\newcommand{\inj}{{\mathrm{in}}}
\newcommand{\Pow}{{\mathcal{P}}}
\newcommand{\Set}{{\mathbf{Set}}}
\newcommand{\FinSet}{{\mathbf{FinSet}}}
\newcommand{\Cat}{{\mathbf{Cat}}}
\newcommand{\cat}{{\mathbf{cat}}}
\newcommand{\N}{{\mathbb{N}}}
\newcommand{\A}{{\mathbf{A}}}
\newcommand{\B}{{\mathbf{B}}}
\newcommand{\C}{{\mathbf{C}}}
\newcommand{\Cl}{{\mathcal{C}}}
\newcommand{\D}{{\mathbf{D}}}
\newcommand{\E}{{\mathbf{E}}}
\newcommand{\F}{{\mathbf{F}}}
\newcommand{\Gra}{{\mathbb{G}}}
\newcommand{\Gbb}{{\mathbb{G}}}
\newcommand{\Gcal}{{\mathcal{G}}}
\newcommand{\PP}{{\mathbf{P}}}
\newcommand{\Se}{{\mathbf{S}}}
\newcommand{\X}{{\mathbf{X}}}
\newcommand{\XX}{{\mathfrak{X}}}
\newcommand{\Y}{{\mathbf{Y}}}
\newcommand{\Z}{{\mathbf{Z}}}
\newcommand{\Ze}{{\mathbb{Z}}}
\newcommand{\DDelta}{{\boldsymbol \Delta}}
\newcommand{\Fam}{{\mathrm{Fam}}}
\newcommand{\FFam}{\mathit{Fam}}
\newcommand{\id}{{\mathit{id}}}
\newcommand{\Id}{{\mathit{Id}}}
\newcommand{\gl}{{\mathsf{gl}}}
\newcommand{\Gl}{{\mathsf{Gl}}}
\newcommand{\Fib}{{\mathbf{Fib}}}
\newcommand{\Spl}{{\mathbf{Sp}}}
\newcommand{\Split}{{\mathit{Sp}}}
\newcommand{\Cart}{{\mathrm{Cart}}}
\newcommand{\cart}{{\mathrm{cart}}}
\newcommand{\cocart}{{\mathrm{cocart}}}
\newcommand{\op}{{\mathrm{op}}}
\newcommand{\Elts}{{\mathbf{Elts}}}
\newcommand{\Ob}{{\mathrm{Ob}}}
\newcommand{\Hom}{{\mathbf{Hom}}}
\newcommand{\homo}{{\mathrm{hom}}}
\newcommand{\Sub}{{\mathrm{Sub}}}
\newcommand{\sub}{{\mathrm{S}}}
\newcommand{\Yon}{{\mathrm{Y}}}
\newcommand{\two}{{\mathbf{2}}}
\newcommand{\one}{{\mathbf{1}}}
\newcommand{\HH}{{\mathcal{H}}}
\newcommand{\MM}{{\mathcal{M}}}
\newcommand{\mono}{\rightarrowtail}
\newcommand{\epi}{\twoheadrightarrow}
\newcommand{\supp}{{\mathsf{supp}}}
\newenvironment{proof}{\paragraph{Proof.}}{\hfill $\square$}
\newtheorem{Thm}{Theorem}[section]
\newtheorem{Lem}[Thm]{Lemma}
\newtheorem{Cor}[Thm]{Corollary}
\newtheorem{Exa}[Thm]{Example}
\newtheorem{Obs}[Thm]{Observation}
\newtheorem{Def}{Definition}[section]
\newcommand{\MYenddef}{\hfill \mbox{\ $\lozenge$}}
\begin{document}

\title{{\huge\textsc{Fibered Categories}} \\ \`a la Jean B\'enabou}
\author{Thomas Streicher}
\date{April 1999 -- April 2022}
\maketitle

\bigskip\bigskip
The notion of \emph{fibered category} was introduced by A.~Grothendieck for
purely geometric reasons. The ``logical'' aspect of fibered categories and, in 
particular, their relevance for \emph{category theory over an arbitrary base
category with pullbacks}  has been investigated and worked out in detail 
by Jean B\'enabou. The aim of these notes is to explain B\'enabou's approach 
to fibered categories which is mostly unpublished but intrinsic to most fields 
of category theory, in particular to topos theory and categorical logic. 

There is no claim for originality by the author of these notes. On the 
contrary I want to express my gratitude to Jean B\'enabou for his lectures
and many personal tutorials where he explained to me various aspects of his 
work on fibered categories.
I also want to thank J.-R.~Roisin for making me available his handwritten 
notes \cite{Ben} of \emph{Des Cat\'egories Fibr\'ees}, a course by Jean 
B\'enabou given at the University of Louvain-la-Neuve back in 1980. 

The current notes are based essentially on \cite{Ben} and quite a few other
insights of J.~B\'enabou that I learnt from him personally. The last four 
sections are based on results of J.-L.~Moens's Th\'ese \cite{Moe} from 1982
which itself was strongly influenced by \cite{Ben}.

\tableofcontents
\newpage

\section{Motivation and Examples}

If $\C$ is a category then a functor \[ F : \C^\op \to \Set \]
also called a ``presheaf over $\C$'' is most naturally considered as a ``set
varying over $\C$''. Of course, one may consider also contravariant functors 
on $\C$ taking their values not in $\Set$ but in some big category of 
structures like {\bf Grp}, {\bf Ab}, {\bf Rng}, {\bf Sp} etc. Typically, 
a presheaf $G : \C^\op \to {\mathbf{Grp}}$ of groups appears as a group 
object in $\widehat{\C} = \Set^{\C^\op}$ which is a topos if the category
$\C$ is small.

More generally, one may consider ``presheaves of categories''
\[ \HH : \C^\op \to {\mathbf{Cat}} \]
which notion will soon be axiomatized and generalised to
our central notion of \emph{fibered category}. But before we consider some 
examples that (hopefully) will provide some intuition and motivation.

\begin{Exa}\label{ex1}
Let $\C$ be the category of monoids and monoid homomorphisms. With every 
monoid $M$ one may associate the category \[ \HH(M) = \Set^{M^\op} \]
of right actions of $M$ on some set and with every monoid homomorphism 
$h : N \to M$ one may associate the functor
\[ \HH(h) = h^* = \Set^{h^\op} : \Set^{M^\op} \to \Set^{N^\op} \]
where $h^*(X,\alpha) : X{\times}N \to X : (x,b) \mapsto \alpha(x,h(b))$. 
\MYenddef
\end{Exa}

\begin{Exa}\label{ex2}
Of course, Example~\ref{ex1} can be generalised by taking for $\C$ some 
subcategory of the category of (small) categories and instead of $\Set$ some
other big category ${\mathcal K}$ (e.g.\ ${\mathcal{K}} = {\mathbf{Ab}}$ 
and $\C = {\mathbf{Cat}}$). \MYenddef
\end{Exa}

\begin{Exa}\label{ex3}
Let $\E$ be an elementary topos (see e.g.\ \cite{Joh}). Then 
\[ \E(-,\Omega) : \E^\op \to {\mathbf{Ha}} \]
is a contravariant functor from $\E$ to the category ${\mathbf{Ha}}$ of
Heyting algebras and their morphisms. \MYenddef
\end{Exa}

\begin{Exa}\label{ex4}
Let $\C$ be the category ${\mathbf{CRng}}$ of commutative rings with $1$. 
Then we may consider the functor
$$\HH : {\mathbf{CRng}}^\op \to {\mathbf{Cat}}$$
where $\HH(R)$ is the category of $R$--modules and for a homomorphism
$h : R^\prime \to R$ the functor $\HH(h)$ performs ``restriction of
scalars'', i.e.\ $\HH(h)(M)$ is the $R^\prime$--module with the same
addition as $M$ and scalar multiplication given by 
$r \cdot x = h(r) \cdot_M x$. \MYenddef
\end{Exa}

\begin{Exa}\label{ex5}
Consider the following instance of Example~\ref{ex2}. Let $\C = \Set$ 
(where sets are considered as small discrete categories) and 
${\mathcal{K}} = \X$ be some (typically not small) category. Then we have
\[ \Fam(\X) : \Set^\op \to {\mathbf{Cat}} \]
where $\Fam(\X)(I) = \X^I$ and 
\[ \Fam(\X)(u) = \X^u : \X^I \to \X^J \]
for $u : J \to I$ in $\Set$.\\
This example is \emph{paradigmatic} for \emph{Fibered Category Theory \`a la
B\'enabou} as it allows categories over $\Set$ to be considered as fibrations 
over $\Set$. Replacing $\Set$ by more general categories $\B$ as e.g.\ toposes
or even just categories with pullbacks one may develop a fair amount of 
\emph{category theory over base $\B$} !
\end{Exa}

\begin{Exa}\label{ex6}
For a category $\B$ with pullbacks we may consider 
$\HH : \B^\op \to {\mathbf{Cat}}$ sending $I \in \B$ to
$\HH(I) = \B/I$ and $u : J \to I$ in $\B$ to the pullback functor
$\HH(u) = u^* : \B/I \to \B/J$ which is right adjoint to 
$\Sigma_u \equiv u \circ (-)$ (postcomposition with $u$).

Notice that this is an example only \emph{cum grano salis} as 
$u^* : \B/I \to \B/J$ involves some choice of pullbacks and, accordingly,
in general we do not have 
$\HH(uv) = \HH(v) \circ \HH(u)$ but only 
$\HH(uv) \cong \HH(v) \circ \HH(u)$ where the 
components of the natural isomorphism are given by the respective mediating 
arrows. Such ``functors'' preserving composition (and identity) only up
to isomorphism are usually called \emph{pseudo--functors}. \MYenddef
\end{Exa}

We definitely do \emph{not} want to exclude the situation of Example~\ref{ex6} 
as it allows one to consider the base category $\B$ as ``fibered over itself''. 
Therefore, one might feel forced to accept pseudo--functors and the ensuing 
bureaucratic handling of ``canonical isomorphisms''. However, as we will show 
immediately one may replace pseudo--functors 
$\HH : \B^\op \to \Cat$ by fibrations 
$P:\X \to \B$ where this bureaucracy will turn out as luckily hidden from us. 

To motivate the definition of a fibration let us consider a functor
$\HH : \B^\op \to {\mathbf{Cat}}$ from which we will 
construct the ``fibration'' $P = \int \HH : \X\to\B$. The objects 
of $\X$ are pairs $(I,X)$ where $I \in \B$ and $X \in \HH(I)$. A 
morphism in $\X$ from $(J,Y)$ to $(I,X)$ is a pair $(u,\alpha)$ where 
$u : J \to I$ in $\B$ and $\alpha : Y \to \HH(u)(X)$ in 
$\HH(J)$. Composition in $\X$ is defined as follows: for maps
$(v,\beta) : (K,Z) \to (J,Y)$ and $(u,\alpha) : (J,Y) \to (I,X)$ 
in $\int \HH$ their composition $(u,\alpha) \circ (v,\beta)$ is given by 
$(u \circ v, \HH(u)(\alpha) \circ \beta)$. It is readily checked 
that this composition is associative and identities are given by 
$\id_{(I,X)} = (\id_I,\id_{X})$. Let $P = \int\HH : \X \to \B$ be
the functor sending an object $(I,X)$ in $\X$ to $I$ in $\B$ and a morphism
$(u,\alpha)$ in $\X$ to $u$ in $\B$.

Similarly, the pseudo--functor from Example~\ref{ex6} may be replaced by the
functor $P_\B \equiv \partial_1 \equiv {\mathsf{cod}} : \B^\two \to \B$ where
$\two$ is the partial order $0 \to 1$, i.e.\ the ordinal $2$. Obviously,
$P_\B$ sends a commuting square 
\begin{diagram}[small]
B           &  \rTo^{f}  &   A         \\
\dTo^{b}    &            &   \dTo_{a}  \\
J           &  \rTo_{u}  &   I         \\
\end{diagram}
to $u$. Just as we have written $\partial_1$ for the ``codomain'' functor 
${\mathsf{cod}}$ we will write $\partial_0$ for the ``domain'' functor 
${\mathsf{dom}} : \B^\two \to \B$. As $P_\B$ allows one to consider $\B$ as
fibered over itself and this is fundamental for developing category theory
over $\B$ we call $P_\B$ the \emph{fundamental fibration of $\B$}.

Let $P : \X \to \B$ be a functor as described above. A morphism $\varphi$ in
$\X$ is called \emph{vertical} iff $P(\varphi) = \id$. We write $P(I)$ or
$\X_I$ for the subcategory of $\X$ which appears as ``inverse image of $I$
under $P$'', i.e.\ which consists of objects $X$ with $P(X) = I$ and morphisms
$\varphi$ with $P(\varphi) = \id_I$.
If $P = \int\HH$ then $(u,\alpha)$ will be called \emph{cartesian}
iff $\alpha$ is an isomorphism and if $P = P_\B$ then a morphism in $\B^\two$
will be called \emph{cartesian} iff the corresponding square is a pullback
in $\B$.

\newpage

\section{Basic Definitions}

From the examples in the previous section we destill the following definition
of fibered category.

\begin{Def}\label{cartdef}
Let $P : \X \to \B$ be a functor. A morphism $\varphi : Y \to X$ in $\X$
over $u := P(\varphi)$ is called \emph{cartesian} iff for all $v : K \to J$
in $\B$ and $\theta : Z \to X$ with $P(\theta) = u \circ v$ there exists a
unique morphism $\psi : Z \to Y$ with $P(\psi) = v$ and
$\theta = \varphi \circ \psi$. 
\begin{diagram}
Z &         &   &                &  \\
  & \rdDotsto_{\psi} \rdTo(4,2)^{\theta}  &   &       &  \\
  &         & Y & \rTo_{\varphi} & X \\
K &         &   &                &  \\
  & \rdTo_{v} \rdTo(4,2)^{u \circ v}  &   &       &  \\
  &         & J & \rTo_{u} & I \\
\end{diagram} 
A morphism $\alpha : Y \to X$ is called \emph{vertical} iff $P(\alpha)$ is
an identity morphism in $\B$.
For $I \in \B$ we write $\X_I$ or $P(I)$ for the subcategory of $\X$ 
consisting of those morphism $\alpha$ with $P(\alpha) = \id_I$. It is called
the \emph{fiber of $P$ over $I$}.
\MYenddef
\end{Def}

It is straightforward to check that cartesian arrows are closed under 
composition and that $\alpha$ is an isomorphism in $\X$ iff $\alpha$ is a
cartesian morphism over an isomorphism.

\begin{Def}\label{fibdef1}
A functor $P : \X \to \B$ is called a (Grothendieck) \emph{fibration} or
\emph{category fibered over $\B$} iff for all $u : J \to I$ in $\B$ and
$X \in P(I)$ there exists a cartesian arrow $\varphi : Y \to X$ over $u$
called a \emph{cartesian lifting of $X$ along $u$}.
\MYenddef
\end{Def}

Obviously, the functors $\int\HH$ and $P_\B$ of the previous section
are examples of fibrations and the \emph{ad hoc} notions of ``cartesian'' as
given there coincide with the official ones of Definition~\ref{fibdef1}.

Notice that cartesian liftings of $X \in P(I)$ along $u : J \to I$ are unique
up to vertical isomorphism: suppose that $\varphi : Y \to X$ and 
$\psi : Z \to X$ are cartesian over $u$ then there exist vertical arrows
$\alpha : Z \to Y$ and $\beta : Y \to Z$ with $\varphi \circ \alpha = \psi$
and $\psi \circ \beta = \varphi$, respectively, from which it follows by
cartesianness of $\varphi$ and $\psi$ that $\beta \circ \alpha = \id_Z$ and
$\alpha \circ \beta = \id_Y$ as 
$\psi \circ \beta \circ \alpha = \varphi \circ \alpha = \varphi = 
\varphi \circ \id_Y$ and
$\varphi \circ \beta \circ \alpha = \psi \circ \alpha = \varphi = 
\varphi \circ \id_Y$.

\begin{Def}\label{fibdef2}
Let $P : \X \to \B$ and $Q : \Y \to \B$ be fibrations over $\B$.\\
A \emph{cartesian} or \emph{fibered} functor from $P$ to $Q$ is an ordinary
functor $F : \X \to \Y$ such that 
\begin{enumerate}
\item[\rm (1)] $Q \circ F = P$ and
\item[\rm (2)] $F(\varphi)$ is cartesian w.r.t.\ $Q$ whenever $\varphi$ is
           cartesian w.r.t.\ $P$.
\end{enumerate}

If $F$ and $G$ are cartesian functors from $P$ to $Q$ then a \emph{cartesian
natural transformation from $F$ to $G$} is an ordinary natural transformation
$\tau : F \Rightarrow G$ with $\tau_X$ vertical for every $X \in \X$.

The ensuing 2-category will be called $\Fib(\B)$. \MYenddef
\end{Def}

Of course, if $\B$ is the terminal category then $\Fib(\B)$ is isomorphic
to the 2-category $\Cat$.

\bigskip\bigskip
\noindent
{\bf Remark.} What we have called ``cartesian'' in Definition~\ref{cartdef}
is usually called \emph{hypercartesian} whereas ``cartesian'' morphisms are
defined as follows: a morphism $\varphi : Y \to X$ is called \emph{cartesian}
iff for all $\psi : Z \to X$ with $P(\varphi) = P(\psi)$ there is a unique
vertical arrow $\alpha : Z \to Y$ with $\varphi \circ \alpha = \psi$. 
Employing this more liberal notion of ``cartesian'' one has to strengthen 
the definition of fibered category by adding the requirement that cartesian 
arrows are closed under composition. It is a simple exercise to show that
this addendum ensures that every cartesian arrow (in the liberal sense) is
actually hypercartesian (i.e.\ cartesian in the more restrictive sense of
our definition) and, accordingly, both definitions of fibered category are
equivalent.

As the current notes consider only fibrations for which ``cartesian'' and 
``hypercartesian'' are equivalent anyway we have adopted the somewhat
non--canonical Definition~\ref{cartdef} as in our context it will not lead
to any confusion.

Notice, however, that in more recent (unpublished) work by J.~B\'enabou 
on \emph{generalised fibrations} the distinction between cartesian arrows
(in the liberal sense) and hypercartesian arrows turns out as crucial.

\bigskip\bigskip
Obviously, a fibration $P : \X\to\B$ is a fibration of groupoids iff
all vertical arrows are isos iff all morphism of $\X$ are cartesian and
thus $P$ is a \emph{discrete} fibration, i.e.\ a fibration of discrete
categories, iff all vertical arrows are identities.  

\begin{Lem}\label{fibcartlm}
Suppose $P : \X \to \B$ and $Q : \Y \to \B$ are fibrations and $F : Q \to P$
is a cartesian functor over $\B$. If $P$ is discrete, i.e.\ all vertical
arrows are identities, then $F$ is a fibration itself.
\end{Lem}
\begin{proof}
Suppose $Y \in \Y$ and $f : X \to FY$ is a morphism in $\X$. Since $Q$ is a
fibration there exists a $Q$-cartesian arrow $\varphi : Z \to Y$ in $\Y$ above
$P(f)$. Since $F$ is cartesian $F(\varphi) : FZ \to FY$ is $P$-cartesian. We
have $P(F(\varphi)) = Q(\varphi) = P(f)$ and thus both $F(\varphi)$ and $f$
are morphism to $FY$ over $P(f)$. Since $P$ is a discrete fibration it follows
that $F(\varphi) = f$. It remains to show that $\varphi$ is $F$-cartesian.
For this purpose suppose $g : U \to X$ and $\psi : V \to Y$ with
$F(\psi) = F(\varphi) g$. Then $Q(\psi) = Q(\varphi)P(g)$ and thus, since
$Q$ is a fibration, there exists a unique $\theta : V \to Z$ with
$\varphi\theta = \psi$ and $Q(\theta) = P(g)$. Thus $F(\varphi) g
= F(\psi) = F(\varphi)F(\theta)$ from which it follows that $F(\theta) = g$
since $P$ is a discrete fibration. Suppose $\widetilde{\theta} : V \to Z$
with $\varphi \widetilde{\theta}= \psi$ and $F(\widetilde{\theta}) = g$. 
Then $Q(\widetilde{\theta}) = P(F(\widetilde{\theta})) = P(g)$.
Thus $\theta = \widetilde{\theta}$ as desird.      
\end{proof}

\bigskip
In general, i.e.\ if $P$ is not assumed to be discrete, a cartesian
functor $F : Q \to P$ will not be a fibration. For example if $\B$ is
nontrivial, $Q = \Id_\B$ and $F$ is right adjoint to $P$, i.e.\ $F$ picks a
terminal object in each fiber, then $F$ is not a fibration unless all fibers
are equivalent to $\one$. In particular, if $\B$ is the ordinal $\two$ and
$P$ is the fundamental fibration $P_\B$ of $\B$ then the functor
$1 : \Id_\B \to P_\B$ (sending $I$ to $\id_I$) is not a fibration. Thus, it is
not sufficient to require that $P$ is faithful, i.e.\ that $P$ is a fibration
of posetal categories, for $F$ being a fibration, too.

\newpage

\section{Split Fibrations and Fibered Yoneda Lemma}

If $P : \X\to\B$ is a fibration then using axiom of choice for classes we may
select for every $u : J \to I$ in $\B$ and $X \in P(I)$ a cartesian arrow
$\Cart(u,X) : u^*X \to X$ over $u$. Such a choice of cartesian liftings is 
called a \emph{cleavage} for $P$ and it induces for every map $u : J \to I$ 
in $\B$ a so-called \emph{reindexing functor} $u^* : P(I) \to P(J)$ 
in the following way
\begin{diagram}
u^*X & \rTo^{\Cart(u,X)} & X \\
\dDashto^{u^*\alpha} & & \dTo_{\alpha} \\
u^*Y & \rTo_{\Cart(u,Y)} & Y \\
\end{diagram}
where $u^*\alpha$ is the unique vertical arrow making the diagram commute.
Alas, in general for composable maps $u : J \to I$ and $v : K \to J$ in $\B$
it does not hold that
$$v^* \circ u^* = (u \circ v)^*$$ 
although the functors are canonically isomorphic via $c_{u,v}$ 
as shown in the following diagram
\begin{diagram}
v^*u^*X\; & & & & \\
 & \rdTo^{\Cart(v,u^*X)}_{\mbox{cart.}} & & & \\
\dDashto^{(c_{u,v})_X}_{\cong} & & u^*X & & \\
& & & \rdTo^{\Cart(u,X)}_{\mbox{cart.}} & \\
(uv)^*X & & \rTo_{\Cart(uv,X)}^{\mbox{cart.}} & & X \\
\end{diagram}
where $(c_{u,v})_X$ is the unique vertical arrow making the diagram commute.

Typically, for $P_{\B} = \partial_1 : \B^{\two} \to \B$, the
{\bf fundamental fibration} for a category $\B$ with pullbacks, we do not
know how to choose pullbacks in a functorial way, i.e.\ that
$\Cart(\id,X) = \id_X$ and
$\Cart(u{\circ}v,X) = \Cart(u,X) \circ \Cart(v,u^*X)$.
Of course, the first condition is easy to achieve but the problem is
the second condition since in general one does not know how to choose
pullbacks in such a way that they are closed under composition.

But, nevertheless, often such a functorial choice of cartesian liftings is 
possible in particular situations.

\begin{Def}
A cleavage $\Cart$ of a fibration $P : \X \to \B$ is called \emph{split} 
or a \emph{splitting of $P$} iff the following two conditions are satisfied
\begin{itemize}
\item[\rm (1)]   $\Cart(\id,X) = \id_X$
\item[\rm (2)]   $\Cart(uv,X) = \Cart(u,X) \circ \Cart(v,u^*X)$.  
\end{itemize}
A \emph{split fibration} is a fibration \emph{endowed} with a split cleavage.

A \emph{split cartesian functor} between split fibrations is a cartesian
functor $F$ between split fibrations which, moreover, preserves chosen 
cartesian liftings, i.e.\ satisfies 
\[ F(\Cart(u,X)) = \Cart(u,F(X)) \] 
for all $u : J \to I$ in the base and all $X$ over $I$. We write $\Spl(\B)$
for the ensuing category of split fibrations over $\B$ and split cartesian
functors between them.
\MYenddef 
\end{Def}

\bigskip
\noindent
{\bf Warning.}\\
(1) There are fibrations which are not splitable. Consider for example the 
groups $\B = (\Ze_2,+_2)$ and $\X = (\Ze,+)$ (considered as categories) and 
the fibration $P : \X \to \B : a \mapsto P(a) := a \, {\mathrm{mod}} \, 2$. 
A splitting of $P$ would give rise to a functor $F : \B \to \X$
with $P \circ F = {\mathrm{Id}}_\B$ but that cannot exist as there is no group
homomorphism $h : (\Z_2,+_2) \to (\Z,+)$ with $h(1)$ an odd number of $\Z$.\\
(2) Notice that different splittings of the same fibration may give rise to
the same presheaf of categories. Consider for example 
$\HH : \two^\op\to {\mathbf{Ab}}$ with
$\HH(1) = {\mathcal{O}}$, the zero group, and $\HH(0)$ some
non--trivial abelian group $A$. Then every $g \in A$ induces a splitting 
$\Cart_g$ of $P \equiv \int {\mathcal H}$ by putting
\begin{enumerate}
\item[]  \qquad $\Cart_g(u,\star) = (u,g)$ 
         \qquad for $u : 0 \to 1$ in $\two$
\end{enumerate}
but all these $\Cart_g$ induce the same functor $\two^\op \to \Cat$,
namely $\HH$ ! 

In the light of (2) it might appear as more appropriate to define 
split fibrations over $\B$ as functors from $\B^\op$ to $\Cat$. The latter
may be considered as categories internal to $\widehat{\B} = \Set^{\B^\op}$
and organise into the (2-)category $\cat(\B)$ of categories and functors 
internal to $\widehat{\B}$. However, as $\Spl(\B)$ and $\cat(\B)$ are strongly 
equivalent as 2-categories we will not distiguish them any further in the rest
of these notes.

\bigskip
Next we will presented the \emph{Fibered Yoneda Lemma} making precise the 
relation between fibered categories and split fibrations (over the same base).

\subsection*{Fibered Yoneda Lemma}

Though, as we have seen, not every fibration $P \in \Fib(\B)$ is isomorphic 
to a splitable fibration there is always a distinguished \emph{equivalent} 
split fibration as ensured by the so-called \emph{Fibered Yoneda Lemma}.
Before giving the full formulation of the Fibered Yoneda Lemma we motivate
the construction of a canonical split fibration $\Split(P)$ equivalent to a
given fibration $P \in \Fib(\B)$. 

For an object $I \in \B$ let $\underline{I} = P_I = \partial_0 : \B/I \to \B$
be the discrete fibration corresponding to the representable presheaf 
$Y_\B(I) = \B(-,I)$ and for $u : J \to I$ in $\B$ let $\underline{u} = 
P_u = \Sigma_u$ be the cartesian functor from $\underline{J}$ to 
$\underline{I}$ as given by postcomposition with $u$ and corresponding to 
the presheaf morphism $Y_\B(u) = \B(-,u) : Y_\B(J) \to Y_\B(I)$. Then 
cartesian functors from $\underline{I}$ to $P : \X \to \B$ in $\Fib(\B)$ 
correspond to choices of cartesian liftings for an object $X \in P(I)$. There 
is an obvious functor $E_{P,I} : \Fib(\B)(\underline{I},P) \to P(I)$ sending 
$F$ to $F(\id_I)$ and $\tau : F\to G$ to $\tau_{\id_I} : F(\id_I)\to G(\id_I)$.
It is a straightforward exercise to show that $E_{P,I}$ is full and faithful
and using the axiom of choice for classes we also get that $E_{P,I}$ is 
surjective on objects, i.e.\ that 
$E_{P,I} : \Fib(\B)(\underline{I},P) \to P(I)$ is an equivalence of categories.
Now we can define $\Split(P) : \B^\op \to \Cat$ as
\[ \Split(P)(I) = \Fib(\B)(\underline{I},P) \]
for objects $I$ in $\B$ and 
\[ \Split(P)(u) = \Fib(\B)(\underline{u},P) : \Split(P)(I) \to \Split(P)(J) \]
for morphisms $u : J \to I$ in $\B$. Let us write $U(\Split(P))$ for 
$\int \Split(P)$, the fibration obtained from $\Split(P)$ via the 
Grothendieck construction. Then the $E_{P,I}$ as described above arise as
the components of a cartesian functor $E_P : U(\Split(P)) \to P$ sending 
objects $(I,X)$ in $ U(\Split(P)) = \int \Split(P)$ to $E_{P,I}(X)$ and 
morphism $(u,\alpha) : G \to F$ in $U(\Split(P)) = \int \Split(P)$ 
over $u : J \to I$ to the morphism
$F(u{:}u{\to}\id_I) \circ \alpha_{\id_J} : G(\id_J) \to F(\id_I)$ in $\X$.
As all fibers of $E_P$ are equivalences it follows\footnote{We leave it as
an exercise to show that under assumption of axiom of choice for classes a 
cartesian functor is an equivalence in $\Fib(\B)$ iff all its fibers are 
equivalences of categories.} that $E_P$ is an equivalence in the $2$-category 
$\Fib(\B)$. 

Actually, the construction of $\Split(P)$ from $P$ is just the object part
of a $2$-functor $\Split : \Fib(\B) \to \Spl(\B)$ right adjoint to the 
forgetful $2$-functor from $\Spl(\B)$ to $\Fib(\B)$ as described in the 
following theorem 
(which, however, will not be used any further in the rest of these notes).

\begin{Thm}\label{fYl}\emph{(Fibered Yoneda Lemma)}\\
For every category $\B$ the forgetful $2$-functor $U : \Spl(\B) \to \Fib(\B)$
has a right $2$-adjoint $\Split : \Fib(\B) \to \Spl(\B)$, i.e.\ there is
an equivalence of categories
\[ \Fib(\B)(U(S),P) \simeq \Spl(\B)(S,\Split(P)) \]
naturally in $S \in \Spl(\B)$ and $P \in \Fib(\B)$, whose counit 
$E_P : U(\Split(P)) \to P$ at $P$ is an equivalence in $\Fib(\B)$ 
for all $P \in \Fib(\B)$.

However, in general the unit $H_S : S \to \Split(U(S))$ at $S \in \Spl(\B)$
is not an equivalence in $\Spl(\B)$ although $U(H_S)$ is always an equivalence 
in $\Fib(\B)$.
\end{Thm}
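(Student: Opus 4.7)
The excerpt already constructs $\Split(P)$ on objects and exhibits the counit $E_P : U(\Split(P)) \to P$. My plan is: (i) promote $\Split$ to a strict $2$-functor $\Fib(\B) \to \Spl(\B)$; (ii) define the unit $H_S : S \to \Split(U(S))$ and verify the triangle identities; (iii) prove $E_P$ is an equivalence in $\Fib(\B)$ by showing every fibre $E_{P,I}$ is an equivalence of categories; (iv) deduce that $U(H_S)$ is an equivalence in $\Fib(\B)$; (v) note why $H_S$ itself need not be an equivalence in $\Spl(\B)$.

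For (i), I set $\Split(P)(u)(F) := F \circ \underline{u}$ on morphisms of $\B$; this is strictly functorial in $u$ because $\underline{\id_I} = \id_{\underline{I}}$ and $\underline{u \circ v} = \underline{u} \circ \underline{v}$ hold on the nose, $\Sigma_u$ being literal postcomposition in $\B$. The Grothendieck construction applied to $\Split(P)$ therefore yields a \emph{genuinely} split fibration, and the splitting is preserved by the $2$-functorial action $\Split(F) := F \circ (-)$ associated to a cartesian functor $F : P \to Q$, which is manifestly split cartesian.

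The crux is step (iii), a fibred Yoneda lemma. \emph{Faithfulness and fullness of $E_{P,I}$:} a cartesian natural transformation $\tau : F \Rightarrow G$ between cartesian functors $\underline{I} \to P$ is determined by $\tau_{\id_I}$, because for any $u : J \to I$ the morphism $u \to \id_I$ in $\B/I$ is sent to a cartesian arrow over $u$, and naturality together with verticality of $\tau_u$ force $\tau_u$ to be the unique vertical factorisation of $G(u) \circ \tau_{\id_I}$ through the cartesian arrow $F(u)$; conversely, any vertical $\alpha : F(\id_I) \to G(\id_I)$ lifts to such a $\tau$ with routine naturality. \emph{Essential surjectivity:} given $X \in P(I)$, choose (axiom of choice for classes) a cartesian lifting $\Cart(u,X)$ for every $u : J \to I$, then define $F_X : \underline{I} \to P$ by $F_X(u) := \Cart(u,X)$ on objects and by universal factorisation through cartesian arrows on morphisms; then $E_{P,I}(F_X) = X$. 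That all fibres being equivalences implies $E_P$ is an equivalence in $\Fib(\B)$ is the footnoted fact.

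For (ii), define $H_S(X)$ to be the split cartesian functor $\underline{I} \to U(S)$ given by the splitting of $S$, namely $u \mapsto \Cart(u,X)$ on objects with morphism action forced by splitness. The triangle identity $E_{U(S)} \circ U(H_S) = \id_{U(S)}$ holds on the nose (evaluating $H_S(X)$ at $\id_I$ recovers $X$ since $\Cart(\id_I,X) = \id_X$), so combined with (iii) $U(H_S)$ is an equivalence in $\Fib(\B)$. The other triangle $\Split(E_P) \circ H_{\Split(P)} = \id_{\Split(P)}$ is verified similarly by unwinding the construction. Finally, $H_S$ need not be an equivalence in $\Spl(\B)$: Warning~(2) exhibits pairwise distinct split fibrations $(P,\Cart_g)$ with a common underlying presheaf, and a split-cartesian equivalence would have to preserve cleavages strictly, which rules out such an equivalence between inequivalent cleavages on the same fibration. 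The main obstacle is step (iii), the Yoneda computation; the remaining verifications are formal book-keeping with cleavages and the Grothendieck construction.
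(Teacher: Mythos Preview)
Your steps (i)--(iv) are essentially the paper's approach, fleshed out in places the paper leaves as exercises: the paper sketches the fibrewise Yoneda argument for $E_{P,I}$ just before the theorem and defines $H_S$ exactly as you do. Your use of the triangle identity $E_{U(S)} \circ U(H_S) = \id$ to deduce that $U(H_S)$ is an equivalence is a clean variant of the paper's observation that all fibres of $H_S$ are equivalences.

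There is, however, a genuine gap in step (v). Your appeal to Warning~(2) does not work: the split fibrations $(P,\Cart_g)$ described there are in fact \emph{isomorphic} in $\Spl(\B)$, not merely distinct. Concretely, for $h \in A$ the functor $F_h : \X \to \X$ which is the identity on objects, the identity on vertical arrows, and sends $(u,a) \mapsto (u,a+h)$ is a split cartesian isomorphism from $(P,\Cart_g)$ to $(P,\Cart_{g+h})$. So ``distinct cleavages'' does not imply ``inequivalent split fibrations'', and your argument collapses. The paper's argument is structurally different: it observes that if every $H_S$ were an equivalence in $\Spl(\B)$ then, together with the counit $E_P$ being an equivalence, the 2-adjunction $U \dashv \Split$ would be a biequivalence, so $U$ would reflect equivalences --- i.e.\ every split cartesian functor that is a fibrewise equivalence would already be an equivalence in $\Spl(\B)$. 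This is then ruled out by noting that not every epimorphism in $\widehat{\B}$ is split; you would need to supply (or cite) an argument of this shape rather than the one based on Warning~(2).
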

\begin{proof}
The functor $U : \Spl(\B) \to \Fib(\B)$ just forgets cleavages. The object 
part of its right adjoint $\Split$ is as described above, namely
\[ \Split(P)(I) = \Fib(\B)(\underline{I},P) \qquad\quad 
   \Split(P)(u) = \Fib(\B)(\underline{u},P)\]
for $P \in \Fib(\B)$. For cartesian functors $F : P \to Q$ in $\Fib(\B)$ we
define $\Split(F) : \Split(P) \to \Split(Q)$ as
\[  \Split(F)_I = \Fib(\B)(\underline{I},F) \]
for objects $I$ in $\B$. Under assumption of axiom of choice for classes
the counit for $U \dashv \Split$ at $P$ is given by the equivalence
$E_P : U(\Split(P)) \to P$ as described above. 
The unit $H_S : S \to \Split(U(S))$ for $U \dashv \Split$ at $S \in \Spl(\B)$
sends $X \in P(I)$ to the cartesian functor from $\underline{I}$ to $P$ which
chooses cartesian liftings as prescribed by the underlying cleavage of $S$ and
arrows $\alpha : X \to Y$ in $P(I)$ to the cartesian natural transformation
$H_S(\alpha) : H_S(X) \to H_S(Y)$ with $H_S(\alpha)_{\id_I} = \alpha$. We
leave it as a tedious, but straightforward exercise to show that these data
give rise to an equivalence
\[ \Fib(\B)(U(S),P) \simeq \Spl(\B)(S,\Split(P)) \]
naturally in $S$ and $P$.

As all components of $H_S$ are equivalences of categories it follows that
$U(H_S)$ is an equivalence in $\Fib(\B)$. However, it cannot be the case
that all $H_S$ are equivalences as otherwise a split cartesian functor $F$
were an equivalence in $\Spl(\B)$ already if $U(F)$ is an equivalence in 
$\Fib(\B)$ and this is impossible as not every epi in $\widehat{\B}$ is a
split epi.
\end{proof}

\bigskip
As $E_P : U(\Split(P)) \to P$ is always an equivalence it follows 
that for fibrations $P$ and $Q$
\[ \Split_{P,Q} : \Fib(\B)(P,Q) \to \Spl(\B)(\Split(P),\Split(Q)) \]
is an equivalence of categories. 

However, in general $\Split_{P,Q}$ is not an isomorphism of categories.
An arbitrary split cartesian functor $G : \Split(P) \to \Split(Q)$ corresponds
via the $2$-adjunction $U \dashv \Split$ to a cartesian functor 
$E_Q \circ U(G) : U(\Split(P)) \to Q$ which, however, need not factor as 
$E_Q \circ U(G) = F \circ E_P$ for some cartesian $F : P \to Q$.\footnote{
For example, if $Q = U({{\mathit{Sp}}}(P))$ and 
$E_Q \circ U(G) = {{\mathit{Id}}}_{U({{\mathit{Sp}}}(P))}$ and $E_P$ is not
one-to-one on objects which happens to be the case whenever cartesian 
liftings are not unique in $P$.} One may characterise the split cartesian
functors of the form $\Split(F)$ for some cartesian $F : P \to Q$ as those
split cartesian functors $G : \Split(P) \to \Split(Q)$ satisfying
$\Split(E_Q) \circ \Split(U(G)) =  G \circ \Split(E_P)$. One easily sees 
that this condition is necessary and if it holds then an $F$ with 
$G = \Split(F)$ can be obtained as $E_Q \circ U(G) \circ E'_P$ 
for some $E'_P$ with $E_P \circ E'_P = {{\mathit{Id}}}_P$ because we have
$\Split(F) = \Split(E_Q \circ U(G) \circ E'_P) =
 \Split(E_Q) \circ \Split(U(G)) \circ \Split(E'_P) =
 G \circ \Split(E_P) \circ \Split(E'_P) =  G \circ \Split(E_P \circ E'_P) = G$.
 
Although $\Split$ is not full and faithful the adjunction $U \dashv \Split$ 
nevertheless is of the type ``full reflective subcategory'' albeit in the 
appropriate $2$-categorical sense. This suggests that $\Fib(\B)$ is obtained 
from $\Spl(\B)$ by ``freely quasi-inverting weak equivalences in $\Fib(\B)$''
which can be made precise as follows. 

A split cartesian functor $F$ is called a \emph{weak equivalence}
iff all its fibers are equivalences of categories, i.e.\ iff $U(F)$ is an 
equivalence in $\Fib(\B)$. Let us write $\Sigma$ for the class of weak 
equivalences in $\Spl(\B)$. For a $2$-category $\XX$ and a $2$-functor 
$\Phi : \Spl(\B) \to \XX$ we say that $\Phi$ \emph{quasi-inverts} a morphism
$F$ in $\Spl(\B)$ iff $\Phi(F)$ is an equivalence in $\XX$. Obviously, the
$2$-functor $U : \Spl(\B) \to \Fib(\B)$ quasi-inverts all weak equivalences.
That $U$ freely inverts the maps in $\Sigma$ can be seen as follows. Suppose
that a $2$-functor $\Phi : \Spl(\B) \to \XX$ quasi-inverts all weak 
equivalences. Then there exists a $2$-functor $\Psi : \Fib(\B) \to \XX$ 
unique up to equivalence with the property that $\Psi \circ U \simeq  \Phi$. 
As by assumption $\Phi$ quasi-inverts weak equivalences we have 
$\Phi \circ \Split \circ U \simeq \Phi$ because all $H_S$ are weak 
equivalences. On the other hand if $\Psi \circ U \simeq \Phi$ then we have
$\Psi \simeq \Psi \circ U \circ \Split \simeq \Phi \circ \Split$ (because all
$E_P$ are equivalences) showing that $\Psi$ is unique up to equivalence.

\subsection*{A Left Adjoint Splitting}

The forgetful functor $U : \Spl(\B) \to \Fib(\B)$ admits also a left adjoint
$L : \Fib(\B) \to \Spl(\B)$ which like the right adjoint splitting
discussed previously was devised by J.~Giraud in the late 1960s. 

This left adjoint splitting $L(P)$ of a fibration $P : \X \to \B$ is 
constructed as follows. First choose a cleavage $\Cart_P$ of $P$ which is 
\emph{normalized} in the sense that $\Cart_P(\id_I,X) = \id_X$ for all $X$ 
over $I$. From this cleavage one may construct a presheaf 
$S(P) : \B^\op \to \Cat$ of categories giving rise to the desired split fibration
$L(P)$ over $\B$. For $I \in \B$ the objects of $S(P)(I)$ are pairs $(a,X)$ where
$X$ is an object of $\X$ and $a : I \to P(X)$. Morphisms from $(b,Y)$ to $(a,X)$
are vertical morphism $\alpha : b^*Y \to a^*X$ and composition in $S(P)(I)$ is
inherited from $\X$, i.e.\ $P(I)$. For $u : J \to I$ in $\B$ the functor $S(P)(u)
: S(P)(I) \to S(P)(J)$ is constructed as follows. For $(a,X)$ in $S(P)(I)$ let
$\Cart_{L(P)}(u,(a,X)) : (au)^*X \to u^*X$ be the unique cartesian arrow $\varphi$ 
over $u$ with $\Cart_P(a,X) \circ \varphi = \Cart_P(au,X)$. Let $\alpha : b^*Y \to a^*X$ 
be a morphism from $(b,Y)$ to $(a,X)$ in $S(P)(I)$. Then we define $S(P)(u)(\alpha)$
as the unique vertical morphism making the diagram
\begin{diagram}[small]
& & Y \\
 & \ruTo^{\Cart_P(bu,Y)} & \uTo_{\Cart_P(b,Y)} \\
(bu)^*Y & \rTo_{\;\;\;\Cart_{L(P)}(u,(b,Y))} & b^*Y \\
\dTo^{S(P)(u)(\alpha)} &  & \dTo_\alpha \\
(au)^*X & \rTo^{\;\;\;\Cart_{L(P)}(u,(a,X))} & a^*X \\
& \rdTo_{\Cart_P(au,X)} & \dTo_{\Cart_P(a,X)} \\
& & X
\end{diagram}
commute. One readily checks that $S(P)$ is indeed a functor from $\B^\op$ to $\Cat$
since  
$\Cart_{L(P)}(uv,(a,X) = \Cart_{L(P)}(u,(a,X)) \circ \Cart_{L(P)}(v,(au,X))$ 
and $\Cart_{L(P)}(\id_I,(a,X)) = \id_{a^*X}$ as one can see easily. 
Objects of the total category of $L(P)$ are objects of $S(P)(I)$ for some $I \in \B$
and morphisms from $(b,Y)$ to $(a,X)$ are just morphisms $b^*Y \to a^*X$ 
whose composition is inherited from $\X$. 
The functor $L(P)$ sends $(a,X)$ to the domain of $a$ and $f : b^*Y \to a^*X$ to $P(f)$.
The splitting of $L(P)$ is given by $\Cart_{L(P)}$ as defined above for specifying 
the morphism part of $S(P)$.
The unit $H_P : P \to U(L(P))$ of the (2-categorical) adjunction $L \dashv U$ 
sends $X$ to $(\id_{P(X)},X)$ and $f : Y \to X$ to $f : H_P(Y) \to H_P(X)$.

Notice that the above construction of $L(P)$ is based on a choice of a cleavage
for $P$. But this may be avoided by defining morphisms from $(b,Y)$ to $(a,X)$
over $u : J \to I$ as equivalence classes of spans $(\psi,f)$ in $\X$ where $\psi$ 
is a cartesian morphism to $Y$ over $b$ and $f$ is a morphism to $A$ over $au$ 
where $(\psi,f)$ and $(\psi^\prime,f^\prime)$ get identified iff there is a vertical 
isomorphism $\iota$ with $\psi \circ \iota = \psi^\prime$ and 
$f \circ \iota = f^\prime$. For a given cleavage $\Cart_P$ of $P$ the equivalence
class of $(\psi,f)$ contains a unique pair whose first component is $\Cart_P(b,Y)$.
 
\newpage

\section{Closure Properties of Fibrations}

In this section we will give some examples of fibrations and constructions of 
new fibrations from already given ones. Keeping in mind that we think of 
fibrations over $\B$ as generalisations of fibrations of the form $\Fam(\C)$ 
over $\Set$ it will appear that most of these constructions are 
generalisations of well-known constructions in ${\bf Cat}$.

\subsection*{Fundamental Fibrations}

For a category $\B$ the codomain functor
    $$P_\B \equiv \partial_1 : \B^{\two} \to \B$$
is a fibration if and only if $\B$ has pullbacks. In this case $P_\B$ is 
called the {\bf fundamental fibration} of $\B$.

\subsection*{Externalisations of Internal Categories}

Let $C$ be a category internal to $\B$ as given by domain and codomain
maps $d_0,d_1 : C_1 \to C_0$, the identity map $i : C_0 \to C_1$ and a 
composition map $m : C_1 \times_{C_0} C_1 \to C_1$. Then one may construct the
fibration $P_C : \underline{C} \to \B$ called \emph{externalisation of $C$}.
The objects of $\underline{C}$ over $I$ are pairs $(I,a : I \to C_0)$ and a
morphism in $\underline{C}$ from $(J,b)$ to $(I,a)$ over $u : J \to I$ is 
given by a morphism $f : J \to C_1$ with $d_0 \circ f = b$  and 
$d_1 \circ f =  a \circ u$. Composition in $C$ is defined using $m$ analogous
to $\Fam(\C)$. The fibration $P_C$ itself is defined as
$$P_C(I,a) = I \qquad\qquad  P_C(u,f) = u$$
and the cartesian lifting of $(I,a)$ along $u : J \to I$ is given by 
$i \circ a \circ u$.

In particular, every object $I \in \B$ can be considered as a \emph{discrete}
internal category of $\B$. Its externalisation is given by
$P_I = \partial_0 : \B/I \to \B$ for which (by a convenient abuse of notation)
we often also write $\underline{I}$
.

\subsection*{Change of Base and ``Glueing''}

If $P \in \Fib(\B)$ and $F : \C \to \B$ is an ordinary functor then
$F^*P \in \Fib(\C)$ where
\begin{diagram}[small]
\Y  \SEpbk         &  \rTo^{K}  &   \X         \\
\dTo^{F^*P}  &            &   \dTo_{P}  \\
\C           &  \rTo_{F}  &   \B         
\end{diagram}
is a pullback in ${\bf Cat}$. One says that fibration $F^*P$ is obtained
from $P$ \emph{by change of base along $F$}.
Notice that $(u,\varphi)$ in $\Y$ is cartesian w.r.t.\ $F^*P$ iff $\varphi$ 
is cartesian w.r.t.\ $P$. Accordingly, $K$ preserves cartesianness of arrows 
as $K(u,\varphi) = \varphi$.

When instantiating $P$ by the fundamental fibration $P_\B$ we get the 
following important particular case of change of base 
\begin{diagram}[small]
\B{\downarrow}F  \SEpbk  &  \rTo^{\partial_1^*F}  &    \B^{\two}  \\
\dTo^{P_F}           &           &   \dTo_{P_\B}  \\
\C           &  \rTo_{F\;}  &   \B         
\end{diagram}
where we write $P_F$ for $F^*P_\B$. This is often referred to as
\emph{(Artin) glueing} in which case
one often writes $\gl(F)$ for $P_F$ and $\Gl(F)$ for $\B{\downarrow}F$.
Typically, in applications the functor $F$ will be the inverse image part of 
a geometric morphism $F \dashv U : \E \to \Se$ between toposes. But already
if $F$ is a pullback preserving functor between toposes 
$\Gl(F) = \E{\downarrow}F$ is again a topos and the functor
$P_F = \gl(F) : \E{\downarrow}F \to \Se$ is \emph{logical}, i.e.\ preserves
all topos structure. The glueing construction will get very important later
on  when we discuss the \emph{Fibrational Theory of Geometric Morphisms}
\`a la J.-L.~Moens.  

We write $\Fib$ for the (non--full) subcategory of $\Cat^{\two}$ 
whose objects are fibrations and whose morphisms are commuting squares
\begin{diagram}[small]
\Y        &  \rTo^{K}  &   \X         \\
\dTo^{Q}  &            &   \dTo_{P}  \\
\C           &  \rTo_{F}  &   \B         
\end{diagram}
with $K$ cartesian over $F$, i.e.\ $K(\varphi)$ is cartesian over $F(u)$
whenever $\varphi$ is cartesian over $u$. Obviously, $\Fib$ is fibered
over $\Cat$ via the restriction of $\partial_1 : \Cat^{\two} \to \Cat$ to 
$\Fib$ for which we write $\Fib/\Cat : \Fib \to \Cat$. A morphism of $\Fib$
is cartesian iff it is a pullback square in $\Cat$.

We write $\Fib(\B)/\B$ for the fibration obtained from $\Fib/\Cat$ 
by change of base along the functor $\Sigma : \B \to \Cat$
sending $I$ to $\B/I$ and $u : J \to I$ to 
$\Sigma_u : \B/J \to \B/I : v \mapsto u \circ v$
\begin{diagram}[small]
\Fib {\downarrow} \Sigma  \SEpbk    &  \rTo  &  \Fib        \\
\dTo^{\Fib(\B)/\B}  &            &   \dTo_{\Fib/\Cat}  \\
\B           &  \rTo_{\Sigma\quad}  &   {\bf Cat}         
\end{diagram}
We leave it as an exercise to show that $P : \X \to \B/I$ is a fibration
iff $P_I \circ P$ is a fibration over $\B$ and
$P \in \Fib(\B)(P_I{\circ}P,P_I)$.
Accordingly, fibrations over $\B/I$ may be considered as $I$-indexed
families of fibrations over $\B$ in analogy with ordinary functors to a
discrete category $I$ which may be considered as $I$-indexed families of
categories.

\subsection*{Composition and Product of Fibrations}

First notice that fibrations are closed under composition. Even more we have 
the following 

\begin{Thm}\label{fibfib}
Let $P : \X \to \B$ be a fibration and $F : \Y \to \X$ be an arbitrary functor.
Then $F$ itself is a fibration over $\X$ iff
\begin{enumerate}
\item[\rm (1)] $Q \equiv P{\circ}F$ is a fibration and $F$ is a cartesian
               functor from $Q$ to $P$ over $\B$ and
\item[\rm (2)] all $F_I : \Y_I \to \X_I$ are fibrations and cartesian arrows
               w.r.t.\ these fibrations are stable under reindexing, i.e.\
               for every commuting diagram
\begin{diagram}[small]
Y_1 &  \rTo^{\varphi_1}  &   X_1   \\
\dTo^{\theta}  &   &   \dTo_{\psi}  \\
Y_2 &  \rTo_{\varphi_2}  &   X_2       
\end{diagram}
in $\Y$ with $\varphi_1$ and $\varphi_2$ cartesian w.r.t.\ $Q$ over the
same arrow $u : J \to I$ in $\B$ and $Q(\psi) =\id_I$ and $Q(\theta) =\id_J$ 
it holds that $\theta$ is cartesian w.r.t.\ $F_J$ 
whenever $\psi$ is cartesian w.r.t.\ $F_I$.
\end{enumerate} 
\end{Thm}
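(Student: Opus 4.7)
The plan is to prove both directions by explicit cartesian lifting constructions; the main work lies in the reverse direction, where the stability condition in (2) has to be invoked at precisely the right moment.

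For the forward direction, I first use that fibrations are closed under composition (noted just above), so $Q = P \circ F$ is a fibration. To see that $F$ sends $Q$-cartesian arrows to $P$-cartesian arrows, I construct a specific $Q$-cartesian lifting of $Y$ along $u : J \to I$ by taking a $P$-cartesian lifting $\bar\varphi : X' \to F(Y)$ along $u$, then an $F$-cartesian lifting $\tilde\varphi : Y' \to Y$ of $Y$ along $\bar\varphi$. A direct calculation shows $\tilde\varphi$ is $Q$-cartesian, and $F(\tilde\varphi) = \bar\varphi$ is already $P$-cartesian by construction. Since any two $Q$-cartesian liftings over $u$ differ by a $Q$-vertical isomorphism, whose $F$-image is $P$-vertical and hence $P$-cartesian, $F$ preserves cartesianness in general. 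For part (2), $F$-cartesian liftings along $P$-vertical arrows of $\X$ stay in the appropriate fibre of $Q$ and so supply $F_I$-cartesian liftings in $\Y_I$. The stability under reindexing follows by routine unwinding: given a commuting square as in (2), the universal property of $\theta$ is obtained by combining $F_I$-cartesianness of $\psi$ with the factorization properties of the $Q$-cartesian arrows $\varphi_1$ and $\varphi_2$.

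For the reverse direction, given $\psi : X \to F(Y)$ with $u = P(\psi) : J \to I$, I construct an $F$-cartesian lifting in four steps: (i) take a $Q$-cartesian lifting $\varphi : Y' \to Y$ of $Y$ along $u$; (ii) since $F(\varphi)$ is $P$-cartesian by (1), produce the unique $P$-vertical $\beta : X \to F(Y')$ with $F(\varphi) \circ \beta = \psi$; (iii) using that $F_J$ is a fibration by (2), take an $F_J$-cartesian lifting $\gamma : Y'' \to Y'$ of $Y'$ along $\beta$, so $F(\gamma) = \beta$; (iv) show $\varphi \circ \gamma$ is $F$-cartesian over $\psi$. For the cartesianness verification, given $\eta : Z \to Y$ and $h : F(Z) \to X$ with $\psi \circ h = F(\eta)$, $Q$-cartesianness of $\varphi$ yields a unique $\eta^\flat : Z \to Y'$ over $v := P(h)$ with $\varphi \circ \eta^\flat = \eta$, and uniqueness of factorization through the $P$-cartesian arrow $F(\varphi)$ forces $F(\eta^\flat) = \beta \circ h$.

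The main obstacle is then producing the unique $\delta : Z \to Y''$ with $F(\delta) = h$ and $\gamma \circ \delta = \eta^\flat$, because $\eta^\flat$ sits over $v$ rather than in $\Y_J$ where $\gamma$ is known to be $F_J$-cartesian. To bridge this, I take $Q$-cartesian liftings $\varphi'_1 : (Y'')^v \to Y''$ and $\varphi'_2 : (Y')^v \to Y'$ along $v$, form the unique $Q$-vertical $\gamma^v : (Y'')^v \to (Y')^v$ making the square with $\gamma$ commute, and invoke the stability condition of (2) to conclude that $\gamma^v$ is $F_K$-cartesian, where $K = \mathrm{dom}(v)$. Now $\eta^\flat$ factors uniquely as $\varphi'_2 \circ \tilde\eta$ with $\tilde\eta \in \Y_K$, and $h$ factors uniquely as $F(\varphi'_1) \circ \sigma'$ with $\sigma' \in \X_K$ by $P$-cartesianness of $F(\varphi'_1)$; applying $F_K$-cartesianness of $\gamma^v$ to the compatible pair $(\tilde\eta, \sigma')$ yields a unique $\delta' : Z \to (Y'')^v$ in $\Y_K$, and $\delta := \varphi'_1 \circ \delta'$ is the required arrow, with uniqueness propagating through each step. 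The real difficulty is the bookkeeping across the three interlocking cartesian structures for $Q$, $P$, and $F_I$, and recognizing that the stability clause of (2) is exactly what transports $F_J$-cartesianness of $\gamma$ to $F_K$-cartesianness of $\gamma^v$.
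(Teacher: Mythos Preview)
The paper's own proof is the single sentence ``Exercise left to the reader,'' so there is nothing to compare against; your argument therefore supplies what the paper omits. Your proof is correct: in the forward direction the key (implicit) point is that an arrow in $\Y_I$ is $F_I$-cartesian iff it is $F$-cartesian, after which stability under reindexing follows by the direct verification you sketch using $F$-cartesianness of $\psi$ together with $Q$-cartesianness of $\varphi_1,\varphi_2$ and $P$-cartesianness of $F(\varphi_1)$. In the reverse direction your four-step construction and the subsequent use of the stability clause to transport $F_J$-cartesianness of $\gamma$ to $F_K$-cartesianness of $\gamma^v$ is exactly the right idea, and the existence and uniqueness of $\delta$ propagate through the chain of cartesian factorizations as you describe.
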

\begin{proof} Exercise left to the reader. 
\end{proof}

\medskip
The second condition means that the commuting diagram
\begin{diagram}[small]
\Y_I &  \rTo^{u^*}  &   \Y_J   \\
\dTo^{F_I}  &            &   \dTo_{F_J}  \\
\X_I &  \rTo_{u^*}  &   \X_J       
\end{diagram}
is a morphism in $\Fib$. (Notice that due to condition (1) 
of Theorem~\ref{fibfib} one can choose the reindexing functor 
$u^* : \Y_I \to \Y_J$ in such a way that the diagram actually commutes. For
arbitrary cartesian functors this need not be possible although for all
choices of the $u^*$ the diagram always commutes up to isomorphism.)

The relevance of Theorem~\ref{fibfib} is that it characterises 
``fibered fibrations'' as those fibered functors which are themselves ordinary
fibrations. This handy characterisation cannot even be formulated in the 
framework of indexed categories and, therefore, is considered as a typical
example of the superiority of the fibrational point of view.

For fibrations $P$ and $Q$ over $\B$ their product $P{\times_{\B}}Q$ in 
$\Fib(\B)$ is given by $P \circ P^*Q = Q \circ Q^*P$ as in
\begin{diagram}[small]
\PP \SEpbk &  \rTo^{Q^*P}  &   \Y   \\
\dTo^{P^*Q}  &            &   \dTo_{Q}  \\
\X &  \rTo_{P}  &   \B       
\end{diagram}
and it follows from Theorem~\ref{fibfib} that $P{\times_{\B}}Q$ is a 
fibration and that the projections $P^*Q$ and $Q^*P$ are cartesian functors.

\subsection*{Fibrations of Diagrams}

Let $\D$ be a category and $P : \X \to \B$ a fibration. Then the
\emph{fibration $P^{(\D)}$ of diagrams of shape $\D$} is given by
\begin{diagram}[small]
\X^{(\D)} \SEpbk &  \rTo  &   \X^{\D}   \\
\dTo^{P^{(\D)}}  &        &   \dTo_{P^\D}  \\
\B &  \rTo_{\Delta_\D}  &   \B^{\D}       
\end{diagram}
where the ``diagonal functor'' $\Delta_\D$ sends $I \in \B$ to the constant
functor with value $I$ and a morphism $u$ in $\B$ to the natural 
transformation all whose components are $u$. 

Somewhat surprisingly, as shown by A.~Kurz in spring 2019, 
the functor $P^\D$ is also a fibration, however, over $\B^\D$.

\subsection*{Exponentiation of Fibrations}

For fibrations $P$ and $Q$ over $\B$ we want to construct a fibration 
$[P{\to}Q]$ such that there is an equivalence
\[  \Fib(\B)(R,[P{\to}Q]) \simeq \Fib(\B)(R{\times_\B}P, Q)  \]
naturally in $R \in \Fib(\B)$.

Analogous to the construction of exponentials in 
$\widehat{\B} = \Set^{\B^\op}$ the fibered Yoneda lemma (Theorem~\ref{fYl})
suggest us to put 
$$[P{\to}Q](I) = \Fib(\B)(\underline{I}{\times_\B}P, Q)  \qquad
  [P{\to}Q](u) = \Fib(\B)(\underline{u}{\times_\B}P, Q)$$
where $\underline{u}$ is given by 
\begin{diagram}[small]
\B/J & & \rTo^{\Sigma_u} & & \B/I \\
     & \rdTo_{P_J} & &\ldTo_{P_I} &  \\
     & &      \B   &       &
\end{diagram}
for $u : J \to I$ in $\B$. We leave it as a tedious, but straightforward
exercise to verify that 
\[  \Fib(\B)(R,[P{\to}Q]) \simeq \Fib(\B)(R{\times_\B}P, Q)  \]
holds naturally in $R \in \Fib(\B)$.

Notice that we have 
\[  \Fib(\B)(P_I{\times_\B}P,Q) \simeq \Fib(\B/I)(P_{/I},Q_{/I}) \]
naturally in $I \in \B$ where $P_{/I} = {P_I}^*P$ and $Q_{/I} = {P_I}^*Q$ 
are obtained by change of base along $P_I$. Usually $P_{/I}$ is referred to 
as ``localisation of $P$ to $I$''. The desired equivalence follows from the 
fact that change of base along $P_I$ is right adjoint to postcomposition 
with $P_I$ and the precise correspondence between 
$F \in \Fib(\B)(P_I{\times_\B}P, Q)$ and 
$G \in \Fib(\B/I)(P_{/I},Q_{/I})$ is indicated by the following diagram
\begin{diagram}
\cdot &            & & & \\
  & \rdTo~{G} \rdTo(2,4)_{P_{/I}} \rdTo(4,2)^{F} & & & \\
  &            &   \cdot \SEpbk & \rTo &  \Y \\
  &            &  \dTo_{Q_{/I}}     &      &  \dTo_{Q} \\
  &            &    \B/I      & \rTo_{P_I} & \B  \\
\end{diagram}

\newpage

\section{The Opposite of a Fibration}

If $P : \X \to \B$ is a fibration thought of ``as of the form $\Fam(\C)$''
then one may want to construct the fibration $P^\op$ thought of ``of the 
form $\Fam(\C^\op)$''. It might be tempting at first sight to apply $(-)^\op$ 
to the functor $P$ giving rise to the functor $\X^\op \to \B^\op$ which, 
however, has the wrong base even if it were a fibration (which in general
will not be the case). If $P = \int \HH$ for some $\HH : \B^\op \to \Cat$ 
then one may consider
$\HH^\op = (-)^\op \circ \HH : \B^\op \to \Cat$, 
i.e.\ the assignment
\[   I \mapsto \HH(I)^\op \qquad\quad
     u : J \to I \; \mapsto \; \HH(u)^\op : \HH(I)^\op  \to  \HH(I)^\op
\]
where $(-)^\op$ is applied to the fibers of $\HH$ and to the reindexing 
functors. Now we express $P^\op = \int \HH^\op$ in terms of $P = \int \HH$ 
directly.

The fibration $P^\op : \Y \to \B$ is constructed from the fibration
$P : \X \to \B$ in the following way. The objects of $\Y$ and $\X$ are the 
same but for $X \in P(I)$, $Y \in P(J)$ and $u : J \to I$ the collection of
morphisms in $\Y$ from $Y$ to $X$ over $u$ is constructed as follows. It 
consists of all spans $(\alpha,\varphi)$ with $\alpha : Z \to Y$ vertical 
and $\varphi : Z \to X$ is cartesian over $u$ \emph{modulo} the equivalence 
relation $\sim_{Y,u,X}$ (also denoted simply as $\sim$) where 
$(\alpha,\varphi) \sim_{Y,u,X} (\alpha',\varphi')$ iff 
\begin{diagram}[small]
& & Z & & \\
& \ldTo^{\alpha} & & \rdTo^{\varphi}_\cart  & \\
Y  & & \uTo^{\iota}_{\cong} & & X\\
& \luTo_{\alpha'} & & \ruTo_{\varphi'}^\cart& \\
& & Z' & & \\
\end{diagram} 
for some (necessarily unique) vertical isomorphism $\iota : Z' \to Z$. 
Composition of arrows in $\Y$ is defined as follows: 
if $[(\alpha,\varphi)]_\sim : Y \to X$
over $u : J \to I$ and $[(\beta,\psi)]_\sim : Z \to Y$ over $v : K \to J$ then
$[(\alpha,\varphi)]_\sim \circ [(\beta,\psi)]_\sim := 
 [(\beta \circ \widetilde{\alpha}, \varphi \circ \widetilde{\psi})]_\sim$ where
\begin{diagram}[small]
& & & & (uv)^*X & & & &\\
&&&\ldTo^{\widetilde{\alpha}}&&\rdTo^{\widetilde{\psi}}_\cart & & &\\
& & v^*Y  & & \mbox{p.b.}& & u^*X & &\\
& \ldTo^{\beta} & & \rdTo_{\psi}^\cart & & \ldTo_{\alpha}& & 
\rdTo^{\varphi}_\cart &\\
Z & & & & Y & & & & X\\
\end{diagram} 
with $\widetilde{\alpha}$ vertical.

Actually, this definition does not depend on the choice of $\widetilde{\psi}$
as morphisms in $\Y$ are equivalence classes modulo $\sim$ which forgets about
all distinctions made by choice of cleavages. On objects $P^\op$ behaves like
$P$ and $P^\op([(\alpha,\varphi)]_\sim)$ is defined as $P(\varphi)$. 
The $P^\op$-cartesian arrows are the equivalence classes 
$[(\alpha,\varphi)]_\sim$ where $\alpha$ is a vertical isomorphism. 

Though most constructions appear more elegant from the fibrational point of 
view the construction of $P^\op$ from $P$ may appear as somewhat less 
immediate though (hopefully!) not too unelegant. Notice, however, that
for small fibrations, i.e.\ externalisations of internal categories, the
construction can be performed as in the case of presheaves of categories
as we have $P_{C^\op} \simeq P_C^\op$ for internal categories $C$.

Anyway, we have generalised now enough constructions from ordinary category
theory to the fibrational level so that we can perform (analogues of) the 
various constructions of (covariant and contravariant) functor categories on 
the level of fibrations. In particular, for a category $C$ internal to a 
category $\B$ with pullbacks we may construct the fibration 
$[P_C^\op{\to}P_\B]$ which may be considered as the fibration of (families of) 
$\B$-valued presheaves over the internal category $C$. Moreover, for categories
$C$ and $D$ internal to $\B$ the fibration of (families of) distributors from
$C$ to $D$ is given by $[P_D^\op{\times}P_C{\to}P_\B]$.\footnote{For an 
equivalent, but non-fibrational treatment of internal presheaves and 
distributors see \cite{Joh}.}

\newpage

\section{Internal Sums}

Suppose that $\C$ is a category. We will identify a purely fibrational 
property of the fibration $\Fam(\C) \to \Set$ equivalent to the requirement 
that the category $\C$ has small sums. This will provide a basis for 
generalising the property of ``having small sums'' to fibrations over 
arbitrary base categories with pullbacks.

Suppose that category $\C$ has small sums. Consider a family of objects
$A = (A_i)_{i \in I}$ and a map $u : I \to J$ in $\Set$. Then one may 
construct the family
$B := (\coprod_{i \in u^{-1}(j)} A_i)_{j \in J}$ together with the morphism
$(u,\varphi) : (I,A) \to (J,B)$ in $\Fam(\C)$ where 
$\varphi_i = {\mathrm{in}}_i : A_i \to B_{u(i)} = 
 \coprod_{k \in u^{-1}(u(i))} A_k$, i.e.\ the restriction of $\varphi$ to
$u^{-1}(j)$ is the cocone for the sum of the family $(A_i)_{i \in u^{-1}(j)}$.

One readily observes that $(u,\varphi) : A \to B$ satisfies the following
universal property: whenever $v : J \to K$ and $(v \circ u,\psi) : A \to C$
then there exists a unique $(v,\theta) : B \to C$ such that
$(v,\theta) \circ (u,\varphi) = (v \circ u,\psi)$, i.e.\ 
$\theta_{u(i)} \circ {\mathrm{in}}_i = \psi_i$ for all $i \in I$.
Arrows $(u,\varphi)$ satisfying this universal property are called 
\emph{cocartesian} and are determined uniquely up to vertical isomorphism.

Moreover, the cocartesian arrows of $\Fam(\C)$ satisfy the following 
so-called\footnote{Chevalley had this condition long before Beck who later
independently found it again.} \emph{Beck--Chevalley Condition} (BCC) 
which says that for every pullback
\begin{diagram}[small]
K \SEpbk & \rTo^{\widetilde{u}} & L \\
\dTo^{\widetilde{h}}  & \mbox{(1)} & \dTo_{h}  \\
I & \rTo_{u} & J \\
\end{diagram}
in $\Set$ and cocartesian arrow $\varphi : A \to B$ over $u$ it holds that
for every commuting square
\begin{diagram}[small]
C & \rTo^{\widetilde{\varphi}} & D \\
\dTo^{\widetilde{\psi}}  &  & \dTo_{\psi}  \\
A & \rTo_{\varphi} & B \\
\end{diagram}
over the pullback square (1) in $\B$ with $\psi$ and $\widetilde{\psi}$ 
cartesian the arrow $\widetilde{\varphi}$ is cocartesian, too.

Now it is a simple exercise to formulate the obvious generalisation to 
fibrations over an arbitrary base category with pullbacks.

\begin{Def}\label{intsumdef}
Let $\B$ be a category with pullbacks and $P : \X \to \B$ a fibration 
over $\B$.
An arrow $\varphi : X \to Y$ over $u : I \to J$ is called \emph{cocartesian}
iff for every $v : J \to K$ in $\B$ and $\psi : X \to Z$ over $v \circ u$ 
there is a unique arrow  $\theta : Y \to Z$ over $v$ with 
$\theta \circ \varphi = \psi$.\\
The fibration $P$ \emph{has internal sums} iff the following two conditions 
are satisfied.
\begin{enumerate}
\item[\rm (1)] For every $X \in P(I)$ and $u : I \to J$ in $\B$ there exists
a cocartesian arrow $\varphi : X \to Y$ over $u$.
\item[\rm (2)] The \emph{Beck--Chevalley Condition (BCC)} holds, i.e.\
for every commuting square in $\X$
\begin{diagram}[small]
C & \rTo^{\widetilde{\varphi}} & D \\
\dTo^{\widetilde{\psi}}  &  & \dTo_{\psi}  \\
A & \rTo_{\varphi} & B \\
\end{diagram}
over a pullback in the base it holds that $\widetilde{\varphi}$ is cocartesian
whenever $\varphi$ is cocartesian and $\psi$ and $\widetilde{\psi}$ are 
cartesian. \MYenddef
\end{enumerate}
\end{Def}

\bigskip \noindent
{\bf Remark.}\\
(1) One easily sees that for a fibration $P : \X \to \B$ an arrow $\varphi :
X \to Y$ is cocartesian iff for all $\psi : X \to Z$ over $P(\varphi)$ there
exists a unique vertical arrow $\alpha : Y \to Z$ with 
$\alpha \circ \varphi = \psi$.\\
(2) It is easy to see that BCC of Definition~\ref{intsumdef} is equivalent 
to the  requirement that for every commuting square in $\X$
\begin{diagram}[small]
C & \rTo^{\widetilde{\varphi}} & D \\
\dTo^{\widetilde{\psi}}  &  & \dTo_{\psi}  \\
A & \rTo_{\varphi} & B \\
\end{diagram}
over a pullback in the base it holds that $\psi$ is cartesian whenever
$\widetilde{\psi}$ is cartesian and $\varphi$ and $\widetilde{\varphi}$ 
are cocartesian.
 
\bigskip
Next we give a less phenomenological explanation of the concept of internal 
sums where, in particular, the Beck--Chevalley Condition arises in a less
\emph{ad hoc} way. For this purpose we first generalise the $\Fam$ 
construction from ordinary categories to fibrations.

\begin{Def}
Let $\B$ be a category with pullbacks and $P : \X \to \B$ be a fibration.
Then the \emph{family fibration $\Fam(P)$ for $P$} is defined as 
$P_\B \circ \FFam(P)$ where
\begin{diagram}[small]
P{\downarrow}\B \SEpbk & \rTo & \X \\
\dTo^{\FFam(P)} & & \dTo_{P} \\
\B^{\two} & \rTo_{\partial_0}& \B \\
\end{diagram}
The cartesian functor $\FFam(P) : \Fam(P) \to P_\B$ is called the
\emph{fibered family fibration of $P$}.

The cartesian functor $\eta_P : P \to \Fam(P)$ is defined as in the diagram
\begin{diagram}[small]
\X & & & & \\
   & \rdDashto(2,2)_{\eta_P} \rdEqual(4,2) & &  \\
\dTo^{P}  & & P{\downarrow}\B \SEpbk & \rTo & \X \\
   & & \dTo^{\FFam(P)} & & \dTo_{P} \\
\B & \rTo_{\quad\;\;\;\Delta_\B} & \B^{\two} & \rTo_{\partial_0} & \B \\
& \rdEqual &  \dTo_{\partial_1 = P_\B} & & \\
& & \B & & \\
\end{diagram}
where $\Delta_\B$ sends $u : I  \to J$ to 
\begin{diagram}[small]
I & \rTo^{u} & J \\ 
\dEqual & & \dEqual \\
I & \rTo_{u} & J \\
\end{diagram}
in $\B^{\two}$. More explicitly, $\eta_P$ sends $\varphi : X \to Y$ over 
$u : I \to J$ to
\begin{diagram}[small]
X & \rTo^{\varphi} & Y \\
I & \rTo^{u} & J \\
\dEqual & & \dEqual \\
I & \rTo_{u} & J \\
\end{diagram}
in $P{\downarrow}\B$. Obviously, the functor $\eta_P$ preserves 
cartesianness of arrows, i.e.\ $\eta_P$ is cartesian. \MYenddef
\end{Def}

\noindent
{\bf Remark.}\\
(1) If $\FFam(P)(\varphi)$ is cocartesian w.r.t.\ $P_\B$ then $\varphi$ is
cartesian w.r.t.\ $\FFam(P)$ iff $\varphi$ is cocartesian w.r.t.\ $\FFam(P)$.
Moreover, for every morphism 
\begin{diagram}[small]
A & \rTo^v & B \\
\dTo^a & & \dTo_b \\
I & \rTo_u & J
\end{diagram}
in $\B^{\two}$ we have 
\begin{diagram}[small]
A & \rTo^v & B & \rEqual & B & & A & \rEqual & A & \rTo^v & B \\
\dEqual & 1_v & \dEqual & \varphi_b & \dTo_b & = & 
\dEqual & \varphi_a & \dTo_a & & \dTo_b \\
A & \rTo_v & B & \rTo_b & I & & A & \rTo_a & I & \rTo_u & J
\end{diagram}
where $\varphi_a$ and $\varphi_b$ are cocartesian w.r.t.\ $P_\B$.
Using these two observations one can show that for fibrations $P$ and $Q$ 
over $\B$ a cartesian functor $F : \FFam(P) \to \FFam(Q)$ is determined
uniquely up to isomorphism by its restriction along the inclusion
$\Delta_\B : \B \to \B^{\two}$ from which it follows that $F$ is isomorphic
to $\FFam(\Delta_\B^*F)$. Thus, up to isomorphism all cartesian functors 
from $\FFam(P)$ to $\FFam(Q)$ are of the form $\FFam(F)$ for some cartesian 
functor $F : P \to Q$.

\noindent
(2) Notice, however, that not every cartesian functor $\Fam(P) \to \Fam(Q)$ 
over $\B$ is isomorphic to one of the form $\Fam(F)$ for some cartesian functor
$F : P \to Q$. An example for this failure is the cartesian functor
$\mu_P : \Fam^2(P) \to \Fam(P)$ sending $((X,v),u)$ to $(X,uv)$ for nontrivial
$\B$.\footnote{One can show that $\eta$ and $\mu$ are natural transformations
giving rise to a monad $(\Fam,\eta,\mu)$ on $\Fib(\B)$.}

\noindent
(3) If $\X$ is a category we write $\Fam(\X)$ for the category of families
in $\X$ and $\FFam(\X) : \Fam(\X) \to \Set$ for the family fibration.

The analogon of (1) in ordinary category theory is that for categories
$\X$ and $\Y$ a cartesian functor $F : \FFam(\X) \to \FFam(\Y)$ is isomorphic
to $\FFam(F_1)$ (the fiber of $F$ at $1 \in \Set$). 

The analogon of (2) in ordinary category theory is that not every ordinary 
functor $F : \Fam(\X) \to \Fam(\Y)$ is isomorphic to one of the form $\Fam(G)$
for some $G : \X \to \Y$.

\bigskip
Next we characterise the property of having internal sums in terms of
the family monad $\Fam$.

\begin{Thm}
Let $\B$ be a category with pullbacks and $P : \X \to \B$ be a fibration. 
Then $P$ has internal sums iff $\eta_P : P \to \Fam(P)$ has a fibered 
left adjoint $\coprod_P :  \Fam(P) \to P$ , i.e.\ $\coprod_P \dashv \eta_P$
where $\coprod_P$ is cartesian and unit and counit of the adjunction 
are cartesian natural transformations.
\end{Thm}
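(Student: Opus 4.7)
The strategy is to observe that the fibred adjunction $\coprod_P \dashv \eta_P$ decouples cleanly into two pieces that match the two clauses of Definition~\ref{intsumdef}: the pointwise adjunction with vertical unit will correspond to existence of cocartesian liftings, and cartesianness of $\coprod_P$ will correspond to BCC. The bridge is a simple unpacking of morphisms in $\Fam(P) = P_\B \circ \FFam(P)$: a morphism in the fibre over $J$ from $(X, a:I \to J)$ to $(Z, \id_J) = \eta_P(Z)$ (with $Z \in P(J)$) is, by the definition of the comma category, exactly a morphism $\psi : X \to Z$ in $\X$ with $P(\psi) = a$.

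For the ``$\Rightarrow$'' direction, assume $P$ has internal sums. Choose for each $(X, a : I \to J)$ a cocartesian lift $\varphi_{X,a} : X \to \coprod_P(X, a)$ over $a$, normalised by $\varphi_{X, \id_I} = \id_X$ (possible since identities are cocartesian over identities). Viewing $\varphi_{X,a}$ as a vertical arrow $(X, a) \to (\coprod_P(X, a), \id_J)$ in $\Fam(P)$, Remark~(1) after Definition~\ref{intsumdef} read through the bridge above is exactly the universal property of the unit of an adjunction at $(X,a)$; this extends $\coprod_P$ functorially to morphisms and makes the counit the identity. Unit and counit are vertical by construction.

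It remains to show $\coprod_P$ is cartesian. Since a morphism $(\varphi, u) : (X, a) \to (Y, b)$ in $\Fam(P) = P_\B \circ \FFam(P)$ is cartesian iff $\varphi$ is $P$-cartesian and the associated square in $\B$ is a pullback (cartesian arrows in a composite fibration being precisely those cartesian for the lower fibration whose image is cartesian for the upper), naturality of the unit at such a cartesian $(\varphi, u)$ yields the commuting square
\begin{diagram}[small]
X & \rTo^{\varphi_{X,a}} & \coprod_P(X,a) \\
\dTo^{\varphi} & & \dTo_{\coprod_P(\varphi, u)} \\
Y & \rTo_{\varphi_{Y,b}} & \coprod_P(Y,b)
\end{diagram}
over the pullback in $\B$. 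Horizontals are cocartesian and the left edge is cartesian, so the second formulation of BCC in Remark~(2) after Definition~\ref{intsumdef} gives that the right edge $\coprod_P(\varphi, u)$ is cartesian.

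For the converse, given a fibred left adjoint, the vertical unit $\eta^{\mathrm{adj}}_{(X,a)}$ supplies $\varphi_{X,a} : X \to \coprod_P(X,a)$ over $a$, whose universality as adjunction unit matches, via the bridge above, precisely the cocartesian property of Remark~(1). For BCC, given a square in $\X$ as in Remark~(2), uniqueness of cocartesian liftings up to vertical isomorphism lets us assume its horizontals are the units $\varphi_{X,a}$ and $\varphi_{Y,b}$; the right edge is then forced by naturality to be $\coprod_P(\varphi, u)$, which is cartesian because $(\varphi, u)$ is cartesian in $\Fam(P)$ and $\coprod_P$ is cartesian. The main piece of bookkeeping is characterising the cartesian arrows of $\Fam(P)$ and aligning the orientation of the BCC square with the naturality square for $\coprod_P$; once set up, both directions drop out of Remarks~(1) and~(2) after Definition~\ref{intsumdef}.
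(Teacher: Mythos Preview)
Your proof is correct and follows essentially the same approach as the paper: both identify the unit of the adjunction with cocartesian liftings and the cartesianness of $\coprod_P$ with BCC via Remark~(2) after Definition~\ref{intsumdef}. Your upfront ``bridge'' observation and the normalisation making the counit the identity are tidy touches the paper leaves implicit, but the overall structure is the same.
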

\begin{proof}
The universal property of the unit of the adjunction $\coprod_P \dashv \eta_P$
at $(u,X)$ is explicitated in the following diagram
\begin{diagram}[small]
& & & & Z \\
X & \rTo_{\eta_{(u,X)}} & Y & \ruTo(4,1)^{\psi} \ruDashto(2,1)_{\theta} & \\ 
\dDots & & \dDots & &  \dDots\\
I & \rTo^{u} & J & \rTo^{v}& K\\
\dTo^{u} & & \dEqual & &  \dEqual \\
J & \rEqual & J & \rTo_{v} & K \\
\end{diagram}
whose left column is the unit at $(u,X)$. From this it follows that 
$\eta_{(u,X)} : X \to Y$ is cocartesian over $u$.

Cartesianness of $\coprod_P$ says that the cartesian arrow $f$ as given by
\begin{diagram}[small]
X  & \rTo^{\psi}_\cart & Y \\
K \SEpbk & \rTo^{q} & L \\
\dTo^{p}  &  & \dTo_{v}  \\
I & \rTo_{u} & J \\
\end{diagram}
in $P{\downarrow}\B$ is sent by $\coprod_P$ to the cartesian arrow
$\coprod_P f$ over $u$ satisfying
\begin{diagram}[small]
X & \rTo^{\psi}_\cart & Y \\
\dTo^{\eta_{(p,X)}}  &  & \dTo_{\eta_{(v,Y)}}  \\
A & \rTo_{\coprod_P f}^\cart & B \\
\end{diagram}
where $\eta_{(p,X)}$ and $\eta_{(v,Y)}$ are the cocartesian units of the 
adjunction above $p$ and $v$, respectively. 
Thus, according to the second remark after Definition~\ref{intsumdef}
cartesianness of $\coprod_P$ is just the Beck--Chevalley Condition for
internal sums.

On the other hand if $P$ has internal sums then the functor $\coprod_P$ left
adjoint to $P$ is given by sending a morphism $f$ in $P{\downarrow}\B$ 
as given by 

\begin{diagram}[small]
X  & \rTo^{\psi} & Y \\
K  & \rTo^{q} & L \\
\dTo^{p}  &  & \dTo_{v}  \\
I & \rTo_{u} & J \\ 
\end{diagram}
to the morphism $\coprod_P f$ over $u$ satisfying
\begin{diagram}[small]
X & \rTo^{\psi} & Y \\
\dTo^{\varphi_1}  &  &   \dTo_{\varphi_2}\\
A & \rTo_{\coprod_P f} & B \\
\end{diagram}
where $\varphi_1$ and $\varphi_2$ are cocartesian over $p$ and $v$, 
respectively. It is easy to check that $\coprod_P$ is actually left adjoint 
to $\eta_P$ using for the units of the adjunction the cocartesian liftings
guaranteed for $P$. Cartesianness of $\coprod_P$ is easily seen to be
equivalent to the Beck--Chevalley condition.
\end{proof}

\newpage

\section{Internal Products}

Of course, by duality a fibration $P : \X \to \B$ has internal products iff
the dual fibration $P^\op$ has internal sums. After some explicitation (left
to the reader) one can see that the property of having internal products can 
be characterised more elementarily as follows.

\begin{Thm}
Let $\B$ be a category with pullbacks. Then a fibration $P : \X \to \B$ has 
internal products iff the following two conditions are satisfied.
\begin{itemize}
\item[\rm (i)] For every $u : I \to J$ in $\B$ and $X \in P(I)$ there is a span
$\varphi : u^*E \to E$, $\varepsilon : u^*E \to X$ with $\varphi$ cartesian
over $u$ and $\varepsilon$ vertical such that for every span 
$\theta : u^*Z \to Z$, $\alpha : u^*Z \to X$ with $\theta$ cartesian over $u$
and $\alpha$ vertical there is a unique vertical arrow $\beta : Z \to E$ 
such that $\alpha = \varepsilon \circ u^*\beta$ where $u^*\beta$ is the
vertical arrow with $\varphi \circ u^*\beta = \beta \circ \theta$ as 
illustrated in the diagram
\begin{diagram}[small]
 & & u^*Z & \rTo^{\theta}_\cart &  Z \\
 & \ldTo^{\alpha} & \dTo_{u^*\beta }& & \dTo_{\beta}\\ 
X & \lTo_{\varepsilon} & u^*E & \rTo_{\varphi}^\cart & E\\
\end{diagram} 
Notice that the span $(\varphi,\varepsilon)$ is determined uniquely up to
vertical isomorphism by this universal property and is called an
\emph{evaluation span for $X$ along $u$}.
\item[\rm (ii)] Whenever
\begin{diagram}[small]
L \SEpbk & \rTo^{\widetilde{v}} & I \\
\dTo^{\widetilde{u}}  & \mathrm{(1)} & \dTo_{u}  \\
K & \rTo_{v} & J \\
\end{diagram}
is a pullback in $\B$ and $\varphi :  u^*E \to E$, $\varepsilon : u^*E \to X$
is an evaluation span for $X$ along $u$ then for every diagram
\begin{diagram}[small]
\widetilde{v}^*X & \rTo^{\psi}_\cart & X \\
\uTo^{\widetilde{\varepsilon}} & & \uTo_{\varepsilon}\\
\widetilde{u}^*\widetilde{E}&\rTo^{\widetilde{\theta}}_\cart &u^*E\\
\dTo^{\widetilde{\varphi}} & &  \dTo_{\varphi}\\
\widetilde{E} & \rTo_{\theta}^\cart & E \\
\end{diagram}
where the lower square is above pullback \emph{(1)} in $\B$ and 
$\widetilde{\varepsilon}$ is vertical it holds that 
$(\widetilde{\varphi},\widetilde{\varepsilon})$ is an evaluation span 
for $\widetilde{v}^*X$ along $\widetilde{u}$.
\end{itemize}
\end{Thm}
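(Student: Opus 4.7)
The plan is to prove this via the duality stated in the preamble: $P$ has internal products iff the opposite fibration $P^\op$ (constructed in Section~5) has internal sums in the sense of Definition~\ref{intsumdef}. I will unfold that definition for $P^\op$ and translate each clause back into data living in $P$ itself; the two clauses should become precisely conditions (i) and (ii).

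For condition (i) I would unwind cocartesian arrows of $P^\op$. By the construction of Section~5, a $P^\op$-morphism $X \to E$ over $u : I \to J$ is an equivalence class $[(\varepsilon,\varphi)]_\sim$ with $\varepsilon : u^*E \to X$ vertical in $P$ and $\varphi : u^*E \to E$ cartesian over $u$ in $P$ --- precisely the ``evaluation span'' data of (i). By remark (1) after Definition~\ref{intsumdef} applied to $P^\op$, such an arrow is cocartesian iff for every parallel $P^\op$-arrow $X \to Z$ over $u$, represented by a span $(\alpha : u^*Z \to X,\ \theta : u^*Z \to Z)$, there is a unique $P^\op$-vertical $E \to Z$ factoring through it. A $P^\op$-vertical arrow $E \to Z$ is, modulo $\sim$, the same datum as a $P$-vertical $\beta : Z \to E$ in $P(J)$. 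Chasing the composition formula $[(\alpha,\varphi)]_\sim \circ [(\beta,\psi)]_\sim = [(\beta \circ \widetilde{\alpha},\ \varphi \circ \widetilde{\psi})]_\sim$ of Section~5 and using cartesianness of $\varphi$ to produce a unique vertical $u^*\beta$ with $\varphi \circ u^*\beta = \beta \circ \theta$, the factorization equation unravels to $\alpha = \varepsilon \circ u^*\beta$, with $\beta$ unique. This is exactly condition (i).

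For condition (ii) I would translate BCC. A cartesian arrow in $P^\op$ over $v$ can, up to $\sim$, be represented by a span $(\id,\rho)$ with $\rho$ $P$-cartesian over $v$, so the two cartesian sides of a BCC square for $P^\op$ correspond to the two $P$-cartesian arrows $\theta$ and $\psi$ of condition (ii), while the two cocartesian sides correspond to two evaluation spans $(\varphi,\varepsilon)$ and $(\widetilde{\varphi},\widetilde{\varepsilon})$. Commutativity of the square in the total category of $P^\op$ over a pullback in $\B$ then becomes precisely the diagrammatic arrangement displayed in condition (ii), and the requirement that $\widetilde{\varphi}$ be cocartesian in $P^\op$ translates to $(\widetilde{\varphi},\widetilde{\varepsilon})$ being an evaluation span for $\widetilde{v}^*X$ along $\widetilde{u}$. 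Hence BCC for $P^\op$ reads off as condition (ii).

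The principal obstacle is the bookkeeping in the first step: one must verify that the $P^\op$-composite $\alpha_\op \circ [(\varepsilon,\varphi)]_\sim$, after pulling $\beta$ back against $\theta$ and choosing representatives, is $\sim$-equivalent to the given span $(\alpha,\theta)$ exactly when $\alpha = \varepsilon \circ u^*\beta$, and that this uniquely determines $\beta$ --- a calculation that requires carefully tracking which arrow is vertical and which is cartesian at each vertex of the composed span. Once this is in place, Step~2 is largely mechanical: the four corners of the BCC square for $P^\op$ match the four corners of the diagram in (ii), and the pullback square in $\B$ is literally the one that appears there.
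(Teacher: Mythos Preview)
Your approach is exactly the paper's: the proof given there is literally the single sentence ``Tedious, but straightforward explicitation of the requirement that $P^\op$ has internal sums,'' and you have carried out precisely that explicitation, correctly identifying $P^\op$-arrows $X\to E$ over $u$ with evaluation spans and translating the cocartesian-lifting clause and BCC into (i) and (ii). Your bookkeeping is sound; the only thing to watch is direction conventions when matching Section~5's span notation to the theorem's (source over $I$, target over $J$), but you have done this correctly.
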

\begin{proof}
Tedious, but straightforward explicitation of the requirement that
$P^\op$ has internal sums.
\end{proof}

\bigskip
Condition (ii) is called Beck--Chevally Condition (BCC) for internal products
and essentially says that evaluation spans are stable under reindexing. 

\bigskip\noindent
{\bf Examples.}\\
(1) ${\mathrm{Mon}}({\cal E})$ fibered over topos ${\cal E}$ has both internal
sums and internal products.\\
(2) For every category $\B$ with pullbacks the fundamental fibration 
$P_\B = \partial_1 : \B^{\two}\to\B$ has internal sums which look as follows
\begin{diagram}[small]
A & \rEqual & A \\
\dTo^{a} & & \dTo_{\coprod_u a} \\
I & \rTo_u & J \\
\end{diagram}
The fundamental fibration $P_\B$ has internal products iff for every 
$u : I \to J$ in $\B$ the pullback functor $u^* : \B/J \to \B/I$ has a
right adjoint $\prod_u$. For $\B = \Set$ this right adjoint gives 
\emph{dependent products} (as known from Martin-L\"of Type Theory).

\subsection*{Models of Martin--L\"of Type Theory}

A category $\B$ with finite limits such that its fundamental fibration
$P_\B$ has internal products---usually called a \emph{locally cartesian closed
category}---allows one to interpret $\Sigma$, $\Pi$ and Identity Types of  
Martin--L\"of Type Theory. 
Dependent sum $\Sigma$ and dependent product $\Pi$ are interpreted as 
internal sums and internal products.The fiberewise diagonal $\delta_a$ 

\begin{diagram}[small]
A  &            & & & \\
  & \rdTo~{\delta_a} \rdEqual(2,4) \rdEqual(4,2) & & & \\
  &            &   A\times_I A\SEpbk & \rTo_{\pi_2} &   A \\
  &            &  \dTo_{\pi_1}     &      &  \dTo_{a} \\
  &            &  A        & \rTo_{a} &   I \\
\end{diagram} 
is used for interpreting identity types: the sequent 
$i{:}I,x,y{:}A \vdash {\mathrm{Id}}_A(x,y)$ is interpreted as $\delta_a$
when $i{:}I \vdash A$ is interpreted as $a$.

One may interpret W-types in $\B$ iff for $b : B \to A$ and $a : A \to I$
there is a ``least'' $w : W \to I$ such that $W \cong \coprod_a \prod_b b^*w$
mimicking on a categorical level the requirement that $W$ is the ``least''
solution of the recursive type equation $W \cong \Sigma x{:}A.W^{B(x)}$.

\newpage

\section{Fibrations of Finite Limit Categories\\ and Complete Fibrations}

Let $\B$ be a category with pullbacks remaining fixed for this section.

\begin{Lem}\label{flf1}
For a fibration $P : \X \to \B$ we have that
\begin{enumerate}
\item[\rm (1)] a commuting square of cartesian arrows in $\X$ over 
               a pullback in $\B$ is always a pullback in $\X$
\item[\rm (2)] a commuting square
\begin{diagram}[small]
Y_1 & \rTo^{\varphi_1}_\cart & X_1 \\
\dTo^{\beta}  & & \dTo_{\alpha}  \\
Y_2 & \rTo_{\varphi_2}^\cart & X_2\\
\end{diagram}
in $\X$ is a pullback in $\X$ whenever the $\varphi_i$ are cartesian and 
$\alpha$ and $\beta$ vertical.
\end{enumerate}
\end{Lem}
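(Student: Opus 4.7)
The plan is to prove both parts by a direct application of the universal property of cartesian arrows, using the pullback in the base to determine the image under $P$ of the required mediating morphism.

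For part (1), consider a commuting square $\varphi_1 \circ \psi_1 = \varphi_2 \circ \psi_2$ of cartesian arrows with apex $W$, lying over a pullback square with apex $K$ in $\B$. Given a test cone $f_i : Z \to Y_i$ with $\varphi_1 \circ f_1 = \varphi_2 \circ f_2$, applying $P$ gives a cone over the base pullback which factors uniquely as $h : P(Z) \to K$. Using cartesianness of (say) $\psi_1$ I lift $f_1$ to a unique morphism $\tilde f : Z \to W$ with $P(\tilde f) = h$ and $\psi_1 \circ \tilde f = f_1$. It then remains to check that $\psi_2 \circ \tilde f = f_2$; this follows from the uniqueness clause in the cartesianness of $\varphi_2$, since both arrows lie over the same base morphism $P(f_2)$ and become equal after postcomposing with $\varphi_2$ (one computes $\varphi_2 \circ \psi_2 \circ \tilde f = \varphi_1 \circ \psi_1 \circ \tilde f = \varphi_1 \circ f_1 = \varphi_2 \circ f_2$). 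Uniqueness of $\tilde f$ is immediate from the uniqueness in the cartesian property of $\psi_1$.

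For part (2), the situation is a degenerate case of the same idea: the vertical arrows $\alpha,\beta$ force the square to sit over a trivial pullback in $\B$ where the top and left edges are identities on $J$ and the right and bottom edges are $u := P(\varphi_i) : J \to I$. Given $f : Z \to X_1$ and $g : Z \to Y_2$ with $\alpha \circ f = \varphi_2 \circ g$, applying $P$ yields $P(f) = u \circ P(g)$, so cartesianness of $\varphi_1$ produces a unique $h : Z \to Y_1$ over $P(g)$ with $\varphi_1 \circ h = f$. The identity $\beta \circ h = g$ is then forced by the uniqueness part of cartesianness of $\varphi_2$ applied to $\varphi_2 \circ \beta \circ h = \alpha \circ \varphi_1 \circ h = \alpha \circ f = \varphi_2 \circ g$, with both $\beta \circ h$ and $g$ lying over $P(g)$.

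The essential content in both parts is identical: cartesian lifts transport pullback universal properties from the base into the total category, and the uniqueness clauses glue the two components of the mediating morphism. There is no real obstacle here beyond careful bookkeeping of which universal property to invoke at each stage, and in particular ensuring that the ``second component'' of the factorization ($\psi_2 \circ \tilde f = f_2$, respectively $\beta \circ h = g$) is forced rather than independently constructed. No ingredient beyond Definition~\ref{cartdef} is required.
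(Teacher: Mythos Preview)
Your argument is correct and is precisely the straightforward verification the paper leaves as an exercise (the paper's proof reads only ``Straightforward exercise''). The bookkeeping is handled cleanly, including the one nontrivial point that the second component of the mediating arrow is forced by the uniqueness clause of cartesianness rather than constructed independently.
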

\begin{proof}
Straightforward exercise.
\end{proof}

\begin{Def}\label{flfdef}
$P : \X \to \B$ is a \emph{fibration of categories with pullbacks} iff every
fiber $P(I)$ has pullbacks and these are stable under reindexing along 
arbitrary morphisms in the base. \MYenddef 
\end{Def}

\begin{Lem}\label{flf2}
If $P : \X \to \B$ is a fibration of categories with pullbacks then every
pullback in some fiber $P(I)$ is also a pullback in $\X$. 
\end{Lem}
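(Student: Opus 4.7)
The plan is to reduce any competing cone in $\X$ to a cone living in a single fibre by factoring through cartesian liftings, then invoke the hypothesis that pullbacks in fibres are stable under reindexing.

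Fix a pullback square
\begin{diagram}[small]
D & \rTo^{g} & B \\
\dTo^{h} & & \dTo_{f} \\
C & \rTo_{k} & A \\
\end{diagram}
in the fibre $P(I)$ and suppose $\alpha : Z \to B$, $\beta : Z \to C$ are morphisms of $\X$ satisfying $f \circ \alpha = k \circ \beta$. Since $f$ and $k$ are vertical, applying $P$ gives $P(\alpha) = P(\beta) =: u : J \to I$, where $J = P(Z)$. First I would choose cartesian liftings $\varphi_X : u^*X \to X$ for $X \in \{A,B,C,D\}$ and let $u^*g, u^*h, u^*f, u^*k$ be the induced reindexings; by Definition~\ref{flfdef} the reindexed square is a pullback in $P(J)$.

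Next, using cartesianness of $\varphi_B$ and $\varphi_C$, factor $\alpha = \varphi_B \circ \widetilde\alpha$ and $\beta = \varphi_C \circ \widetilde\beta$ with $\widetilde\alpha, \widetilde\beta$ vertical over $J$. Compute
\[ \varphi_A \circ (u^*f) \circ \widetilde\alpha = f \circ \varphi_B \circ \widetilde\alpha = f \circ \alpha = k \circ \beta = k \circ \varphi_C \circ \widetilde\beta = \varphi_A \circ (u^*k) \circ \widetilde\beta, \]
and cancel $\varphi_A$ using its cartesianness (both $(u^*f)\widetilde\alpha$ and $(u^*k)\widetilde\beta$ are morphisms $Z \to u^*A$ lying above $u$) to obtain $(u^*f)\widetilde\alpha = (u^*k)\widetilde\beta$. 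Then the fibre pullback yields a unique vertical $\gamma : Z \to u^*D$ with $(u^*g)\gamma = \widetilde\alpha$ and $(u^*h)\gamma = \widetilde\beta$. Setting $\delta := \varphi_D \circ \gamma$ and using the defining identities $g \circ \varphi_D = \varphi_B \circ (u^*g)$ and $h \circ \varphi_D = \varphi_C \circ (u^*h)$ gives $g\delta = \alpha$ and $h\delta = \beta$, as required.

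For uniqueness, any $\delta' : Z \to D$ with $g\delta' = \alpha$ and $h\delta' = \beta$ satisfies $P(\delta') = u$, hence factors as $\delta' = \varphi_D \circ \gamma'$ for a unique vertical $\gamma'$; applying $g$ and $h$ and cancelling $\varphi_B, \varphi_C$ by cartesianness reduces to the uniqueness of $\gamma$ in the fibre pullback, forcing $\gamma' = \gamma$ and hence $\delta' = \delta$. There is no real obstacle beyond the careful separation of cartesian and vertical components; the key conceptual input is simply the stability of fibre pullbacks under reindexing, which does all the work once the problem has been transported from $\X$ into $P(J)$.
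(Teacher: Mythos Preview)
Your proof is correct and follows essentially the same approach as the paper's: reindex the pullback square along $u = P(\alpha) = P(\beta)$, factor the given cone through cartesian liftings to obtain a vertical cone over the reindexed square, invoke the fibre pullback, and compose back with the cartesian lift to $D$. One small slip in exposition: in your parenthetical the arrows $(u^*f)\widetilde\alpha$ and $(u^*k)\widetilde\beta$ are vertical, hence lie over $\id_J$ rather than over $u$; the cancellation by the cartesian $\varphi_A$ is still valid precisely because both lie over the same arrow in $\B$.
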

\begin{proof}
Suppose
\begin{diagram}[small]
Z \SEpbk& \rTo^{\beta_2} & X_2 \\
\dTo^{\beta_1} & \mbox{($\dagger$)}& \dTo_{\alpha_2} \\
X_1 & \rTo_{\alpha_1}& Y \\
\end{diagram}
is a pullback in $P(I)$ and $\theta_1$, $\theta_2$ is a cone over $\alpha_1$,
$\alpha_2$ in $\X$, i.e.\ $\alpha_1 \circ \theta_1 = \alpha_2 \circ \theta_2$.
Obviously, $\theta_1$ and $\theta_2$ are above the same arrow $u$ in $\B$. 
For $i=1,2$ let $\varphi_i : u^*X_i \to X_i$ be a cartesian arrow over $u$ 
and $\gamma_i : V \to u^*X_i$ be a vertical arrow with 
$\varphi_i \circ \gamma_i = \theta_i$. As the image of $(\dagger)$ under $u^*$
is a pullback in its fiber there is a vertical arrow $\gamma$ with $\gamma_i =
u^*\beta_i \circ \gamma$ for $i=1,2$. The situation is illustrated in the
following diagram
\begin{diagram}[small]
V  &            & & & \\
  & \rdTo~{\gamma} \rdTo(2,4)_{\gamma_i} \rdTo(4,2)^{\theta} & & & \\
  &            &   u^*Z \SEpbk & \rTo^{\psi}_\cart &   Z \\
  &            &  \dTo_{u^*\beta_i}     &      &  \dTo_{\beta_i} \\
  &            &  u^*X_i  & \rTo_{\varphi_i}^\cart &  X_i \\
  &            &  \dTo_{u^*\alpha_i}     &      &  \dTo_{\alpha_i} \\  
  &            &  u^*Y  & \rTo_{\varphi}^\cart &  Y \\
\end{diagram}
where $\varphi$ and $\psi$ are cartesian over $u$. From this diagram it is
obvious that $\theta := \psi \circ \gamma$ is a mediating arrow as desired.
If $\theta'$ were another such mediating arrow then for 
$\theta' = \psi \circ \gamma'$ with $\gamma'$ vertical it holds that 
$\gamma' = \gamma$ as both are mediating arrows to $u^*(\dagger)$ for the 
cone given by $\gamma_1$ and $\gamma_2$ and, therefore, it follows that 
$\theta = \theta'$. Thus $\theta$ is the unique mediating arrow.
\end{proof}

\bigskip
Now we can give a simple characterisation of fibrations of categories 
with pullbacks in terms of a preservation property.

\begin{Thm}\label{flf3}
$P : \X \to \B$ is a fibration of categories with pullbacks iff $\X$ has and
$P$ preserves pullbacks.
\end{Thm}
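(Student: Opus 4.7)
The plan is to prove the two implications separately, leaning heavily on Lemmas~\ref{flf1} and~\ref{flf2}, which together already say that the two ``extreme'' kinds of squares in $\X$ (cartesian ones above a base pullback, and vertical ones living in a fibre) are pullbacks in $\X$.

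For the forward direction, assume $P$ is a fibration of categories with pullbacks and take a cospan $\alpha_1 : X_1 \to Y \leftarrow X_2 : \alpha_2$ with $P(\alpha_i) = u_i : I_i \to J$. First I would form the pullback $v_i : K \to I_i$ of $u_1,u_2$ in $\B$ and set $u = u_1 v_1 = u_2 v_2$. Choose cartesian liftings $\varphi_i : v_i^* X_i \to X_i$ over $v_i$ and $\varphi_Y : u^* Y \to Y$ over $u$; by cartesianness of $\varphi_Y$, each $\alpha_i \varphi_i$ factors uniquely as $\varphi_Y \circ \beta_i$ with $\beta_i : v_i^* X_i \to u^* Y$ vertical in $P(K)$. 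Now form the pullback $(Z,\gamma_1,\gamma_2)$ of $\beta_1,\beta_2$ in the fibre $P(K)$, which exists by hypothesis. I claim $(Z,\varphi_1\gamma_1,\varphi_2\gamma_2)$ is the desired pullback in $\X$, and that $P$ sends it to the chosen base pullback (so preservation is automatic by construction).

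The main technical step is the universal property. Given a competing cone $\theta_i : W \to X_i$ with common composite to $Y$, let $P(W) = L$ and let $h : L \to K$ be the base mediator arising from $u_1 P(\theta_1) = u_2 P(\theta_2)$. By cartesianness of $\varphi_i$, factor $\theta_i = \varphi_i \theta_i'$ with $\theta_i' : W \to v_i^*X_i$ over $h$; cartesianness of $\varphi_Y$ then forces $\beta_1 \theta_1' = \beta_2 \theta_2'$. Choose cartesian liftings $\chi_i : h^*(v_i^*X_i) \to v_i^*X_i$ over $h$ and decompose $\theta_i' = \chi_i \theta_i''$ with $\theta_i''$ vertical in $P(L)$. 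Stability of fibre pullbacks under reindexing along $h$ says that the reindexed square $(h^*Z, h^*\gamma_i, h^*\beta_i)$ is a pullback in $P(L)$, so the $\theta_i''$ factor uniquely through a vertical $\delta : W \to h^*Z$; composing with the chosen cartesian lift $h^*Z \to Z$ over $h$ produces the mediator $W \to Z$, and uniqueness is tracked back through the same chain of unique factorisations. Conceptually, this is nothing but pasting the ``cartesian pullback'' of Lemma~\ref{flf1}(1) over the base pullback with the ``vertical pullback'' in the fibre $P(K)$ supplied by Lemma~\ref{flf2}.

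For the converse, assume $\X$ has and $P$ preserves pullbacks. Given a vertical cospan in $P(I)$, take its pullback $(Z,\beta_1,\beta_2)$ in $\X$; since $P$ sends the cospan $\id_I,\id_I$ to itself, preservation forces $P(Z) = I$ and $P(\beta_i) = \id_I$, so the square lives entirely in $P(I)$, and any vertical competitor is \emph{a fortiori} a competitor in $\X$ whose mediator is then necessarily vertical (as its $P$-image composes with identities to identities). So $P(I)$ has pullbacks. For stability, given $u : J \to I$ and a pullback in $P(I)$, lift objects and morphisms cartesianly along $u$ to obtain a candidate square in $P(J)$; this square sits in a prism of cartesian arrows in $\X$ above the base square $u = u$, and combining the pullback pasting lemma in $\X$ with Lemma~\ref{flf1}(2) (applied to the faces of the prism) shows the reindexed square is still a pullback in $P(J)$.

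The main obstacle is the universal-property verification in the forward direction: several layers of cartesian factorisations must be interleaved with the fibrewise pullback, and one has to be scrupulous about which arrow lies over which base morphism. Everything else is routine diagram chasing once Lemmas~\ref{flf1} and~\ref{flf2} are in hand.
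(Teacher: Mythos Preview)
Your proof is correct and reaches the same pullback construction as the paper, but the paper's execution of the forward direction is tidier. Where you verify the universal property of $(Z,\varphi_1\gamma_1,\varphi_2\gamma_2)$ by hand---reindexing along $h$, invoking stability of the fibre pullback, and chasing factorisations through several layers of cartesian lifts---the paper instead factors each $f_i$ as vertical-then-cartesian and assembles a $3{\times}3$ grid of four squares: a cartesian square over the base pullback (Lemma~\ref{flf1}(1)), two mixed cartesian/vertical squares (Lemma~\ref{flf1}(2)), and the fibre pullback (Lemma~\ref{flf2}). Since each small square is already known to be a pullback in $\X$, the pasting lemma gives the result with no explicit competitor-chasing. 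Your closing ``conceptually'' remark almost says this, but misses that Lemma~\ref{flf1}(2) is doing real work for the two mixed squares; it is precisely this lemma that absorbs the stability-under-reindexing argument you carry out explicitly. For the reverse direction your argument and the paper's coincide.
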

\begin{proof}
Suppose that $P : \X \to \B$ is a fibration of categories with pullbacks.
For $i=1,2$ let $f_i : Y_i \to X$ be arrows in $\X$ and 
$f_i = \varphi_i \circ \alpha_i$ be some vertical/cartesian factorisations.
Consider the diagram
\begin{diagram}[small]
U \SEpbk & \rTo^{\beta_2} & \SEpbk & \rTo^{\varphi_1''} & Y_2 \\
\dTo^{\beta_1} & \mbox{(4)} & \dTo^{\alpha_2'} &  \mbox{(3)}
& \dTo_{\alpha_2} \\
\SEpbk & \rTo^{\alpha_1'} & \SEpbk & \rTo^{\varphi_1'} & Z_2 \\
\dTo^{\varphi_2''} & \mbox{(2)}& \dTo^{\varphi_2'} & \mbox{(1)}
& \dTo_{\varphi_2} \\
Y_1 & \rTo_{\alpha_1} & Z_1 & \rTo_{\varphi_1} & X \\
\end{diagram}
where the $\varphi$'s are cartesian and the $\alpha$'s and $\beta$'s are 
vertical. Square (1) is a pullback in $\X$ over a pullback in $\B$ by
Lemma~\ref{flf1}(1). Squares (2) and (3) are pullbacks in $\X$ by 
Lemma~\ref{flf1}(2). Square (4) is a pullback in $\X$ by Lemma~\ref{flf2}.
Accordingly, the big square is a pullback in $\X$ over a pullback in $\B$.
Thus, $\X$ has and $P$ preserves pullbacks.

For the reverse direction assume that $\X$ has and $P$ preserves pullbacks.
Then every fiber of $P$ has pullbacks and they are preserved under reindexing
for the following reason. For every pullback
\begin{diagram}[small]
Z \SEpbk& \rTo^{\beta_2} & X_2 \\
\dTo^{\beta_1} & \mbox{($\dagger$)}& \dTo_{\alpha_2} \\
X_1 & \rTo_{\alpha_1}& Y \\
\end{diagram}
in $P(I)$ and $u: J \to I$ in $\B$ by Lemma~\ref{flf1} we have
\begin{diagram}[small]
u^*Z \SEpbk & \rTo^{\theta}_\cart & Z\\
\dTo^{u^*\beta_i}     &      &  \dTo_{\beta_i} \\
u^*X_i \SEpbk & \rTo_{\varphi_i}^\cart &  X_i \\
\dTo^{u^*\alpha_i}     &      &  \dTo_{\alpha_i} \\  
u^*Y  & \rTo_{\varphi}^\cart &  Y \\
\end{diagram}
and, therefore, the image of pullback $(\dagger)$ under $u^*$ is isomorphic 
to the pullback of $(\dagger)$ along $\varphi$ in $\X$. As pullback functors
preserve pullbacks it follows that the reindexing of $(\dagger)$ along $u$ is
a pullback, too.
\end{proof}

\begin{Def}\label{fibtermdef}
A fibration $P : \X \to \B$ is a \emph{fibration of categories with terminal
objects} iff every fiber $P(I)$ has a terminal object and these are stable
under reindexing.\MYenddef
\end{Def}

One easily sees that $P$ is a fibration of categories with terminal objects 
iff for every $I \in \B$ there is an object $1_I \in P(I)$ such that for every
$u : J \to I$ in $\B$ and $X \in P(J)$ there is a unique arrow $f : X \to 1_I$
in $\X$ over $u$. Such a $1_I$ is called an``$I$--indexed family of terminal
objects''. It is easy to see that this property is stable under reindexing.

\begin{Lem}\label{flf4}
Let $\B$ have a terminal object (besides having pullbacks). 
Then $P : \X \to \B$ is a fibration of categories with terminal objects iff 
$\X$ has a terminal object $1_\X$ with $P(1_\X)$ terminal in $\B$.
\end{Lem}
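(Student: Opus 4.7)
The plan is to use the elementary characterisation of ``fibration of categories with terminal objects'' stated just after Definition~\ref{fibtermdef}: namely, that $P$ is such a fibration iff for every $I \in \B$ there is an object $1_I \in P(I)$ such that for every $u : J \to I$ in $\B$ and every $X \in P(J)$ there is a unique arrow $X \to 1_I$ in $\X$ over $u$. Writing $1_\B$ for the terminal object of $\B$ and $!_I : I \to 1_\B$ for the unique arrow, both directions then reduce to short manipulations.

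For the forward implication, I would take $1_\X := 1_{1_\B}$, the chosen terminal object of the fibre $P(1_\B)$. Given any $X \in \X$ lying over some $I \in \B$, any morphism $X \to 1_\X$ in $\X$ must project to the unique arrow $!_I : I \to 1_\B$, and the characterisation (applied with $u = !_I$) gives exactly one morphism $X \to 1_\X$ over $!_I$. Thus $1_\X$ is terminal in $\X$, and by construction $P(1_\X) = 1_\B$ is terminal in $\B$.

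For the reverse implication, suppose $1_\X$ is terminal in $\X$ with $1_\B := P(1_\X)$ terminal in $\B$. For every $I \in \B$ choose a cartesian lifting $\varphi_I : 1_I \to 1_\X$ of $!_I$; I claim this family satisfies the characterisation. Given $u : J \to I$ in $\B$ and $X \in P(J)$, terminality of $1_\X$ supplies a unique $g : X \to 1_\X$ in $\X$, and necessarily $P(g) = !_J = !_I \circ u$; cartesianness of $\varphi_I$ then yields a unique $f : X \to 1_I$ over $u$ with $\varphi_I \circ f = g$. To upgrade this to uniqueness of $f$ over $u$ with no further condition, note that for any $f' : X \to 1_I$ over $u$ the composite $\varphi_I \circ f'$ lies over $!_J$, so by terminality of $1_\X$ it equals $g$; the uniqueness clause of cartesianness of $\varphi_I$ then forces $f' = f$.

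I expect no serious obstacle: the only subtle point is precisely this last uniqueness argument, where one must remember to combine terminality of $1_\X$ in the total category with the uniqueness built into the definition of a cartesian arrow, rather than trying to deduce uniqueness fibrewise alone.
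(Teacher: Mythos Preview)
Your proposal is correct and is precisely the standard argument; the paper itself leaves this as a ``simple exercise'' without further detail, so there is nothing to compare against beyond noting that your use of the characterisation following Definition~\ref{fibtermdef} together with a cartesian lifting of $!_I$ is exactly the intended route.
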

\begin{proof} 
Simple exercise. 
\end{proof}

\begin{Thm}\label{flf5}
For a category $\B$ with finite limits a fibration $P : \X \to \B$ is a 
fibration of categories with finite limits, i.e.\ all fibers of $P$ have 
finite limits preserved by reindexing along arbitrary arrows in the base, 
iff $\X$ has finite limits and $P$ preserves them.
\end{Thm}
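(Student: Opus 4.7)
The plan is to reduce the claim to the two previously established results: Theorem~\ref{flf3} (for pullbacks) and Lemma~\ref{flf4} (for terminal objects). The key observation is that a category has finite limits iff it has pullbacks and a terminal object, and a functor between such categories preserves finite limits iff it preserves pullbacks and terminal objects. So ``finite limits'' splits cleanly into the two cases we have already handled, both in the fibres and in the total category.

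For the forward direction, suppose $P$ is a fibration of categories with finite limits. Then in particular each fibre $P(I)$ has pullbacks stable under reindexing and a terminal object stable under reindexing. Applying Theorem~\ref{flf3} yields that $\X$ has pullbacks and $P$ preserves them. Applying Lemma~\ref{flf4} (whose hypothesis that $\B$ has a terminal object is part of our standing assumption that $\B$ has finite limits) yields that $\X$ has a terminal object $1_\X$ with $P(1_\X)$ terminal in $\B$, so $P$ preserves the terminal object. Together this shows $\X$ has and $P$ preserves finite limits.

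For the reverse direction, suppose $\X$ has finite limits and $P$ preserves them. Then in particular $\X$ has pullbacks preserved by $P$, so Theorem~\ref{flf3} gives that each fibre has pullbacks stable under reindexing. Similarly $\X$ has a terminal object preserved by $P$, so Lemma~\ref{flf4} gives that each fibre has a terminal object stable under reindexing. Combining, every fibre has all finite limits, and since both pullbacks and terminal objects are preserved by reindexing, so are all finite limits (as they can be built canonically from pullbacks and the terminal object). Hence $P$ is a fibration of categories with finite limits.

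There is no real obstacle here; the only thing to be a little careful about is the (standard) fact that if a functor between categories with finite limits preserves pullbacks and the terminal object, then it preserves all finite limits, and the analogous statement for reindexing functors between fibres. Both follow from the standard construction of arbitrary finite limits out of pullbacks and terminals, which is preserved by any functor preserving those two kinds of limit. Thus the proof is essentially a one-line invocation of Theorem~\ref{flf3} and Lemma~\ref{flf4} in parallel.
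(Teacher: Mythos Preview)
Your proposal is correct and follows exactly the same approach as the paper, which simply says the result is immediate from Theorem~\ref{flf3} and Lemma~\ref{flf4}. Your careful spelling-out of the two directions and the remark about reconstructing finite limits from pullbacks and terminals is a faithful elaboration of that one-line argument.
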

\begin{proof}
Immediate from Theorem~\ref{flf3} and Lemma~\ref{flf4}.
\end{proof}

\bigskip
From ordinary category theory one knows that $\C$ has small limits iff $\C$ 
has finite limits and small products. Accordingly, one may define
``completeness'' of a fibration over a base category $\B$ with finite limits
by the requirements that
\begin{enumerate}
\item[(1)] $P$ is a fibration of categories with finite limits and
\item[(2)] $P$ has internal products (satisfying BCC).
\end{enumerate}
In \cite{Bor} vol.2, Ch.8 it has been shown that for a fibration $P$ complete 
in the sense above it holds for all $C \in \cat(\B)$ that the fibered 
``diagonal'' functor $\Delta_C : P \to [P_C{\to}P]$ has a fibered right 
adjoint $\prod_C$ sending diagrams of shape $C$ to their limiting cone 
(in the appropriate fibered sense).
Thus, requirement (2) above is necessary and sufficient for internal 
completeness under the assumption of requirement (1).

\newpage

\section{Elementary Fibrations and Representability}

A fibration $P:\X\to\B$ is called \emph{discrete} iff all its fibers are
discrete categories, i.e.\ iff $P$ reflects identity morphisms. However,
already in ordinary category theory discreteness of categories is not stable
under equivalence (though, of course, it is stable under isomorphism of
categories). Notice that a category $\C$ is equivalent to a discrete one
iff it is a \emph{posetal groupoid}, i.e.\ Hom--sets contain at most one 
element and all morphisms are isomorphisms. Such categories will be called
\emph{elementary}.

This looks even nicer from a fibrational point of view.

\begin{Thm}\label{elemfibchar}
Let $P : \X \to \B$ be a fibration. Then we have
\begin{itemize}
\item[\rm (1)] $P$ is a fibration of groupoids iff $P$ is \emph{conservative},
               i.e.\ $P$ reflects isomorphism. 
\item[\rm (2)] $P$ is a fibration of posetal categories iff $P$ is faithful.
\item[\rm (3)] $P$ is a fibration of elementary categories iff 
               $P$ is faithful and reflects isomorphisms. 
\end{itemize}
Fibrations $P : \X \to \B$ are called \emph{elementary} iff $P$ is faithful
and reflects isomorphisms.
\end{Thm}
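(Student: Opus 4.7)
The plan is to reduce all three claims to the vertical/cartesian factorisation of arrows in $\X$ together with the observation already recorded in the text that a morphism of $\X$ is an isomorphism iff it is cartesian over an isomorphism of $\B$.

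For (2) I would start with the easy direction: if $P$ is faithful and $\alpha,\beta : A \to B$ are two parallel vertical arrows in a fibre $\X_I$, then $P(\alpha) = \id_I = P(\beta)$, and faithfulness forces $\alpha = \beta$, so every fibre is a poset. For the converse, assume each fibre is posetal and let $f,g : Y \to X$ be two arrows with $P(f) = P(g) = u$. Choosing a cartesian lifting $\varphi : u^*X \to X$ of $u$ at $X$, the universal property of $\varphi$ produces vertical arrows $\alpha,\beta : Y \to u^*X$ with $f = \varphi \circ \alpha$ and $g = \varphi \circ \beta$. Since $\alpha,\beta$ are parallel in the posetal fibre $\X_{P(Y)}$, they coincide, whence $f = g$.

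For (1), suppose first that every fibre is a groupoid and that $P(f)$ is an isomorphism. Factor $f = \varphi \circ \alpha$ with $\varphi$ cartesian over $P(f)$ and $\alpha$ vertical; the arrow $\varphi$ is cartesian over an isomorphism and therefore itself an isomorphism by the basic observation, while $\alpha$ is vertical in a groupoid fibre hence invertible, so $f$ is an isomorphism. Conversely, if $P$ reflects isomorphisms and $\alpha$ is any vertical arrow, then $P(\alpha)$ is an identity and hence an isomorphism, so $\alpha$ is an isomorphism, making every fibre a groupoid.

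Part (3) is then immediate: a category is elementary precisely when it is both posetal and a groupoid, so the equivalence follows by conjoining (1) and (2). The only point that requires care rather than routine bookkeeping is in the posetal half of (2), where one must notice that the vertical factorisations $\alpha,\beta$ chosen through the \emph{same} cartesian lifting $\varphi$ really are parallel arrows in a single fibre so that posetality applies; everything else is a direct application of the two general facts about cartesian arrows quoted above.
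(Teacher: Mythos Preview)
Your proof is correct and is exactly the straightforward argument the paper has in mind; the paper itself omits the details entirely, simply marking the result as a ``straightforward exercise''. The reduction via vertical/cartesian factorisation together with the observation that cartesian arrows over isomorphisms are isomorphisms is precisely the intended route, and your treatment of the posetal direction in (2)---taking care that the two vertical factors $\alpha,\beta$ land in the same fibre---is the only point requiring any attention.
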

\begin{proof}
Straightforward exercise.
\end{proof}
 
\bigskip
It is well known that a presheaf $A : \B^{\mathrm op} \to \Set$ is 
representable iff $\int A : \Elts(A) \to \B$ has a terminal object.
This motivates the following definition.

\begin{Def}
An elementary fibration $P : \X \to \B$ is \emph{representable} iff 
$\X$ has a terminal object, i.e.\ there is an object $R \in P(I)$ such that
for every $X \in \X$ there is a unique classifying morphism $u : P(X) \to I$
in $\B$ with $X \cong u^*R$, i.e.\ fibration $P$ is equivalent to 
$P_I = \partial_0 : \B/I \to \B$ for some $I \in \B$, i.e.\ $P$
is equivalent to some small discrete fibration over $\B$. \MYenddef
\end{Def}

\newpage

\section{Local Smallness}

\begin{Def}\label{locsmalldef}
Let $P : \X \to \B$ be a fibration. For objects $X,Y \in P(I)$ let 
$\Hom_I(X,Y)$ be the category defined as follows. Its objects are spans 
\begin{diagram}[small]
 & & U & & \\
 & \ldTo^{\varphi}_\cart & & \rdTo^{f} & \\
X & & & & Y \\
\end{diagram}
with $P(\varphi) = P(f)$ and $\varphi$ cartesian. A morphism from
$(\psi,g)$ to $(\varphi,f)$ is a morphism $\theta$ in $\X$ such that
$\varphi \circ \theta = \psi$ and $f \circ \theta = g$
\begin{diagram}[small]
& & X & & \\
& \ruTo^{\psi}_\cart & & \luTo^{\varphi}_\cart & \\
V  & & \rTo_{\theta}& & U\\
& \rdTo_{g} & & \ruTo_{f}& \\
& & Y & & \\
\end{diagram} \\
Notice that $\theta$ is necessarily cartesian and fully determined by
$P(\theta)$. The category $\Hom_I(X,Y)$ is fibered over $\B/I$ by 
sending $(\varphi, f)$ to $P(\varphi)$ and $\theta$ to $P(\theta)$. 
The fibration $P$ is called \emph{locally small} iff for all $X,Y \in P(I)$
the elementary fibration $\Hom_I(X,Y)$ over $\B/I$ is representable, 
i.e.\ $\Hom_I(X,Y)$ has a terminal object.     \MYenddef
\end{Def}

The intuition behind this definition can be seen as follows. Let 
$(\varphi_0,f_0)$ be terminal in $\Hom_I(X,Y)$. Let 
$d := P(\varphi_0) : \homo_I(X,Y)\to I$. Let 
$f_0 = \psi_0 \circ \mu_{X,Y}$ with $\psi_0$ cartesian and $\mu_{X,Y}$ 
vertical. Then for every $u : J \to I$ and $\alpha : u^*X \to u^*Y$ there 
exists a unique $v : J \to \homo_I(X,Y)$ with $d \circ v = u$ such that 
$\alpha = v^*\mu_{X,Y}$ as illustrated in the following diagram.
\begin{diagram}[small]
 & & X \\
 & \ruTo^{\varphi} & \uTo_{\varphi_0} \\
u^*X & \rTo_{\theta\quad} & d^*X \\
\dTo^{\alpha = v^*\mu_{X,Y}} & & \dTo_{\mu_{X,Y}} \\
u^*Y & \rTo^\cart  & d^*Y \\
 & \rdTo^\cart & \dTo_{\psi_0}\\
& & Y \\
\end{diagram} 

\begin{Thm}\label{lsbp}
Let $P : \X \to \B$ be a locally small fibration and $\B$ have binary products.
Then for all objects $X$, $Y$ in $\X$ there exist morphisms
$\varphi_0 : Z_0 \to X$ and $f_0 : Z_0 \to Y$ with $\varphi_0$ cartesian 
such that for morphisms $\varphi : Z \to X$ and $f : Z \to Y$ with 
$\varphi$ cartesian there exists a unique $\theta : Z \to Z_0$ making
the diagram

\goodbreak 

\begin{diagram}[small]
     &                   &  X  \\
     &  \ruTo^{\varphi}      &  \uTo_{\varphi_0} \\
Z    &  \rDashto_{\theta}   &  Z_0 \\
     &  \rdTo_{f}   &  \dTo_{f_0} \\
     &                   &  Y \\
\end{diagram}
commute.
\end{Thm}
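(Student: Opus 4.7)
The plan is to reduce the general statement to the in-fibre case provided by Definition~\ref{locsmalldef}, using the binary product in $\B$ to bring $X$ and $Y$ into a common fibre. Set $I := P(X)$ and $J := P(Y)$ and form $K := I \times J$ in $\B$ with projections $\pi_1 : K \to I$ and $\pi_2 : K \to J$. Choose cartesian liftings $\varphi_X : \pi_1^* X \to X$ over $\pi_1$ and $\varphi_Y : \pi_2^* Y \to Y$ over $\pi_2$, so that $\pi_1^* X$ and $\pi_2^* Y$ both lie in $P(K)$. Apply local smallness to obtain a terminal object $(\varphi_0',f_0')$ in $\Hom_K(\pi_1^* X, \pi_2^* Y)$, with apex $Z_0$ and structural map $d := P(\varphi_0') = P(f_0') : P(Z_0) \to K$. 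Define $\varphi_0 := \varphi_X \circ \varphi_0'$ (which is cartesian as a composite of two cartesian arrows) and $f_0 := \varphi_Y \circ f_0'$.

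Next, for a given cone $(\varphi,f)$ with $\varphi : Z \to X$ cartesian, let $u := P(\varphi)$, $v := P(f)$, and write $w := \langle u,v\rangle : P(Z) \to K$. Because $\varphi_X$ is cartesian over $\pi_1$ and $\pi_1 \circ w = u = P(\varphi)$, the universal property of $\varphi_X$ supplies a unique arrow $\varphi' : Z \to \pi_1^* X$ over $w$ with $\varphi_X \circ \varphi' = \varphi$; the usual lemma that cartesian arrows satisfy a right cancellation property shows $\varphi'$ is itself cartesian. The analogous argument with $\varphi_Y$ yields $f' : Z \to \pi_2^* Y$ over $w$ with $\varphi_Y \circ f' = f$. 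Then $(\varphi', f')$ is a span in $\Hom_K(\pi_1^* X, \pi_2^* Y)$, so by terminality of $(\varphi_0',f_0')$ there is a unique morphism $\theta : Z \to Z_0$ in this category with $\varphi_0' \circ \theta = \varphi'$ and $f_0' \circ \theta = f'$. Composing with $\varphi_X$ and $\varphi_Y$ gives $\varphi_0 \circ \theta = \varphi$ and $f_0 \circ \theta = f$, as required.

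For uniqueness, suppose $\theta' : Z \to Z_0$ also satisfies $\varphi_0 \circ \theta' = \varphi$ and $f_0 \circ \theta' = f$. Applying $P$ and then the projections $\pi_i$ yields $\pi_1 \circ d \circ P(\theta') = u$ and $\pi_2 \circ d \circ P(\theta') = v$, hence $d \circ P(\theta') = w$ by the universal property of $K$. Now $\varphi_X \circ (\varphi_0' \circ \theta') = \varphi = \varphi_X \circ \varphi'$ with both $\varphi_0' \circ \theta'$ and $\varphi'$ lying over $w$, so cartesianness of $\varphi_X$ forces $\varphi_0' \circ \theta' = \varphi'$; the analogous computation with $\varphi_Y$ gives $f_0' \circ \theta' = f'$. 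Thus $\theta'$ is also a morphism from $(\varphi',f')$ to $(\varphi_0',f_0')$ in $\Hom_K(\pi_1^* X, \pi_2^* Y)$, so the terminality of $(\varphi_0',f_0')$ gives $\theta' = \theta$.

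There is no real obstacle; the only point requiring care is the factorisation step, where one must check that $\varphi'$ is actually cartesian (so that $(\varphi',f')$ qualifies as an object of $\Hom_K$) and that $P(\theta')$ is forced to land over $w$ in the uniqueness argument. Both are straightforward consequences of the universal properties already in play.
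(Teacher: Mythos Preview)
Your proof is correct and follows exactly the approach sketched in the paper: form the product $K = P(X) \times P(Y)$, pull $X$ and $Y$ back to the fibre over $K$, and compose the terminal span in $\Hom_K(\pi_1^*X,\pi_2^*Y)$ with the cartesian liftings. The paper states the construction and leaves the verification of the universal property as a ``straightforward exercise''; you have carried out that exercise carefully, including the right-cancellation argument showing $\varphi'$ is cartesian and the check that any competing $\theta'$ lies over $w$.
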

\begin{proof}
Let $p : K \to I$, $q : K \to J$ be a product cone in $\B$. Then the desired
span $(\varphi_0 , f_0)$ is obtained by putting
$$\varphi_0 := \varphi_X \circ \widetilde{\varphi} \qquad
  f_0 := \varphi_Y \circ \widetilde{f}$$
where $(\widetilde{\varphi}, \widetilde{f})$ is terminal in 
$\Hom_K(p^*X,q^*Y)$ and $\varphi_X : p^*X \to X$ and $\varphi_Y : p^*Y \to Y$ 
are cartesian over $p$ and $q$, respectively. We leave it as a straightforward
exercise to verify that $(\varphi_0 , f_0)$ satisfies the desired universal 
property.
\end{proof}

\bigskip
Locally small categories are closed under a lot of constructions as e.g.\
finite products and functor categories. All these arguments go through for
locally small fibrations (see e.g.\ \cite{Bor} vol.~2, Ch.~8.6). There arises 
the question what it means that $\B$ fibered over itself is locally small. 
The answer given by the following Theorem is quite satisfactory.

\begin{Thm}\label{lslccc}
Let $\B$ be a category with pullbacks. Then the fundamental fibration
$P_\B = \partial_0 : \B^{\two} \to \B$ is locally small if and only if
for every $u : J \to I$ in $\B$ the pullback functor $u^* : \B/I \to \B/J$
has a right adjoint $\Pi_u$ or, equivalently, iff every slice of $\B$ is
cartesian closed. 
Such categories are usually called \emph{locally cartesian closed}.
\end{Thm}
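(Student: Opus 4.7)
The plan is to make $\Hom_I(X,Y)$ fully concrete in the case $P = P_\B$, identify it with the category of elements of a presheaf on $\B/I$, and match its representability with the existence of exponentials in every slice.

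Fix $I \in \B$ and $X = (a : A \to I)$, $Y = (b : B \to I)$ in the fibre $P_\B(I) = \B/I$. Cartesian arrows of $P_\B$ are exactly the pullback squares, so a cartesian lift of $X$ along $u : J \to I$ is a chosen pullback $u^*X = (p : A \times_I J \to J)$, and an arrow $f$ from $u^*X$ to $Y$ in $\B^{\two}$ lying over $u$ is the same datum as a morphism $\Sigma_u(u^*X) \to Y$ in $\B/I$, where $\Sigma_u(u^*X) = u \times_I X$ is the product in $\B/I$ of $u$ (viewed as an object over $I$) with $X$. Up to the vertical isomorphism induced by any other choice of pullback, an object of $\Hom_I(X,Y)$ sitting over $u$ is therefore an element of $\homo_{\B/I}(u \times_I X, Y)$, and the condition $\varphi \circ \theta = \psi$, $f \circ \theta = g$ that defines morphisms of $\Hom_I(X,Y)$ translates into naturality of this correspondence along $\theta : u \to u'$ in $\B/I$. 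Writing $F : (\B/I)^\op \to \Set$ for the presheaf $F(u) = \homo_{\B/I}(u \times_I X, Y) \cong \homo_{\B/J}(u^*X, u^*Y)$ (via $\Sigma_u \dashv u^*$), this identifies the elementary fibration $\Hom_I(X,Y) \to \B/I$ with the discrete fibration of elements of $F$.

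By the fact recalled just above the definition of representable elementary fibration, $F$ is representable iff its total category has a terminal object; hence $P_\B$ is locally small iff, for every $I$ and every $X,Y \in \B/I$, the functor $u \mapsto \homo_{\B/I}(u \times_I X, Y)$ is represented by an object of $\B/I$. But a representing object for this functor is by definition an exponential $Y^X$ in $\B/I$, and since pullbacks in $\B$ already supply binary products in every slice, this is the same as asking every slice $\B/I$ to be cartesian closed, i.e.\ that $\B$ be locally cartesian closed. The companion equivalence, that local cartesian closedness coincides with each pullback functor $u^{-1} : \B/I \to \B/J$ having a right adjoint $\Pi_u$, is standard: given all $\Pi_u$ one obtains $Y^X \cong \Pi_a(a^*Y)$ in $\B/I$ by chaining $\Sigma_a \dashv a^* \dashv \Pi_a$, while conversely a right adjoint to $u^* : \B/I \to \B/J$ can be read off the exponential $(u : J \to I) \Rightarrow (-)$ in $\B/I$.

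The main obstacle is the bookkeeping in the first step: one must verify that the vertical isomorphisms coming from different choices of cartesian lift collapse $\Hom_I(X,Y)$ exactly onto the category of elements of $F$, and that morphisms on either side really correspond under the adjunction $\Sigma_u \dashv u^*$. This is a routine diagram chase once that adjunction is in place; the remainder is either an adjunction shuffle or the standard characterisation of local cartesian closure.
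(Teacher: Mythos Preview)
Your proposal is correct and is precisely the expected unpacking of the definitions. The paper itself offers no detailed argument here---its proof reads in full ``Lengthy, but straightforward exercise''---so there is nothing substantive to compare against; your identification of $\Hom_I(X,Y)\to\B/I$ with (the elementary fibration equivalent to) the category of elements of $u\mapsto\hom_{\B/I}(u\times_I X,Y)$, followed by the observation that a representing object is exactly an exponential $Y^X$ in $\B/I$, is the standard route and the one the author evidently has in mind.
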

\begin{proof}
Lengthy but straightforward exercise.
\end{proof}

\bigskip
Some further uses of local smallness are the following.

\begin{Obs}
Let $\B$ be a category with an initial object $0$ and $P : \X \to \B$ be
a locally small fibration. Then for $X,Y \in P(0)$ there is precisely one
vertical morphism from $X$ to $Y$.
\end{Obs}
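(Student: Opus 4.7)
The plan is to exploit the universal property of local smallness spelled out in the paragraph following Definition~\ref{locsmalldef} and specialise it to the case $I = 0$.

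First, since $P$ is locally small, the fibration $\Hom_0(X,Y) \to \B/0$ is representable, so there is a terminal span $(\varphi_0, f_0)$ in $\Hom_0(X,Y)$. Let $d := P(\varphi_0) = P(f_0) : \homo_0(X,Y) \to 0$, and factor $f_0 = \psi_0 \circ \mu_{X,Y}$ with $\psi_0$ cartesian and $\mu_{X,Y}$ vertical, as in the excerpt. The key fact I would use is the bijection that this data encodes: for every $u : J \to 0$ in $\B$, vertical morphisms $\alpha : u^*X \to u^*Y$ correspond bijectively to morphisms $v : J \to \homo_0(X,Y)$ satisfying $d \circ v = u$ (with $\alpha = v^*\mu_{X,Y}$).

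Next I would instantiate this with $J = 0$ and $u = \id_0$. Choosing the identity as cartesian lifting of $\id_0$, we may take $u^*X = X$ and $u^*Y = Y$, so the bijection becomes: vertical morphisms $X \to Y$ are in bijection with those $v : 0 \to \homo_0(X,Y)$ for which $d \circ v = \id_0$.

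Finally, since $0$ is initial in $\B$, there is exactly one morphism $v : 0 \to \homo_0(X,Y)$; and the constraint $d \circ v = \id_0$ is automatic, because both $d \circ v$ and $\id_0$ are endomorphisms of $0$, of which by initiality there is only one. So precisely one $v$ qualifies, and hence there is precisely one vertical morphism from $X$ to $Y$. There is no real obstacle here beyond keeping track of the identifications $u^*X = X$, $u^*Y = Y$ when $u = \id_0$; the whole argument is a two-line specialisation of the representability condition once one remembers that $\B(0,A)$ is a singleton for every $A$.
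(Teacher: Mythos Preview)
Your proof is correct and is essentially the same as the paper's. The paper argues directly that vertical arrows $X \to Y$ correspond to sections $\theta$ of $\varphi_0$, and such a $\theta$ is determined by $P(\theta) : 0 \to P(Z_0)$, of which there is exactly one by initiality; you have simply used the reformulation spelled out after Definition~\ref{locsmalldef} (the bijection with maps $v$ into $\homo_0(X,Y)$ over $d$), which amounts to the same thing.
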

\begin{proof}
Let $(\varphi_0,f_0)$ be terminal in $\Hom_0(X,Y)$. Then there is a
1--1--correspondence between vertical arrows $\alpha : X \to Y$ and 
sections $\theta$ of $\varphi_0$
\begin{diagram}[small]
     &                   &  X  \\
     &  \ruEqual      &  \uTo_{\varphi_0} \\
X    &  \rDashto_{\theta}   &  Z_0 \\
     &  \rdTo_{\alpha}   &  \dTo_{f_0} \\
     &                   &  Y \\
\end{diagram}
As there is precisely one map from $0$ to $P(Z_0)$ there is precisely one 
section $\theta$ of $\varphi_0$. Accordingly, there is precisely one
vertical arrow $\alpha :X \to Y$.
\end{proof}

\begin{Obs}
Let $P : \X \to \B$ be a locally small fibration. Then every cartesian arrow
over an epimorphism in $\B$ is itself an epimorphism in $\X$.
\end{Obs}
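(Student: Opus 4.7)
The plan is to reduce the epi property of a cartesian lift $\varphi : Y \to X$ over an epi $u : J \to I$ to the epi property of $u$ itself via local smallness. Assume $f, g : X \to Z$ satisfy $f \circ \varphi = g \circ \varphi$. Applying $P$ and using that $u$ is epi in $\B$, first conclude $P(f) = P(g) =: v$. Choose a cartesian lift $\psi_v : v^*Z \to Z$ over $v$; by cartesianness of $\psi_v$, factor $f = \psi_v \circ \tilde{f}$ and $g = \psi_v \circ \tilde{g}$ with $\tilde{f}, \tilde{g} : X \to v^*Z$ vertical over $I$. Then $f \circ \varphi = g \circ \varphi$ together with the uniqueness in the cartesian property of $\psi_v$ (both $\tilde{f} \circ \varphi$ and $\tilde{g} \circ \varphi$ lift $f \circ \varphi$ over $v \circ u$) forces $\tilde{f} \circ \varphi = \tilde{g} \circ \varphi$.

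Next I invoke local smallness applied to $X, v^*Z \in P(I)$. Let $(\varphi_0, f_0)$ be terminal in $\Hom_I(X, v^*Z)$, with $\varphi_0 : Z_0 \to X$ cartesian over $d : \homo_I(X, v^*Z) \to I$. The span $(\id_X, \tilde{f})$ is an object of $\Hom_I(X, v^*Z)$, so by terminality there is a unique cartesian $\theta_f : X \to Z_0$ with $\varphi_0 \circ \theta_f = \id_X$ and $f_0 \circ \theta_f = \tilde{f}$; its image $s_f := P(\theta_f) : I \to \homo_I(X, v^*Z)$ is a section of $d$. Likewise obtain $\theta_g$ and $s_g$ from $\tilde{g}$.

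Now compose with $\varphi$: the arrow $\theta_f \circ \varphi$ is cartesian and mediates the span $(\varphi, \tilde{f} \circ \varphi)$ to $(\varphi_0, f_0)$; similarly $\theta_g \circ \varphi$ mediates $(\varphi, \tilde{g} \circ \varphi)$. Since we have already established $\tilde{f} \circ \varphi = \tilde{g} \circ \varphi$, these two spans coincide, so by the uniqueness in the terminal property $\theta_f \circ \varphi = \theta_g \circ \varphi$. Projecting by $P$ gives $s_f \circ u = s_g \circ u$. Since $u$ is epi in $\B$, this yields $s_f = s_g$.

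Finally, cartesianness of $\varphi_0$ says a section of $\varphi_0$ is determined by its base component, so $s_f = s_g$ implies $\theta_f = \theta_g$, whence $\tilde{f} = f_0 \circ \theta_f = f_0 \circ \theta_g = \tilde{g}$ and therefore $f = \psi_v \circ \tilde{f} = \psi_v \circ \tilde{g} = g$. The main delicate point is the bookkeeping at the previous step: one must check that both $s_f \circ u$ and $s_g \circ u$ arise as $P(\theta)$ for the \emph{same} span, which is exactly the observation that $(\varphi, \tilde{f} \circ \varphi) = (\varphi, \tilde{g} \circ \varphi)$ in $\Hom_J(Y, u^*v^*Z)$; once this is in place, epi-ness of $u$ does the rest.
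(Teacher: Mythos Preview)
Your proof is correct and follows essentially the same route as the paper's. The only difference is organisational: the paper first asserts that it suffices to check that $\varphi$ is epic with respect to \emph{vertical} arrows and then runs the local-smallness argument with $Z$ already in $P(I)$, whereas you unroll that reduction explicitly by factoring $f,g$ through a cartesian lift $\psi_v$ to obtain vertical $\tilde f,\tilde g$ before invoking local smallness; from that point on the two arguments are identical (terminal span $(\varphi_0,f_0)$, sections $\theta_f,\theta_g$, uniqueness gives $\theta_f\circ\varphi=\theta_g\circ\varphi$, project by $P$, use that $u$ is epi, then cartesianness of $\varphi_0$). One small slip in your closing remark: the span $(\varphi,\tilde f\circ\varphi)$ lives in $\Hom_I(X,v^*Z)$, not in $\Hom_J(Y,u^*v^*Z)$, but this does not affect the argument.
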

\begin{proof}
Let $\varphi : Y \to X$ be cartesian with $P(\varphi)$ epic in $\B$. 
For $\varphi$ being epic in $\X$ it suffices to check that $\varphi$ is epic 
w.r.t.\ vertical arrows. Suppose that 
$\alpha_1 \circ \varphi = \alpha_2 \circ \varphi$ 
for vertical $\alpha_1,\alpha_2 : X \to Z$. Due to local smallness of $P$
there is a terminal object $(\varphi_0,f_0)$ in $\Hom_{P(X)}(X,Z)$. Thus, for 
$i{=}1,2$ there are unique cartesian arrows
$\psi_i$ with $\varphi_0 \circ \psi_i = \id_X$ and 
$f_0 \circ \psi_i = \alpha_i$. We have
\begin{enumerate}
\item[]  $\varphi_0 \circ \psi_1 \circ \varphi = \varphi = 
          \varphi_0 \circ \psi_2 \circ \varphi$ \qquad\qquad  and
\item[]  $f_0 \circ \psi_1 \circ \varphi = \alpha_1 \circ \varphi = 
           \alpha_2 \circ \varphi = f_0 \circ \psi_2 \circ \varphi$
\end{enumerate}
from which it follows that $\psi_1 \circ \varphi = \psi_2 \circ \varphi$. 
Thus, $P(\psi_1) \circ P(\varphi) = P(\psi_2) \circ P(\varphi)$ and, therefore,
as $P(\varphi)$ is epic by assumption it follows that $P(\psi_1) = P(\psi_2)$.
As $\varphi_0 \circ \psi_1 =\varphi_0 \circ \psi_2$ and $P(\psi_1) = P(\psi_2)$
it follows that $\psi_1 = \psi_2$ as $\varphi_0$ is cartesian. Thus, we finally
get $$\alpha_1 = f_0 \circ \psi_1 = f_0 \circ \psi_2 = \alpha_2$$
as desired.
\end{proof}

\bigskip\bigskip
Next we introduce the notion of generating family.

\begin{Def}\label{genfamdef}
Let $P : \X \to \B$ be a fibration. A \emph{generating family} for $P$ is
an object $G \in P(I)$ such that for every parallel pair of distinct vertical
arrows $\alpha_1, \alpha_2 : X \to Y$ there exist morphisms $\varphi : Z \to G$
and $\psi : Z \to X$ with $\varphi$ cartesian and 
$\alpha_1 \circ \psi \not= \alpha_2 \circ \psi$. 
\end{Def}

For locally small fibrations we have the following useful characterisation of
generating families provided the base has binary products.

\begin{Thm}\label{genfamthm}
Let $\B$ have binary products and $P : \X \to \B$ be a locally small fibration.
Then $G \in P(I)$ is a generating family for $P$ iff for every $X \in \X$
there are morphisms $\varphi_X : Z_X \to G$ and $\psi_X : Z_X \to X$ such
that $\varphi_X$ is cartesian and $\psi_X$ is \emph{collectively epic} in the
sense that vertical arrows $\alpha_1, \alpha_2 : X \to Y$ are equal 
iff $\alpha_1 \circ \psi_X = \alpha_2 \circ \psi_X$.
\end{Thm}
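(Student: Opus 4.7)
The plan is to prove the two directions separately, noting that the backward direction is essentially immediate from the definition of generating family while the forward direction relies crucially on Theorem~\ref{lsbp}.

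For the backward direction, suppose that for every $X \in \X$ we have $\varphi_X : Z_X \to G$ cartesian and $\psi_X : Z_X \to X$ collectively epic among vertical arrows. Given a pair of distinct vertical arrows $\alpha_1,\alpha_2 : X \to Y$, by the collectively-epic property applied to $\psi_X$ we have $\alpha_1 \circ \psi_X \neq \alpha_2 \circ \psi_X$. Then taking $Z := Z_X$, $\varphi := \varphi_X$, $\psi := \psi_X$ exhibits $G$ as a generating family, and this step is a direct unwinding of Definition~\ref{genfamdef}.

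For the forward direction, given $X \in \X$, I would apply Theorem~\ref{lsbp} to the pair $(G,X)$ (with $G$ playing the role of ``$X$'' and $X$ the role of ``$Y$'' in the statement of that theorem) to obtain a span $\varphi_X := \varphi_0 : Z_X \to G$ cartesian and $\psi_X := f_0 : Z_X \to X$ enjoying the universal factorisation property: every span $(\varphi : Z \to G,\ \psi : Z \to X)$ with $\varphi$ cartesian factors uniquely through $(\varphi_X,\psi_X)$ via some $\theta : Z \to Z_X$ with $\varphi_X \circ \theta = \varphi$ and $\psi_X \circ \theta = \psi$. I would then verify that $\psi_X$ is collectively epic among vertical arrows as follows: suppose $\alpha_1,\alpha_2 : X \to Y$ are vertical with $\alpha_1 \neq \alpha_2$; by the generating family property of $G$ there exist $\varphi : Z \to G$ cartesian and $\psi : Z \to X$ with $\alpha_1 \circ \psi \neq \alpha_2 \circ \psi$; factoring through $\theta$ gives $\psi = \psi_X \circ \theta$, so that
\[
(\alpha_1 \circ \psi_X) \circ \theta = \alpha_1 \circ \psi \;\neq\; \alpha_2 \circ \psi = (\alpha_2 \circ \psi_X) \circ \theta,
\]
which forces $\alpha_1 \circ \psi_X \neq \alpha_2 \circ \psi_X$, as required.

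There is no real obstacle here once Theorem~\ref{lsbp} is in hand; the only point requiring care is identifying the correct instantiation of the roles of ``$X$'' and ``$Y$'' in Theorem~\ref{lsbp} so that $\varphi_X$ (which must be cartesian) lands in $G$ and $\psi_X$ lands in $X$, and then noting that the universal factorisation automatically promotes the pointwise separation property of a generating family to a single collectively epic span $\psi_X$.
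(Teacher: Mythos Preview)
Your proof is correct and follows essentially the same approach as the paper: the backward direction is immediate from Definition~\ref{genfamdef}, and the forward direction applies Theorem~\ref{lsbp} (with $G$ and $X$ in the roles you indicate) to obtain the universal span $(\varphi_X,\psi_X)$, then uses the factorisation $\psi = \psi_X \circ \theta$ of any separating span through it to conclude that $\psi_X$ itself already separates $\alpha_1$ from $\alpha_2$.
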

\begin{proof}
The implication from right to left is trivial. 

For the reverse direction suppose that $G \in P(I)$ is a generating family. 
Let $X \in P(J)$. As $\B$ is assumed to have binary products by 
Theorem~\ref{lsbp}
there exist $\varphi_0 : Z_0 \to G$ and $\psi_0 : Z_0 \to X$
with $\varphi_0$ cartesian such that for morphisms $\varphi : Z \to G$ and
$\psi : Z \to X$ with $\varphi$ cartesian there exists a unique 
$\theta : Z \to Z_0$ with
\begin{diagram}[small]
     &                   &  G  \\
     &  \ruTo^{\varphi}      &  \uTo_{\varphi_0} \\
Z    &  \rDashto_{\theta}   &  Z_0 \\
     &  \rdTo_{\psi}   &  \dTo_{\psi_0} \\
     &                   &  X \\
\end{diagram}
Now assume that $\alpha_1, \alpha_2 : X \to Y$ are distinct vertical arrows.
As $G$ is a generating family for $P$ there exist $\varphi : Z \to G$ and
$\psi : Z \to X$ with $\varphi$ cartesian and 
$\alpha_1 \circ \psi \not= \alpha_2 \circ \psi$. But there is a 
$\theta : Z \to Z_0$ with $\psi = \psi_0 \circ \theta$. Then we
have $\alpha_1 \circ \psi_0 \not= \alpha_2 \circ \psi_0$ (as otherwise
$\alpha_1 \circ \psi = \alpha_1 \circ \psi_0 \circ \theta =
\alpha_2 \circ \psi_0 \circ \theta = \alpha_2 \circ \psi$). Thus, we have shown
that $\psi_0$ is collectively epic and we may take $\varphi_0$ and $\psi_0$ as
$\varphi_X$ and $\psi_X$, respectively.
\end{proof}

\bigskip
Intuitively, this means that ``every object can be covered by a sum of 
$G_i$'s'' in case the fibration has internal sums.

\newpage

\section{Well-Poweredness}

For ordinary categories well-poweredness means that for every object the 
collection of its subobjects can be indexed by a set. Employing again the
notion of representability (of elementary fibrations) we can define a notion
of well--poweredness for (a wide class of) fibrations.

\begin{Def}\label{wellpowerdef}
Let $P : \X \to \B$ be a fibration where vertical monos are stable under 
reindexing. For $X \in P(I)$ let $\Sub_I(X)$ be the following category.
Its objects are pairs $(\varphi,m)$ where $\varphi : Y \to X$ is cartesian
and $m : S \to Y$ is a vertical mono. A morphism from $(\psi,n)$ to
$(\varphi,m)$ is a morphism $\theta$ such that $\varphi \circ \theta = \psi$
and $\theta \circ n = m \circ \widetilde{\theta}$
\begin{diagram}
T \SEpbk & \rTo^{\widetilde{\theta}}_\cart  & S \\
\dEmbed^{n} & & \dEmbed_{m} \\
Z & \rTo_{\theta}^\cart  & Y \\
 & \rdTo_{\psi} & \dTo_{\varphi}\\
 & & X \\
\end{diagram} 
for a (necessarily unique) cartesian arrow $\widetilde{\theta}$. The category
$\Sub_I(X)$ is fibered over $\B/I$ by sending objects $(\varphi,m)$ to 
$P(\varphi)$ and morphisms $\theta$ to $P(\theta)$.

The fibration $P$ is called \emph{well--powered} iff for every $I \in \B$ and
$X \in P(I)$ the elementary fibration $\Sub_I(X)$ over $\B/I$ is representable,
i.e.\ $\Sub_I(X)$ has a terminal object. \MYenddef
\end{Def}

If $(\varphi_X,m_X)$ is terminal in $\Sub_I(X)$ then, roughly speaking,
for every $u : J \to I$ and $m \in \Sub_{P(J)}(u^*X)$ there is a unique map 
$v : u \to P(\varphi_X)$ in $\B/I$ with $v^*(m_X) \cong m$.
We write $\sigma_X : \sub_I(X) \to I$ for $P(\varphi_X)$.

\medskip
Categories with finite limits whose fundamental fibration is well-powered 
have the following pleasant characterisation.

\begin{Thm}
A category $\B$ with finite limits is a topos if and only if its fundamental
fibration $P_\B = \partial_1 : \B^{\two} \to \B$ is well--powered.\\
Thus, in this particular case well--poweredness entails local smallness 
as every topos is locally cartesian closed.
\end{Thm}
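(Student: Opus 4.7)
The plan is to translate well-poweredness of $P_\B$ into the existence of power objects in every slice of $\B$, and then invoke the standard fact that a finitely complete category is a topos iff it has power objects, together with the fact that slices of a topos are toposes.

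First I would unpack, for a fixed object $a : A \to I$ of $\B^\two$ over $I$, what a terminal object of $\Sub_I(a)$ amounts to. A cartesian arrow $\varphi : b \to a$ over $u : J \to I$ is a pullback square, so $b$ may be identified with $u^*a : u^*A \to J$, while a vertical mono $m : s \to b$ in $\B/J$ is the same thing as a mono $S \to u^*A$ in $\B$. Hence an object of $\Sub_I(a)$ is a pair consisting of a morphism $u : J \to I$ and a subobject of $u^*A$, and the fibration $\Sub_I(a) \to \B/I$ acts by reindexing these subobjects along the base. A terminal $(\varphi_a,m_a)$ is thus a morphism $\sigma_a : \sub_I(a) \to I$ together with a mono into $\sigma_a^*A$ that classifies every subobject of every $u^*A$ by a unique $v : J \to \sub_I(a)$ with $\sigma_a \circ v = u$. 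Since $u^*A \cong J \times_I A$ is the product of $u$ and $a$ in $\B/I$, this is exactly the universal property of a power object of $a$ in the slice $\B/I$. So $P_\B$ is well-powered iff every slice $\B/I$ has power objects for each of its objects.

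Granted this translation, the two implications are short. For the forward direction, if $\B$ is a topos then by the fundamental theorem of topos theory every slice $\B/I$ is itself a topos, hence has power objects, hence $P_\B$ is well-powered. Conversely, if $P_\B$ is well-powered then, specialising to the terminal object $I = 1$ and using $\B/1 \simeq \B$, every $A \in \B$ has a power object; by the Mikkelsen--Par\'e theorem a finitely complete category with all power objects is an elementary topos, so $\B$ is a topos. The final clause is then immediate: a topos is locally cartesian closed, so by Theorem~\ref{lslccc} its fundamental fibration is locally small, whence well-poweredness entails local smallness in this setting.

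The main obstacle is the bookkeeping in the translation step: one must check that a morphism in $\Sub_I(a)$, consisting of a cartesian part and a compatible pullback of monos, corresponds exactly to classification of a subobject by a single arrow into $\sub_I(a)$ over $I$, so that terminality really delivers a unique classifying map (not only one up to vertical isomorphism). Verifying this, and the evident stability of the universal mono under reindexing, is routine but needs to be done carefully; once in place, both implications reduce to standard topos theory.
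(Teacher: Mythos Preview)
Your argument is correct and is exactly the intended unpacking: the paper's own proof is left as a ``lengthy, but straightforward exercise'', and the natural way to do that exercise is precisely your translation of a terminal object in $\Sub_I(a)$ into a power object of $a$ in the slice $\B/I$, followed by the standard facts that a finitely complete category with power objects is a topos and that slices of a topos are toposes. There is no alternative route on offer in the paper to compare with.
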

\begin{proof}
Lengthy, but straightforward exercise.
\end{proof}

\bigskip
One may find it reassuring that for categories $\B$
with finite limits we have
\begin{itemize}
\item[]  $P_\B$ is locally small iff   $\B$ is locally cartesian closed
\item[]  $P_\B$ is wellpowered   iff   $\B$ is a topos
\end{itemize}
i.e.\ that important properties of $\B$ can be expressed by simple 
conceptual properties of the corresponding fundamental fibration.

\newpage

\section{Definability}

If $\C$ is a category and $(A_i)_{i \in I}$ is a family of objects in $\C$
then for every subcategory $\PP$ of $\C$ one may want to form the subset
\[ \{ i \in I \mid A_i \in \PP\}  \]
of $I$ consisting of all those indices $i \in I$ such that object $A_i$ 
belongs to the subcategory $\PP$. Though intuitively ``clear'' it is somewhat 
disputable from the point of view of \emph{formal axiomatic set theory} (e.g.\
ZFC or GBN) whether the set $\{ i \in I \mid A_i \in \PP\}$ actually exists.
The reason is that the usual \emph{separation axiom} guarantees the existence 
of (sub)sets of the form $\{ \, i \in I \mid P(i) \,\}$ only for predicates 
$P(i)$ that can be expressed\footnote{i.e.\ by a first order formula using 
just the binary relation symbols $=$ and $\in$} in the formal language of set 
theory. Now this may appear as a purely ``foundationalist'' argument to the 
working mathematician. However, we don't take any definite position w.r.t.\ 
this delicate foundational question but, instead, investigate the 
mathematically clean concept of \emph{definability} for fibrations.

\begin{Def}\label{stabclass}
Let $P : \X \to \B$ be a fibration. A class $\Cl \subseteq \Ob(\X)$ is called
\emph{$P$--stable} or simply \emph{stable} iff for $P$--cartesian arrows 
$\varphi : Y \to X$ it holds that $Y \in \Cl$ whenever $X \in \Cl$, i.e.\ iff 
the class $\Cl$ is stable under reindexing (w.r.t.\ $P$).
\MYenddef
\end{Def}

\begin{Def}\label{dfblclass}
Let $P : \X \to \B$ be a fibration. A stable class $\Cl \subseteq \Ob(\X)$ 
is called \emph{definable} iff for every $X \in P(I)$ there is a subobject 
$m_0 : I_0 \mono I$ such that
\begin{enumerate}
\item[\rm (1)]  $m_0^*X \in \Cl$ and
\item[\rm (2)]  $u : J \to I$ factors through $m_0$ 
                whenever $u^*X \in \Cl$. \MYenddef
\end{enumerate}
\end{Def}

Notice that  $u^*X \in \Cl$ makes sense as stable classes 
$\Cl \subseteq \Ob(\X)$ are necessarily closed under (vertical) isomorphisms.

\bigskip\noindent
{\bf Remark.}
If $\Cl \subseteq \Ob(\X)$ is stable then $\Cl$ is definable iff for every 
$X \in P(I)$ the elementary fibration $P_{\Cl,X} : \Cl_X \to \B/I$ is 
representable where $\Cl_X$ is the full subcategory of $\X/X$ on cartesian 
arrows and $P_{\Cl,X} = P_{/X}$ sends
\begin{diagram}[small]
Z & & \rTo^{\theta} & & Y \\
 & \rdTo_{\psi}^\cart & & \ldTo_{\varphi}^\cart & \\
 & & X & & \\
\end{diagram}
in $\Cl_X$ to
\begin{diagram}[small]
P(Z) & & \rTo^{P(\theta)} & & P(Y) \\
 & \rdTo_{P(\psi)} & & \ldTo_{P(\varphi)} & \\
 & & P(X) & & \\
\end{diagram}
in $\B/I$. Representability of the elementary fibration $P_{\Cl,X}$ then means 
that there is a cartesian arrow $\varphi_0 : X_0 \to X$ with $X_0 \in \Cl$ 
such that for every cartesian arrow $\psi : Z \to X$ with $Z \in \Cl$ there 
exists a unique arrow $\theta : Z \to X_0$ with $\varphi_0\circ\theta = \psi$.
This $\theta$ is necessarily cartesian and, therefore, already determined by 
$P(\theta)$. From uniqueness of $\theta$ it follows immediately that 
$P(\varphi_0)$ is monic. 

One also could describe the situation as follows. Every $X \in P(I)$ gives
rise to a subpresheaf $C_X$ of $\Yon_\B(P(X))$ consisting of the arrows 
$u : J \to I$ with $u^*X \in \Cl$. Then $\Cl$ is definable iff for every 
$X \in \X$ the presheaf $C_X$ is representable, i.e.\
\begin{diagram}[small]
\Cl_X & \rEmbed & \Yon_\B(P(X)) \\
\dTo_{\cong} & \ruEmbed_{\Yon_\B(m_X)} & \\
\Yon_\B(I_X) & & \\
\end{diagram}
where $m_X$ is monic as $\Yon_\B$ reflects monos.
\MYenddef

\medskip
Next we give an example demonstrating that \emph{definability is not vacuously
true}. Let $\C = \FinSet$ and $\X = \Fam(\C)$ fibered over $\Set$. Let 
$ \Cl \subseteq \Fam(\C)$ consist of those families $(A_i)_{i{\in}I}$ such 
that $\exists n \in \N.\, \forall i \in I. \,|A_i| \leq n$.
Obviously, the class $\Cl$ is stable but it is not definable as for the family
$$K_n = \{ i \in \N \mid i < n \} \qquad (n \in \N)$$
there is no greatest subset $P \subseteq \N$ with 
$\exists n \in \N.\, \forall i \in P. \,i < n$. Thus, the requirement of 
definability is non--trivial already when the base is $\Set$.

\medskip
For a fibration $P : \X \to \B$ one may consider the fibration
$P^{(\two)} : \X^{(\two)} \to \B$ of (vertical) arrows in $\X$. Thus, it is
clear what it means that a class $\MM \subseteq \Ob(\X^{(\two)})$ 
is ($P^{(\two)}$-)stable. Recall that $\Ob(\X^{(\two)})$ is the class of 
$P$--vertical arrows of $\X$. Then $\MM$ is stable iff for
all $\alpha : X \to Y$ in $\MM$ and cartesian arrows 
$\varphi : X' \to X$ and $\psi : Y' \to Y$ over the same arrow $u$ in $\B$ 
the unique vertical arrow $\alpha' : X' \to Y'$ with $\psi \circ \alpha' = 
\alpha \circ \varphi$ is in $\MM$, too. In other words 
$u^*\alpha \in \MM$ whenever $\alpha \in \MM$.

\begin{Def}\label{dfblclassmor}
Let $P : \X \to \B$ be a fibration and  $\MM$ a stable class of 
vertical arrows in $\X$. Then  $\MM$ is called \emph{definable} iff for
every $\alpha \in P(I)$ there is a subobject $m_0 : I_0 \mono I$ such that 
$m_0^*\alpha \in \MM$ and $u : J \to I$ factors through $m_0$ whenever 
$u^*\alpha \in \MM$. \MYenddef
\end{Def}

Next we discuss what is an appropriate notion of subfibration for a fibration
$P : \X \to \B$. Keeping in mind the analogy with $\Fam(\C)$ over $\Set$ we 
have to generalise the situation $\Fam(\Se) \subseteq \Fam(\C)$ where $\Se$ is
a subcategory of $\C$ which is \emph{replete} in the sense that 
${{\mathrm{Mor}}}(\Se)$ is stable under composition with isomorphisms in $\C$.
In this case the objects of $\Fam(\Se)$ are stable under reindexing and so 
are the vertical arrows of $\Fam(\Se)$. This motivates the following

\begin{Def}\label{sfdef}
Let $P : \X \to \B$. A \emph{subfibration} of $P$ is given by a subcategory
$\Z$ of $\X$ such that
\begin{enumerate}
\item[\rm (1)] cartesian arrows of $\X$ are in $\Z$ whenever their codomain
               is in $\Z$ (i.e.\ a cartesian arrow $\varphi : Y \to X$ is in 
               $\Z$ whenever $X \in \Z$) and
\item[\rm (2)] for every commuting square in $\X$
\begin{diagram}[small]
X' & \rTo^{\varphi}_\cart & X \\
\dTo^{f'} & & \dTo_{f} \\
Y' & \rTo_{\psi}^\cart & Y \\
\end{diagram}
 the morphism $f' \in \Z$ whenever $f \in \Z$ and 
$\varphi$ and $\psi$ are cartesian.\MYenddef
\end{enumerate}
\end{Def}

Notice that a subfibration $\Z$ of $P : \X \to \B$ is determined uniquely by 
${\mathcal V} \cap \Z$ where ${\mathcal V}$ is the class of vertical arrows
of $\X$ w.r.t.\ $P$. Thus, $\Z$ gives rise to \emph{replete} subcategories
$$S(I) = \Z \cap P(I) \qquad\qquad  (I \in \B)$$
which are stable under reindexing in the sense that for $u : J \to I$ in $\B$ 
\begin{itemize}\item[]
\begin{itemize}
\item[\rm ($S_{\mathrm{obj}}$)] \quad
$u^*X \in S(J)$ whenever $X \in S(I)$ \quad and
\item[\rm ($S_{\mathrm{mor}}$)] \quad
$u^*\alpha \in S(J)$ whenever $\alpha \in S(I)$.
\end{itemize}
\end{itemize}
On the other hand for every such such system $S = (S(I) \mid I \in \B)$ of
replete subcategories of the fibers of $P$ which is stable under reindexing
in the sense that the above conditions ($S_{\mathrm{obj}}$) and 
($S_{\mathrm{mor}}$) are satisfied we can define a subfibration $\Z$
of $P : \X \to \B$ as follows: $f : Y \to X$ in $\Z$ iff 
$X \in S(P(X))$ and $\alpha \in S(P(Y))$ where the diagram
\begin{diagram}[small]
Y & & \\
\dTo^{\alpha} & \rdTo^{f}& \\
Z & \rTo^\cart_{\varphi} & X \\
\end{diagram}
commutes and $\alpha$ is vertical and $\varphi$ is cartesian. Obviously, this
subcategory $\Z$ satisfies condition (1) of Definition~\ref{sfdef}. For 
condition (2) consider the diagram
\begin{diagram}[small]
X' & \rTo^{\varphi}_\cart & X \\
\dTo^{\alpha'} & & \dTo_{\alpha} \\
Z' & \rTo_\cart & Z \\
\dTo^{\theta'} & & \dTo_{\theta} \\
Y' & \rTo_{\psi}^\cart & Y \\
\end{diagram}
where $\alpha'$ and $\alpha$ are vertical, $\theta'$ and $\theta$ are 
cartesian and $f' = \theta' \circ \alpha'$ and $f = \theta \circ \alpha$ 
from which it is clear that $\alpha' \in S(P(X'))$ whenever 
$\alpha \in S(P(X))$ and, therefore, $f' \in \Z$ whenever $f \in \Z$.

\smallskip
Now we are ready to define the notion of definability for subfibrations.
\begin{Def}\label{subfibdfbl}
A subfibration $\Z$ of a fibration $P : \X \to \B$ is \emph{definable} iff
$\Cl = \Ob(\Z)$ and $\MM = {\mathcal{V}} \cap \Z$ are definable classes of 
objects and morphisms, respectively. \MYenddef
\end{Def}

Without proof we mention a couple of results illustrating the strength of 
definability. Proofs can be found in \cite{Bor} vol.2, Ch.8.
\begin{enumerate}
\item[(1)]
Locally small fibrations are closed under definable subfibrations. 
\item[(2)]
Let $P : \X \to \B$ be a locally small fibration over $\B$ with finite limits. 
Then the class of vertical isomorphisms of $\X$ is a definable subclass of 
objects of $\X^{(\two)}$ w.r.t.\ $P^{(\two)}$.
\item[(3)]
If, moreover, $\X$ has finite limits and $P$ preserves them then vertical 
monos (w.r.t.\ their fibers) form a definable subclass of objects of 
$\X^{(\two)}$ w.r.t.\ $P^{(\two)}$ and fiberwise terminal objects form a 
definable subclass of objects of $\X$ w.r.t.\ $P$.
\item[(4)]
Under the assumptions of (3) for every finite category $\D$ the fiberwise 
limiting cones of fiberwise $\D$--diagrams from a definable class.
\end{enumerate}

A pleasant consequence of (3) is that under the assumptions of (3) the class
of pairs of the form $(\alpha,\alpha)$ for some vertical arrow $\alpha$ form
a definable subclass of the objects of $\X^{(\Gra)}$ w.r.t.\ $P^{(\Gra)}$ 
where $\Gra$ is the category with two objects $0$ and $1$ and two nontrivial 
arrows from $0$ to $1$. In other words under the assumptions of (3) 
\emph{equality of morphisms is definable}.

On the negative side we have to remark that for most fibrations the class
$\{ (X,X) \mid X \in \Ob(X)\}$ is not definable as a subclass of $\X^{(2)}$ 
(where $2 = 1+1$ is the discrete category with 2 objects) simply because this 
class is not even stable (under reindexing). Actually, stability fails already
if some of the fibers contains different isomorphic objects! This observation
may be interpreted as confirming the old suspicion that equality of objects 
is somewhat ``fishy'' at least for non--split fibrations. 
Notice, however, that even for discrete split fibrations equality need not be 
definable which can be seen as follows. Consider a presheaf 
$A \in \widehat{\Gra}$ (where $\Gra$ is defined as in the previous paragraph)
which may most naturally be considered as a directed graph. Then for $A$
considered as a discrete split fibration equality of objects is definable if
and only if $A$ is subterminal, i.e.\ both $A(1)$ and $A(0)$ contain at most
one element. Thus, for interesting graphs equality of objects is not definable!

\smallskip
We conclude this section with the following positive result.

\begin{Thm}\label{descent}
Let $\B$ be a topos and $P : \X \to \B$ a fibration. If $\Cl$ is a definable 
class of objects of $\X$ (w.r.t.\ $P$) then for every cartesian arrow 
$\varphi : Y \to X$ over an epimorphism in $\B$ we have that $X \in \Cl$ iff 
$Y \in \Cl$ (often refered to as ``descent property'').
\end{Thm}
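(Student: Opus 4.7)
The plan is to show the two directions separately, with the forward direction being trivial and the content lying in the converse. The forward direction, $X \in \Cl \Rightarrow Y \in \Cl$, is immediate: $\varphi : Y \to X$ is cartesian, so $P$-stability of $\Cl$ (Definition~\ref{stabclass}) gives $Y \in \Cl$ directly. So the real work is the converse.

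For the converse, assume $Y \in \Cl$ with $\varphi : Y \to X$ cartesian over an epi $e : J \to I$. I would first invoke definability of $\Cl$ (Definition~\ref{dfblclass}) applied to $X \in P(I)$, obtaining a subobject $m_0 : I_0 \mono I$ such that $m_0^* X \in \Cl$ and such that any $u : K \to I$ with $u^* X \in \Cl$ factors through $m_0$. Since $\varphi$ is cartesian over $e$, it is a cartesian lifting of $X$ along $e$; all such liftings are vertically isomorphic, and stable classes are closed under vertical isomorphism (as noted in the Remark following Definition~\ref{dfblclass}), so $e^* X \in \Cl$. Therefore $e$ itself must factor through $m_0$: there is some $v : J \to I_0$ with $m_0 \circ v = e$.

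Now the topos hypothesis enters decisively. From $m_0 \circ v = e$ and $e$ epi, it follows that $m_0$ is epi (if $g \circ m_0 = h \circ m_0$ then $g \circ e = g \circ m_0 \circ v = h \circ m_0 \circ v = h \circ e$, forcing $g = h$). But $m_0$ is also monic by construction, and a topos is a balanced category, so $m_0$ is an isomorphism. The hard (or at least essential) part of the proof is exactly this appeal to balancedness: without it, $m_0$ could be a proper subobject through which $e$ happens to factor, and there would be no reason for $X$ itself (as opposed to $m_0^* X$) to lie in $\Cl$.

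Once $m_0$ is an isomorphism, the identity $\id_I = m_0 \circ m_0^{-1}$ factors through $m_0$, and by stability applied to the cartesian arrow $m_0^{-1 *}(m_0^* X) \to m_0^* X$ we obtain $\id_I^* X \in \Cl$. Since any cartesian lifting of $X$ along $\id_I$ is vertically isomorphic to $X$, closure of $\Cl$ under vertical isomorphism yields $X \in \Cl$, as required.
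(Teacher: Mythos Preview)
Your proof is correct and follows essentially the same route as the paper's: invoke definability to obtain the mono $m_0$, use $Y \in \Cl$ to factor $e$ through $m_0$, then appeal to balancedness of the topos to conclude $m_0$ is an isomorphism and hence $X \in \Cl$. Your version is simply more explicit about the intermediate steps (why $m_0$ becomes epi, and the final passage from $m_0^*X \in \Cl$ to $X \in \Cl$ via closure under vertical isomorphism).
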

\begin{proof} 
The implication from left to right follows from stability of $\Cl$. 

For the reverse direction suppose that $\varphi : Y \to X$ is cartesian over
an epi $e$ in $\B$. Then by definability of $\Cl$ we have $e = m \circ f$ 
where $m$ is a mono in $\B$ with $m^*X \in \Cl$. But as $e$ is epic and $m$ 
is monic and we are in a topos it follows that $m$ is an isomorphism and, 
therefore, $X \cong m^*X \in \Cl$.
\end{proof}

\bigskip
Notice that this Theorem can be generalised to regular categories $\B$
where, however, one has to require that $P(\varphi)$ is a regular epi 
(as a monomorphism $m$ in
a regular category is an isomorphism if $m \circ f$ is a regular epimorphism
for some morphism $f$ in $\B$).

\newpage

\section{Preservation Properties of Change of Base}

We know already that for an arbitrary functor $F : \A \to \B$ we have that 
$F^*P \in {\bf Fib}(\A)$ whenever $P \in {\bf Fib}(\B)$. The (2-)functor 
$F^* : {\bf Fib}(\B) \to {\bf Fib}(\A)$ is known as \emph{change of base 
along $F$}. In this section we will characterize those functors $F$ which by 
change of base along $F$ preserve ``all good properties of fibrations''.

\begin{Lem}\label{cbsm}
Let $F : \A \to \B$ be a functor. Then $F^* : \Fib(\B) \to \Fib(\A)$
preserves smallness of fibrations if and only if $F$ has a right adjoint $U$.
\end{Lem}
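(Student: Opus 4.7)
The plan is to exploit the externalization construction: a fibration is small when it is (equivalent to) $P_C$ for some internal category $C$ in the base, so the question reduces to whether the data of an internal category in $\B$ can be transported to $\A$ along $F^*$—precisely what a right adjoint $U$ to $F$ provides.

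For the direction $(\Leftarrow)$, assume $F \dashv U$. Since $U$ preserves all limits, applying it componentwise to an internal category $C = (C_0, C_1, d_0, d_1, i, m)$ in $\B$ yields an internal category $U(C) = (U(C_0), U(C_1), U(d_0), U(d_1), U(i), U(m))$ in $\A$ (the composition transports using the preservation of the pullback $C_1 \times_{C_0} C_1$ by $U$). I would then verify that the adjunction transpose $a : F(I) \to C_0 \leftrightarrow \hat{a} : I \to U(C_0)$, together with its naturality, extends to a bijection between the morphisms of $F^*\underline{C}$ (maps $F(J) \to C_1$ over a base arrow with prescribed vertices) and those of $\underline{U(C)}$, producing an isomorphism $F^* P_C \cong P_{U(C)}$ of fibrations over $\A$. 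Hence $F^*$ preserves smallness.

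For $(\Rightarrow)$, assume $F^*$ preserves smallness, and test the hypothesis on the discrete small fibrations $P_B = \partial_0 : \B/B \to \B$ for $B \in \B$. The pullback $F^* P_B$ is again discrete, with fiber the set $\B(F(I), B)$ over $I \in \A$. By assumption $F^* P_B$ is small, hence (equivalent to) $P_C$ for some internal $C$ in $\A$; discreteness of its fibers forces $C$ to be a discrete internal category, i.e.\ a single object $U(B) \in \A$, so that $F^* P_B \cong \partial_0 : \A/U(B) \to \A$. Reading off fibers gives a bijection $\A(I, U(B)) \cong \B(F(I), B)$, natural in $I$ because the identification is a cartesian functor over $\A$. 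To make $U$ functorial and obtain naturality in $B$, a morphism $g : B \to B'$ yields the cartesian postcomposition functor $\Sigma_g : \B/B \to \B/B'$ over $\B$; applying $F^*$ and using the identifications just obtained produces a cartesian functor $\A/U(B) \to \A/U(B')$ over $\A$, which by a standard Yoneda argument corresponds to a unique morphism $U(g) : U(B) \to U(B')$. Functoriality of $U$ and naturality of the hom-bijection in $B$ then follow from $2$-functoriality of $F^*$ and of $\Sigma_{(-)}$, exhibiting $F \dashv U$.

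The principal obstacle is the key step in $(\Rightarrow)$ that a small fibration with discrete fibers arises from a discrete internal category: if $P_C$ has discrete fibers, the fiber over $C_0$ has vertical endomorphisms of $\id_{C_0}$ classified by elements $C_0 \to C_1$ with $d_0 = d_1 = \id_{C_0}$, and discreteness forces this classifying set to be a singleton, which together with the unit laws collapses $C_1 \cong C_0$ via $i$ and reduces $C$ to its object-of-objects. Once this reduction is in place, the remaining identifications and naturality checks are routine Yoneda bookkeeping.
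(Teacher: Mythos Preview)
Your approach is essentially the same as the paper's: both directions proceed via the identification of small fibrations with externalisations of internal categories, and the reverse direction tests smallness on the representable discrete fibrations $P_B = \partial_0 : \B/B \to \B$ to extract the representing object $U(B)$. The paper's proof is terser than yours---it simply asserts that $F^*P_I \cong \partial_0 : F{\downarrow}I \to \A$ is small iff it is of the form $P_{U(I)}$ for some object $U(I)$---whereas you spell out the reduction and the functoriality of $U$.

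One small correction to your sketch of the ``principal obstacle'': looking at the fibre of $P_C$ over $C_0$ and at vertical endomorphisms of $\id_{C_0}$ only tells you that the unique section of $(d_0,d_1)$ over the diagonal is $i$, which by itself does not collapse $C_1$ onto $C_0$. The cleaner argument works in the fibre over $C_1$: there $\id_{C_1} : C_1 \to C_1$ is a vertical morphism from the object $d_0$ to the object $d_1$ (since $d_0 \circ \id_{C_1} = d_0$ and $d_1 \circ \id_{C_1} = d_1$), so discreteness forces $d_0 = d_1$ and $\id_{C_1}$ to equal the identity morphism on that object, namely $i \circ d_0$. Together with $d_0 \circ i = \id_{C_0}$ this makes $i : C_0 \to C_1$ an isomorphism, as you want.
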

\begin{proof}
Suppose that $F$ has a right adjoint $U$. If $C \in \cat(\B)$ then
$F^*P_C$ is isomorphic to $P_{U(C)}$ where $U(C)$ is the image of $C$ under
$U$ which preserves all existing limits as it is a right adjoint.

Suppose that $F^*$ preserves smallness of fibrations. Consider for $I \in \B$
the small fibration $P_I = \underline{I} = \partial_0 : \B/I \to \B$. 
Then $F^*P_I$ is isomorphic to $\partial_0 : F{\downarrow}I \to \B$ which
is small iff there exists $U(I) \in \A$ such that $F^*P_I \cong P_{U(I)}$,
i.e.\ $\B(F(-),I) \cong \A(-,U(I))$. Thus, if $F^*$ preserves smallness
of fibrations then for all $I \in \B$ we have $\B(F(-),I) \cong \A(-,U(I))$
for some $U(I) \in \A$, i.e.\ $F$ has a right adjoint $U$.
\end{proof}

\bigskip
As a consequence of Lemma~\ref{cbsm} we get that for $u : I \to J$ in $\B$
change of base along $\Sigma_u : \B/I \to \B/J$ preserves smallness of 
fibrations iff $\Sigma_u$ has a right adjoint $u^* : \B/J \to \B/I$, i.e.\ 
pullbacks in $\B$ along $u$ do exist. Analogously, change of base along 
$\Sigma_I : \B/I \to \B$ preserves smallness of fibrations iff $\Sigma_I$ has 
a right adjoint $I^*$, i.e.\ for all $K \in \B$ the cartesian product of $I$
and $K$ exists. One can show that change of base along $u^*$ and $I^*$
is right adjoint to change of base along $\Sigma_u$ and $\Sigma_I$, 
respectively. Thus, again by Lemma~\ref{cbsm} change of base along $u^*$ 
and $I^*$  preserves smallness of fibrations iff $u^*$ and $I^*$ have right
adjoints $\Pi_u$ and $\Pi_I$, respectively.

\bigskip
From now on we make the reasonable assumption that all base categories have 
pullbacks as otherwise their fundamental fibrations would not exist.

\begin{Lem}\label{cbis}
Let $\A$ and $\B$ be categories with pullbacks and $F : \A \to \B$ an 
arbitrary functor. Then the following conditions are equivalent
\begin{enumerate}
\item[\rm (1)] $F$ preserves pullbacks
\item[\rm (2)] $F^* : \Fib(\B) \to \Fib(\A)$ preserves the property
               of having internal sums
\item[\rm (3)] $\partial_1 :  \B{\downarrow}F \to \A$ has internal sums.
\end{enumerate}
\end{Lem}
\begin{proof}
The implications (1) $\Rightarrow$ (2) and (2) $\Rightarrow$ (3) are easy.
The implication  (3) $\Rightarrow$ (1) can be seen as follows. Suppose that
the bifibration $\partial_1 :  \B{\downarrow}F \to \A$ satisfies BCC then
for pullbacks
\begin{diagram}[small]
L \SEpbk & \rTo^{q} & K \\
\dTo^{p} &          & \dTo_{v} \\
J & \rTo_{u} & I \\
\end{diagram}
in $\A$ we have 
\begin{diagram}[small]
F(L) \SEpbk &  &  \rTo^{F(q)}  &  &  F(K)  &  &  \\
   & \rdDashto_{\alpha}^{\cong} & & & \dEqual & \rdEqual & \\ 
\dEqual & &  \SEpbk & \rTo & \HonV & & F(K) \\
& & \dTo & & \dEqual & & \\
F(L) & \hLine & \VonH & \rTo^{F(q)} & F(K) & & \dTo_{F(v)} \\
  & \rdTo_{F(p)} & & & & \rdTo^{F(v)} & \\
  &            & F(J)   &   &  \rTo_{F(u)} & & F(I) \\
\end{diagram}
As back and front face of the cube are cartesian arrows and the right face is 
a cocartesian arrow it follows from the postulated BCC for 
$\partial_1 :  \B {\downarrow} F \to \A$ that the left face is a cocartesian
arrow, too. Thus, the map $\alpha$ is an isomorphism from which it follows that
\begin{diagram}[small]
F(L) \SEpbk & \rTo^{F(q)} & F(K) \\
\dTo^{F(p)} &          & \dTo_{F(v)} \\
F(J) & \rTo_{F(u)} & F(I) \\
\end{diagram}
is a pullback as required.
\end{proof} 

\bigskip
Thus, by the previous two lemmas a functor $F : \A \to \B$ between categories
with pullbacks necessarily has to preserve pullbacks and have a right adjoint
$U$ whenever $F^*$ preserves ``all good properties of fibrations'' as being 
small and having internal sums certainly are such ``good properties''.

Actually, as pointed out by B\'enabou in his 1980 Louvain-la-Neuve lectures
\cite{Ben} these requirements for $F$ are also sufficient for $F^*$ preserving 
the following good properties of fibrations 
\begin{itemize}
\item (co)completeness
\item smallness
\item local smallness
\item definability
\item well--poweredness.
\end{itemize}
We will not prove all these claims but instead discuss \emph{en detail}
preservation of local smallness. Already in this case the proof is 
paradigmatic and the other cases can be proved analogously.

\begin{Lem}\label{lspres}
If $F : \A \to \B$ is a functor with right adjoint $U$ and $\A$ and $\B$ 
have pullbacks then change of base along $F$ preserves local smallness 
of fibrations.
\end{Lem}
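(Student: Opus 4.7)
The plan is to show that whenever $P : \X \to \B$ is locally small, the pulled-back fibration $F^*P : F^*\X \to \A$ is locally small, by transferring representing objects from $\B$ to $\A$ through the adjunction $F \dashv U$. Fix $I' \in \A$ and $X', Y' \in (F^*P)(I')$. Since $F^*\X$ is a pullback in $\Cat$, the fibre $(F^*P)(I')$ is isomorphic to $P(F(I'))$, so $X', Y'$ correspond to objects $X, Y \in P(F(I'))$; similarly cartesian arrows in $F^*\X$ over $u : J' \to I'$ correspond to cartesian arrows in $\X$ over $F(u)$, and vertical arrows in $F^*\X$ correspond to vertical arrows in $\X$. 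Consequently the hom-fibration $\Hom^{F^*P}_{I'}(X',Y')$ over $\A/I'$ is entirely determined by $\Hom_{F(I')}(X,Y)$ over $\B/F(I')$.

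First I would invoke local smallness of $P$ to pick a terminal object $(\varphi_0, f_0)$ in $\Hom_{F(I')}(X,Y)$ with factorisation $f_0 = \psi_0 \circ \mu_{X,Y}$ and classifying morphism $d := P(\varphi_0) : \homo_{F(I')}(X,Y) \to F(I')$. By the universal property recalled after Definition~\ref{locsmalldef}, morphisms $v : F(J') \to \homo_{F(I')}(X,Y)$ in $\B$ satisfying $d \circ v = F(u)$ correspond bijectively, via $v \mapsto v^*\mu_{X,Y}$, to vertical arrows $F(u)^*X \to F(u)^*Y$ in $P(F(J'))$, i.e.\ to vertical arrows $u^*X' \to u^*Y'$ in $(F^*P)(J')$.

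Next I would transpose via the adjunction $F \dashv U$, with unit $\eta : \id_\A \Rightarrow UF$: such a $v$ corresponds to $\tilde v : J' \to U(\homo_{F(I')}(X,Y))$ in $\A$, and by naturality of the transposition the equation $d \circ v = F(u)$ translates to $U(d) \circ \tilde v = \eta_{I'} \circ u$. Using that $\A$ has pullbacks, form the pullback of $U(d) : U(\homo_{F(I')}(X,Y)) \to U F(I')$ along $\eta_{I'} : I' \to U F(I')$ in $\A$, obtaining an object $Q$ together with a projection $\sigma : Q \to I'$ and a second projection $\pi : Q \to U(\homo_{F(I')}(X,Y))$. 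Then factorisations $u = \sigma \circ w$ in $\A$ correspond, via the universal property of this pullback, to pairs $(w, \tilde v)$ with $U(d) \circ \tilde v = \eta_{I'} \circ u$, hence bijectively to vertical arrows $u^*X' \to u^*Y'$ in $(F^*P)(J')$; and this bijection is natural in $u \in \A/I'$. Thus $\sigma$, equipped with the cartesian lifting of $X'$ along $\sigma$ and the vertical arrow to $Y'$ corresponding via the above chain to the identity $\id_\sigma$, is a terminal object of $\Hom^{F^*P}_{I'}(X',Y')$, witnessing local smallness of $F^*P$.

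The main obstacle I foresee is the bookkeeping needed to extract from the arrow $\sigma : Q \to I'$ an actual terminal \emph{span} in $\Hom^{F^*P}_{I'}(X',Y')$ and to verify the universal property as a fibration over $\A/I'$; this reduces to combining the triangular identity for $F \dashv U$ with the universal property of $(\varphi_0, f_0)$ pulled back along the unit $\eta_{I'}$. Conceptually the argument is merely that representability of a presheaf on $\B/F(I')$ of the form $\B(F(-), Z)$ transfers to representability of the corresponding presheaf on $\A/I'$ via the right adjoint $U$ combined with a pullback along $\eta_{I'}$, so once the translation between the hom-fibrations of $P$ and of $F^*P$ is in place, the rest is a routine, if somewhat notation-heavy, naturality verification.
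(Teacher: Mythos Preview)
Your proposal is correct and follows essentially the same route as the paper: both transpose the representing object $d:\homo_{FI}(X,Y)\to FI$ through the adjunction $F\dashv U$ by forming the pullback of $U(d)$ along the unit $\eta_{I}$ in $\A$, and then check that the resulting arrow over $I$ carries a terminal span for $\Hom^{F^*P}_I(X,Y)$. The only difference is presentational: the paper constructs the terminal span explicitly (choosing a cartesian lift $\theta_1$ over $\varepsilon_H\circ F(h)$ and setting $\varphi_1=\varphi_0\circ\theta_1$, $\psi_1=\psi_0\circ\theta_1$) and verifies existence and uniqueness of the mediating arrow by hand, whereas you phrase the same verification as a bijection between sections of $\sigma$ and vertical arrows $u^*X'\to u^*Y'$; the triangle identity you anticipate needing is exactly what the paper invokes to compute $P(\varphi_1)=F\widetilde{d}$.
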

\begin{proof}
Suppose $P \in \Fib(\B)$ is locally small. Let $X,Y \in P(FI)$ and
\begin{diagram}[small]
       &   &                  X  \\
      & \ruTo^{\varphi_0}_\cart & \\
d^*X   &    &  \\ 
& \rdTo_{\psi_0} & \\
   &    & Y
\end{diagram}
with $d = P(\varphi_0) = P(\psi_0) : \hom_{FI}(X,Y) \to FI$ be the terminal 
such span. Then consider the pullback (where we write $H$ for $\hom_{FI}(X,Y)$)
\begin{diagram}[small]
\widetilde{H} \SEpbk  &  \rTo^{h}  & UH \\
\dTo^{\widetilde{d}} & & \dTo_{Ud}  \\
I & \rTo_{\eta_I} & UFI \\
\end{diagram}
in $\A$ where $\eta_I$ is the unit of $F \dashv U$ at $I$. Then there is a 
natural bijection between $v : u \to \widetilde{d}$ in $\A/I$ and 
$\widehat{v} : F(u) \to d$ in $\B/FI$ by sending $v$ to 
$\widehat{v} = \varepsilon_H \circ F(h) \circ F(v)$ 
where $\varepsilon_H$ is the counit of $F \dashv U$ at $H$.

Let $\theta_1$ be a $P$-cartesian arrow over 
$\varepsilon_H \circ F(h) : F\widetilde{H} \to H$ to $d^*X$. Let 
$\varphi_1 := \varphi_0 \circ \theta_1$ and $\psi_1 := \psi_0 \circ \theta_1$
which are both mapped by $P$ to
\[ d \circ \varepsilon_H \circ F(h)  = 
   \varepsilon_{FI} \circ F(Ud) \circ F(h) =
   \varepsilon_{FI} \circ F\eta_I \circ F\widetilde{d} = F\widetilde{d}
\]
We show now that the span
$\bigl((\widetilde{d},\varphi_1),((\widetilde{d},\psi_1)\bigr)$ is a terminal 
object in the category $\Hom_I(X,Y)$ for $F^*P$.
For that purpose suppose that $u : J \to I$ in $\A$ and $\varphi : Z \to X$, 
$\psi : Z \to Y$ are arrows over $u$  w.r.t.\ $F^*P$ with $\varphi$ cartesian
w.r.t.\ $F^*P$. 

There exists a unique $P$-cartesian arrow $\theta_2$ with
$\varphi = \varphi_0 \circ \theta_2$ and $\psi = \psi_0 \circ \theta_2$. 
For $\widehat{v} := P(\theta_2)$ we have $d \circ \widehat{v} = F(u)$ 
as $P(\varphi) = F(u) = P(\psi)$. Then there exists a unique map 
$v : u \to \widetilde{d}$ with 
$\varepsilon_H \circ F(h) \circ F(v) = \widehat{v}$. 
Now let $\theta$ be the unique $P$-cartesian arrow over $F(v)$ 
with $\theta_2 = \theta_1 \circ \theta$ which exists 
as $P(\theta_1) = \varepsilon_H \circ F(h)$ and $P(\theta_2) = \widehat{v} =
\varepsilon_H \circ F(h) \circ F(v)$. Thus, we have $v : u \to \widetilde{d}$
and a cartesian arrow $\theta$ with 
$P(\theta) = F(v)$, $\varphi_1 \circ \theta = \varphi$
and $\psi_1 \circ \theta = \psi$ as desired.

For uniqueness of $(v,\theta)$ with this property suppose that
$v' : u \to \widetilde{d}$ and $\theta'$ is a cartesian arrow with
$P(\theta') = F(v')$, $\varphi_1 \circ \theta' = \varphi$ and 
$\psi_1 \circ \theta' = \psi$. 
From the universal property of $(\varphi_0,\psi_0)$ it follows that
$\theta_2 = \theta_1 \circ \theta'$. Thus, we have
\[ \widehat{v} = P(\theta_2) = P(\theta_1) \circ P(\theta') = 
   \varepsilon_H \circ F(h) \circ P(\theta') =
   \varepsilon_H \circ F(h) \circ F(v')
\]
from which it follows that $v = v'$ as by assumption we also have 
$\widetilde{d} \circ v' = u$. From $\theta_2 = \theta_1 \circ \theta'$ and 
$P(\theta') = F(v') = F(v)$ it follows that $\theta' = \theta$ because 
$\theta_1$ is cartesian and we have $\theta_2 = \theta_1 \circ \theta$ and
$P(\theta') = F(v)$ due to the construction of $\theta$.
\end{proof}

\bigskip
Analogously one shows that under the same premisses as in Lemma~\ref{lspres}
the functor $F^*$ preserves well--poweredness of fibrations and that, for
fibrations $P : \X \to \B$ and definable classes $\Cl \subseteq \X$,
the class 
$$F^*(\Cl) := \{(I,X) \mid X \in P(FI) \wedge X \in \Cl \}$$ 
is definable w.r.t.\ $F^*P$.

\bigskip\noindent
{\bf Warning.} If $F \dashv U : \E \to \Se$ is an unbounded geometric morphism
then $P_\E = \partial_1 : \E^\two \to \E$ has a generic family 
though $ P_F \cong F^*P_\E$ does not have a generic family
as otherwise by Theorem~\ref{bgmthm} (proved later on) the geometric morphism
$F \dashv U$ were bounded! Thus, the property of having a generating family is
not preserved by change of base along functors that preserve finite limits
and have a right adjoint. In this respect the property of having a small 
generating family is not as ``good'' as the other properties of fibrations 
mentioned before which are stable under change of base along functors 
that preserve pullbacks and have a right adjoint. \MYenddef

\bigskip
The moral of this section is that functors $F$ between categories with 
pullbacks preserve ``all good'' (actually ``most good'') properties of 
fibrations by change of base along $F$ if and only if $F$ preserves 
pullbacks and has a right adjoint. In particular, this holds for inverse image
parts of geometric morphisms, i.e.\ finite limit preserving functors having a 
right adjoint. But there are many more examples which are also important. 
Let $\B$ be a category with pullbacks and $u : I \to J$ a morphism in $\B$ then
$\Sigma_u : \B/I \to \B/J$ preserves pullbacks and has a right adjoint, namely
the pullback functor $u^* : \B/J \to \B/I$, but $\Sigma_u$ preserves 
terminal objects if and only if $u$ is an isomorphism. Notice that for 
$I \in \B$ the functor $\Sigma_I = \partial_0 : \B/I \to \B$ always preserves 
pullbacks but has a right adjoint $I^*$ if and only if for all $K \in \B$ the 
cartesian product of $I$ and $K$ exists. Thus, for a category $\B$ with 
pullbacks the functors $\Sigma_I : \B/I \to \B$ preserve ``all good 
properties'' of fibrations by change of base if and only if $\B$ has all 
binary products (but not necessarily a terminal object!). 

A typical such example is the full subcategory $\F$ of $\Set^{\N}$ on those 
$\N$-indexed families of sets which are empty for almost all indices. Notice,
however, that every slice of $\F$ actually is a (Grothendieck) topos. This
$\F$ is a typical example for B\'enabou's notion of \emph{partial topos}, 
i.e.\ a category with binary products where every slice is a topos.
The above example can be generalised easily. Let $\E$ be some topos and 
$F$ be a (downward closed) subset of $\Sub_\E(1_\E)$ then $\E_{/F}$, the full
subcategory of $\E$ on those objects $A$ whose terminal projection factors
through some subterminal in $F$, is a partial topos whose subterminal objects
form a full reflective subcategory of $\E_{/F}$ and have binary infima. 

\bigskip\noindent
{\bf Exercise.} Let $\B$ be an arbitrary category. Let ${{\mathrm{st}}}(\B)$
be the full subcategory of $\B$ on \emph{subterminal} objects, i.e.\ objects
$U$ such that for every $I \in \B$ there is at most one arrow $I \to U$ 
(possibly none!). We say that $\B$ \emph{has supports} iff 
${{\mathrm{st}}}(\B)$ is a (full) reflective subcategory of $\B$.

Show that for a category $\B$ having pullbacks and supports it holds that
$\B$ has binary products iff ${{\mathrm{st}}}(\B)$ has binary meets!

\newpage

\section{Adjoints to Change of Base}

We first show that for a functor $F : \A \to \B$ there is a left (2-)adjoint
$\coprod_F$ and a right (2-)adjoint $\prod_F$ to $F^* : \Fib(\B) \to \Fib(\A)$,
i.e.\ change of base along $F$.

The right (2-)adjoint $\prod_F$ is easier to describe as its behaviour
is prescribed by the fibered Yoneda Lemma as 
\[ \prod_F(P)(I) \simeq \Fib(\B)(\underline{I},\prod_F(P)) \simeq
  \Fib(\A)(F^*\underline{I},P)
\]
for $I\in\B$. Accordingly, one verifies easily that the right adjoint 
$\prod_F$ to $F^*$ is given by
$$\prod_F(P)(I) = \Fib(\A)(F^*\underline{I},P) \qquad\quad
  \prod_F(P)(u) = \Fib(\A)(F^*\underline{u},P)$$
for objects $I$ and morphisms $u$ in $\B$. Obviously, as expected if $\B$ is 
terminal then $\prod P=\prod_F P$ is the category of all cartesian sections 
of $P$.

Notice further that in case $F$ has a right adjoint $U$ then 
$F^*\underline{I} \cong \underline{UI}$ and, accordingly, we have 
$\prod_F \simeq U^*$.

We now turn to the description of $\coprod_F$. We consider first the simpler
case where $\B$ is terminal. Then one easily checks that for a fibration
$P : \X\to\A$ the sum $\coprod P=\coprod_F P$ is given by $\X[\Cart(P)^{-1}]$,
i.e.\ the category obtained from $\X$ be freely inverting all cartesian arrows.
This we can extend to the case of arbitrary functors $F$ as follows. 
For $I\in\B$ consider the pullback $P_{(I)}$ of $P$ along 
$\partial_1 : I/F \to \A$
\begin{diagram}[small]
\X_{(I)} \SEpbk &  \rTo & \X \\
\dTo^{P_{(I)}} & & \dTo_{P} \\
I/F & \rTo_{\partial_1} & \A
\end{diagram}
and for $u : J \to I$ in $\B$ let $G_u$ the mediating cartesian functor 
from $P_{(I)}$ to $P_{(J)}$ over $\underline{u}/F$ (precomposition by $u$) 
in the diagram
\begin{diagram}[small]
\X_{(I)} \SEpbk & \rTo^{G_u} & \X_{(J)} \SEpbk & \rTo & \X \\
\dTo^{P_{(I)}} & & \dTo^{P_{(J)}} & & \dTo_{P} \\
I/F & \rTo_{\underline{u}/F} & J/F & \rTo_{\partial_1} & \A
\end{diagram}
bearing in mind that $\partial_1 \circ \underline{u}/F = \partial_1$. 
Now the reindexing functor $\coprod_F(u) : \coprod_F(I) \to \coprod_F(J)$ 
is the unique functor $H_u$ with
\begin{diagram}
\X_{(I)}[\Cart(P_{(I)})^{-1}] & \rTo^{H_u} & \X_{(J)}[\Cart(P_{(J)})^{-1}] \\
\uTo^{Q_I} & & \uTo_{Q_J} \\
\X_{(I)} & \rTo_{G_u} & \X_{(J)}
\end{diagram}
which exists as $Q_J \circ G_u$ inverts the cartesian arrows of $X_{(I)}$.

Notice, however, that due to the non--local character\footnote{Here we mean 
that $\X_{(I)}[\Cart(P_{(I)})]$ and $\Cart(F^*\underline{I},P)$ do not depend
only on $P(I)$, the fiber of $P$ over $I$. This phenomenon already turns up
when considering reindexing of presheaves which in general for does not 
preserve exponentials for example.} of the construction of $\coprod_F$ 
and $\prod_F$ in general the Beck--Chevalley Condition does not hold 
for $\coprod$ and $\prod$. 

As for adjoint functors $F \dashv U$ we have $\prod_F \simeq U^*$ it follows
that $F^* \simeq \coprod_U$.

\bigskip
Now we will consider change of base along distributors. 
Recall that a distributor $\phi$ from $\A$ to $\B$ (notation 
$\phi:\A\nrightarrow\B$) is a functor from $\B^\op{\times}\A$ to $\Set$, or 
equivalently, a functor from $\A$ to $\widehat{\B} = \Set^{\B^\op}$. 
Of course, up to isomorphism distributors from $\A$ to $\B$ are in 
1--1--correspondence with cocontinuous functors from $\widehat{\A}$ to 
$\widehat{\B}$ (by left Kan extension along $\Yon_\A$). 
Composition of distributors is defined in terms of composition of the 
associated cocontinuous functors.\footnote{As the correspondence between 
distributors and cocontinuous functors is only up to isomorphism composition 
of distributors is defined also only up to isomorphism. That is the reason 
why distributors do form only a bicategory and not an ordinary category!} 
For a functor $F:\A\to\B$ one may define a distributor 
$\phi_F : \A\nrightarrow\B$ as $\phi_F(B,A) = \B(B,FA)$ 
and a distributor $\phi^F : \B\nrightarrow\A$ in the reverse direction as 
$\phi^F(A,B) = \B(FA,B)$. Notice that $\phi_F$ corresponds
to $\Yon_\B \circ F$ and $\phi^F$ is right adjoint to $\phi_F$.

For a distributor $\phi : \A\to\widehat{\B}$ change of base along $\phi$
is defined as follows 
(identifying presheaves over $\B$ with their corresponding discrete fibrations)
$$\phi^*(P)(I) = \Fib(\B)(\phi(I),P) \qquad\qquad
  \phi^*(P)(u) = \Fib(\B)(\phi(u),P)$$
for objects $I$ and morphisms $u$ in $\A$. From this definition one easily 
sees that for a functor $F : \A\to\B$ change of base along $\phi^F$ coincides
with $\prod_F$, i.e.\ we have
$$(\phi^F)^*P \cong \prod_F P$$
for all fibrations $P$ over $\A$. 

This observation allows us to reduce change of base along distributors 
to change of base along functors and their right adjoints. The reason is 
that every distributor $\phi:\A\nrightarrow\B$ can be factorised as a 
composition of the form $\phi^G\phi_F$.\footnote{Let $F$ and
$G$ be the inclusions of $\A$ and $\B$, respectively, into the \emph{display
category} $\D_\phi$ of $\phi$ which is obtained by adjoining to the disjoint 
union of $\A$ and $\B$ the elements of $\phi(B,A)$ as morphisms from $B$ to 
$A$ and defining $u{\circ}x{\circ}v$ as $\phi(v,u)(x)$ for $u:A\to A'$, 
$x{\in}\phi(B,A)$ and $v : B'\to B$.} Thus, we obtain
$$\phi^* = (\phi^G\phi_F)^* \simeq (\phi_F)^*(\phi^G)_* \simeq F^*\prod_G$$
as $(\phi_F)^* \simeq F^*$ and $(\phi^G)^* \simeq \prod_G$ and change of base 
along distributors is functorial in a contravariant way (i.e.\ 
$(\phi_2\phi_1)^* \simeq \phi_1^*\phi_2^*$). Thus $\phi^*$ has a left adjoint
$\coprod_\phi = G^*\coprod_F$.

One might ask whether for all distributors $\phi:\A\nrightarrow\B$ there also
exists a right adjoint $\prod_\phi$ to $\phi^*$. Of course, if $\prod_\phi$
exists then by the fibered Yoneda Lemma it must look as follows
$$(\prod_\phi P)(I) \simeq \Fib(\B)(\underline{I},\prod_\phi P) \simeq
  \Fib(\A)(\phi^*\underline{I},P)$$
from which it is obvious that it restricts to an adjunction between 
$\widehat{\A}$ and $\widehat{\B}$ as $\Fib(\A)(\phi^*\underline{I},P)$ is
discrete whenever $P$ is discrete. Thus, a necessary condition for the 
existence of $\prod_\phi$ is that the functor $\phi^* : \widehat{\B} \to
\widehat{\A}$ is cocontinuous. As $\phi^* : \widehat{\B} \to \widehat{\A}$ is
right adjoint to $\widehat{\phi} : \widehat{\A} \to \widehat{\B}$, the left
Kan extension of $\phi : \A\to\widehat{\B}$ along $\Yon_\B$, the distributor 
$\phi$ has a right adjoint if and only if $\phi^*$ is cocontinuous.
Thus, a necessary condition for the existence of $\prod_\phi$ is the
existence of a right adjoint distributor to $\phi$. This, however, is known to
be equivalent (see e.g.\ \cite{Bor} vol.1) to the requirement that $\phi(A)$ 
is a retract of a representable presheaf for all objects $A$ in $\A$. In case 
$\B$ is Cauchy complete, i.e.\ all idempotents in $\B$ split, this means that 
up to isomorphism $\phi$ is of the form $\phi_F$ for some functor $F:\A\to\B$ 
and then $\prod_F$ provides a right adjoint to $\phi^*$. As $\Fib(\B)$ is 
equivalent to $\Fib({{\mathrm{IdSp}}}(\B))$, where ${{\mathrm{IdSp}}}(\B)$ is 
obtained from $\B$ by splitting all idempotents, one can show that $\phi^*$ 
has a right adjoint $\prod_\phi$ whenever $\phi$ has a right adjoint distributor. Thus, for a distributor $\phi$ the change of base functor $\phi^*$ has a 
right adjoint $\prod_\phi$ if and only if $\phi$ has a right adjoint 
distributor, i.e.\ if and only if $\phi$ is essentially a functor. An example 
of a distributor $\phi$ where $\phi^*$ does not have a right adjoint can be 
obtained as follows. Let $\A$ be a terminal category and $\B$ a small category
whose splitting of idempotents does not have a terminal object. 
Let $\phi : \A\to\widehat{\B}$
select a terminal presheaf from $\widehat{\B}$. Then $\phi^*$ amounts to the 
global sections functor on $\widehat{\B}$ which, however, does not have a 
right adjoint as otherwise ${{\mathrm{IdSp}}}(\B)$ would have a 
terminal object.

\newpage
\section{Finite Limit Preserving Functors as Fibrations}

If $F : \B \to \C$ is a finite limit preserving functor between categories
with finite limits then the fibration
$$P_F \equiv F^*P_{\C} = \partial_1 : \C{\downarrow}F \to \B$$
satisfies the following conditions which later will turn out as sufficient
for reconstructing the functor $F : \B \to \C$ up to equivalence.

\begin{enumerate}
\item[(1)] $\C{\downarrow}F$ has finite limits and 
           $P_F$ preserves them. 
\item[(2)] $P_F$ has internal sums which are \emph{stable} in the sense 
that cocartesian arrows are stable under pullbacks along arbitrary morphisms.
\item[(3)] The internal sums of $P_F$ are \emph{disjoint} in the
sense that for every cocartesian arrow $\varphi : X \to Y$ the fiberwise 
diagonal $\delta_{\varphi}$ is cocartesian, too.
\begin{diagram}
  &            & & & \\
  & \rdTo~{\delta_{\varphi}} \rdEqual(2,4) \rdEqual(4,2) & & & \\
  &            &   \SEpbk & \rTo_{\pi_2} &   X\\
  &            &  \dTo_{\pi_1}     &      &  \dTo_{\varphi} \\
  &            &  X        & \rTo_{\varphi} &   Y \\
\end{diagram} 
\end{enumerate}

We refrain from giving the detailed verifications of properties (1)--(3).
Instead we recall some few basic facts needed intrinsically when verifying
the claims.

Notice that a morphism
\begin{diagram}[small]
A & \rTo^{f} & B \\
\dTo^{a} & & \dTo_{b}  \\
FI & \rTo_{Fu} & FJ
\end{diagram}
in $\C{\downarrow}F$ over $u : I \to J$ is cocartesian iff $f$ is an isomorphism.

Notice that pullbacks in $\C{\downarrow}F$ are given by
\begin{diagram}[small]
D  &  &  \rTo^{\pi_2}  &  &  C  &  &  \\
   & \rdTo_{\pi_1} & & & \vLine_{c} & \rdTo^{g} & \\ 
\dTo^{d} & & B & \rTo^{f} & \HonV & & A \\
& & \dTo^{b} & & \dTo & & \\
FL & \hLine & \VonH & \rTo^{Fq} & FK & & \dTo_{a} \\
  & \rdTo_{Fp} & & & & \rdTo^{Fv} & \\
  &            & FJ   &   &  \rTo_{Fu} & & FI \\
\end{diagram}
where the top square is a pullback and the bottom square is the image of a 
pullback under $F$. From this is is clear that $\partial_0,\partial_1 :
\C{\downarrow}F \to \B$ both preserve pullbacks. Condition (3) follows from
preservation of pullbacks by $\partial_0$ and the above characterisation of 
cocartesian arrows.

Now based on work by J.-L.~Moens from his Th\'ese \cite{Moe} we will
characterise those fibrations over a category $\B$ with finite limits 
which up to equivalence are of the form $P_F$ for some finite limit 
preserving functor $F$ from $\B$ to a category $\C$ with finite limits. 
It will turn out that the three conditions above are necessary and sufficient.
In particular, we will show that the functor $F$ can be recovered from 
$P_F$ in the following way. First observe that 
$\partial_0 : \C{\downarrow}F \to \C$ is isomorphic to the functor
$\DDelta : \C{\downarrow}F \to \C{\downarrow}F1 \cong \C$ given by
\begin{diagram}[small]
X & \rTo^{\varphi_X}_\cocart & \DDelta(X) \\
\dTo^{f} & & \dTo_{\DDelta(f)} \\
Y & \rTo_{\varphi_Y}^\cocart & \DDelta(Y)
\end{diagram}
with $\DDelta(f)$ vertical over the terminal object in $\B$.
Now the functor $F$ itself can be obtained up to isomorphism as 
$\Delta = \DDelta \circ 1$ (where $1$ is the cartesian functor choosing
fiberwise terminal objects).

Notice that this construction makes sense for arbitrary fibrations $P$ over
$\B$ with internal sums. Our goal now is to show that every fibration $P$ of 
categories with finite limits over $\B$ with stable disjoint (internal) sums 
is equivalent to $P_\Delta$ where $\Delta$ is defined as above
and preserves finite limits.

But for this purpose we need a sequence of auxiliary lemmas.

\begin{Lem}\label{auxMoensLem}
Let $\B$ be category with finite limits and $P : \X \to \B$ be a fibration
of categories with finite limits and stable disjoint internal sums. Then in
\begin{diagram}[small]
U &            & & & \\
  & \rdTo~{\gamma} \rdEqual(2,4) \rdTo(4,2)^{\psi} & & & \\
  &            &  W \SEpbk & \rTo &   X\\
  &            &  \dTo     &      &  \dTo_{\varphi} \\
  &            &  U        & \rTo_{\varphi \circ \psi} &   Y  
\end{diagram}
the arrow $\gamma$ is cocartesian whenever $\varphi$ is cocartesian.
\end{Lem}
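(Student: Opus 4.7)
The plan is to exhibit $\gamma$ as a pullback of the fibrewise diagonal $\delta_\varphi : X \to X \times_Y X$ and then invoke stability. Since $\varphi$ is cocartesian, disjointness of internal sums (condition~(3) of the section preamble) says that $\delta_\varphi$ itself is cocartesian; and stability (condition~(2)) says that cocartesian arrows are closed under pullback. So if I can realise $\gamma$ as the pullback of $\delta_\varphi$ along some morphism in $\X$, the result is immediate.

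The correct morphism to pull back along is $q := \langle \psi \circ p_U,\, p_X \rangle : W \to X \times_Y X$, where $p_U : W \to U$ and $p_X : W \to X$ are the two projections of the pullback square defining $W$; this $q$ is well defined because $\varphi \circ p_X = \varphi \circ \psi \circ p_U$ by the very definition of $W$. I would then verify that
\begin{diagram}[small]
U & \rTo^{\psi} & X \\
\dTo^{\gamma} & & \dTo_{\delta_{\varphi}} \\
W & \rTo_{q} & X \times_Y X \\
\end{diagram}
is a pullback in $\X$, which has finite limits by Theorem~\ref{flf5}. Commutativity is immediate from the defining equations $p_U \circ \gamma = \id_U$ and $p_X \circ \gamma = \psi$, since both routes around the square equal $\langle \psi, \psi \rangle$. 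For the universal property, given $h : Z \to X$ and $k : Z \to W$ with $\delta_\varphi \circ h = q \circ k$, a componentwise comparison in $X \times_Y X$ forces $h = p_X \circ k = \psi \circ p_U \circ k$, so $z := p_U \circ k$ is the unique mediating arrow, with $\psi \circ z = h$ automatic and $\gamma \circ z = k$ following from the universal property of $W$ together with $p_U \circ \gamma = \id_U$.

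The main, and in fact only, obstacle is this verification that the square really is a pullback in $\X$; it is essentially a routine diagram chase using the universal property of $W$. Once it is in place, stability of cocartesian arrows under pullback applied to the cocartesian morphism $\delta_\varphi$ delivers cocartesianness of $\gamma$ directly. No further structure of the base $\B$ or of the projection $P$ enters beyond the standing hypothesis that $P$ is a fibration of categories with finite limits having stable, disjoint internal sums.
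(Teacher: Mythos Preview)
Your proof is correct and follows essentially the same strategy as the paper: exhibit $\gamma$ as a pullback of the cocartesian fibrewise diagonal $\delta_\varphi$ and conclude by stability. The only cosmetic difference is that the paper verifies the pullback square via the pasting lemma (the bottom square $W \to Z \to X$ over $U \to X$ is a pullback by construction, the outer rectangle has identity verticals and so is trivially a pullback, hence the top square is a pullback), whereas you check the universal property directly; your map $q = \langle \psi \circ p_U,\, p_X \rangle$ is exactly the paper's map $W \to Z$.
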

\begin{proof}
Consider the diagram
\begin{diagram}[small]
U \SEpbk  &  \rTo^{\psi} & X & & \\
\dTo^{\gamma}   & & \dTo^{\delta_\varphi} &  \rdEqual& \\
W  \SEpbk &  \rTo  &  Z  \SEpbk & \rTo^{\pi_2} &   X\\
\dTo^{\widetilde{\varphi}} &  &  \dTo^{\pi_1} &    & \dTo_{\varphi} \\
U  &  \rTo_{\psi} &  X        & \rTo_{\varphi} &   Y  
\end{diagram}
with $\pi_i \delta_{\varphi} = \id_X$ and 
$\widetilde{\varphi} \circ \gamma = \id_U$. Thus, by stability of sums
$\gamma$ is cocartesian as it appears as pullback of the cocartesian 
arrow $\delta_{\varphi}$.
\end{proof}

\begin{Lem}\label{MoensLem}
Let $\B$ be category with finite limits and $P : \X \to \B$ be a fibration
of categories with finite limits and stable internal sums, i.e.\ $P$ is also
a cofibration whose cocartesian arrows are stable under pullbacks along 
arbitrary maps in $\X$.

Then the following conditions are equivalent
\begin{enumerate}
\item[\rm (1)] The internal sums of $P$ are disjoint.
\item[\rm (2)] If $\varphi$ and $\varphi \circ \psi$ are cocartesian 
           then $\psi$ is cocartesian, too.
\item[\rm (3)] 
If $\alpha$ is vertical and both $\varphi$ and $\varphi \circ \alpha$ are 
cocartesian then $\alpha$ is an isomorphism.
\item[\rm (4)] A commuting diagram
\begin{diagram}[small]
X & \rTo^\varphi_\cocart & U \\
\dTo^{\alpha} & & \dTo_{\beta} \\
Y & \rTo_\psi^\cocart & V
\end{diagram}
is a pullback in $\X$ whenever $\varphi$, $\psi$ are cocartesian and $\alpha$,
$\beta$ are vertical.
\end{enumerate} 
The equivalence of conditions (2)--(4) holds already under the weaker
assumption that cocartesian arrows are stable under pullbacks along 
vertical arrows.
\end{Lem}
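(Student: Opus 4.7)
The plan is to establish the circle $(1)\Rightarrow(2)\Rightarrow(3)\Rightarrow(4)\Rightarrow(1)$ (or equivalently, a triangle of equivalences $(2)\Leftrightarrow(3)\Leftrightarrow(4)$ that also ties back to $(1)$), keeping track of which steps need full stability and which need only stability along verticals. Two basic facts will be used repeatedly: a vertical arrow is cocartesian iff it is an isomorphism (immediate from Remark (1) after Definition~\ref{intsumdef}), and cocartesian arrows admit right cancellation, i.e.\ if $g$ and $fg$ are cocartesian then $f$ is cocartesian (a direct verification from the universal property). I shall also use freely that every arrow $\psi$ factors as $\alpha\circ\psi_0$ with $\psi_0$ cocartesian and $\alpha$ vertical, obtained by taking a cocartesian lifting of $P(\psi)$ at the source and filling in the unique vertical mediator.

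For $(1)\Rightarrow(2)$ I would use Lemma~\ref{auxMoensLem}. Given $\varphi$ cocartesian and $\varphi\circ\psi$ cocartesian, form the pullback $W$ of $\varphi$ along $\varphi\psi$; the aux lemma gives that the diagonal section $\gamma : U \to W$ (with $\pi_1\gamma=\id_U$, $\pi_2\gamma=\psi$) is cocartesian, while by full stability $\pi_2$ is cocartesian as a pullback of the cocartesian $\varphi\psi$, so $\psi=\pi_2\circ\gamma$ is cocartesian. For $(2)\Rightarrow(1)$ one simply observes that in the diagonal setup $\pi_1\circ\delta_\varphi=\id_X$, where $\pi_1$ is cocartesian by stability; since $\id_X$ is cocartesian, $(2)$ yields cocartesianness of $\delta_\varphi$. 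This is the only place where arbitrary (not merely vertical) stability is used, which matches the lemma's distinction between $(1)$ and the subset $(2)$--$(4)$.

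The equivalences $(2)\Leftrightarrow(3)\Leftrightarrow(4)$ I would prove as $(2)\Rightarrow(3)\Rightarrow(4)\Rightarrow(2)$, none of which needs more than stability along verticals. For $(2)\Rightarrow(3)$: given $\alpha$ vertical with $\varphi$ and $\varphi\alpha$ cocartesian, $(2)$ says $\alpha$ is cocartesian, hence vertical and cocartesian, hence an isomorphism. For $(3)\Rightarrow(4)$: with the hypotheses of $(4)$, both $\varphi$ and $\psi$ are cocartesian over the same base arrow $u=P(\varphi)=P(\psi)$. Form the pullback $U\times_V Y$ in $\X$; its projection to $U$ is cocartesian as a pullback of $\psi$ along the vertical $\beta$ (only vertical stability needed), lives over $u$, and the comparison map $m\colon X\to U\times_V Y$ is therefore vertical. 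Right cancellation on $\pi_U\circ m=\varphi$ shows $m$ is cocartesian, so $m$ is vertical and cocartesian, hence an isomorphism by $(3)$, making the original square a pullback. For $(4)\Rightarrow(2)$: factor $\psi=\alpha\circ\psi_0$ with $\psi_0$ cocartesian and $\alpha$ vertical; right cancellation makes $\varphi\alpha$ cocartesian; apply $(4)$ to
\[
\begin{array}{ccc}
X & \xrightarrow{\varphi\alpha} & Z \\
\downarrow\alpha & & \Vert \\
Y & \xrightarrow{\varphi} & Z
\end{array}
\]
whose canonical pullback is $Y$ via $\id_Y$; the comparison map is $\alpha$ itself, so $(4)$ forces $\alpha$ to be an isomorphism, and hence $\psi$ is cocartesian.

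I expect the main obstacle to be the direction $(1)\Rightarrow(2)$, which is why Lemma~\ref{auxMoensLem} has been isolated beforehand; once it is available, the rest of the circle is a clean interplay between right cancellation, the cocartesian/vertical factorisation, and the observation that vertical cocartesian arrows are invertible. The only subtlety to watch is that the implication $(1)\Rightarrow(2)$ genuinely uses stability of cocartesian arrows along arbitrary (non-vertical) maps, whereas everything in $(2)\Leftrightarrow(3)\Leftrightarrow(4)$ gets by with stability along verticals only, justifying the parenthetical remark at the end of the statement.
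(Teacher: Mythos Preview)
Your overall strategy matches the paper's proof closely: the paper also shows $(1)\Leftrightarrow(2)$ via Lemma~\ref{auxMoensLem} and full stability, and handles $(2)\Leftrightarrow(3)\Leftrightarrow(4)$ using only stability along verticals. Your identification of where full stability is needed is correct.

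There is, however, a genuine slip in your $(3)\Rightarrow(4)$ step. You correctly set up the pullback $U\times_V Y$, observe that $\pi_U$ is cocartesian (as the pullback of $\psi$ along the vertical $\beta$), and that the comparison $m:X\to U\times_V Y$ is vertical with $\pi_U\circ m=\varphi$. But then you write ``right cancellation on $\pi_U\circ m=\varphi$ shows $m$ is cocartesian''. This is the wrong direction: your right cancellation principle says that if $g$ and $fg$ are cocartesian then $f$ is cocartesian, whereas here you know $\pi_U$ ($=f$) and $\pi_U\circ m$ ($=fg$) are cocartesian and want $m$ ($=g$) cocartesian. That implication is precisely condition $(2)$, which you are not allowed to use in a cycle $(2)\Rightarrow(3)\Rightarrow(4)\Rightarrow(2)$.

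The fix is immediate and is exactly what the paper does: apply $(3)$ \emph{directly}. You have $m$ vertical, $\pi_U$ cocartesian, and $\pi_U\circ m=\varphi$ cocartesian; condition $(3)$ then gives that $m$ is an isomorphism, and the square is a pullback. (Your appended ``hence an isomorphism by $(3)$'' suggests you had this in mind, but the intermediate ``$m$ is cocartesian'' claim is both unjustified and unnecessary.) With this correction your argument is essentially identical to the paper's.
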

\begin{proof} 
(1) $\Rightarrow$ (2) : Suppose that both $\varphi$ and $\varphi \circ \psi$ 
are cocartesian. Then for the diagram
\begin{diagram}
Z &            & & & \\
  & \rdTo~{\gamma} \rdEqual(2,4) \rdTo(4,2)^{\psi} & & & \\
  &            &  \cdot \SEpbk & \rTo_{\theta} &   Y \\
  &            &  \dTo     &      &  \dTo_{\varphi} \\
  &            &  Z        & \rTo_{\varphi \circ \psi} &   X  
\end{diagram}
we have that $\psi = \theta \circ \gamma$ is cocartesian as $\gamma$ is 
cocartesian by Lemma~\ref{auxMoensLem} and $\theta$ is cocartesian by 
stability of sums as it appears as pullback of the 
cocartesian arrow $\varphi \circ \psi$.\\
(2) $\Rightarrow$ (1) : As $\pi_i$ and $\pi_i \circ \delta_{\varphi} = \id$
are both cocartesian it follows from assumption (2) that $\delta_{\varphi}$
is cocartesian, too.\\
(2) $\Leftrightarrow$ (3) : Obviously, (3) is an instance of (2). For 
the reverse direction assume (3) and suppose that both $\varphi$ and 
$\varphi \circ \psi$ are cocartesian. Let $\psi = \alpha \circ \theta$
with $\theta$ cocartesian and $\alpha$ vertical. Then $\varphi\circ\alpha$ 
is cocartesian from which it follows by (3) that $\alpha$ is a vertical 
isomorphism and thus $\psi = \alpha \circ \theta$ is cocartesian.\\
(3) $\Leftrightarrow$ (4) : Obviously, (4) entails (3) instantiating $\beta$
by identity as isos are stable under pullbacks. For the reverse direction 
consider the diagram

\begin{diagram}
X  &            & & & \\
&\rdTo~{\iota} \rdTo(2,4)_{\alpha} \rdTo(4,2)^{\varphi}_{\mathrm{cocart.}}&&&\\
  &            &  \cdot \SEpbk & \rTo^{\theta}_{\mathrm{cocart.}} &   U\\
  &            &  \dTo_{\pi}     &      &  \dTo_{\beta} \\
  &            &  Y        & \rTo_{\psi} &   V \\
\end{diagram}  
with $\pi$ vertical. The morphism $\theta$ is cocartesian since it arises
as pullback of the cocartesian arrow $\psi$ along the vertical arrow $\beta$.
Moreover, the map $\iota$ is vertical since $\alpha$ and $\pi$
are vertical. Thus, by assumption (3) it follows that $\iota$ is an 
isomorphism. Thus, the outer square is a pullback since it is isomorphic 
to a pullback square via $\iota$.
\end{proof}

\bigskip\noindent
{\bf Remark.} Alternatively, we could have proved Lemma~\ref{MoensLem}  
by showing (1) $\Rightarrow$ (4) $\Rightarrow$ (3) $\Rightarrow$ (2) 
$\Rightarrow$ (1) where the last three implications have already been 
established. The implication (1) $\Rightarrow$ (4) was proved in \cite{Moe} 
as follows. Consider the diagram
\begin{diagram}[small]
X &&&&  \\
  &  \rdTo(2,6)_{\alpha} \rdTo~{\gamma} \rdEqual(4,2)& & &  \\
  & &    \SEpbk   &  \rTo            &  X            \\
  & & \dTo^{\theta} &                  &  \dTo_{\varphi}  \\
  & &    \SEpbk   &  \rTo          &  U        \\
  & & \dTo         &                &  \dTo_{\beta} \\
  & & Y            & \rTo_{\psi} &  V        \\
\end{diagram} 
where $\theta$ is cocartesian by stability of sums since $\theta$ appears 
as pullback of the cocartesian arrow $\varphi$. From Lemma~\ref{auxMoensLem}
it follows that $\gamma$ is cocartesian as by assumption 
$\beta \circ \varphi = \psi \circ \alpha$ and $\psi$ is cocartesian. 
Thus, the map $\theta \circ \gamma$ is cocartesian over an isomorphism and, 
therefore, an isomorphism itself. \MYenddef

Notice that condition (3) of Lemma~\ref{MoensLem} is equivalent 
to the requirement that for every map $u : I \to J$ in $\B$ the coproduct 
functor $\coprod_u : \X_I \to \X_J$ reflects isomorphisms.

\medskip
As a consequence of Lemma~\ref{MoensLem} we get the following 
characterisation of disjoint stable sums in terms of 
extensivity.

\begin{Lem}\label{extensiveMoensLem}
Let $\B$ be category with finite limits and $P : \X \to \B$ be a fibration
of categories with finite limits and internal sums. Then the following 
conditions are equivalent
\begin{enumerate}
\item[\rm (1)] The internal sums of $P$ are stable and disjoint.
\item[\rm (2)] The internal sums of $P$ are extensive\footnote{Recall 
               that a category with pullbacks and sums is
               called \emph{extensive} iff for every family of squares
               \begin{diagram}[small]
               B_i & \rTo^{f_i} & B \\
               \dTo^{a_i} &  & \dTo_b \\
               A_i & \rTo_{\inj_i} & \coprod_{i{\in}I} A_i \\
               \end{diagram}
               all squares are pullbacks iff $f_i : B_i \to B$ is a 
               coproduct cone.},
i.e.\ for all commuting squares
\begin{diagram}[small]
X & \rTo^\varphi & U \\
\dTo^\alpha & & \dTo_\beta \\
Y & \rTo_\psi^\cocart & V \\
\end{diagram}
where $\psi$ is cocartesian and $\alpha$ and $\beta$ are vertical it holds 
that $\varphi$ is cocartesian iff the square is a pullback.
\item[\rm (3)] The internal sums of $P$ are 
               extensive in the sense of Lawvere\footnote{Recall that
               a category $\C$ is extensive in the sense of Lawvere
               iff for all sets $I$ the categories 
               $\C^I$ and $\C/\coprod_I 1$ are canonically isomorphic.}, 
               i.e.\ for all commuting squares
\begin{diagram}[small]
X & \rTo^\varphi & U \\
\dTo^\alpha & & \dTo_\beta \\
1_I & \rTo_{\varphi_I}^\cocart & \coprod_I 1_I \\
\end{diagram}
where $\varphi_I$ is cocartesian over $!_I : I \to 1$ in $\B$, 
$1_I$ is terminal in its fiber and $\alpha$ and $\beta$ are vertical 
it holds that $\varphi$ is cocartesian iff the square is a pullback.
\end{enumerate}
The equivalence of (2) and (3) holds already under the weaker assumption
that cocartesian arrows are stable under pullbacks along vertical arrows.
\end{Lem}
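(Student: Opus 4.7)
The plan is to prove (1)~$\Rightarrow$~(2), (2)~$\Rightarrow$~(3), (2)~$\Rightarrow$~(1), and finally the one genuinely nontrivial direction (3)~$\Rightarrow$~(2), which should go through under the weaker hypothesis that cocartesian arrows are stable under pullbacks along vertical arrows only.

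For (1)~$\Rightarrow$~(2), the ``cocartesian $\Rightarrow$ pullback'' half is exactly condition~(4) of Lemma~\ref{MoensLem}, which holds under the assumption of stable and disjoint sums. The converse ``pullback $\Rightarrow$ cocartesian'' is then immediate: in the square at issue $\varphi$ is the pullback of the cocartesian $\psi$ along the vertical $\beta$, and by stability such a pullback is cocartesian. The implication (2)~$\Rightarrow$~(3) is simply the observation that (3) restricts (2) to squares whose bottom is $\varphi_I : 1_I \to \coprod_I 1_I$.

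For (2)~$\Rightarrow$~(1), stability of cocartesian arrows under pullback along an arbitrary morphism is obtained by decomposing that morphism as a cartesian followed by a vertical one: the cartesian step is handled by the Beck--Chevalley condition built into Definition~\ref{intsumdef}, and the vertical step is precisely the ``pullback $\Rightarrow$ cocartesian'' half of (2). Once stability is in hand, Lemma~\ref{MoensLem} becomes applicable, and the ``cocartesian $\Rightarrow$ pullback'' half of (2) is exactly condition~(4) of that lemma, whence its equivalent condition~(1) yields disjointness.

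The main obstacle is (3)~$\Rightarrow$~(2) under the weaker hypothesis. The ``pullback $\Rightarrow$ cocartesian'' direction is then given directly by vertical stability, so the work is in proving ``cocartesian $\Rightarrow$ pullback''. Given cocartesian $\psi : Y \to V$ over $u : I \to J$, vertical $\alpha, \beta$, and cocartesian $\varphi : X \to U$ forming a commuting square, the plan is to reduce to the special case (3) by enlarging the square. Downward, attach the unique vertical maps $!_Y : Y \to 1_I$, $!_V : V \to 1_J$, $!_X : X \to 1_I$, $!_U : U \to 1_J$ to the fibrewise terminal objects. Upward, compose with cocartesian lifts over $!_J$ into the $\Delta$-objects $\Delta(V), \Delta(U)$, which, via $\coprod_{!_J}\coprod_u = \coprod_{!_I}$, agree with the total cocartesian lifts of $Y, X$ over $!_I$ up to vertical iso; the naturality of this construction on the vertical arrows $\alpha, \beta, !_{(-)}$ yields well-defined vertical maps between $\Delta$-objects. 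The outer rectangle obtained by stacking has the form of a square covered by (3), hence is a pullback. The hard part, which is the main obstacle, is to recover the original central square as a pullback from this outer datum: one must show that the auxiliary squares introduced by the stacking (between the original row and the $\Delta$-row, and between the original row and the terminal row) are themselves pullbacks, using the functoriality of $\Delta$ on verticals, the preservation of fibrewise terminals and pullbacks under reindexing guaranteed by $P$ being a fibration of finite-limit categories, and vertical stability; the conclusion then follows by pullback pasting and the two-out-of-three property of pullbacks applied to the resulting rectangle.
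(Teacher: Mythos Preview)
Your treatment of (1)$\Leftrightarrow$(2) and (2)$\Rightarrow$(3) is correct and matches the paper. Your shortcut for the ``pullback $\Rightarrow$ cocartesian'' half of (2) via vertical stability is also fine (the paper derives this half differently, from (3) alone, but your route is shorter under the stated hypothesis).

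The gap is in your (3)$\Rightarrow$(2), ``cocartesian $\Rightarrow$ pullback'' direction. Your construction attaches a terminal row $1_I \to 1_J$ below and extends rightward by cocartesian arrows over $!_J$. But the resulting bottom arrow $1_I \to 1_J \to \coprod_J 1_J$ is the composite of the \emph{cartesian} map $1_u$ with the cocartesian $\varphi_J$; this is \emph{not} the cocartesian arrow $\varphi_I : 1_I \to \coprod_I 1_I$, so your ``outer rectangle'' is not of the form demanded by (3). Worse, the auxiliary square $Y \to V$ over $1_I \to 1_J$ is not a pullback in general: the pullback of the cartesian $1_u$ along $!_V$ yields the cartesian arrow $u^*V \to V$, not the cocartesian $\psi$. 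So even if the outer rectangle were a pullback, you could not cancel back to the original square.

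The paper resolves this with an intermediate step. From (3) one first derives
\[
(\dagger)\quad\text{for any square over $!_K$ with cocartesian bottom, the square is a pullback iff the top is cocartesian,}
\]
by stacking such a square on top of the (3)-square $1_K \to \coprod_K 1_K$ and using pullback pasting/cancellation. This extends (3) from the special bottom $\varphi_K$ to an \emph{arbitrary} cocartesian bottom over a terminal projection. Then for a general square over $u : I \to J$, extend rightward by cocartesian arrows $\varphi_0, \psi_0$ over $!_J$: the right square is over $!_J$ and the outer rectangle is over $!_I$, both with cocartesian bottom, so $(\dagger)$ applies to each; two applications of $(\dagger)$ plus pullback cancellation give that the original square is a pullback. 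The missing idea in your argument is precisely this intermediate $(\dagger)$: you tried to jump directly from (3) to the general case, but (3) only covers the very special bottom $\varphi_I$, and no amount of attaching terminal rows will turn a square over $u$ into one with that bottom.
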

\begin{proof}
(1)$\Leftrightarrow$(2) :
The implication from right to left in (2) is just stability of internal sums.
The implication from left to right in (2) is just condition (4) of 
Lemma~\ref{MoensLem} which under assumption of stability of sums  
by Lemma~\ref{MoensLem} is equivalent to the disjointness of sums.

Obviously, condition (3) is an instance of condition (2). Thus it remains
to show that (3) entails (2). 

Consider the diagram
\begin{diagram}[small]
V & \rTo^{\psi_0} & V_0 \\
\dTo^\beta & ^{(*)} & \dTo_{\beta_0} \\
U \SEpbk & \rTo_{\varphi_0}^\cocart & U_0 \\
\dTo^{\gamma} & & \dTo_{\gamma_0} \\
1_I & \rTo_{\varphi_I}^\cocart & \coprod_I 1_I \\
\end{diagram}
where $\beta$, $\beta_0$, $\gamma$ and $\gamma_0$ are vertical, $\varphi_I$
is cocartesian over $!_I$ and $1_I$ is terminal in its fiber. The lower
square is a pullback due to assumption (3). If the upper square is a pullback 
then the outer rectangle is a pullback and thus $\psi_0$ is cocartesian by (3).
If $\psi_0$ is cocartesian then the outer rectangle is a pullback by (3) and 
thus the upper square is a pullback, too. 

Thus we have shown that 
\begin{enumerate}
\item[$(\dagger)$] a diagram of the form $(*)$ is a pullback 
                   iff $\psi_0$ is cocartesian.
\end{enumerate}
Now consider a commuting diagram
\begin{diagram}[small]
Y & \rTo^\psi & V \\
\dTo^\alpha & ^{(+)} & \dTo_\beta \\
X & \rTo_\varphi^\cocart & U
\end{diagram}
with $\alpha$ and $\beta$ vertical. 

We have to show that $\psi$ is cocartesian iff $(+)$ is a pullback.

Suppose $\psi$ is cocartesian. Then by $(\dagger)$ the outer rectangle and 
the right square in
\begin{diagram}[small]
Y & \rTo^\psi_\cocart & V & \rTo^{\psi_0}_\cocart & V_0 \\
\dTo^\alpha & & \dTo_\beta & & \dTo_{\beta_0} \\
X & \rTo_\varphi^\cocart & U & \rTo_{\varphi_0}^\cocart & U_0
\end{diagram}
are pullbacks from which it follows that the left square, i.e.\ $(+)$,
is a pullback, too, as desired. 

Suppose the square $(+)$ is a pullback. Then we have
\begin{diagram}[small]
Y \SEpbk & \rTo^\psi & V & \rTo^{\psi_0}_\cocart & V_0 \\
\dTo^\alpha & & \dTo_\beta & & \dTo_{\beta_0} \\
X & \rTo_\varphi^\cocart & U & \rTo_{\varphi_0}^\cocart & U_0
\end{diagram}
As by $(\dagger)$ the right square is a pullback it follows that 
the outer rectangle is a pullback, too, from which it follows by $(\dagger)$ 
that $\psi_0 \psi$ is cocartesian. Now consider the diagram
\begin{diagram}[small]
Y & \rTo_\theta^\cocart & Z & \rTo_{\theta_0}^\cocart & Z_0 \\
  & \rdTo_\psi  &  \dTo_\iota  &   & \dTo_{\iota_0} \\
  & & V & \rTo_{\psi_0} & V_0
\end{diagram}
where $\iota$ and $\iota_0$ are vertical. Then $\iota_0$ is an isomorphism
because $\theta_0 \theta$ and $\psi_0 \psi$ start from the same source and
are both cocartesian over the same arrow in $\B$. By $(\dagger)$ the right
square is a pullback from which it follows that $\iota$ is an isomorphism
(as isomorphisms are pullback stable) and thus $\psi$ is cocartesian
as desired.
\end{proof}

\medskip
Notice that condition (3) of Lemma~\ref{extensiveMoensLem} is equivalent to 
the requirements that for all $I\in\B$ the coproduct functor 
$\coprod_I : \X_I \to \X_1$ reflects isomorphisms and $\beta^*\varphi_I$ 
is cocartesian for all vertical maps $\beta : U \to \coprod_I 1_I$.

An immediate consequence of Lemma~\ref{extensiveMoensLem} is the following

\begin{Cor}\label{corMoens1}
Let $\B$ have finite limits and $P : \X \to \B$ be a fibration of categories 
with finite limits and stable disjoint internal sums. Then for every 
$u : I \to J$ in $\B$ and $X \in P(I)$ the functor 
$\coprod_u / X  : \X_I / X \to \X_J / \coprod_u X$ is an equivalence.
In particular, we get that $\X_I \cong \X_I/1_I$ is equivalent to 
$\X_J/ \coprod_u 1_I$ via $\coprod_u / 1_I$ and that $\X_I \cong \X_I/1_I$
is equivalent to $\X_1 / \Delta(I)$ via $\coprod_I / 1_I$
where $\Delta(I) = \coprod_I 1_I$.
\end{Cor}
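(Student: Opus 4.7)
The plan is to exhibit an explicit quasi-inverse built by pullback along the cocartesian unit, using extensivity (Lemma~\ref{extensiveMoensLem}(2)) as the essential input. The functor $\coprod_u / X$ itself sends a vertical $\alpha : Y \to X$ in $\X_I / X$ to the unique vertical arrow $\coprod_u \alpha : \coprod_u Y \to \coprod_u X$ over $J$ characterized by $\coprod_u \alpha \circ \varphi_Y = \varphi_X \circ \alpha$, where $\varphi_Y : Y \to \coprod_u Y$ and $\varphi_X : X \to \coprod_u X$ are cocartesian liftings of $u$.

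I would construct a candidate quasi-inverse $G : \X_J/\coprod_u X \to \X_I/X$ as follows. Given $\beta : V \to \coprod_u X$ in $\X_J$, form in $\X$ the pullback $W = V \times_{\coprod_u X} X$ with projections $\widetilde{\beta} : W \to X$ and $\widetilde{\varphi} : W \to V$. This pullback exists because $P$ is a fibration of categories with finite limits, and since $P$ preserves pullbacks (Theorem~\ref{flf3}) we get $P(W) = I$, $P(\widetilde{\beta}) = \id_I$, and $P(\widetilde{\varphi}) = u$. The pullback square has $\varphi_X$ cocartesian and $\beta$, $\widetilde{\beta}$ vertical, so the extensivity characterisation (Lemma~\ref{extensiveMoensLem}(2)) forces $\widetilde{\varphi}$ to be cocartesian over $u$. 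Set $G(\beta) := \widetilde{\beta}$; this clearly extends to morphisms by the universal property of pullbacks.

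Next I would verify the two natural isomorphisms. For $G \circ (\coprod_u / X) \cong \id$, given $\alpha : Y \to X$ the square with sides $\varphi_Y, \varphi_X$ (cocartesian) and $\alpha, \coprod_u \alpha$ (vertical) is a pullback by the same lemma, so the pullback defining $G(\coprod_u \alpha)$ recovers $(Y,\alpha)$ up to canonical isomorphism. For $(\coprod_u / X) \circ G \cong \id$, uniqueness of cocartesian liftings up to vertical isomorphism applied to the two cocartesian arrows $\varphi_W : W \to \coprod_u W$ and $\widetilde{\varphi} : W \to V$ over $u$ yields a unique vertical iso $\iota : \coprod_u W \to V$ with $\iota \circ \varphi_W = \widetilde{\varphi}$. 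Both $\beta \circ \iota$ and $\coprod_u \widetilde{\beta}$ are vertical arrows $\coprod_u W \to \coprod_u X$ whose precomposition with $\varphi_W$ equals $\varphi_X \circ \widetilde{\beta}$ (the first by the pullback identity $\beta \circ \widetilde{\varphi} = \varphi_X \circ \widetilde{\beta}$, the second by definition of $\coprod_u \widetilde{\beta}$), so the cocartesian universal property of $\varphi_W$ forces them to coincide, giving the desired iso in $\X_J/\coprod_u X$.

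The two named consequences then follow by specialisation. Since $1_I$ is terminal in the fibre $\X_I$, the canonical functor $\X_I \to \X_I/1_I$ is an isomorphism, and instantiating the main equivalence at $X = 1_I$ gives $\X_I \simeq \X_J/\coprod_u 1_I$; taking further $u = {!}_I : I \to 1$ produces $\X_I \simeq \X_1/\Delta(I)$ via $\coprod_I/1_I$, where $\Delta(I) = \coprod_I 1_I$ by definition. No step is a real obstacle, since extensivity does all the geometric work; the only point that deserves explicit attention is the identity $\beta \circ \iota = \coprod_u \widetilde{\beta}$, which requires appealing to the cocartesian universal property of $\varphi_W$ rather than any additional hypothesis on $P$.
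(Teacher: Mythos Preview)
Your proof is correct and is exactly the natural elaboration of what the paper intends: the paper gives no explicit argument, merely recording the corollary as ``an immediate consequence of Lemma~\ref{extensiveMoensLem}'', and your construction of the quasi-inverse by pulling back along the cocartesian arrow $\varphi_X$ is precisely how that lemma (extensivity) delivers the equivalence. The verification of the two triangle isomorphisms via Lemma~\ref{extensiveMoensLem}(2) and the cocartesian universal property is the expected unpacking, and the specialisations to $X = 1_I$ and $u = {!}_I$ are immediate as you say.
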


\begin{Cor}\label{corMoens2}
Let $\B$ have finite limits and $P : \X \to \B$ be fibration of categories 
with finite limits and stable disjoint internal sums. Then for every 
$u : I \to J$ in $\B$ the functor $\coprod_u : \X_I \to \X_J$ 
preserves pullbacks.
\end{Cor}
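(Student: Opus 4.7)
The plan is to derive this from the immediately preceding corollary, which asserts that for every $u : I \to J$ and every $C \in \X_I$, the induced functor $\coprod_u / C : \X_I / C \to \X_J / \coprod_u C$ is an equivalence of categories.

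First I would recall the standard fact that for any category $\mathcal{D}$ with pullbacks and any object $C \in \mathcal{D}$, a binary product in the slice $\mathcal{D}/C$ is the same thing as a pullback in $\mathcal{D}$ over $C$: the product of $f : A \to C$ and $g : B \to C$ is computed as the pullback $A \times_C B \to C$. Since $P : \X \to \B$ is a fibration of categories with finite limits, each fibre $\X_I$ has pullbacks, so this description applies inside $\X_I$ and $\X_J$.

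Next I would argue as follows. Let $f : A \to C$ and $g : B \to C$ be a cospan in $\X_I$, and let $A \times_C B$ together with its projections be its pullback. Viewed in $\X_I/C$, this pullback expresses $A \times_C B \to C$ as the product of $f$ and $g$. Because equivalences of categories preserve all limits, the equivalence $\coprod_u/C$ sends this product to the product of $(\coprod_u A \to \coprod_u C)$ and $(\coprod_u B \to \coprod_u C)$ in $\X_J / \coprod_u C$. Unpacking the slice-product description on the $\X_J$ side, this product is precisely the pullback $\coprod_u A \times_{\coprod_u C} \coprod_u B$ in $\X_J$. Since $\coprod_u/C$ acts on objects by applying $\coprod_u$ (and likewise on morphisms), one concludes $\coprod_u(A \times_C B) \cong \coprod_u A \times_{\coprod_u C} \coprod_u B$, with the projections being the images under $\coprod_u$ of the original projections, which is exactly preservation of the pullback.

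The main (essentially bookkeeping) obstacle is to verify the compatibility that was glossed over in the previous corollary: namely, that the equivalence $\coprod_u/C$ really does act on an object $(A, f : A \to C)$ of $\X_I/C$ by producing $(\coprod_u A, \coprod_u f : \coprod_u A \to \coprod_u C)$, and similarly on morphisms. Once this is pinned down from the universal property of cocartesian liftings for vertical arrows (so that $\coprod_u$ is a genuine functor $\X_I \to \X_J$), the rest of the argument is purely formal: equivalences preserve limits, and products in slices are pullbacks.
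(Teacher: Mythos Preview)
Your proof is correct and follows essentially the same approach as the paper: both derive the result from Corollary~\ref{corMoens1} via the identification of pullbacks with products in slices. The only cosmetic difference is that the paper packages the argument as a single global factorisation $\coprod_u \cong \Sigma_{\coprod_u 1_I} \circ (\coprod_u/1_I)$ (using $\X_I \cong \X_I/1_I$ and the fact that the domain functor $\partial_0 : \X_J/\coprod_u 1_I \to \X_J$ preserves pullbacks), whereas you apply the equivalence $\coprod_u/C$ locally at the apex $C$ of each cospan; the underlying content is identical.
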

\begin{proof}
Notice that $\coprod_u = \Sigma_{\X_J/\coprod_u 1_I} \circ \coprod_u/1_I$ 
where we identify $\X_I$ and $\X_I/1_I$ via their canonical isomorphism.
The functor $\coprod_u/1_I$ preserves pullbacks as it is an equivalence by
Corollary~\ref{corMoens1}. The functor 
$\Sigma_{\X_J/\coprod_u1_I} = \partial_0$ is known to preserve pullbacks 
anyway. Thus, the functor $\coprod_u$ preserves pullbacks 
as it arises as the composite of pullback preserving functors.
\end{proof}

\begin{Lem}\label{Moenslem1}
Let $P : \X \to \B$ be a fibration of categories with finite limits and
stable disjoint internal sums. Then the mediating arrow $\theta$ is 
cocartesian for any diagram in $\X$
\begin{diagram}[small]
U \SEpbk  &   & \rTo^{\phi_2}    & &                X_2  \\
      &  \rdDashto_{\theta}     &      &          & \dTo_{\varphi_2}\\
 \dTo^{\phi_1}   &      &  V  \SEpbk  & \rTo^{\beta_2}   &       Y_2    \\
    &                   & \dTo^{\beta_1}  &      &  \dTo_{\alpha_2}  \\
X_1 & \rTo_{\varphi_1}  & Y_1   & \rTo_{\alpha_1} &        Y          \\
\end{diagram}
whenever the $\varphi_i$ are cocartesian, the $\alpha_i$, $\beta_i$ 
are vertical and the outer and the inner square are pullbacks.
\end{Lem}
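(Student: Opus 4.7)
The plan is to exhibit $\theta$ as a composite of two cocartesian arrows by decomposing the outer pullback defining $U$ through an intermediate pullback against $V$. Write $u_i := P(\varphi_i)$ and $v_i := P(\phi_i)$, so $P(\theta) = u_1v_1 = u_2v_2$.

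First I would form, for $i=1,2$, the pullback $X'_i := X_i \times_{Y_i} V$ in $\X$, with projections $p^{(i)}_1 : X'_i \to X_i$ and $p^{(i)}_2 : X'_i \to V$. Stability of internal sums applied to $\varphi_i$ cocartesian and $\beta_i$ vertical shows that the pullback of $\varphi_i$ along $\beta_i$ is cocartesian, so $p^{(i)}_2$ is cocartesian over $u_i$.

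Next I would invoke the standard iterated-pullback identity $X_1 \times_Y X_2 \cong (X_1 \times_{Y_1} V) \times_V (V \times_{Y_2} X_2)$. Since by hypothesis $V = Y_1 \times_Y Y_2$ (inner pullback) and $U = X_1 \times_Y X_2$ (outer pullback), this yields a canonical isomorphism $U \cong X'_1 \times_V X'_2$. Concretely, the pair $(\phi_i, \theta)$ satisfies $\varphi_i \phi_i = \beta_i \theta$ by construction of $\theta$, hence induces a unique $f_i : U \to X'_i$ with $p^{(i)}_1 f_i = \phi_i$ and $p^{(i)}_2 f_i = \theta$; the pair $(f_1, f_2)$ factors through $X'_1 \times_V X'_2$ because $p^{(1)}_2 f_1 = \theta = p^{(2)}_2 f_2$, and the resulting map is invertible because both sides represent the pullback of $\alpha_1 \varphi_1, \alpha_2 \varphi_2$ over $Y$ (here one uses that $P$ preserves pullbacks, Theorem~\ref{flf3}, to check the decomposition lives over the corresponding decomposition in $\B$).

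Under this identification, $f_1 : U \to X'_1$ is the projection of the pullback $X'_1 \times_V X'_2$ against $p^{(2)}_2 : X'_2 \to V$ along $p^{(1)}_2 : X'_1 \to V$. A second application of stability of sums, now to the cocartesian arrow $p^{(2)}_2$, shows that $f_1$ is cocartesian over $v_1$. Writing $\theta = p^{(1)}_2 \circ f_1$ therefore exhibits $\theta$ as a composite of cocartesian arrows over $u_1$ and $v_1$, and closure of cocartesian arrows under composition concludes that $\theta$ is cocartesian over $u_1 v_1 = P(\theta)$.

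The only real obstacle is the bookkeeping of the iterated-pullback identification $U \cong X'_1 \times_V X'_2$ and matching $f_1$ with the relevant projection; this is routine but slightly fiddly diagram-chasing. Notably, the argument uses only stability of internal sums (twice) and closure of cocartesian arrows under composition --- the disjointness half of the hypothesis plays no role here.
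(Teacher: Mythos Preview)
Your proof is correct and is essentially the same argument as the paper's: the paper also decomposes the outer pullback as the iterated pullback $U \cong X'_1 \times_V X'_2$ (writing $U_i$ for your $X'_i$ and $\theta_i$, $\psi_i$ for your $p^{(i)}_2$, $f_i$), obtains the four projections as cocartesian by two applications of stability, and concludes $\theta = \theta_1 \circ \psi_1$ is cocartesian by composition. Your closing observation that disjointness is not used here is also correct.
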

\begin{proof}
Consider the diagram
\begin{diagram}[small]
U \SEpbk  &   \rTo^{\psi_2}  & U_2 \SEpbk &   \rTo^{\gamma_2}     &    X_2  \\
 \dTo^{\psi_1}      &      & \dTo^{\theta_2}      &      & \dTo_{\varphi_2} \\
 U_1 \SEpbk  & \rTo^{\theta_1}  &  V  \SEpbk  & \rTo^{\beta_2}   &   Y_2    \\
\dTo^{\gamma_1}  &          & \dTo^{\beta_1}  &      &  \dTo_{\alpha_2}  \\
X_1 & \rTo_{\varphi_1}  & Y_1   & \rTo_{\alpha_1} &        Y          \\
\end{diagram}
where by stability of sums the $\psi_i$ and $\theta_i$ are cocartesian as they
arise as pullbacks of $\varphi_1$ or $\varphi_2$, respectively. 
As the big outer square is a pullback we may assume that 
$\phi_i = \gamma_i \circ \psi_i$ (by appropriate choice of the $\psi_i$). 

Thus, $\theta = \theta_1 \circ \psi_1 = \theta_2 \circ \psi_2$
cocartesian as it arises as composition of cocartesian arrows.
\end{proof}

\begin{Lem}\label{Moenslem2}
Let $\B$ have finite limits and $P : \X \to \B$ be fibration of categories 
with finite limits and stable disjoint internal sums. 

Then the functor $\DDelta : \X \to \X_1$ given by
\begin{diagram}
X & \rTo^{\varphi_X}_\cocart & \DDelta(X) \\
\dTo^{f} & & \dDashto_{\DDelta(f)} \\
Y & \rTo_{\varphi_Y}^\cocart & \DDelta(Y)
\end{diagram}
with $\DDelta(f)$ over $1$ preserves finite limits. 
\end{Lem}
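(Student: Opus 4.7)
The plan is to show $\DDelta$ preserves the terminal object and pullbacks, which together imply preservation of all finite limits. The terminal case is immediate: $1_\X$ lies over $1_\B$ by Lemma~\ref{flf4}, so its cocartesian lift over $\id_{1_\B}$ is an isomorphism, giving $\DDelta(1_\X) \cong 1_\X$, which is terminal in $\X_1$.

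For pullbacks, let $(P; g_1, g_2)$ be a pullback of $f_i : X_i \to Y$ in $\X$, and set $u_i := P(f_i)$, $p_i := P(g_i)$, $I_i := P(X_i)$, $J := P(Y)$, and $K := P(P) = I_1 \times_J I_2$. Factor $f_i = \gamma_i \circ \varphi_{u_i}$ where $\varphi_{u_i}$ is cocartesian over $u_i$ and $\gamma_i$ is vertical in $\X_J$, and form $V := \coprod_{u_1} X_1 \times_Y \coprod_{u_2} X_2$ in $\X_J$. The crux is the identity
\[ \coprod_{u_1 p_1} P \;\cong\; V \quad \text{in } \X_J. \]
Granting this, applying $\coprod_J$ (which preserves pullbacks by Corollary~\ref{corMoens2}) yields
\[ \DDelta(P) = \coprod_J \coprod_{u_1 p_1} P \cong \coprod_J V = \DDelta(X_1) \times_{\DDelta(Y)} \DDelta(X_2), \]
since $\coprod_J \coprod_{u_i} X_i = \coprod_{I_i} X_i = \DDelta(X_i)$ and $\coprod_J Y = \DDelta(Y)$ by functoriality of cocartesian lifts.

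To prove the crux, I express $P$ fibrewise (Theorem~\ref{flf3}) as the pullback in $\X_K$ of $p_1^* X_1 \to (u_1 p_1)^* Y \leftarrow p_2^* X_2$, and apply the pullback-preserving $\coprod_{u_1 p_1}$ (Corollary~\ref{corMoens2}). Two auxiliary identities then compute the pushed-forward objects: Beck--Chevalley for the pullback $K = I_1 \times_J I_2$ (yielding $\coprod_{p_{3-i}} p_i^* X_i \cong u_{3-i}^* \coprod_{u_i} X_i$), and Frobenius reciprocity $\coprod_u(A \times u^* B) \cong \coprod_u A \times B$. I establish Frobenius reciprocity by applying extensivity (Lemma~\ref{extensiveMoensLem}(2)) to the pullback of the cocartesian $\varphi_A : A \to \coprod_u A$ along the vertical projection $\coprod_u A \times B \to \coprod_u A$: extensivity forces the opposite projection $A \times u^* B \to \coprod_u A \times B$ to be cocartesian over $u$, proving the identity. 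Substituting gives $\coprod_{u_1 p_1}(p_i^* X_i) \cong \coprod_{u_i} X_i \times \Delta'(u_{3-i})$ and $\coprod_{u_1 p_1}((u_1 p_1)^* Y) \cong \Delta'(u_1) \times \Delta'(u_2) \times Y$, where $\Delta'(u) := \coprod_u 1$; a standard pullback cancellation in $\X_J$ then identifies the resulting pullback with $V$. The main obstacle is justifying Frobenius reciprocity and tracking the $\Delta'(u_i)$ factors cleanly through the final pullback cancellation.
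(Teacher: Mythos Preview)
Your overall architecture coincides with the paper's: preservation of the terminal object is immediate, and for pullbacks you factor each $f_i$ as cocartesian followed by vertical, form the fibrewise pullback $V$ of the vertical parts $\gamma_i$ in $\X_J$, identify $\coprod_{u_1p_1}P$ with $V$, and then apply the pullback-preserving $\coprod_J$ (Corollary~\ref{corMoens2}).

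Where you diverge is in the proof of the crux $\coprod_{u_1p_1}P \cong V$. This statement is precisely Lemma~\ref{Moenslem1}: the mediating arrow $\theta : P \to V$ is cocartesian over $u_1p_1$. The paper proves it in a few lines by subdividing the outer pullback square into four smaller pullbacks (pull back each vertical $\alpha_i$ along each cocartesian $\varphi_j$); stability makes two of the intermediate arrows $\theta_i$ and $\psi_i$ cocartesian as pullbacks of $\varphi_1,\varphi_2$, and $\theta = \theta_1\psi_1 = \theta_2\psi_2$ is then cocartesian as a composite. Your route---expressing $P$ as a pullback in $\X_K$ via Theorem~\ref{flf3}, pushing forward by the pullback-preserving $\coprod_{u_1p_1}$, and then invoking Beck--Chevalley, Frobenius reciprocity, and a pullback cancellation to identify the result with $V$---is valid, but considerably longer and requires tracking that the induced \emph{maps} (not just the objects) land on the cospan $\gamma_1,\gamma_2$ defining $V$. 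Since Frobenius itself is derived from extensivity (Lemma~\ref{extensiveMoensLem}), you gain no extra generality; the point of isolating Lemma~\ref{Moenslem1} is exactly to bypass this algebra with a single stability argument.
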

\begin{proof}
Clearly, the functor $\DDelta$ preserves the terminal object. It remains 
to show that it preserves also pullbacks. Let
\begin{diagram}[small]
U \SEpbk & \rTo^{g_2} & X_2\\
\dTo^{g_1} & & \dTo_{f_2} \\
X_1 & \rTo_{f_1} & Y\\
\end{diagram}
be a pullback in $\X$. 
Then by Lemma~\ref{Moenslem1} the arrow $\theta$ is cocartesian in 

\begin{diagram}[small]
U \SEpbk  &   & \rTo^{g_2}    & &                X_2  \\
      &  \rdDashto_{\theta}     &      &          & \dTo_{\varphi_2}\\
 \dTo^{g_1}   &      &  V  \SEpbk  & \rTo^{\beta_2}   &       Y_2    \\
    &                   & \dTo^{\beta_1}  &      &  \dTo_{\alpha_2}  \\
X_1 & \rTo_{\varphi_1}  & Y_1   & \rTo_{\alpha_1} &        Y          \\
\end{diagram} 
where $f_i = \alpha_i \circ \varphi_i$ with $\alpha_i$ vertical and 
$\varphi_i$ cocartesian. From this we get that the square
\begin{diagram}[small]
\DDelta(U)  & \rTo^{\DDelta(g_2)} & \DDelta(X_2)\\
\dTo^{\DDelta(g_1)} & & \dTo_{\DDelta(f_2)} \\
\DDelta(X_1) & \rTo_{\DDelta(f_1)} & \DDelta(Y)\\
\end{diagram}
is a pullback, too, as it is obtained by applying the pullback preserving
functor $\coprod_{P(Y)}$ to 
\begin{diagram}[small]
V  \SEpbk  & \rTo^{\beta_2}   &       Y_2    \\
\dTo^{\beta_1}  &      &  \dTo_{\alpha_2}  \\
Y_1   & \rTo_{\alpha_1} &        Y          \\
\end{diagram}
which is a pullback in the fiber over $P(Y)$.
\end{proof}

\bigskip
Now we are ready to prove Moens's Theorem.

\begin{Thm}\label{MoensThm}
Let $\B$ have finite limits and $P : \X \to \B$ be fibration of categories 
with finite limits and stable disjoint internal sums. Then $P$ is equivalent
to $P_\Delta$ where $\Delta$ is the finite limit preserving
functor $\DDelta \circ 1$.

More explicitely, the fibered equivalence $E : P \to P_\Delta$ is
given by sending $f : X \to Y$ in $\X$ over $u : I \to J$ to
\begin{diagram}[small]
\DDelta(X) & \rTo^{\DDelta(f)} & \DDelta(Y) \\
\dTo^{\DDelta(\alpha)} & E(f) & \dTo_{\DDelta(\beta)} \\
\DDelta(1_I) & \rTo_{\Delta(u)} & \DDelta(1_J) \\
\end{diagram} 
where $\alpha$ and $\beta$ are terminal projections in their fibers. 
\end{Thm}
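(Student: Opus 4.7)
The plan is to verify that the prescribed assignment $E : \X \to \X_1{\downarrow}\Delta$ defines an equivalence in $\Fib(\B)$ between $P$ and $\gl(\Delta) = \partial_1 : \X_1{\downarrow}\Delta \to \B$. This will proceed in three stages: (i) check $E$ is a well-defined cartesian functor over $\B$, (ii) identify the fibre of $E$ over each $I\in\B$ with the equivalence from Corollary~\ref{corMoens1}, and (iii) conclude using the fibrewise equivalence criterion for $\Fib(\B)$.

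First I would verify well-definedness. For $f : X \to Y$ over $u : I \to J$ the displayed square in $\X_1$ commutes because $\beta \circ f = 1_u \circ \alpha$ holds in $\X$ by uniqueness of arrows into fibrewise terminals, and $\DDelta$ is a functor with $\DDelta(1_u) = \Delta(u)$ by definition of $\Delta$. Functoriality of $E$ follows from functoriality of $\DDelta$ together with the observation that composing terminal projections in fibres gives the terminal projection in the relevant fibre (so the vertical sides behave correctly). Finally $\gl(\Delta) \circ E = P$ on objects and morphisms by construction.

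Next I would show $E$ sends $P$-cartesian arrows to $\gl(\Delta)$-cartesian arrows, i.e.\ to pullback squares in $\X_1$. Suppose $f : X \to Y$ is cartesian over $u : I \to J$. Then in $\X$ the square with sides $f, \alpha, \beta, 1_u$ is a pullback: both $f$ and $1_u$ are cartesian over $u$, both $\alpha$ and $\beta$ are vertical into fibrewise terminals, so Lemma~\ref{flf1}(2) applies. Now Lemma~\ref{Moenslem2} says $\DDelta$ preserves pullbacks, so the image square (which is exactly $E(f)$) is a pullback in $\X_1$, hence cartesian for $\gl(\Delta)$.

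For the fibrewise equivalence, the fibre of $\gl(\Delta)$ over $I$ is $\X_1/\Delta(I)$, and $E_I$ sends $X\in\X_I$ to $\DDelta(\alpha_X) : \DDelta(X) \to \Delta(I)$ where $\alpha_X : X \to 1_I$ is the terminal projection in $\X_I$. Since $\DDelta$ restricted to $\X_I$ coincides with the coproduct functor $\coprod_{!_I} : \X_I \to \X_1$, this $E_I$ is precisely the composite $\X_I \simeq \X_I/1_I \xrightarrow{\coprod_{!_I}/1_I} \X_1/\Delta(I)$, which is an equivalence by Corollary~\ref{corMoens1}. Since $E$ is a cartesian functor all of whose fibres are equivalences, the footnote observation of Section~3 (stating that such functors are equivalences in $\Fib(\B)$) yields that $E$ itself is a fibred equivalence.

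The main conceptual step is the identification of $E_I$ with the equivalence of Corollary~\ref{corMoens1}; the main technical step that could cause friction is verifying preservation of cartesian arrows, but this reduces cleanly to the two facts that (a) the naturality square of the terminal projection over $u$ is a pullback in $\X$ and (b) $\DDelta$ preserves pullbacks. Both have already been established, so once the bookkeeping is in place the theorem follows without further combinatorial work.
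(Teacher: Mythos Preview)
Your proposal is correct and follows essentially the same route as the paper's proof: the paper also argues that $E(f)$ is the image under $\DDelta$ of the naturality square of the terminal projection, observes this square is a pullback when $f$ is cartesian, invokes Lemma~\ref{Moenslem2} to conclude $E$ is cartesian, and then appeals to Corollary~\ref{corMoens1} for the fibrewise equivalences. You have simply made explicit a few steps the paper leaves implicit (well-definedness, the citation of Lemma~\ref{flf1}(2) for the pullback property, and the identification of $E_I$ with $\coprod_{!_I}/1_I$).
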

\begin{proof}
As $\Delta(u) = \DDelta(1_u)$ the map $E(f)$ arises as the  
image under $\DDelta$ of the square
\begin{diagram}[small]
X  & \rTo^{f} & Y \\
\dTo^{\alpha} &  & \dTo_{\beta} \\
1_I & \rTo_{1_u} & 1_J \\
\end{diagram}
which is a pullback if $f$ is cartesian. As by Lemma~\ref{Moenslem2}
the functor $\DDelta$ preserves pullbacks it follows that $E$ is cartesian.
Thus, the fibered functor $E$ is a fibered equivalence as by 
Corollary~\ref{corMoens1} all fibers of $E$ are (ordinary) equivalences. 
\end{proof}

\medskip
Thus, for categories $\B$ with finite limits we have established a 
1--1--correspondence up to equivalence between fibrations of the form 
$P_F = \partial_1 : \C{\downarrow}F \to \B$ for some finite limit preserving 
$F : \B \to \C$ where $\C$ has finite limits and fibrations over $\B$ of
categories with finite limits and extensive internal sums.\footnote{
This may explain why Lawvere's notion of extensive sums is so important.
Notice, however, that Lawvere's original definition only applied to 
ordinary categories $\C$ with small coproducts in the ordinary sense.
That our notion of Lawvere extensivity is slightly more general can be seen
from the discussion at the end of section 17 where we give an example (due to
Peter Johnstone) of a fibration over $\Set$ of categories with finite limits
and Lawvere extensive small sums which, however, is not of the form
$\Fam(\C)$ for some ordinary category $\C$.}

\medskip
With little effort we get the following generalization of Moens's Theorem.

\begin{Thm}\label{GenMoensThm}
Let $\B$ be a category with finite limits. If $\C$ is a category with finite
limits and $F : \B\to\C$ preserves terminal objects then $P_F$ is a fibration
of finite limit categories and a cofibration where cocartesian arrows are stable
under pullbacks along vertical arrows and one of the following equivalent 
conditions holds
\begin{enumerate}
\item[\emph{(1)}] if $\varphi$ and $\varphi\circ\psi$ are cocartesian 
                  then $\psi$ is cocartesian
\item[\emph{(2)}] if $\alpha$ is vertical and both $\varphi$ and 
                  $\varphi\circ\alpha$ are cocartesian 
                  then $\alpha$ is an isomorphism
\item[\emph{(3)}] a commuting square
\begin{diagram}[small]
X & \rTo^\varphi_\cocart & U \\
\dTo^{\alpha} & & \dTo_{\beta} \\
Y & \rTo_\psi^\cocart & V
\end{diagram}
is a pullback whenever $\psi$ and $\varphi$ are cocartesian and
$\alpha$ and $\beta$ are vertical.
\end{enumerate}
Bifibrations $P$ satisfying these properties are equivalent to $P_{\Delta_P}$
where $\Delta_P$ is the functor $\Delta : \B \to \X_1$ sending $I$ to 
$\Delta(I) = \coprod_I 1_I$ and $u : J \to I$ to the unique vertical arrow
$\Delta(u)$ rendering the diagram
\begin{diagram}[small]
1_J & \rTo^{\varphi_J} & \Delta(J) \\
\dTo^{1_u} & & \dTo_{\Delta(u)} \\
1_I & \rTo_{\varphi_I}^\cocart & \Delta(I)
\end{diagram}
commutive.
This correspondence is an equivalence since $\Delta_{P_F}$ is isomorphic
to $F$.
\end{Thm}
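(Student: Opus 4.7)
My plan is to treat the two directions of the equivalence separately and then verify the naturality statement. The forward direction (properties of $\gl(F)$) is largely mechanical: $\gl(F) = F^*P_\C$ always has fibres $\C/F(B)$ with finite limits stable under reindexing (which is pullback in $\C$ along $F(u)$), and is a cofibration whose cocartesian arrows over $u$ are exactly the $(u,f)$ with $f$ an isomorphism in $\C$; from this characterization, stability of cocartesians along vertical arrows and each of the conditions (1)--(3) reduce to standard properties of isomorphisms in $\C$. The mutual equivalence of (1)--(3) is the content of the last sentence of Lemma~\ref{MoensLem}, which holds precisely under the vertical-stability hypothesis we have here. The hypothesis that $F$ preserves the terminal is not used at this stage; it enters only at the end to identify $\gl(F)(1)$ with $\C$.

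For the backward direction I imitate the proof of Theorem~\ref{MoensThm}. Given a bifibration $P$ with the listed properties, I define $\DDelta : \X \to \X_1$ via cocartesian lifts of $!_{P(X)} : P(X) \to 1$, set $\Delta_P := \DDelta \circ 1$ (which preserves the terminal since the cocartesian lift of an identity is an identity up to iso), and define $E : \X \to \gl(\Delta_P)$ by $X \mapsto (\DDelta(X),\DDelta(\alpha_X))$, where $\alpha_X : X \to 1_{P(X)}$ is the vertical terminal projection. I then verify that each fibre functor $E_I : P(I) \to \X_1/\Delta_P(I)$ is an equivalence: essential surjectivity is obtained by pulling back the cocartesian arrow $\varphi_{1_I} : 1_I \to \Delta_P(I)$ along a given vertical map $g : Z \to \Delta_P(I)$, which by stability of cocartesians along vertical arrows produces a cocartesian arrow with source in $P(I)$; fullness and faithfulness follow from the universal property of cocartesian lifts. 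Structurally this is exactly the Lawvere-extensivity condition (3) of Lemma~\ref{extensiveMoensLem}, which by the last sentence of that lemma is equivalent to our condition (3) under vertical stability alone.

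Cartesianness of $E$ is the main obstacle. In the original Moens setting (Theorem~\ref{MoensThm}) one appeals to Lemma~\ref{Moenslem2} to show that $\DDelta$ preserves arbitrary pullbacks, and that argument rests on full stability of internal sums, which we no longer assume. I bypass this by proving only the narrow preservation actually required: for cartesian $f : X \to Y$ over $u$, the square with horizontals $f$ and $1_u$ and verticals $\alpha_X, \alpha_Y$ is already a pullback in $\X$ by Lemma~\ref{flf1}(2); since the fibre equivalences $E_I, E_J$ intertwine cartesian reindexing along $u$ in $P$ with pullback along $\Delta_P(u)$ in $\gl(\Delta_P)$, the image square in $\X_1$ must itself be a pullback, so $E(f)$ is cartesian.

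Finally, $\Delta_{\gl(F)} \cong F$ is a direct unwinding: the fibrewise terminal of $\gl(F)$ over $I$ is $(I,F(I),\id_{F(I)})$, its cocartesian lift along $!_I$ is $(!_I,\id_{F(I)})$ with codomain $(1,F(I),F(!_I))$, and modulo the identification $\gl(F)(1) \cong \C$ afforded by $F(1) \cong 1_\C$ this reads $\Delta_{\gl(F)}(I) \cong F(I)$ naturally in $I$.
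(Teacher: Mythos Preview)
Your proposal is correct and follows essentially the same route as the paper. The paper's proof of the reverse direction is organized around the quasi-inverse $\varphi_I^* : \X_1/\Delta(I) \to \X_I$ (pullback along the cocartesian $\varphi_I$) rather than your $E_I = \coprod_I/1_I$, but the content is identical: the ``iff'' observation combines vertical stability with condition~(3) to give the fibre equivalences, and cartesianness is obtained from the commuting square $u^* \circ \varphi_I^* \cong \varphi_J^* \circ \Delta(u)^*$ (your ``intertwining''), which follows from $\Delta(u)\circ\varphi_J = \varphi_I \circ 1_u$ and pullback pasting together with Lemma~\ref{flf1}(2). Your remark that Lemma~\ref{Moenslem2} is unavailable here and must be bypassed is exactly the point of this weaker argument; the paper makes the same move but without flagging it.
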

\begin{proof}
We have already seen that these conditions are necessary and the equivalence of 
(1)--(3) follows from Lemma~\ref{MoensLem}.

For the reverse direction first observe that the assumptions on $P$ imply 
that every commuting square
\begin{diagram}[small]
X & \rTo^\varphi & U \\
\dTo^{\alpha} & & \dTo_{\beta} \\
1_I & \rTo_{\varphi_I}^\cocart & \Delta(I)
\end{diagram}
with $\alpha$ and $\beta$ vertical is a pullback iff $\varphi$ is cocartesian. 
Thus pullback along the cocartesian arrow 
$\varphi_I : 1_I \to \coprod_I 1_I = \Delta(I)$ induces an equivalence 
between $X_I$ and $\X_1 / \coprod_I 1_I$. This extends to 
an equivalence between $P$ and $P_\Delta$ since
\begin{diagram}[small]
\X_J & \lTo^{\varphi_J^*}_\simeq & \X_1 / \Delta(J) \\
\uTo^{u^*} & & \uTo_{\Delta(u)^*} \\
\X_I & \lTo_{\varphi_I^*}^\simeq & \X_1 / \Delta(I) \\
\end{diagram}
commutes up to isomorphism for all $u : J \to I$ in $\B$. 
\end{proof}

\medskip
As apparent from the proof fibrations $P : \X \to \B$ over a finite limit 
category $\B$ are equivalent to $P_F$ for some terminal object preserving
functor $F$ to a finite limit category if and only if $P$ is a bifibration 
such that $\X$ has and $P$ preserves finite limits and every commuting square
of the form
\begin{diagram}[small]
X & \rTo^\varphi & U \\
\dTo^{\alpha} & & \dTo_{\beta} \\
1_I & \rTo_{\varphi_I}^\cocart & \Delta(I)
\end{diagram}
with $\alpha$ and $\beta$ vertical is a pullback iff $\varphi$ is cocartesian.

\bigskip
Finally we discuss how the fact that finite limit preserving functors are 
closed under composition is reflected on the level of their fibrations 
associated via glueing. Suppose that $F : \B\to\C$ and $G:\C\to\D$ are finite 
limit preserving functors between categories with finite limits. 
Then $P_{GF} \cong 1^*F^*\FFam(P_G)$ as indicated in 
\begin{diagram}
\D{\downarrow}G{\circ}F \SEpbk & \rInto & \cdot \SEpbk & \rTo & \cdot \SEpbk & 
\rTo & \D{\downarrow}G \SEpbk & \rTo & \D^\two\\
\dTo_{P_{GF}} & & \dTo_{F^*\FFam(P_G)} && \dTo_{\FFam(P_G)} & 
& \dTo_{P_G} & & \dTo_{\partial_1} \\
\B & \rInto_1 & \C{\downarrow}F\SEpbk &\rTo_{\partial_1^*F} & \C^\two & \rTo_{\partial_0}&\C & \rTo_G & \D\\
& \rdEqual & \dTo_{P_F}  & & \dTo_{\partial_1} & & \\
& & \B & \rTo_F & \C & & 
\end{diagram}
because $\partial_0 \circ \partial_1^*F \circ 1 = \partial_0 \circ 1 = F$. 
The fibration $F^*\FFam(P_G)$ is $P_G$ shifted from $\C$ to $\C{\downarrow}F$
via change of base along 
$\DDelta = \partial_0 = \partial_0 \circ \partial_1^*F : \C{\downarrow}F \to \C$. 
The fibration $P_{GF}$ appears as a(n in general proper) subfibration 
of the composite fibration $P_F \circ F^*\FFam(P_G)$.

A fibration $Q : \Y \to \C{\downarrow}F$ is isomorphic to one of the form 
$F^*\FFam(P_G)$ iff $Q$ is a fibration of categories with finite limits and 
stable disjoint internal sums such that $\Delta : \C{\downarrow}F \to \Y_1$
is isomorphic to a functor of the form $G \circ \partial_0$, i.e.\ iff $\Delta$ 
inverts cocartesian arrows of $\C{\downarrow}F$. This latter condition is 
equivalent to the requirement that $1_\varphi$ is cocartesian w.r.t.\ $Q$ 
whenever $\varphi$ is cocartesian w.r.t.\ $P_F$.\footnote{As $\Delta(\varphi)$ 
is an isomorphism iff $1_\varphi$ is cocartesian. 
This can be seen from the diagram 
\begin{diagram}[small]
1_X & \rTo^{\varphi_X}_\cocart  & \Delta(X) \\
\dTo^{1_\varphi} & & \dTo_{\Delta(\varphi)} \\
1_Y & \rTo_{\varphi_Y}^\cocart  & \Delta(Y) \\
\end{diagram}
where $\varphi_X$ and $\varphi_Y$ are cocartesian over the terminal 
projections of $X$ and $Y$, respectively, and $\Delta(\varphi)$ is vertical.
If $1_\varphi$ is cocartesian then $\Delta(\varphi)$ is an isomorphism as it
is vertical and cocartesian. On the other hand if $\Delta(\varphi)$ is an 
isomorphism then $\Delta(\varphi) \circ \varphi_X$ is cocartesian, too, and 
thus by Lemma~\ref{MoensLem}(2) it follows that $1_\varphi$ is cocartesian.} 
This fails e.g.\ for $Q \equiv P_{\Id_{\C{\downarrow}F}}$ if not all 
cocartesian arrows of $\C{\downarrow}F$ are isomorphisms, i.e.\ $\B$ is 
not equivalent to the trivial category $\mathbf{1}$.

\newpage

\section{Geometric Morphisms as Fibrations}

Geometric morphism are adjunctions $F \dashv U : \C \to \B$ where $F$ 
preserves finite limits. Though introduced originally for toposes the notion 
of geometric morphism makes sense already if $\B$ and $\C$ have finite limits.

First we will characterise for functors $F$ between categories with finite 
limits the property that $F$ has a right adjoint in terms of a purely 
fibrational property of its associated fibration $P_F = F^*P_\C$, 
namely that of having \emph{small global sections}. 

First we observe that the requirement $P \dashv 1 \dashv G$ is equivalent to 
$P$ having small global sections since $1 \dashv G$ says that 
for every $X \in P(I)$ there is an $\varepsilon_X : 1_{GX} \to X$ 
such that for every $\sigma : 1_J \to X$ over
$u : J \to I$ there is a unique $v : J \to GX$ with
\begin{diagram}
     &                   &  1_I  \\
     &  \ruTo^{1_u}      &  \uTo_{1_{P(\varepsilon_X)}} \\
1_J  &  \rDashto^{\qquad 1_v}   &  1_{GX} \\
     &  \rdTo_{\sigma}   &  \dTo_{\varepsilon_X} \\
     &                   &  X \\
\end{diagram}
i.e.\ that $\Hom_I(1_I,X)$ is representable.
If $P$ is a fibration of cartesian closed categories (or even a fibered topos)
then $P$ has small global sections iff $P$ is locally small. 
 
\begin{Thm}\label{gmthm1}
Let $F : \B \to \C$ be a functor between categories with finite limits. 
Then $F$ has a right adjoint $U$ iff the fibration
$P_F$ has small global sections, i.e.\ $P_F \dashv 1 \dashv G$.
\end{Thm}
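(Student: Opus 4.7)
The plan is to exhibit the right adjoint $U$ of $F$ and the small-global-sections right adjoint $G$ of $1 : \B \to \C{\downarrow}F$ as mutually derivable using the finite limits available in $\B$ and $\C$, so that the adjunction $F \dashv U$ on one side corresponds to the adjunction $1 \dashv G$ on the other via the $\Gl(F)$ construction.

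For $(\Rightarrow)$ I assume $F \dashv U$ with unit $\eta$. Given an object $(a : A \to FI)$ of $\C{\downarrow}F$ over $I$, I define $G(I,A,a)$ to be the pullback in $\B$ of the cospan $I \xrightarrow{\eta_I} UFI \xleftarrow{U(a)} UA$. I would then check directly that a morphism in $\Gl(F)$ from $1_J = (J, FJ, \id_{FJ})$ to $(I, A, a)$ over $u : J \to I$, which is exactly an arrow $f : FJ \to A$ in $\C$ with $a \circ f = Fu$, corresponds under $F \dashv U$ (together with the naturality square $U(Fu) \circ \eta_J = \eta_I \circ u$) to a pair $(u, \tilde f)$ with $U(a) \circ \tilde f = \eta_I \circ u$, hence to a unique arrow $J \to G(I,A,a)$ over $u$. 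Functoriality of $G$ on cartesian and vertical arrows, plus naturality of the bijection, follow mechanically from the universal property of pullbacks, yielding $1 \dashv G$.

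For $(\Leftarrow)$ I assume $\Gl(F) \dashv 1 \dashv G$. Using that $\C$ has binary products, I define
$$ UA \;:=\; G(1_\B,\; A \times F(1_\B),\; \pi_2) $$
on objects, and on a morphism $h : A \to A'$ in $\C$ I put $U(h) := G(\id_{1_\B},\, h \times \id_{F(1_\B)})$, taking the counit of $1 \dashv G$ at $(1_\B, A{\times}F(1_\B), \pi_2)$ as the counit of the putative $F \dashv U$. The verification reduces to the observation that a morphism in $\C{\downarrow}F$ from $1_J$ to $(1_\B, A \times F(1_\B), \pi_2)$ over the unique $J \to 1_\B$ is the same data as an arrow $FJ \to A$ in $\C$, because the second component is forced to be $F(!_J)$. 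Combining this with the bijection $\B(J, UA) \cong (\C{\downarrow}F)(1_J,\, (1_\B, A{\times}F(1_\B), \pi_2))$ supplied by $1 \dashv G$ yields a natural bijection $\C(FJ, A) \cong \B(J, UA)$, which is the adjunction $F \dashv U$.

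The main obstacle is the $(\Leftarrow)$ direction: the right adjoint $G$ a priori produces objects of $\B$ from arbitrary morphisms into $F$-images, so one has to single out a family of such morphisms that ``encodes'' a bare object $A \in \C$. The choice $(A \times F(1_\B), \pi_2)$ is the universal one that absorbs the compatibility condition $a \circ f = Fu$ into a trivial requirement; once this trick is in hand, the rest of the argument is formal bookkeeping with the two adjunctions and pullbacks.
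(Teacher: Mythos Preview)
Your proposal is correct and follows essentially the same route as the paper. For $(\Rightarrow)$ both you and the paper build $G(I,A,a)$ as the pullback of $\eta_I$ along $U(a)$ and verify the universal property of the counit directly; for $(\Leftarrow)$ your formula $UA = G(1_\B,\, A{\times}F(1_\B),\, \pi_2)$ is exactly the paper's right adjoint $G \circ I \circ (F1)^*$ unwound, the only difference being that the paper packages the argument as a composition of the two adjunctions $F_{/1} \dashv G{\restriction}_{\C/F1}$ and $\Sigma_{F1} \dashv (F1)^*$ rather than checking the hom-set bijection by hand.
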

\begin{proof}
Suppose that $F$ has a right adjoint $U$. We show that $1 \dashv G$ by 
exhibiting its counit $\widetilde{\varepsilon}_a$ for an arbitrary object
$a : A \to FI$ in $\C{\downarrow}F$.
For this purpose consider the pullback
\begin{diagram}[small]
C \SEpbk & \rTo^{q} & UA \\
\dTo^{p} & & \dTo_{Ua} \\
I & \rTo_{\eta_I} & UFI \\
\end{diagram}
where $\eta_I$ is the unit of $F \dashv U$ at $I \in \B$.
Then for the transpose $\widehat{q} = \varepsilon_A \circ Fq : FC \to A$ 
of $q$ we have
\begin{diagram}[small]
FC & \rTo^{\widehat{q}} & A \\
   & \rdTo(1,2)_{Fp} \ldTo(1,2)_{a} & \\
   &  FI &  \\
\end{diagram}
We show that $(p,\widehat{q}) : 1_C \to a$ is the desired counit 
$\widetilde{\varepsilon}_a$ of $1 \dashv G$ at $a$. Suppose that 
$(u,s) : 1_J \to a$ in $\C{\downarrow}F$, i.e.\ $u : J \to I$ and $s : FJ \to A$ 
with $a \circ s = Fu$ as shown in the diagram
\begin{diagram}[small]
FJ & \rTo^s & A \\
\dEqual & & \dTo_a \\
FJ & \rTo_{Fu} & FI \\
J & \rTo_u & I\\
\end{diagram}
We have to show that there is a unique $v : J \to C$ with $p \circ v = u$ 
and $\widehat{q} \circ Fv = s$ as shown in the diagram
\begin{diagram}[small]
FJ    &   &      & &                \\
      &  \rdTo_{Fv}    \rdTo(4,2)^{s} &      &          & \\
 \dEqual   &      &  FC  & \rTo_{\widehat{q}}   &       A   \\
    &               & \dEqual  &  \widetilde{\varepsilon}_a &  \dTo_{a}  \\
FJ & \rTo_{Fv}  & FC   & \rTo_{Fp} &        FI          \\
\end{diagram} 
But $\widehat{q} \circ Fv = s$ iff $q \circ v = Us \circ \eta_J$ 
due to $F \dashv U$. Thus $v$ satisfies the above requirements iff
$p \circ v = u$ and  $q \circ v = Us \circ \eta_J$, 
i.e.\ iff $v$ is the mediating arrow in the diagram
\begin{diagram}[small]
J  &           & & & \\
  & \rdTo~{v} \rdTo(2,4)_{u} \rdTo(4,2)^{Us \circ \eta_J} & & & \\
  &            &   \SEpbk & \rTo_{q} &   UA \\
  &            &  \dTo_{p}     &      &  \dTo_{Ua} \\
  &            &  I        & \rTo_{\eta_I} &   UFI  
\end{diagram}
from which there follows uniqueness and existence of $v$ with the 
desired properties. Thus $\widetilde{\varepsilon}_a$ actually is the counit
for $1 \dashv G$ at $a$.

\medskip
For the reverse direction assume that $P_F \dashv 1 \dashv G$. 
Thus, for all $X$ over $1$ we have 
$\B(-,GX) \cong \C{\downarrow}F(1_{(-)},X) \cong \C/F1(F_{/1}(-),X)$, i.e.\ 
$F_{/1} : \B \cong \B/1 \to \C/F1$ has a right adjoint (given by the 
restriction of $G$ to $\C/F1$). Since $\Sigma_{F1} \dashv (F1)^* : \C \to
\C/F1$ and $F = \Sigma_{F1} \circ F_{/1} : \B \cong \B/1 \to \C$ the functor
$F$ has a right adjoint. 

A slightly more abstract proof of the backwards direction goes by observing
that the inclusion $I : \C{\downarrow}F1 \hookrightarrow \C{\downarrow}F$ 
has a left adjoint $R$ sending $a : A \to FI$ to $F!_I \circ a : A \to F1$ and 
a morphism $(u,f)$ from $b : B \to FJ$ to $a : A \to FI$ to $f : R(b) \to R(a)$
since $(u,f)$ from $a : A \to FI$ to $c : C \to F1$ is in 1-1-correspondence
with $f : R(a) \to c$ (because necessarily $u =\; !_I$). Obviously, we have
that $F_{/1} = R \circ 1$ and thus $F_{/1}$ has right adjoint $G \circ I$.
Since $F = \Sigma_{F1} \circ F_{/1}$ and $\Sigma_{F1} \dashv (F1)^*$ it follows
that $F$ has right adjoint $G \circ I \circ (F1)^*$.
\end{proof}

\medskip
Notice that the above proof goes through if $\C$ has just pullbacks and 
$\Sigma_{F1}$ has a right adjoint $(F1)^*$, i.e.\ $F1 \times X$ exists 
for all objects $X$ in $\C$. 

Thus, we have the following lemma which has a structure analogous 
to the one of Lemma~\ref{cbis}.

\begin{Lem}\label{cbsgs}
Suppose $\B$ has finite limits and $\C$ has pullbacks and all products of
the form $F1 \times X$. Then for a functor $F : \B \to \C$
the following conditions are equivalent
\begin{enumerate}
\item[\emph{(1)}] $F$ has a right adjoint
\item[\emph{(2)}] $F^* : \Fib(\C) \to \Fib(\B)$ preserves the property
                  of having small global sections
\item[\emph{(3)}] $P_F = F^*P_\C = \partial_1 : \C{\downarrow}F \to \B$ 
                  has small global sections.
\end{enumerate} 
\end{Lem}
\begin{proof} The proof of (1) $\Rightarrow$ (2) is a special case of
the proof of Lemma~\ref{lspres}. Since $P_\C$ has small global sections
(3) follows from (2). Finally, claim (1) follows from (3) 
by Theorem~\ref{gmthm1} and the subsequent remark on its strengthening. 
\end{proof}

\bigskip
From Lemma~\ref{cbis} and Lemma~\ref{cbsgs} it follows that for a
functor $F : \B \to \C$ between categories with finite limits the fibration
$P_F = F^*P_\C$ has internal sums and small global sections iff $F$ 
preserves pullbacks and has a right adjoint.\footnote{This was already observed
by J.~B\'enabou in \cite{montreal}.}

\emph{Thus, for categories $\B$ with finite limits we get a 
1--1--correspondence (up to equivalence) between geometric morphisms to $\B$ 
(i.e.\ adjunctions $F \dashv U : \C \to \B$ where $\C$ has finite limits and 
$F$ preserves them) and fibrations over $\B$ of categories with finite limits,
stable disjoint sums and small global sections. Such fibrations are 
called {\bf geometric}.}

In Appendix~\ref{jibthm} we prove M.~Jibladze's theorem \cite{Jib} that in 
fibered toposes with internal sums these are automatically stable and disjoint.
As a consequence \emph{geometric morphisms from toposes to a topos $\Se$ are 
(up to equivalence) in 1--1--correspondence with toposes fibered over $\Se$ 
that are  cocomplete and locally small.}

\medskip
In the rest of this section we show that in a fibered sense every geometric
morphism is of the form $\Delta \dashv \Gamma$. 

First we observe that there is a fibered version of the functor
$\Delta = \DDelta \circ 1$ considered in the previous section
\begin{Def}\label{Deltafibdef}
Let $\B$ be a category with finite limits and $P : \X \to \B$ be a fibration
of categories with finite limits and stable disjoint internal sums. Then there
is a fibered functor $\Delta_P : P_\B \to P$ sending the morphism
\begin{diagram}[small]
I_1         &   \rTo^{v}   &  I_2             \\
\dTo^{u_1}  &              & \dTo_{u_2} \\
J_1         &   \rTo_{w}   &  J_2             \\
\end{diagram}
in $P_\B$ to the arrow $\Delta_P(w,v)$ in $\X$ over $w$ making the
following diagram commute

\goodbreak 
\begin{diagram}[small]
1_{I_1}        &   \rTo^{1_v}        &  1_{I_2}    \\
\dTo^{\varphi_{u_1}}  &              & \dTo_{\varphi_{u_2}} \\
\Delta_P(u_1)         &   \rTo_{\Delta_P(w,v)}   &  \Delta_P(u_2)   \\
\end{diagram}
where $\varphi_{u_i}$ is cocartesian over $u_i$ for $i=1,2$.
\end{Def}

Notice that $\Delta_P$ actually is cartesian as if the first square is a 
pullback then $\Delta_P(w,v)$ is cartesian by BCC for internal sums as 
$1_v$ is cartesian and the  $\varphi_{u_i}$ are cocartesian.

Now $P$ having small global sections turns out as equivalent to $\Delta_P$
having a fibered right adjoint $\Gamma_P$.

\begin{Thm}\label{gmthm2}
Let $\B$ be a category with finite limits and $P : \X \to \B$ be a fibration
of categories with finite limits and stable disjoint internal sums. Then $P$
has small global sections iff $\Delta_P$ has a fibered right adjoint $\Gamma_P$.
\end{Thm}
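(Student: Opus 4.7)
The plan is to establish the equivalence by translating both sides into the representability of the same presheaf, via the universal property of cocartesian arrows.

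First I would fix $X \in P(I)$ and describe the presheaf $S_X : (\B/I)^\op \to \Set$ defined by $S_X(u : K \to I) = \Hom_{P(I)}(\Delta_P(u), X)$. By construction $\Delta_P(u) = \coprod_u 1_I$ is equipped with a cocartesian arrow $\varphi_u : 1_K \to \Delta_P(u)$ over $u$, so the universal property of $\varphi_u$ gives, naturally in $u$, a bijection
\[ \Hom_{P(I)}(\Delta_P(u), X) \;\cong\; \{\sigma : 1_K \to X \text{ in } \X \mid P(\sigma) = u\}. \]
Summing over all $u \in \B/I$ on the right hand side yields the set of all $\sigma : 1_K \to X$ in $\X$, naturally in $K \in \B$. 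Hence $S_X$ is representable in $\B/I$, say by $\gamma_X : GX \to I$, if and only if the presheaf $K \mapsto \Hom_{\X}(1_K, X)$ on $\B^\op$ is representable by $GX$, i.e.\ iff the functor $1 : \B \to \X$ has a right adjoint at $X$ (with counit $\varepsilon_X : 1_{GX} \to X$ above $\gamma_X$). Running this for all $X$ gives: representability of every $S_X$ is equivalent to $1 \dashv G$, i.e.\ $P$ has small global sections.

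Next I would observe that representability of each $S_X$ is exactly the assertion that the fibrewise functor $\Delta_P^I : \B/I \to P(I)$ admits a fibrewise right adjoint $\Gamma_P^I$ with $\Gamma_P^I(X) = \gamma_X$. So what remains is to upgrade the existence of the fibrewise right adjoints to the existence of a fibred right adjoint $\Gamma_P : P \to P_\B$, i.e.\ to show that the $\Gamma_P^I$ assemble into a cartesian functor and that the resulting adjunction has vertical unit and counit. This is a general fact about fibred adjunctions: once $\Delta_P$ is known to be a cartesian (fibred) functor, which was established when $\Delta_P$ was defined (the key point being BCC for internal sums, used to show $\Delta_P$ sends the cartesian square above a pullback to a cartesian square), the Beck--Chevalley isomorphism for the adjoints is obtained by mating $u^* \circ \Delta_P^I \cong \Delta_P^J \circ u^*$ through the fibrewise adjunctions to get $\Gamma_P^J \circ u^* \cong u^* \circ \Gamma_P^I$, which says exactly that $\Gamma_P$ preserves cartesian arrows.

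The converse is immediate: if $\Delta_P \dashv \Gamma_P$ is a fibred adjunction, then on each fibre we get $\Delta_P^I \dashv \Gamma_P^I$, hence each $S_X$ is representable, hence $P$ has small global sections.

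The main obstacle I anticipate is purely bookkeeping: checking that the fibrewise right adjoints really glue into a cartesian functor and that the unit/counit can be chosen vertical. Everything substantial follows once one identifies the bijection in the first paragraph; the rest is general fibred-adjunction machinery applied to the already-verified cartesianness of $\Delta_P$.
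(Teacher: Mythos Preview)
Your reduction of both conditions to representability of the presheaf $S_X(u) = \Hom_{P(I)}(\Delta_P(u),X)$ is correct and elegant: the cocartesian arrow $\varphi_u : 1_K \to \Delta_P(u)$ does give a natural bijection with $\{\sigma : 1_K \to X \mid P(\sigma) = u\}$, and summing over $u$ identifies representability of $S_X$ with the right-adjoint-at-$X$ condition for $1 : \B \to \X$. This is a cleaner packaging than the paper's explicit diagram chase.

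There is, however, a genuine gap in your gluing step. You claim that the Beck--Chevalley isomorphism $\Gamma_P^J \circ u^* \cong u^* \circ \Gamma_P^I$ is obtained by mating the isomorphism $u^* \circ \Delta_P^I \cong \Delta_P^J \circ u^*$ through the fibrewise adjunctions. But mating a natural isomorphism does not in general produce an isomorphism: the mate of an invertible $2$-cell between left adjoints is a $2$-cell between right adjoints that need not be invertible. Cartesianness of $\Delta_P$ alone is not enough to force BCC for the $\Gamma_P^I$; this is exactly the extra condition in the standard criterion for fibrewise right adjoints to assemble into a fibred one.

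The fix is close at hand, and you have already done the work. Since you have established the global adjunction $1 \dashv G$, you can define $\Gamma_P$ on the total category by $\Gamma_P(X) = (P(\varepsilon_X) : GX \to PX)$ and $\Gamma_P(h) = (P(h),G(h))$; then $P_\B \circ \Gamma_P = P$ and the unit and counit of $\Delta_P \dashv \Gamma_P$ are vertical by construction. The correct general fact---the one the paper invokes---is that an ordinary adjunction in $\Cat/\B$ (vertical unit and counit) whose left adjoint is cartesian automatically has a cartesian right adjoint. Alternatively, you can verify BCC directly from your representability description: maps $1_K \to u^*X$ correspond to maps $1_K \to X$ over arrows factoring through $u$, hence to maps $K \to GX$ landing in the pullback of $P(\varepsilon_X)$ along $u$, which is exactly $u^*\Gamma_P^I(X)$. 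Either route closes the gap; the paper takes the former.
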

\begin{proof}
For the implication from left to right assume that $P \dashv 1 \dashv G$. 
For $X \in \X$ let $\widetilde{\varepsilon}_X$ be the unique vertical arrow
making the diagram 
\begin{diagram}[small]
1_{GX} & \rTo^{\varphi}_\cocart  & \Delta_P(P(\varepsilon_X))\\
     &  \rdTo_{\varepsilon_X}   &  \dTo_{\widetilde{\varepsilon}_X} \\
     &                   &  X \\
\end{diagram}
commute where $\varepsilon_X$ is the counit of $1 \dashv G$ at $X$. 
Then for $u : I \to J$ and $f : \Delta_P(u) \to X$ there is a unique 
morphism $(w,v) : u \to P(\varepsilon_X)$ with
$\widetilde{\varepsilon}_X \circ \Delta_P(w,v) = f$
as can be seen from the following diagram
\begin{diagram}
1_I & \rTo^{1_v} & 1_{G(X)} & & \\
\dTo^{\varphi_u}  & &  \dTo^{\varphi} & \rdTo(2,4)^{\varepsilon_X} & \\
\Delta_P(u) & \rTo^{\Delta_P(w,v)}  &  \Delta_P(P(\varepsilon_X)) & & \\
& \rdTo(4,2)_{f} & & \rdTo~{\widetilde{\varepsilon}_X} & \\
& & & & X \\
\end{diagram}
using the universal property of $\varepsilon_X$ and that necessarily $w = P(f)$. 
Thus, for $f : \Delta_P(u) \to X$ its lower
transpose $\check{f}$ is given by $(P(f),v) : u \to P(\varepsilon_X)$ 
where $v : I \to G(X)$ is the unique arrow with 
$\varepsilon_X \circ 1_v = f \circ \varphi_u$. 
The induced right adjoint $\Gamma_P$ sends a morphism 
$h : Y \to X$ in $\X$ to the morphism
\begin{diagram}
G(Y)        &   \rTo^{G(h)}        &  G(X)                \\     
\dTo^{P(\varepsilon_Y)}  &   \Gamma_P(h)   &  \dTo_{P(\varepsilon_X)}    \\
P(Y)        &   \rTo_{P(h)}        &  P(X)                \\          
\end{diagram}
in $\B^\two$ because $G(h)$ is the unique morphism $v$ with 
$\varepsilon_X \circ 1_v = h \circ \varepsilon_Y = 
 h \circ \widetilde{\varepsilon}_Y \circ \varphi_{P(\varepsilon_Y)}$ and,
therefore, $(P(h),G(h))$ is the lower transpose 
of $h \circ \widetilde{\varepsilon}_Y$ as required.
The unit $\widetilde{\eta}_u : u \to \Gamma_P(\Delta_P(u)) =
P(\varepsilon_{\Delta_P(u)})$ of $\Delta_P \dashv \Gamma_P$ at $u : I \to J$
is given by $\widetilde{\eta}_u$ making the following diagram commute
\begin{diagram}
I & & 1_I & &  \\
\dDashto^{\widetilde{\eta}_u} & & \dTo^{1_{\widetilde{\eta}_u}} & 
\rdTo^{\varphi_u}_\cocart & \\
G \Delta_P(u) & & 1_{G \Delta_P(u)} & \rTo_{\varepsilon_{\Delta_P(u)}} & 
\Delta_P(u)\\
\end{diagram}
because $(\id_{P(\Delta_P(u))}, \widetilde{\eta}_u)$ is the 
lower transpose of $\id_{\Delta_P(u)}$. As $P_\B\circ\Gamma_P = P$ and 
the components of $\widetilde{\eta}$ and $\widetilde{\varepsilon}$ are 
vertical it follows\footnote{This is an instance of a general fact about
fibered adjunctions whose formulation and (easy) verification we leave as an 
exercise to the reader.} that $\Gamma_P$ is cartesian and thus 
$\Delta_P \dashv \Gamma_P$ is a fibered adjunction.

\smallskip
For the implication from right to left suppose that $\Delta_P$ has a fibered
right adjoint $\Gamma_P$. We write $\widetilde{\varepsilon}$ for the counit 
of this adjunction. For $X \in \X$ we define $\varepsilon_X$ as 
$\widetilde{\varepsilon}_X \circ \varphi$
\begin{diagram}[small]
1_{GX} & \rTo^{\varphi}_{\mbox{cocart}} & \Delta_P \Gamma_P X\\
     &  \rdTo_{\varepsilon_X}   &  \dTo_{\widetilde{\varepsilon}_X} \\
     &                   &  X \\
\end{diagram}
where $\varphi$ is cocartesian over $P(\Gamma_P(X)) : G(X) \to P(X)$.
To verify the desired universal property of $\varepsilon_X$ assume that
$\sigma : 1_I \to X$ is a morphism over $u : I \to P(X)$. 
Let $\sigma = f \circ \varphi_u$ with $f$ vertical and $\varphi_u$ 
cocartesian. Then the existence of a unique arrow $v : I \to G(X)$
with $\varepsilon_X \circ 1_v = \sigma$ follows from considering the diagram
\begin{diagram}[small]
1_I & \rTo^{1_v} & 1_{G(X)} & & \\
\dTo^{\varphi_u}  & &  \dTo^{\varphi} & \rdTo(2,4)^{\varepsilon_X} & \\
\Delta_P(u) & \rTo^{\Delta_P(\id_{P(X)},v)}  &  \Delta_P \Gamma_P X & & \\
& \rdTo(4,2)_{f} & & \rdTo~{\widetilde{\varepsilon}_X} & \\
& & & & X \\
\end{diagram}
using the universal property of $\widetilde{\varepsilon}_X$.
Thus, $P$ has small global sections.
\end{proof}

\bigskip
The following explicitation of $\Delta_{P_F} \dashv \Gamma_{P_F}$ 
for finite limit preserving $F$ will be helpful later on.

\begin{Thm}\label{gmthm3}
For the geometric fibration $P = P_F$ induced by a geometric morphism 
$F \dashv U : \C \to \B$ the fibered adjunction $\Delta_P \dashv \Gamma_P$ 
can be described more concretely as follows.\\
The left adjoint $\Delta_P$ acts by application of $F$ to arrows and squares
in $\B$. The fiber of $\Gamma_P$ over $I \in \B$ is given by 
$\eta_I^* \circ U_{/I}$. The unit $\widetilde{\eta}_u$ for $u : I \to J$ 
is given by
\begin{diagram}[small]
I  &            & & & \\
   & \rdTo~{\widetilde{\eta}_u} \rdTo(2,4)_{u} \rdTo(4,2)^{\eta_I} & & & \\
   &            &  K \SEpbk & \rTo_{q} &   UFI  \\
   &            &  \dTo_{p}     &      &  \dTo_{UFu} \\
   &            &  J        & \rTo_{\eta_J} &   UFJ \\
\end{diagram} 
and for $a : A \to FI$ the counit $\widetilde{\varepsilon}_a$ is given by 
$\varepsilon_A \circ Fq : Fp \to a$ where
\begin{diagram}[small]
K \SEpbk  & \rTo^{q} &   UA       \\
\dTo^{p}  &          &  \dTo_{Ua} \\
I        & \rTo_{\eta_I} &   UFI  \\
\end{diagram} 
\end{Thm}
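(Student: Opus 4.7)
The plan is to specialise the general fibred adjunction $\Delta_P \dashv \Gamma_P$ constructed in Theorem~\ref{gmthm2} to $P = \gl(F)$, and then check that each piece coincides with the concrete description in the statement. First I would identify $\Delta_P$: the terminal object $1_I$ of the fibre $\gl(F)(I) = \C{\downarrow}FI$ is $\id_{FI}$, and from the characterisation recalled earlier (a morphism in $\C{\downarrow}F$ is cocartesian iff its top component is an isomorphism) a cocartesian lift of $u : I \to J$ at $1_I$ is the square with top arrow $\id_{FI}$ and bottom arrow $Fu$. Hence $\Delta_P(u) = Fu : FI \to FJ$, and by Definition~\ref{Deltafibdef} the action on squares is likewise obtained by applying $F$, so $\Delta_P$ is just $F$ on arrows and squares.

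Next I would read off the counit. The proof of the forward direction of Theorem~\ref{gmthm1} explicitly constructs, for any $a : A \to FI$, the counit of $1 \dashv G$ at $a$ as the pair $(p,\,\varepsilon_A \circ Fq)$ where $p,q$ come from the pullback of $Ua$ along $\eta_I$. Inspection of the proof of Theorem~\ref{gmthm2} shows that $\widetilde{\varepsilon}_a$ is obtained from $\varepsilon_a$ by the cocartesian-vertical factorisation $\varepsilon_a = \widetilde{\varepsilon}_a \circ \varphi$, and since $\varphi$ is cocartesian with bottom component $p$ this factorisation leaves the top component $\varepsilon_A \circ Fq$ untouched. Thus $\widetilde{\varepsilon}_a = \varepsilon_A \circ Fq : Fp \to a$, as stated.

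The fibre of $\Gamma_P$ over $I$ is then forced: on objects it sends $a : A \to FI$ to $p = P(\widetilde{\varepsilon}_a) = \eta_I^*(Ua)$, which is by definition $(\eta_I^* \circ U_{/I})(a)$, where $U_{/I} : \C/FI \to \B/UFI$ denotes postcomposition with $U$. Cartesianness of $\Gamma_P$, already established abstractly in Theorem~\ref{gmthm2}, forces the action on vertical morphisms and on reindexing to agree with that of $\eta_I^* \circ U_{/I}$ by the universal property of the pullbacks used to define both. Finally, the unit $\widetilde{\eta}_u$ at $u : I \to J$ is determined by the adjunction formula: it is the lower transpose of $\id_{\Delta_P(u)} = \id_{Fu}$, hence the unique arrow $I \to K$ (where $K$ is the pullback of $UFu$ along $\eta_J$) with $p \circ \widetilde{\eta}_u = u$ and $q \circ \widetilde{\eta}_u = \eta_I$; such an arrow exists and is unique because the naturality square $UFu \circ \eta_I = \eta_J \circ u$ forms a compatible cone over the pullback.

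The main obstacle is essentially bookkeeping: three descriptions of $\Gamma_P(a)$ are in play — from the counit of $1 \dashv G$, from the abstract fibred right adjoint built in Theorem~\ref{gmthm2}, and from the formula $\eta_I^* \circ U_{/I}$ — and one must keep them aligned on morphisms and on reindexing, not merely on objects. Once the counit is pinned down concretely by reusing the pullback from the proof of Theorem~\ref{gmthm1}, everything else is determined, and the only real work is verifying coherence of the identifications rather than producing any new construction.
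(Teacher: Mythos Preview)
Your proposal is correct and follows exactly the approach the paper indicates: the paper's own proof is merely the sentence ``Straightforward exercise when using the description of $\varepsilon$ from the proof of Theorem~\ref{gmthm1} and the descriptions of $\widetilde{\eta}$ and $\widetilde{\varepsilon}$ from the proof of Theorem~\ref{gmthm2},'' and you have carried out precisely that exercise, correctly identifying $1_I = \id_{FI}$, the cocartesian/vertical factorisation of the counit of $1 \dashv G$, and the pullback description of the unit.
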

\begin{proof}
Straightforward exercise when using the description of $\varepsilon$ from the 
proof of Theorem~\ref{gmthm1} and the descriptions of $\widetilde{\eta}$ and 
$\widetilde{\varepsilon}$ from the proof of Theorem~\ref{gmthm2}.
\end{proof}

\newpage

\section{Fibrational Characterisation of Boundedness}

Recall (e.g.\ from \cite{Joh}) that a geometric morphism 
$F \dashv U : \E \to \Se$ between elementary toposes is called \emph{bounded} 
iff there is an object $S \in \E$ such that for every $X \in \E$ there is an 
object $I \in \Se$ such that $X$ appears as a subquotient of $S{\times}FI$
\begin{diagram}[small]
C  & \rEmbed & S{\times}FI \\
\dOnto & & \\
X & & \\
\end{diagram}
i.e.\ $X$ appears as quotient of some subobject $C$ of $S{\times}FI$. 
Such an $S$ is called a \emph{bound} for the geometric morphism $F \dashv U$.
The importance of bounded geometric morphisms lies in the fact that they 
correspond to Grothendieck toposes over $\Se$ (as shown e.g.\ in \cite{Joh}). 

In this section we will show that a geometric morphism 
$F \dashv U : \E \to \Se$ is bounded iff for its corresponding geometric 
fibration $P_F$  there exists a generating family.

\begin{Lem}\label{ccmon}
Let $\B$ have finite limits and $P : \X \to \B$ be a fibration of categories
with finite limits with stable disjoint internal sums. Then a cocartesian
arrow $\varphi : X \to Y$ is monic w.r.t.\ to vertical arrows, i.e.\ 
vertical arrows $\alpha_1, \alpha_2 : Z \to X$ are equal whenever
$\varphi \circ \alpha_1 = \varphi \circ \alpha_2$.
\end{Lem}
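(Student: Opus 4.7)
The plan is to cancel $\varphi$ on the left by passing to the left adjoint $\coprod_u$ of reindexing along $u := P(\varphi)$, and then to invoke disjointness of sums via Corollary~\ref{corMoens1} in order to upgrade this cancellation to genuine equality of $\alpha_1$ and $\alpha_2$.

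First, because $\varphi$ is cocartesian over $u$ we may, up to vertical isomorphism, take $Y = \coprod_u X$ with $\varphi$ itself serving as the cocartesian lift of $X$. Picking any cocartesian lift $\varphi_Z : Z \to \coprod_u Z$ of $Z$ along $u$, the image $\coprod_u \alpha_i : \coprod_u Z \to Y$ is the unique vertical arrow in $\X_J$ satisfying $\coprod_u \alpha_i \circ \varphi_Z = \varphi \circ \alpha_i$ (this being precisely the functoriality of the left adjoint). The hypothesis $\varphi \alpha_1 = \varphi \alpha_2$ thus forces $\coprod_u \alpha_1 \circ \varphi_Z = \coprod_u \alpha_2 \circ \varphi_Z$, whence $\coprod_u \alpha_1 = \coprod_u \alpha_2$ by the uniqueness clause in the cocartesian universal property of $\varphi_Z$. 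Note that this first step invokes only cocartesian-ness of $\varphi_Z$, not disjointness.

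Secondly, I will show that $\coprod_u : \X_I \to \X_J$ is faithful. Applying Corollary~\ref{corMoens1} with the chosen $u$ and with $X := 1_I$ (the terminal object of the fibre $\X_I$, which exists since $P$ is a fibration of finite-limit categories) yields an equivalence of categories $\coprod_u / 1_I : \X_I \simeq \X_I/1_I \to \X_J/\coprod_u 1_I$. Any equivalence is faithful, and postcomposition with the manifestly faithful forgetful functor $\X_J/\coprod_u 1_I \to \X_J$ recovers $\coprod_u$ on fibres; so $\coprod_u$ is faithful. Applied to our vertical arrows $\alpha_1, \alpha_2 : Z \to X$ in $\X_I$, faithfulness promotes the equality $\coprod_u \alpha_1 = \coprod_u \alpha_2$ obtained above to the desired $\alpha_1 = \alpha_2$.

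The conceptual obstacle is that the cocartesian universal property of $\varphi$ controls only morphisms \emph{out of} $X$, and therefore cannot be used directly to cancel $\varphi$ from the left --- any attempt to set up pullback uniqueness for the span $(\alpha_1,\alpha_2) : Z \to X \times_Y X$ founders on exactly this circularity. Disjointness of internal sums (beyond mere stability) is where the real content of the lemma is hidden, and it is consumed precisely in Corollary~\ref{corMoens1}, which is what makes the slice version of $\coprod_u$ an equivalence and hence $\coprod_u$ itself faithful.
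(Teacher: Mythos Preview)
Your proof is correct, and it takes a genuinely different route from the paper's. The paper argues directly with the fibrewise diagonal: given vertical $\alpha_1,\alpha_2 : Z \to X$ with $\varphi\alpha_1=\varphi\alpha_2$, it forms the induced map $\alpha : Z \to X\times_Y X$ (over the diagonal $\delta_u : I \to I\times_J I$, which is monic), pulls back the cocartesian diagonal $\delta_\varphi$ along $\alpha$, and observes that the resulting map $\psi$ is cocartesian (by stability) over an isomorphism (since $\delta_u$ is monic and $P$ preserves pullbacks), hence an isomorphism; this immediately yields $\alpha_1=\alpha_2$. So the paper consumes disjointness in the single fact ``$\delta_\varphi$ is cocartesian'' and stability in a single pullback, without ever appealing to the Moens equivalence.

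Your argument instead first passes to $\coprod_u$ on the fibres (the step $\varphi\alpha_1=\varphi\alpha_2 \Rightarrow \coprod_u\alpha_1=\coprod_u\alpha_2$ uses only the cocartesian universal property of $\varphi_Z$), and then invokes Corollary~\ref{corMoens1} to conclude that $\coprod_u$ is faithful via the factorisation $\coprod_u \cong \Sigma_{\coprod_u 1_I} \circ (\coprod_u/1_I)$ through an equivalence followed by a faithful forgetful functor. This is perfectly legitimate since Corollary~\ref{corMoens1} is established earlier under the same hypotheses, and indeed faithfulness of $\coprod_u$ is morally equivalent to the present lemma. What you gain is a clean conceptual reduction (``cocartesian arrows are vertical-monic because $\coprod_u$ is faithful''); what the paper's proof gains is self-containedness and a more elementary argument that makes the roles of stability and disjointness separately visible, rather than bundling them into the extensivity equivalence of Corollary~\ref{corMoens1}.
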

\begin{proof}
Let $\alpha_1,\alpha_2 : Z \to X$ be vertical arrows with 
$\varphi \circ \alpha_1 = \varphi \circ \alpha_2$. Then there is a unique
morphism $\alpha$ with $\pi_i \circ \alpha = \alpha_i$ for $i=1,2$. 
Consider the pullback
\begin{diagram}[small]
\cdot \SEpbk & \rTo^{\beta} & X \\
\dTo^{\psi} & & \dTo_{\delta_\varphi} \\
Z & \rTo_{\alpha} & \cdot  \\
\end{diagram}
where $\delta_\varphi$ is the fiberwise diagonal. Notice that both $\alpha$
and $\delta_\varphi$ are above the same mono in $\B$. Thus, the map $\psi$ 
lies above an isomorphism  in the base (as $P$ preserves pullbacks) and, 
moreover, it is cocartesian as it appears as pullback of the cocartesian 
arrow $\delta_\varphi$. Thus, the arrow $\psi$ is an isomorphism and we have
$\alpha = \delta_\varphi \circ \beta \circ \psi^{-1}$ from which it follows
that $\alpha_i = \beta \circ \psi^{-1}$ for $i{=}1,2$. 
Thus, we have $\alpha_1 = \alpha_2$ as desired.

Alternatively, one may argue somewhat simpler as follows. For $i{=}1,2$ we have 
$\alpha_i \circ \psi = \pi_i \circ \alpha \circ \psi = 
 \pi_i \circ \delta_\varphi \circ \beta = \beta$. Accordingly, we have 
$\alpha_1 \circ \psi = \alpha_2 \circ \psi$ from which it follows
that $\alpha_1 = \alpha_2$ since $\psi$ is cocartesian and the $\alpha_i$ are 
vertical.
\end{proof}

\bigskip
For formulating the next lemma we have to recall the notion of collectively
epic morphism as introduced in Theorem~\ref{genfamthm}. If
$P : \X\to\B$ is a fibration then a morphism $f : X \to Y$ in $\X$ is
called \emph{collectively epic} iff for all vertical arrows 
$\alpha_1,\alpha_2 : Y \to Z$ from $\alpha_1 \circ f = \alpha_2 \circ f$ 
it follows that $\alpha_1 = \alpha_2$. Notice that for a collectively epic
morphism $f : X \to Y$ for maps $g_1,g_2 : Y \to Z$ with $P(g_1) = P(g_2)$
from $g_1 f = g_2 f$ it follows that $g_1 = g_2$ because 
if $g_i = \varphi \alpha_i$ with $\varphi$ cartesian and $\alpha_i$ vertical 
then $\alpha_1 f = \alpha_2 f$ and thus $\alpha_1 = \alpha_2$ from which it
follows that $g_1 = \varphi \alpha_1 = \varphi \alpha_2 = g_2$. 

If $P$ is $\Fam(\C)$ for an ordinary category
$\C$ then an arrow $f : X \to Y$ in the total category of $\Fam(\C)$ 
over $u : I \to J$ is collectively epic iff for all $j \in J$ the family
$(f_i : X_i \to Y_j)_{i{\in}u^{-1}(j)}$ is collectively epic in the usual
sense of ordinary category theory. Thus, it would be more precise to say
``family of collectively epic families'' but as this formulation is too
lengthy we prefer the somewhat inaccurate formulation ``collectively epic''.

Notice that for a bifibration $P : \X \to \B$ a morphism $f : X \to Y$ in
$\X$ is collectively epic iff for a cocartesian/vertical factorisation 
$f = \alpha \circ \varphi$ the vertical arrow $\alpha$ is epic in its fiber.

\begin{Lem}\label{bgmlem1}
Let $\B$ be a category with finite limits and $P : \X \to \B$ a geometric 
fibration which is locally small and well--powered. Moreover, suppose that 
collectively epic arrows in $\X$ are stable under pullbacks.

Then for $P$ there exists a generating family iff for $P$ there exists a
\emph{separator}, i.e.\ an object $S \in P(1_ \B)$ such that for every 
object $X \in P(1_\B)$ there exist morphisms $\varphi : Y \to S$, 
$m : Z \to Y$ and $\psi : Z \to X$ with $\varphi$ cartesian, $m$ a vertical 
mono and $\psi$ collectively epic.
\end{Lem}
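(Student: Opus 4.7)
The strategy is to pick canonical candidates in each direction---$S := \coprod_I G \in P(1_\B)$ for the ``generating family gives separator'' direction, and $G$ the source of the universal vertical subobject of $\sigma_S^*S$ (afforded by well-poweredness) for the ``separator gives generating family'' direction---and to reduce both directions to a single auxiliary lemma: for a cocartesian arrow $\varphi$ over $u$, the vertical part $\tilde\varphi$ of its cartesian/vertical factorisation is monic in its fibre. This lemma is extracted from disjointness of internal sums.

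For the forward direction, take a generating family $G \in P(I)$ and set $S := \coprod_I G$. Given $X \in P(1_\B)$, Theorem~\ref{genfamthm} (applicable since $\B$ has finite, hence binary, limits) yields $\varphi_X : Z_X \to G$ cartesian over some $u : J \to I$ and $\psi_X : Z_X \to X$ collectively epic over $v := {!_I} \circ u$. Writing $\eta_G : G \to S$ for the cocartesian unit, factor $\eta_G \circ \varphi_X = \sigma^c \circ \sigma^v$ with $\sigma^c : v^*S \to S$ cartesian and $\sigma^v : Z_X \to v^*S$ vertical. Setting $Y := v^*S$, $\varphi := \sigma^c$, $m := \sigma^v$, $\psi := \psi_X$ realises the separator data provided $\sigma^v$ is a vertical monomorphism. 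Since $Z_X \cong u^*G$ and $\sigma^v \cong u^*(\eta_G^v)$, where $\eta_G^v : G \to {!_I}^*S$ is the vertical part of $\eta_G$, stability of vertical monos under reindexing reduces this to the auxiliary lemma applied to the cocartesian arrow $\eta_G$.

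For the backward direction, let $m_S : G \hookrightarrow \sigma_S^*S$ be the universal vertical subobject of $\sigma_S^*S$ furnished by well-poweredness, where $\sigma_S : \sub_{1_\B}(S) \to 1_\B$; so $G \in P(\sub_{1_\B}(S))$. For distinct vertical $\alpha_1, \alpha_2 : X \to Y$ in $P(J)$, Corollary~\ref{corMoens1} forces $\coprod_J \alpha_1 \neq \coprod_J \alpha_2$ in $P(1_\B)$. Apply the separator hypothesis to $\coprod_J X$ to obtain $\varphi_0 : Y_0 = {!_K}^*S \to S$ cartesian (where $K := P(Y_0)$), $m_0 : Z_0 \hookrightarrow Y_0$ vertical mono, and $\psi_0 : Z_0 \to \coprod_J X$ collectively epic. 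Pull back the cocartesian unit $\eta_X : X \to \coprod_J X$ along $\psi_0$ to get a pullback square over $J \times K$ in $\B$ with $\theta : W \to X$ over $\pi_1$ and $\rho : W \to Z_0$ cocartesian over $\pi_2$ (stability of sums). A chain of cancellations---$\psi_0$ collectively epic (distinguishing the vertical $\coprod_J \alpha_i$), the uniqueness part of $\rho$'s cocartesian universal property, and the naturality equation $(\coprod_J \alpha_i) \circ \eta_X = \eta_Y \circ \alpha_i$---propagates distinctness down to $\alpha_1 \theta \neq \alpha_2 \theta$. The auxiliary lemma applied now to $\rho$ makes its vertical part $\tilde\rho : W \to \pi_2^*Z_0$ a vertical monomorphism; composition with the vertical mono $\pi_2^*m_0$ gives $W \hookrightarrow {!_{J \times K}}^*S$ vertical mono, which well-poweredness classifies as $W \cong w^*G$ for a unique $w : J \times K \to \sub_{1_\B}(S)$. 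This furnishes the required cartesian arrow $\varphi : W \to G$, and $(\varphi, \theta)$ then witnesses $G$ as a generating family per Definition~\ref{genfamdef}.

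The main obstacle is the shared auxiliary lemma. To prove it, one takes $\varphi : A \to B$ cocartesian over $u : I \to J$; its kernel pair in $\X$ lies over $I \times_J I$, and disjointness (i.e., $\delta_\varphi$ cocartesian) identifies it with $\coprod_{\Delta_u} A$. Since $\Delta_u : I \to I \times_J I$ is a split monomorphism (either projection being a retraction), extensivity (Lemma~\ref{extensiveMoensLem}) implies that the unit $A \to \Delta_u^*\coprod_{\Delta_u} A$ is invertible; reindexing along $\Delta_u$ therefore shows that the kernel pair of $\tilde\varphi$ in the fibre $P(I)$ is trivial, i.e., $\tilde\varphi$ is monic. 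The remaining bookkeeping---cartesian/vertical factorisations, BCC squares and stability of vertical monos under reindexing---is routine once this lemma is available.
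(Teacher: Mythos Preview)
Your proof is correct and follows essentially the same strategy as the paper: the same candidates for $S$ (namely $\coprod_I G$) and for $G$ (the source of the universal vertical subobject $m_S : G \hookrightarrow \sigma_S^*S$), and the same key auxiliary lemma, which in the paper is Lemma~\ref{ccmon} (cocartesian arrows are monic with respect to vertical arrows, proved there directly from disjointness via the fibrewise diagonal $\delta_\varphi$, rather than via kernel pairs as you sketch).

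The one genuine difference is in the backward direction. The paper pulls back $\psi_0$ along the cocartesian $\theta_0 : X \to X_0$ and invokes the hypothesis that collectively epic arrows are stable under pullbacks to conclude that the resulting $\psi : Z \to X$ is collectively epic; it then appeals to Theorem~\ref{genfamthm}. You instead verify Definition~\ref{genfamdef} directly: you propagate distinctness of $\alpha_1,\alpha_2$ through $\coprod_J$ (using that $\coprod_J/1_J$ is an equivalence by Corollary~\ref{corMoens1}, hence $\coprod_J$ is faithful), through $\psi_0$ (collectively epic), and then cancel the cocartesian $\rho$ via its universal property. This is a legitimate alternative; it trades one use of the pullback-stability hypothesis for an explicit cancellation chain, and shows in passing that your $\theta : W \to X$ is in fact collectively epic without invoking that hypothesis.
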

\begin{proof}
Let $P : \X \to \B$ be a fibration satisfying the conditions above.

Suppose that $G \in P(I)$ is a generating family for $P$. Let 
$\psi_0: G \to S$ be a cocartesian arrow over $!_I : I \to 1$.
Let $\psi_0 = \varphi_0 \circ \eta$ with $\varphi_0$ cartesian and 
$\eta$ vertical. Notice that $\eta$ is monic as by Lemma~\ref{ccmon} the 
cocartesian $\psi_0$ is monic w.r.t.\ vertical arrows.
We show that $S$ is a separator for $P$. Let $X \in P(1_\B)$. 
As $G$ is a generating family for $P$ and $\B$ has binary products by 
Theorem~\ref{genfamthm} there are morphisms $\theta : Z \to G$ and 
$\psi : Z \to X$ with $\theta$ cartesian and $\psi$ collectively epic. 
Then consider the diagram

\begin{diagram}[small]
X & \lTo^{\psi} & Z \SEpbk & \rTo^{\theta}  & G & &  \\
  &             & \dEmbed^{m} &    & \dEmbed^{\eta} & \rdTo^{\psi_0} & \\
  &             &  Y       & \rTo_{\theta'} & I^*S & \rTo_{\varphi_0} & S \\
\end{diagram}
where $\theta'$ is cartesian over $P(\theta)$ and $m$ is vertical. Thus, the 
middle square is a pullback and $m$ is a vertical mono. Furthermore, 
$\varphi := \varphi_0 \circ \theta'$ is cartesian. Thus, we have constructed
morphisms $\varphi : Y \to S$, $m : Z \to Y$ and $\psi : Z \to X$ with 
$\varphi$ cartesian, $m$ a vertical mono and $\psi$ collectively epic 
as required.

\medskip
Suppose that $S \in P(1_\B)$ is a separator for $P$. By well--poweredness 
of $P$ there exists a vertical mono $m_S : G \mono \sigma_S^*S$ classifying 
families of subobjects of $S$. We show that $G$ is a generating family for $P$.

Suppose $X \in P(I)$. Let $\theta_0 : X \to X_0$ be a cocartesian arrow 
over $!_I : I \to 1$ . As $S$ is a separator there exist morphisms 
$\varphi_0 : Y_0 \to S$, $m_0 : Z_0 \to Y_0$ and $\psi_0 : Z_0 \to X_0$ with 
$\varphi_0$ cartesian, $m_0$ a vertical mono and $\psi_0$ collectively epic.
Consider the pullback
\begin{diagram}[small]
Z \SEpbk     & \rTo^{\theta} & Z_0            \\
\dTo^{\psi}  &                & \dTo_{\psi_0}  \\
X            & \rTo_{\theta_0}  & X_0            \\
\end{diagram}
where $\psi$ is collectively epic and $\theta$ is cocartesian since these
classes of arrows are stable under pullbacks. Consider further the diagram
\begin{diagram}
Z &            & & & \\
  & \rdTo~{\eta} \rdTo(2,4)_{m} \rdTo(4,2)^{\theta} & & & \\
  &            &  \cdot \SEpbk  & \rTo_{\varphi'}           &   Z_0 \\
  &            &  \dEmbed_{m'}     &                  &  \dEmbed_{m_0} \\
  &            &  Y        & \rTo_{\varphi_1} &   Y_0 \\  
\end{diagram}
where $\varphi_1$ and $\varphi'$ are cartesian over $P(\theta)$ and $m'$
and $\eta$ are vertical. The inner square is a pullback and thus $m'$ is 
monic as it appears as pullback of the monic arrow $m_0$. The arrow $\eta$ 
is a vertical mono as by Lemma~\ref{ccmon} $\theta$ is monic w.r.t.\
vertical arrows. Thus $m = m' \circ \eta$ is a vertical mono, too. 
Moreover, $\varphi_0 \circ \varphi_1 : Y \to S$ is cartesian. Thus, the 
mono $m : Z \to Y$ is a family of subobjects of $S$ and, accordingly, we have
\begin{diagram}[small]
Z \SEpbk & \rTo^{\varphi} & G \\
\dEmbed^{m} & & \dEmbed_{m_S} \\
Y  & \rTo_{\widetilde{\varphi}} & \sigma_S^*S \\
\end{diagram}
for some cartesian arrows $\varphi$ and $\widetilde{\varphi}$. Thus, we have
morphisms $\varphi : Z \to G$ and $\psi : Z \to X$ with $\varphi$ cartesian
and $\psi$ collectively epic. 

Thus, by Theorem~\ref{genfamthm} it follows 
that $G$ is a generating family for $P$.
\end{proof}

\bigskip
Suppose $F : \B \to \C$ is a finite limit preserving functor between 
categories with finite limits. One easily checks that an arrow
\begin{diagram}[small]
B & \rTo^e & A \\
\dTo^b & f & \dTo_a \\
FJ & \rTo_{Fu} & FI \\
J & \rTo_u & I
\end{diagram} 
in $\C{\downarrow}F$ is collectively epic (w.r.t.\ the fibration
$P_F = \partial_1 : \C{\downarrow}F \to \B$) iff the map $e$ is epic 
in $\C$. Apparently, this condition is sufficient. On the other hand if $f$
is collectively epic then $e$ is epic in $\C$ which can be seen as follows:
suppose $g_1,g_2 : A \to C$ with $g_1 e = g_2 e$ then the maps
\begin{diagram}[small]
A & \rTo^{g_i} & C \\
\dTo^a & \alpha_i & \dTo \\
FI & \rTo_{F!_I} & F1 \\
I & \rTo_{!_I} & 1
\end{diagram}
are both above $I \to 1$ and satisfy $\alpha_1 f = \alpha_2 f$ from which
it follows -- since $f$ is collectively epic -- that $\alpha_1 = \alpha_2$ and
thus $g_1 = g_2$. 

Thus, if in $\C$ epimorphisms are stable under pullbacks along arbitrary
morphisms then in $\C{\downarrow}F$ collectively epic maps are stable under
pullbacks along arbitrary morphisms.

\begin{Thm}\label{bgmthm}
A geometric morphism $F \dashv U : \E \to \Se$ between toposes is bounded iff 
for the corresponding geometric fibration $P_F$ there exists a 
generating family.
\end{Thm}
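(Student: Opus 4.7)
The plan is to reduce the theorem to Lemma~\ref{bgmlem1} (on separators) combined with a straightforward dictionary between the fibrational notions in $\gl(F)$ and the topos-theoretic notions in $\E$. More precisely, I would first verify the hypotheses of Lemma~\ref{bgmlem1} for $P = \gl(F)$, then unpack the notion of separator in $P(1_\Se)$ and observe it coincides on the nose with the definition of a bound for $F \dashv U$.

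First I would check the hypotheses of Lemma~\ref{bgmlem1}. Since $F$ is the inverse image of a geometric morphism, it preserves finite limits and has a right adjoint, so by the results of Section~12 (preservation properties of change of base) the fibration $\gl(F) = F^* P_\E$ inherits local smallness and well-poweredness from $P_\E$: indeed $\E$ is locally cartesian closed and a topos, so by Theorem~\ref{lslccc} and the topos characterisation of well-poweredness $P_\E$ enjoys both properties, and these are preserved by change of base along a pullback-preserving functor with a right adjoint. That $\gl(F)$ is a geometric fibration (hence has finite limits fibrewise with stable disjoint internal sums) was established in Section~15. It remains to note that collectively epic arrows in $\Gl(F)$ are stable under pullback; but by the characterisation recalled just before Theorem~\ref{bgmthm}, an arrow $(u,e) : (b : B \to FJ) \to (a : A \to FI)$ in $\Gl(F)$ is collectively epic iff $e$ is epic in $\E$, and epis in a topos are pullback-stable, from which the claim follows by computing pullbacks in $\Gl(F)$ componentwise.

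Next I would set up the dictionary. Since $F$ preserves the terminal object, $F1_\Se \cong 1_\E$, so the fibre $\gl(F)(1_\Se)$ is canonically equivalent to $\E$ via $X \mapsto (X \to F1_\Se)$. Cartesian lifting of $S \in \gl(F)(1_\Se)$ along $u = {!_J} : J \to 1_\Se$ is the projection $S \times FJ \to S$ over $FJ \to F1_\Se$, so a cartesian arrow $\varphi : Y \to S$ amounts to picking $J \in \Se$ and presenting $Y$ as $(S \times FJ \to FJ)$. A vertical mono $m : Z \mono Y$ is then precisely a subobject $Z \mono S \times FJ$ in $\E$, and a collectively epic arrow $\psi : Z \to X$ to $X \in \gl(F)(1_\Se) \cong \E$ is, by the characterisation above, an epimorphism $Z \epi X$ in $\E$. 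Consequently, $S \in \E$ is a separator for $\gl(F)$ in the sense of Lemma~\ref{bgmlem1} if and only if for every $X \in \E$ there exist $J \in \Se$, a subobject $Z \mono S \times FJ$ and an epi $Z \epi X$, which is exactly the definition of $S$ being a bound for $F \dashv U$.

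Combining these two steps, Lemma~\ref{bgmlem1} gives that $\gl(F)$ has a generating family iff it has a separator, and the dictionary identifies separators with bounds; this yields the theorem. The main obstacle is the bookkeeping in the first step: the preservation results of Section~12 are stated for the individual properties (local smallness, well-poweredness), and one must be careful that Lemma~\ref{bgmlem1} asks for all of them simultaneously, plus the pullback-stability of collectively epic morphisms, which is not among the standard preservation results and has to be verified by hand using the explicit description of $\Gl(F)$ and the fact that $\E$ is a topos.
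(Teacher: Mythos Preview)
Your proposal is correct and follows essentially the same route as the paper: verify the hypotheses of Lemma~\ref{bgmlem1} for $\gl(F)$ (local smallness and well-poweredness via the preservation results for change of base along $F$, pullback stability of collectively epic arrows via the characterisation in terms of epis in $\E$), then identify separators for $\gl(F)$ with bounds for $F \dashv U$. The paper's proof is terser---it asserts local smallness and well-poweredness without explicitly invoking the preservation lemmas, and dismisses the separator/bound dictionary as obvious---but the argument is the same.
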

\begin{proof}
Let $F \dashv U : \E \to \Se$ be a geometric morphism between toposes. 
Then the corresponding geometric fibration $P_F$ is locally small and
well-powered. 

As for $P_F$ reindexing preserves the topos structure and 
in toposes epis are stable under pullbacks vertical epis are stable under 
pullbacks. Thus, collectively epic arrows are stable under pullbacks as 
both vertical epis and cocartesian arrows are stable under pullbacks. 
Alternatively, this follows from the observations immediately preceding 
the current theorem and pullback stability of epimorphisms in toposes.

Thus, since the assumptions of Lemma~\ref{bgmlem1} are satisfied for $P_F$ 
there exists a generating family for $P_F$ iff there exists a separator
for $P_F$ which, obviously, is equivalent to the requirement that the
geometric morphism $F \dashv U$ is bounded.
\end{proof}

\bigskip
From inspection of the proof of Lemma~\ref{bgmlem1} it follows\footnote{In 
more concrete terms for the fibration $P_F = F^*P_\E$ this can be seen as
follows. Suppose $a : A \to F(I)$ is a map in $\E$. As $S$ is a bound 
there exists $J \in \Se$ and $e : C \epi A$ with $n : C \mono F(J) \times S$.
Consider the diagram
\begin{diagram}
A & \lOnto^{\;\;\;e} & C       &         & \\
  &           & \dEmbed^m  &    \rdEmbed^n & \\
\dTo^a & & F(I{\times}J){\times}S \SEpbk & \rTo^{F(\pi'){\times}S} 
                                                  & F(J){\times}S \\
  & & \dTo^\pi & & \dTo_\pi \\
F(I) & \lTo_{\;\;\;F(\pi)} & F(I{\times}J) & \rTo_{F(\pi')} & F(J) \\
\end{diagram}
(where $F(\pi)$ and $F(\pi')$ form a product cone 
because $F$ preserves finite limits and $\pi$ and $\pi'$ form a product
cone) and notice that $\pi \circ m$ appears as pullback of $g_S$ along
$F(\rho)$ where $\rho : I{\times}J \to U\Pow(S)$ is the unique map 
classifying $m$, i.e.\ 
$((\varepsilon_{\Pow(S)}{\circ}F(\rho)){\times}S)^*{\ni_S} \;\cong m$.}
in particular that if $S\in\E$ is a bound for a geometric morphism 
$F\dashv U:\E\to\Se$ between toposes 
then $g_S = \pi \circ m_S : G_S \to F(U\Pow(S))$

\begin{diagram}[small]
G_S \SEpbk & \rTo & \ni_S \\
\dEmbed^{m_S} & & \dEmbed_{\ni_S} \\
FU\Pow(S){\times}S \SEpbk & \rTo^{\varepsilon_{\Pow(S)}{\times}S} &
\Pow(S){\times}S\\
\dTo^{\pi} & & \dTo_{\pi} \\
FU\Pow(S) & \rTo_{\varepsilon_{\Pow(S)}} & \Pow(S)
\end{diagram}
is a generating family for $P_F$. This condition, however, also implies 
that $S$ is a bound for $F \dashv U$ since if $g_S = \pi \circ m_S$ is a 
generating family for $P_F$ then for every $A \in \E$ there is a map 
$u : I \to U\Pow(S)$ in $\Se$ and an epi $e : u^*G_S \epi A$ such that
\begin{diagram}[small]
A & \lOnto^e & u^*G_S \SEpbk & \rTo & G_S \SEpbk & \rTo & \ni_S \\
\dTo & & \dEmbed^m & & \dEmbed^{m_S} & & \dEmbed_{\ni_S} \\
& & FI{\times}S \SEpbk & \rTo^{Fu{\times}S} & FU\Pow(S){\times}S 
\SEpbk & \rTo^{\varepsilon_{\Pow(S)}{\times}S} &
\Pow(S){\times}S\\
& & \dTo^{\pi} & & \dTo^{\pi} & & \dTo_{\pi}\\
F1 & \lTo_{F!_I} & FI & \rTo_{Fu} & FU\Pow(S) & \rTo_{\varepsilon_{\Pow(S)}} 
& \Pow(S)
\end{diagram}
from which it follows that $A$ appears as quotient of a subobject of some
$FI{\times}S$. 

Thus $S$ is a bound for a geometric morphism $F \dashv U : \E\to\Se$ between 
toposes iff $g_S = \pi \circ m_S : G_S \to FU\Pow(S)$ is a generating
family for $P_F$. In case $\Se$ is $\Set$ this amounts to the usual 
requirement that the family of subobjects of $S$ is generating for the 
topos $\E$.

\smallskip
One can characterize boundedness of geometric morphisms in terms 
of preservation properties as follows.

\begin{Thm}\label{bgmthm'}
A geometric morphism $F \dashv U : \E \to \Se$ between toposes is bounded
if and only if change of base along $F$ preserves existence of small 
generating families for geometric fibrations of toposes.

Thus, a terminal object preserving functor $F : \Se\to\E$ between toposes
is the inverse image part of a bounded geometric morphism iff
$F^*P_\E$ is is a geometric fibration with a small generating family iff
change of base along $F$ preserves the property of being a geometric
fibration of toposes with a small generating family.
\end{Thm}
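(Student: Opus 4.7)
The plan is to reduce both statements to Theorem~\ref{bgmthm} by using the Jibladze--Moens correspondence between geometric fibrations of toposes over $\E$ and geometric morphisms of toposes into $\E$, together with the standard fact that bounded geometric morphisms are closed under composition. The key computational identity is that change of base along $F$ commutes with glueing: if $P \simeq \gl(\Delta) = \Delta^*P_{\X_1}$ for some finite-limit-preserving $\Delta : \E \to \X_1$, then by associativity of pullbacks of fibrations,
\[ F^*P \;\simeq\; F^*\Delta^*P_{\X_1} \;\simeq\; (\Delta F)^*P_{\X_1} \;=\; \gl(\Delta F). \]

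For the ``only if'' direction of the first statement, assume $F \dashv U$ is bounded and let $P : \X \to \E$ be a geometric fibration of toposes with a small generating family. By Moens's Theorem~\ref{MoensThm} combined with Jibladze's theorem \cite{Jib}, $P$ is equivalent to $\gl(\Delta)$ for a geometric morphism $\Delta \dashv \Gamma : \X_1 \to \E$ between toposes, where $\X_1$ is the fibre of $P$ over $1_\E$; by Theorem~\ref{bgmthm}, $\Delta \dashv \Gamma$ is bounded. The displayed identity then identifies $F^*P$ with $\gl(\Delta F)$, which is the fibration associated with the composite geometric morphism $\Delta F \dashv U\Gamma : \X_1 \to \Se$. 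Since the composite of two bounded geometric morphisms is bounded (a standard fact, cf.\ \cite{Joh}), Theorem~\ref{bgmthm} applied once more gives that $F^*P$ has a small generating family. For the ``if'' direction, apply the preservation hypothesis to the fundamental fibration $P_\E$ itself: it is a geometric fibration of toposes and it has a small generating family, since the identity geometric morphism on $\E$ is trivially bounded. Therefore $F^*P_\E \simeq \gl(F)$ has a small generating family, so $F \dashv U$ is bounded by Theorem~\ref{bgmthm}.

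For the second statement, the direction from left to right follows from the first part together with the observation that for $P \simeq \gl(\Delta)$ the identity $F^*P \simeq \gl(\Delta F)$ exhibits $F^*P$ as glueing along the composite $\Delta F$ of finite-limit-preserving functors between toposes with right adjoints, hence as a geometric fibration of toposes. Conversely, assume change of base along the terminal-object-preserving functor $F : \Se \to \E$ preserves ``geometric fibration of toposes with small generating family'', and apply this to $P_\E$: the resulting fibration $\gl(F)$ then has internal sums and small global sections, so by Lemmas~\ref{Lem13.2} and~\ref{lscor} respectively $F$ preserves pullbacks and has a right adjoint $U$. Combined with the standing hypothesis that $F$ preserves terminal objects, $F$ preserves finite limits, so $F \dashv U$ is a geometric morphism, which is bounded by the first part.

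The main obstacle is really only bookkeeping: one must invoke the correspondence between geometric fibrations of toposes and geometric morphisms (which rests on Jibladze's theorem, proved only in the appendix) and the standard closure of boundedness under composition (which is external to these notes). Once these two facts are available, the pullback identity $F^*\gl(\Delta) \simeq \gl(\Delta F)$ converts the preservation question into the composability of bounded geometric morphisms, which is the heart of the argument.
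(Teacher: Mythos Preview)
Your proof is correct and follows essentially the same route as the paper: reduce to Theorem~\ref{bgmthm} via the identity $F^*\gl(\Delta)\simeq\gl(\Delta F)$ and invoke closure of bounded geometric morphisms under composition (citing \cite{Joh}), testing the ``if'' direction on $P_\E$ itself. One small remark: since ``geometric fibration'' already includes stable disjoint sums by definition, you do not actually need Jibladze's theorem to obtain $P\simeq\gl(\Delta)$---Moens's Theorem~\ref{MoensThm} together with Theorem~\ref{gmthm1} suffices; Jibladze is only needed if one wants to weaken the hypothesis on $P$ to ``fibred topos with internal sums''.
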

\begin{proof}
Suppose $F^*$ preserves existence of small generating families for 
geometric fibrations of toposes. Then since 
$P_\E : \E^\two \to \E$ has a small generating family
so does $F^*P_\E = P_F$ and thus by Theorem~\ref{bgmthm} the 
geometric morphism $F \dashv U$ is bounded.

On the other hand if a geometric morphism $F \dashv U$ is bounded
and $P$ is a geometric fibration of toposes over $\E$ with a small
generating family then by Theorem~\ref{bgmthm} $P$ is equivalent to
$G^*P_\F = P_G$ for some bounded geometric morphism $G \dashv V : \F \to \E$
and thus $F^*P$ is equivalent to $F^*G^*P_\F \simeq (GF)^*P_\F = P_{GF}$
which has a small generating family by Theorem~\ref{bgmthm} 
since $GF \dashv UV$ is a bounded geometric morphism (as by \cite{Joh} 
bounded geometric morphisms are closed under composition).

For the second claim first recall that by Lemma~\ref{cbis} and
Lemma~\ref{cbsgs} a terminal object preserving functor $F : \Se\to\E$
between toposes is the inverse image part of a geometric morphism iff
$F^*P_\E$ is a geometric fibration of toposes iff change of base
along $F$ preserves geometric fibrations of toposes.

Thus, if $F$ is the inverse image part of a bounded geometric morphism
then $F^*P_\E$ is a geometric fibration with a small generating family and
change of base along $F$ preserves geometric fibrations with a small
generating family.

Suppose $F : \Se\to\E$ preserves terminal objects. Thus, if $F^*P_\E$ is
a geometric fibration with a small generating family then $F$ is the inverse
image part of a geometric morphism which by Theorem~\ref{bgmthm} is also
bounded.
If $F^*$ preserves geometric fibrations with a small generating family
then $F^*P_\E$ is a geometric fibration with a small generating family and
thus $F$ is the inverse image part of a bounded geometric morphism. 
\end{proof}

\medskip
Thus we may observe that for (bounded) geometric morphisms $F \dashv U$ 
change of base along $F$ for geometric fibrations of toposes (with a small 
generating  family) corresponds to postcomposition with $F \dashv U$ for 
(bounded) geometric morphisms.\footnote{A consequence of this observation
is that change of base along inverse image parts of geometric morphisms
for geometric fibrations of toposes reflects the property of having a small 
generating family since as observed in \cite{Joh} for geometric morphisms 
$f$ and $g$ from $fg$ bounded it follows that $g$ is bounded.}

By Theorem~\ref{bgmthm'} change of base along inverse image parts of unbounded 
geometric morphisms does not preserve existence of small generating families.
From \cite{Joh} we recall the following example of an unbounded geometric 
morphism. Let $\E$ be the full subcategory of $\widehat{\Ze} = \Set^{\Ze^\op}$
on those objects $A$ such that $\forall a{\in}A. A(n)(a) = a$ for some 
$n\in\N$, i.e.\ there is a finite bound on the size of all orbits of the
action $A$. One easily sees that $\E$ is a topos and 
$\Delta \dashv \Gamma : \E \to \Set$ is a geometric morphism which, however, 
is not bounded (as otherwise there were an \emph{a priori} bound on the
size of all orbits of objects of $\widehat{\Ze}$). Notice, however, that 
$\E$ admits a (countable) generating family in the sense of ordinary category 
theory, namely the family $(\Ze_n)_{n\in\N}$ (of all finite orbits up to 
isomorphism), whose sum, however, does not exist in $\E$. 

Johnstone's example also demonstrates that toposes $\E$ over $\Set$ need not 
be cocomplete in the sense of ordinary category theory, i.e.\ do not have all 
small sums, although the associated fibration $P_\Delta$ certainly has
internal sums.\footnote{Thus, the fibrations $P_\Delta = \Delta^*P_\E$
and $\Fam(\E)$ over $\Se$ are not equivalent because $P_\Delta$ has internal
sums whereas $\Fam(\E)$ doesn't!

Consider also the following somewhat weaker counterexample. Let $\mathcal{A}$ 
be a \emph{partial combinatory algebra}, $\mathbf{RT}[\mathcal{A}]$ the 
realizability topos over $\mathcal{A}$ (see e.g.\ \cite{realizbook})
and $\Gamma \dashv \nabla : \Set \to \mathbf{RT}[\mathcal{A}]$ the geometric 
morphism where $\Gamma = \mathbf{RT}[\mathcal{A}](1,-)$ is the global elements 
functor. 
Then $P_\nabla = \nabla^*P_{\mathbf{RT}[\mathcal{A}]}$ is a fibration
with stable and disjoint internal sums over $\Set$ although for 
{\bf nontrivial} $\mathcal{A}$ in the realizability topos 
$\mathbf{RT}[\mathcal{A}]$ the sum $\coprod_{|\mathcal{A}|} 1$ does not exist 
for cardinality reasons. 

Moreover, for nontrivial $\mathcal{A}$ internal sums w.r.t.\ the fibration 
$P_\nabla$ in general do not coincide with the corresponding external sums 
(if they exists): consider e.g.\ $\coprod_2 1$ w.r.t.\ $P_\nabla$, i.e.\ 
$\nabla(2)$, which is not isomorphic to $1+1$ in $\mathbf{RT}(\mathcal{A})$. 
Thus $\nabla(1+1) \not\cong \nabla(1) + \nabla(1)$ from which it follows
that $\nabla$ does not have a right adjoint. Accordingly, the fibration 
$P_\nabla$ over $\Set$ does not have small global elements.}
Apparently there is a difference between \emph{internal} and 
\emph{external} families of objects in $\E$ where a family $(X_i)_{i \in I}$ 
in $\E$ is internal if there is a map $f : Y \to \Delta(I)$ in $\E$ with 
$X_i \cong \inj_i^*f$ for all $i \in I$. Of course, every internal family
gives rise to an external one whereas e.g.\ $(\Ze_n)_{n \in \N}$ is an 
external family in the topos $\E$ which is not internal. 
It is an easy exercise to show that a family 
$(X_i)_{i \in I}$ in a topos $\E$ over $\Set$ is internal if and only if 
the family $(X_i)_{i \in I}$ is \emph{bounded} in the sense that there exists 
an object $X \in \E$ 
such that all $X_i$ appear as subobjects of $X$.\footnote{In the example
$P_\nabla = \partial_1 : 
\mathbf{RT}[\mathcal{A}] {\downarrow} \nabla\to\Set$ every $X \to \nabla(I)$
in $\mathbf{RT}[\mathcal{A}]$ may be understood as a family of $I$-indexed
subobjects of $X$ but the ensuing cartesian functor (over $\Set$) from 
$P_\nabla$ to $\Fam(\mathbf{RT}[\mathcal{A}])$ is far from being an 
equivalence. 

Firstly, it does not reflect isos (in each fiber) since $\id_{\nabla(2)}$ and 
$\eta_2 : 2 \to \nabla\Gamma(2) \cong \nabla(2)$ are not isomorphic in the 
slice over $\nabla(2)$ but both give rise to $(1)_{i \in 2}$ 
in $\Fam(\mathbf{RT}[\mathcal{A}])(2)$. Thus, different internal families 
(over $2$ already) may give rise to the same external family.

Secondly, there are external $\N$-indexed families $(X_n)_{n\in\N}$ in 
$\mathbf{RT}[\mathcal{A}]$ which do not arise from a morphism 
$X \to \nabla(\N)$ because any such family would have to be isomorphic
to a family $(X^\prime_n)_{n\in\N}$ for which symmetry and transitivity
are realized by $e_1, e_2 \in \mathcal{A}$ independently from $n \in \N$.
It is left as an exercise to the reader to give a concrete counterexample.}

Notice that due to Giraud's Theorem (see \cite{Joh}) toposes bounded 
over $\Set$ are precisely the Grothendieck toposes and, therefore, do have 
all small sums. Actually, one may see this more directly as follows. 
Suppose $S$ is a bound for the geometric morphism 
$\Delta \dashv \Gamma : \E \to \Set$. Then $\E$ has all small copowers
$\coprod_{i \in I} X \cong \Delta(I) \times X$. Suppose $(X_i)_{i \in I}$ is
a family in $\E$. Then for every $i \in I$ there is a set $J_i$ such that
$X_i$ is a subquotient of $\Delta(J_i) \times S$. Thus, all $X_i$ are 
subobjects of $\Pow(\Delta(J){\times}S)$ via some mono $m_i$ 
where $J = \bigcup_{i \in I} J_i$ 
(since $\Delta(J_i) \times S$ is a subobject of $\Delta(J) \times S$).
Let $\chi_i$ classify the subobject $m_i$ for $i \in I$ and $\chi :
\coprod_{i \in I} \Pow(\Delta(J){\times}S) \to \Omega$ be the source
tupling of the $\chi_i$. Then the sum $\coprod_{i \in I} X_i$ appears 
as the subobject of the copower
$\coprod_{i \in I} \Pow(\Delta(J){\times}S) \cong 
 \Delta(I){\times}\Pow(\Delta(J){\times}S)$ 
classified by $\chi$ in $\E$.

But there exist toposes over $\Set$ which, in the sense of ordinary 
category theory, are cocomplete but do not admit a small generating family. 
A typical such example (due to Peter Freyd) is the topos $\Gcal$ whose objects 
are pairs $(A,f)$ where $A$ is a set and $f$ is a family of bijections of $A$ 
indexed over the class of all sets such that the class 
$\supp(A,f) = \{ s \mid f_s \not= \id_A \}$ is a set and 
whose morphisms from $(A,f)$ to $(B,g)$ are the maps $h : A \to B$ 
with $h(f_s(a)) = g_s(h(a))$ for all $a \in A$ and all sets $s$. 
The construction of this category can be rephrased as follows. 
Let $\Gbb$ be the free group generated by the class of all sets. 
Then $\Gcal$ is isomorphic to the full subcategory of $\widehat{\Gbb}$ 
on those objects $A$ where $\{s \mid A(s) \not= \id_{A(*)}\}$ is a set. 
The proof that $\Gcal$ is a topos is analogous to the proof that for every 
group $G$ the presheaf category $\Set^{G^\op}$ is a boolean topos. Moreover 
$\Gcal$ has all small
limits and colimits (which are constructed pointwise). Suppose 
$(G_i,g^{(i)})_{i \in I}$ were a small generating family for $\Gcal$. 
Let $J = \bigcup_{i \in I} \supp(G_i,g^{(i)})$ and $s_0$ be a set with 
$s_0 \not\in J$. Now let $(A,f)$ be the object of $\Gcal$ where $A = \{0,1\}$ 
and $f_s \not= \id_A$ only for $s = s_0$. There cannot exist a morphism 
$h : (G_i,g^{(i)}) \to (A,f)$ unless $G_i$ is empty as otherwise there is 
a $z \in G_i$ for which we have $h(z) = h(g^{(i)}_{s_0})(z)) = f_{s_0}(h(z))$
Obviously $(A,f)$ has two different endomorphisms which, however, cannot be 
distinguished by morphisms of the form $h : (G_i,g^{(i)}) \to (A,f)$. Thus,
there cannot exist a small generating family for the cocomplete boolean topos
$\Gcal$.

One easily shows that for a cocomplete topos $\E$ the functor 
$\Delta : \Set \to \E$ preserves finite limits. 
Thus, for a locally small topos $\E$ it holds that
$$ \E \mbox{ bounded over } \Set \Longrightarrow \E \mbox{ cocomplete }
   \Longrightarrow \E \mbox{ over } \Set $$
and the above counterexamples show that none of these implications can be
reversed in general.\footnote{The category $\Set^{\mathbf{Ord}^\op}$ of $\Set$-valued presheaves over the large category $\mathbf{Ord}$ of ordinals is an example of a cocomplete topos which, however, is not locally small since there are class many subterminals and thus $\Omega$ has class many global elements.} 
Freyd's counterexample shows that the first implication cannot be reversed in 
general and Johnstone's counterexample shows that the second implication 
cannot be reversed in general. 

If $\E$ is a topos bounded over $\Set$ then for $\E$ there exists a generating
family in the sense of ordinary category theory. However, as Johnstone's 
counterexample shows the reverse implication does not hold in general for 
toposes over $\Set$. Freyd's counterexample shows there are toposes $\E$ 
over $\Set$ such that there does not even exist a generating family for $\E$
in the sense of ordinary category theory and that such toposes may even be
cocomplete.

Notice that toposes $\E$ cocomplete in the sense of ordinary category theory 
are bounded over $\Set$ iff there exists a generating family for $\E$ in the 
sense of ordinary category theory. The reason is that if $(G_i)_{i \in I}$
is a generating family for $\E$ in the sense of ordinary category theory then 
$\coprod_{i \in I} !_{G_i} : \coprod_{i \in I} G_i \to \coprod_{i \in I} 1_\E 
= \Delta(I)$ is a generating family for the fibration
$\Delta^*P_\E = P_\Delta$. Thus, a topos $\E$ is bounded over $\Set$ iff
$\E$ is cocomplete and there exists a generating family for $\E$ in the sense
of ordinary category theory. However, this characterisation does not generalise
to arbitrary base toposes $\Se$. Formally, the fibrational characterisation
of bounded toposes over $\Se$ as cocomplete locally small fibered toposes 
over $\Se$ with a generating family looks similar but as we have seen above 
cocomplete in the sense of fibered categories is weaker than cocomplete in the
sense of ordinary category theory and generating family in the sense of fibered 
categories is stronger than in the sense of ordinary category theory.

Finally we observe that a topos over $\Set$ which in the sense of ordinary
category theory is neither cocomplete nor has a small generating family can
be obtained by combining the ideas of Freyd's and Johnstone's counterexamples,
namely the full subcategory of Freyd's counterexample $\Gcal$ on those objects
$(A,f)$ for which there exists an $n \in \N$ such that $(f_s)^n = \id_A$ for
all sets $s$.

\newpage

\section{Properties of Geometric Morphisms}

In this section we will characterise some of the most common properties of 
geometric morphisms $F \dashv U$ in terms of simple fibrational properties
of the corresponding geometric fibration $P_F$ for which we often simply 
write $P$. Moreover, the fibered adjunction $\Delta_P \dashv \Gamma_P$ 
induced by $P$ is  often referred to as $\Delta \dashv \Gamma$
and the corresponding unit and counit are denoted by $\widetilde{\eta}$ 
and $\widetilde{\varepsilon}$, respectively.

\subsection{Injective Geometric Morphisms}

\begin{Thm}\label{gmprop1}
Let $F \dashv U : \C \to \B$ be a geometric morphism and $P$ be the induced
geometric fibration $P_F$. Then the following conditions are equivalent.
\begin{enumerate}
\item[\rm (1)] The geometric morphism $F \dashv U$ is \emph{injective}, 
i.e.\ $U$ is full and faithful.
\item[\rm (2)] The counit $\widetilde{\varepsilon}$ of $\Delta \dashv \Gamma$
is a natural isomorphism.  
\item[\rm (3)] For the counit $\varepsilon$ of $1 \dashv G : \C{\downarrow}F \to \B$ 
it holds that $\varepsilon_X$ is cocartesian for all objects $X \in \C{\downarrow}F$.
\end{enumerate}
\end{Thm}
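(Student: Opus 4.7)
The plan is to use the explicit formulas from Theorem~\ref{gmthm3}. For $a:A\to FI$ in $\Gl(F)$, write $(p,q)$ for the pullback square
\[
\eta_I\circ p = Ua\circ q, \qquad p:K\to I,\ q:K\to UA,
\]
and set $\widehat q := \varepsilon_A\circ Fq : FK\to A$. By Theorem~\ref{gmthm3} the vertical morphism $\widetilde{\varepsilon}_a : Fp\to a$ in $\Gl(F)$ is carried by $\widehat q$; by the construction in the proof of Theorem~\ref{gmthm1} the counit $\varepsilon_a : 1_K\to a$ of $1\dashv G$ is the pair $(p,\widehat q)$, i.e.\ is carried by the \emph{same} $\C$-arrow $\widehat q$, but now lying over $p$ rather than over $\id_I$. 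Recalling further that a morphism of $\Gl(F)$ is cocartesian precisely when its $\C$-component is an isomorphism, this reduces the entire theorem to the single assertion ``$\widehat q$ is an isomorphism in $\C$ for every $a$ iff $U$ is fully faithful''.

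For (1)$\Rightarrow$(2) I will assume the counit $\varepsilon$ of $F\dashv U$ is a natural isomorphism. The triangle identity $\varepsilon_{FI}\circ F\eta_I = \id_{FI}$ then forces $F\eta_I = \varepsilon_{FI}^{-1}$ to be an iso; since $F$ preserves pullbacks, applying $F$ to the defining square of $(p,q)$ exhibits $Fq$ as the pullback of the iso $F\eta_I$, hence itself an iso, so $\widehat q = \varepsilon_A\circ Fq$ is a composite of isos. For (2)$\Rightarrow$(1) I specialise to $I = 1_\B$ and $a = {!}_A : A\to F1$ for an arbitrary $A \in \C$. Because the right adjoint $U$ preserves the terminal object, $\eta_1 : 1\to UF1$ is an isomorphism, so the defining pullback collapses to $K \cong UA$ with $q$ (and hence $Fq$) iso, whence $\widehat q$ is identified with $\varepsilon_A$; condition (2) therefore forces every $\varepsilon_A$ to be iso, i.e.\ $U$ is fully faithful.

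Finally, (2)$\Leftrightarrow$(3) is immediate from the preliminary identification: $\widetilde{\varepsilon}_a$ is a vertical iso in $\Gl(F)$ exactly when its $\C$-component $\widehat q$ is iso, while $\varepsilon_a = (p,\widehat q)$ is cocartesian under exactly the same condition on $\widehat q$. The only real bookkeeping obstacle is keeping straight that the two \emph{a priori} different counits share the single $\C$-component $\widehat q$; once that identification is in place, the rest of the argument is routine diagram chasing.
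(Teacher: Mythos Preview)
Your proof is correct and follows essentially the same route as the paper. Both arguments hinge on the explicit formula $\widetilde{\varepsilon}_a = \varepsilon_A \circ Fq$ from Theorem~\ref{gmthm3} and the characterisation of cocartesian arrows in $\Gl(F)$ as those whose $\C$-component is invertible; your observation that $\varepsilon_a$ and $\widetilde{\varepsilon}_a$ share the $\C$-component $\widehat q$ is exactly the content of the factorisation $\varepsilon_X = \widetilde{\varepsilon}_X \circ \varphi$ invoked from Theorem~\ref{gmthm2} in the paper's (2)$\Leftrightarrow$(3). The only minor difference is in (2)$\Rightarrow$(1): the paper argues abstractly that (2) makes every $\Gamma_I$ full and faithful and then reads off $U \cong \Gamma_1$, whereas you specialise the formula directly at $I = 1_\B$, $a = {!}_A$ and use that $\eta_1$ is an isomorphism to identify $\widehat q$ with $\varepsilon_A$. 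These are two phrasings of the same computation.
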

\begin{proof} Conditions (2) and (3) are equivalent as by Theorem~\ref{gmthm2}
we have $\varepsilon_X = \widetilde{\varepsilon}_X \circ \varphi$ with
$\varphi : 1_{GX} \to \Delta\Gamma X$ cocartesian over $P(\varepsilon_X)$.

Condition (2) says that all $\Gamma_I$ are full and faithful. In particular, 
we have that $U\cong\Gamma_1$ is full and faithful. Thus (2) implies (1).

It remains to show that (1) entails (2). Condition (1) says that the counit 
$\varepsilon$ of $F \dashv U$ is a natural isomorphism. 
But then for every $a : A \to FI$ in $\C{\downarrow}F$ we have
\begin{diagram}
FK \SEpbk  & \rTo^{Fq} &   FUA  \SEpbk & \rTo^{\varepsilon_A}_{\cong} & A \\
\dTo^{Fp}  &           &  \dTo_{FUa}   &                    & \dTo_{a }\\
FI        & \rTo_{F\eta_I} &   FUFI & \rTo_{\varepsilon_{FI}}^{\cong} & FI \\
\end{diagram}
from which it follows by  Theorem~\ref{gmthm3} that the map 
$\widetilde{\varepsilon}_a = \varepsilon_A \circ Fq$ 
is an isomorphism as it appears as pullback of the identity
$\id_{FI} = \varepsilon_{FI} \circ F(\eta_I)$.
\end{proof}

\bigskip
The full subfibration of $P_\B$ as given by $\Gamma$ may be characterized
as the class of all morphisms in $\B$ for which the naturality square for
the unit of $F \dashv U$ is a pullback. 

In a paper from 2021 Joyal~et.al.\ have characterized such classes of maps 
as the right part of so called ``lex stable factorization systems'', i.e.\ 
factorization systems $(\mathcal{L},\mathcal{R})$ on $\B$ such that 
$\mathcal{L}$ is stable under pullbacks along arbitrary morphisms in $\B$ 
and $\mathcal{R}$ consists of those maps in $\B$ for which the naturality 
square for the unit of the ensuing adjunction is a pullback.

\subsection{Surjective Geometric Morphisms}

\begin{Thm}\label{gmprop2}
Let $F \dashv U : \C \to \B$ be a geometric morphism and $P$ be its induced
geometric fibration $P_F$. Then the following conditions are 
equivalent.\footnote{This holds without assuming that $F$ has a right 
adjoint. It suffices that $F$ preserves finite limits.}
\begin{enumerate}
\item[\rm (1)]
The geometric morphism $F \dashv U$ is \emph{surjective}, 
i.e.\ $F$ reflects isomorphisms.
\item[\rm (2)] 
A morphism $u$ in $\B$ is an isomorphism whenever $1_u$ is cocartesian.
\end{enumerate}
\end{Thm}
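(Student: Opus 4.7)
The plan is to reduce this to the explicit characterisation of cocartesian arrows in $\Gl(F) = \C{\downarrow}F$ that was recorded earlier in the section on finite limit preserving functors as fibrations: a morphism
\begin{diagram}[small]
A & \rTo^{f} & B \\
\dTo^{a} & & \dTo_{b}  \\
FI & \rTo_{Fu} & FJ
\end{diagram}
in $\Gl(F)$ over $u : I \to J$ is cocartesian with respect to $\gl(F)$ if and only if $f$ is an isomorphism in $\C$. Granted this, the equivalence of (1) and (2) becomes almost tautological.

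First I would spell out what $1_u : 1_I \to 1_J$ denotes in the present situation. The fibrewise terminal object of $\gl(F)$ over $I \in \B$ is the identity $\id_{FI} : FI \to FI$ (this is the unique object over $I$ admitting a unique morphism into it from any object over any $J \to I$). Hence $1_u$ is the canonical square with top arrow $Fu$, both vertical maps $\id_{FI}$ and $\id_{FJ}$, and bottom arrow $Fu$.

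Applying the cocartesianness criterion above to this particular square, $1_u$ is cocartesian precisely when its top arrow $Fu$ is an isomorphism in $\C$. Condition (2) therefore unfolds to: for every $u$ in $\B$, if $Fu$ is an isomorphism in $\C$, then $u$ is an isomorphism in $\B$. But this is exactly the statement that $F$ reflects isomorphisms, i.e.\ condition (1). Thus the two conditions are equivalent, and the only real work is the bookkeeping identification of $1_u$ with the square above, together with the invocation of the earlier lemma on cocartesian arrows in $\Gl(F)$. There is no obstacle of any depth here; I would expect the only thing to be careful about is noting that the cocartesianness characterisation uses only that $F$ preserves finite limits, which is why the footnote observes that surjectivity of $F \dashv U$ can be characterised without assuming that $F$ has a right adjoint.
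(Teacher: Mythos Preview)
Your argument is correct and in fact more direct than the paper's own proof. You invoke the explicit description of cocartesian arrows in $\Gl(F)$ (recorded at the beginning of Section~15) to see immediately that $1_u$ is cocartesian iff $Fu$ is an isomorphism, whence condition~(2) literally restates that $F$ reflects isomorphisms.

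The paper instead argues via the fibred functors $\Delta_I : \B/I \to P(I)$: it first observes that $F$ reflects isomorphisms iff every $\Delta_I$ does, and then uses Lemma~\ref{MoensLem} (the Moens lemma on stable disjoint sums) to show that $\Delta_I(u)$ is an isomorphism precisely when $1_u$ is cocartesian. This detour through $\Delta_I$ and Lemma~\ref{MoensLem} is not needed for the bare equivalence you were asked to prove, but it has the virtue of being phrased entirely in fibrational language, without unpacking the specific shape of $\Gl(F)$; it is thus in keeping with the surrounding theorems in the section, which systematically characterise properties of $F \dashv U$ through properties of the fibred adjunction $\Delta \dashv \Gamma$. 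Your approach, by contrast, buys simplicity at the cost of being tied to the concrete description of $\gl(F)$---which is harmless here, since the theorem is stated only for fibrations of that form.
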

\begin{proof}
Obviously, the functor $F$ reflects isomorphisms iff all $\Delta_I$ reflect
isomorphisms.

For a morphism $u : w \to v$ in $\B/I$ (i.e.\ $w = v \circ u$) we have
\begin{diagram}
1_K          & \rTo^{\varphi_w}     &   \Delta_I(w) \\
\dTo^{1_u}   &                      &   \dTo_{\Delta_I(u)}  \\
1_J        & \rTo_{\varphi_v}       &   \Delta_I(v)           \\
\end{diagram}
where $\varphi_w$ and $\varphi_v$ are cocartesian over $w : K \to I$ and
$v : J \to I$, respectively, and $\Delta(u)$ is vertical over $I$.  
As internal sums in $P_F$ are stable and disjoint it follows 
from Lemma~\ref{MoensLem} that $\Delta_I(u)$ is an isomorphism iff $1_u$
is cocartesian. Thus, the functor $\Delta_I$ reflects isomorphisms iff $u$
is an isomorphism whenver $1_u$ is cocartesian.

Thus, the functor $F$ reflects isomorphisms iff it holds for all maps $u$ 
in $\B$ that $u$ is an isomorphism whenever $1_u$ is cocartesian. 
\end{proof}

\bigskip
A geometric morphisms $F \dashv U$ between toposes is known to be surjective 
iff $F$ is faithful. One easily sees that a finite limit preserving functor 
$F : \B \to \C$ between categories with finite limits is faithful iff for
the associated fibration $P_F$ it holds for $u,v : J \to I$ that $u = v$ 
whenever $\varphi_I \circ 1_u = \varphi_I \circ 1_v$ where
$\varphi_I : 1_I \to \coprod_I 1_I$ is cocartesian over $I \to 1$.
But, of course, in this general case $F$ being faithful does not imply that
$F$ reflects isos, e.g.\ if $\B$ is posetal then $F$ is always faithful but
in general does not reflect isos. However, if $F$ reflects isos then it is 
also faithful since $F$ preserves equalizers.

\subsection{Connected Geometric Morphisms}

\begin{Thm}\label{gmprop3}
Let $F \dashv U : \C \to \B$ be a geometric morphism and $P$ be its induced
geometric fibration $P_F$. Then the following conditions are equivalent.
\begin{enumerate}
\item[\rm (1)]
The geometric morphism $F \dashv U$ is \emph{connected}, 
i.e.\ $F$ is full and faithful.
\item[\rm (2)] 
The right adjoint $G$ of $1 : \B \to \C{\downarrow}F$ 
sends cocartesian arrows to isomorphisms.
\item[\rm (3)] The fibered functor $\Gamma$ is cocartesian, i.e.\ preserves
cocartesian arrows.
\end{enumerate}
\end{Thm}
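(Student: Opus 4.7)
The plan is to first collapse $(2)$ and $(3)$ into a single statement by unfolding definitions, then reduce $(1) \Leftrightarrow (3)$ to a condition on the naturality squares of $\eta$, and finally discharge that condition using the adjunction $F \dashv U$. The main obstacle will be the last step: showing that having all naturality squares of $\eta$ be pullbacks forces $\eta$ itself to be a natural isomorphism.

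For $(2) \Leftrightarrow (3)$, I would simply observe that a cocartesian arrow in the fundamental fibration $P_\B = \partial_1$ is precisely a commutative square whose top edge is invertible. The cartesian functor $\Gamma$ sends $\varphi : a \to b$ over $u : I \to J$ to the square with top edge $G(\varphi) : G(a) \to G(b)$ and bottom edge $u$. Hence $\Gamma(\varphi)$ is $P_\B$-cocartesian iff $G(\varphi)$ is invertible, which is exactly the content of $(2) \Leftrightarrow (3)$.

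For $(1) \Leftrightarrow (3)$, every $\gl(F)$-cocartesian arrow $\varphi$ over $u : I \to J$ is of the form $(\id_A, u) : a \to Fu \circ a$ for $a : A \to FI$. By Theorem~\ref{gmthm3}, $G(a)$ is the pullback of $Ua$ along $\eta_I$ and $G(b) = G(Fu \circ a)$ is the pullback of $UFu \circ Ua$ along $\eta_J$. Stacking the pullback defining $G(a)$ above the naturality square of $\eta$ at $u$, pullback pasting shows that $G(\varphi)$ is invertible iff the naturality square at $u$ is a pullback. Taking $a = \id_{FI}$ conversely forces the naturality square itself to be a pullback when $G(\varphi)$ is invertible. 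Thus $(3)$ is equivalent to saying that every naturality square of $\eta$ is a pullback.

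It remains, and here lies the main work, to show that pullback-stability of all naturality squares of $\eta$ implies $\eta$ is a natural isomorphism (the converse being trivial). Using $F \dashv U$, a test cone at the naturality square at $u : I \to J$ given by $s : T \to UFI$ and $t : T \to J$ with $UFu \circ s = \eta_J \circ t$ transposes via $\hat{s} := \varepsilon_{FI} \circ Fs : FT \to FI$ to a pair $(\hat{s}, t)$ with $Fu \circ \hat{s} = Ft$; and a mediating $w : T \to I$ with $\eta_I \circ w = s$ corresponds to $w$ with $Fw = \hat{s}$. Specialising to $u = {!}_B : B \to 1$, the compatibility $F{!}_B \circ \hat{s} = F{!}_A$ is automatic since $F$ preserves terminal objects, so the pullback property yields, for each $g : FA \to FB$, a \emph{unique} $w : A \to B$ with $Fw = g$. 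Existence gives fullness and uniqueness gives faithfulness of $F$, which is $(1)$ and closes the circle.
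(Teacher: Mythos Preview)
Your argument is correct. The treatment of $(2)\Leftrightarrow(3)$ is the same as the paper's. For $(1)\Leftrightarrow(3)$, however, you take a genuinely different route.

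The paper stays inside the fibred adjunction $\Delta_P \dashv \Gamma_P$: it identifies the fibred unit $\widetilde{\eta}_u$ with $G(\varphi_u)$ for the cocartesian $\varphi_u : 1_I \to \Delta(u)$, so that $F$ full and faithful (i.e.\ all $\widetilde{\eta}_u$ invertible) is the same as $G$ inverting cocartesian arrows out of fibrewise terminals; it then extends this to \emph{all} cocartesian arrows via the pullback square of Lemma~\ref{MoensLem} and the fact that $G$, being a right adjoint, preserves pullbacks.

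You instead unfold the explicit description of $G$ from Theorem~\ref{gmthm1} (pullback of $U(-)$ along the ordinary unit $\eta$) and reduce condition~(3) to the intermediate statement ``every naturality square of $\eta$ is a pullback'', which you then discharge by a direct transposition argument specialising to $u = {!}_B$. This avoids both the fibred unit $\widetilde{\eta}$ and Lemma~\ref{MoensLem}; what it costs is the dependence on the concrete formula for $G$, whereas the paper's argument works abstractly with the counit of $1 \dashv G$. Your intermediate characterisation (connectedness $\Leftrightarrow$ $\eta$ has cartesian naturality squares) is a pleasant by-product, and is in fact equivalent to the description of $\widetilde{\eta}$ given in Theorem~\ref{gmthm3}.
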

\begin{proof}
Obviously, the functor $F$ is full and faithful iff all $\Delta_I$ are full 
and faithful, i.e.\ all $\widetilde{\eta}_u$ are isomorphisms. 
Let $u : I \to J$ be a morphism in $\B$. Then we have
\begin{diagram}
1_{G(1_I)}                & \rEqual^{\varepsilon_{1_I}}  &  1_I \\
\dTo^{1_{\widetilde{\eta}_u}} &                   & \dTo_{\varphi_u} \\
1_{G(\Delta(u))}  & \rTo_{\varepsilon_{\Delta(u)}}  &  \Delta(u) \\       
\end{diagram}
where $\varphi_u : 1_I \to \Delta(u)$ is cocartesian over $u$ and, therefore, 
we have $\widetilde{\eta}_u = G(\varphi_u)$.
Thus, the functor $F$ is full and faithful iff $G(\varphi)$ is an isomorphism
for cocartesian $\varphi$ whose source is terminal in its fiber. But then
$G$ sends all cocartesian arrows to isomorphisms which can be seen as follows.
Suppose $\varphi : X \to Y$ is cocartesian over $u : I \to J$. 
Let  $\varphi_u : 1_I \to \Delta(u)$ be cocartesian over $u$. 
Then by Lemma~\ref{MoensLem} the commuting square

\begin{diagram}
X  \SEpbk & \rTo^{\varphi}  &  Y\\
\dTo^{\alpha} &             & \dTo_{\beta} \\
1_I  & \rTo_{\varphi_u}  &  \Delta(u) \\       
\end{diagram}
with $\alpha$ and $\beta$ vertical over $I$ and $J$, respectively, 
is a pullback. As $G$ is a right adjoint it preserves pullbacks and, therefore,
\begin{diagram}
G(X)  \SEpbk & \rTo^{G(\varphi)}  &  G(Y) \\
\dTo^{G(\alpha)} &             & \dTo_{G(\beta)} \\
G(1_I)  & \rTo_{G(\varphi_u)}  &  G(\Delta(u)) \\       
\end{diagram}
is a pullback, too, from which it follows that $G(\varphi)$ is an isomorphism 
as $G(\varphi_u)$ is an isomorphism by assumption.
Thus, we have shown the equivalence of conditions (1) and (2). 

The equivalence of conditions (2) and (3) can be seen as follows.
From (inspection of) the proof of Theorem~\ref{gmthm1} we know that for 
$\varphi : X \to Y$ its image under $\Gamma$ is given by
\begin{diagram}
G(X)        &   \rTo^{G(\varphi)}        &  G(Y)                \\     
\dTo^{P(\varepsilon_X)}  &  \Gamma(\varphi)  &  \dTo_{P(\varepsilon_Y)} \\
P(X)        &   \rTo_{P(\varphi)}        &  P(Y)              \\          
\end{diagram}
Thus $\Gamma(\varphi)$ is cocartesian iff $G(\varphi)$ is an isomorphism.
Accordingly, the functor $G$ sends all cocartesian arrows to isomorphisms 
iff $\Gamma$ preserves cocartesianness of arrows.
\end{proof}

\bigskip
Notice that condition (2) of Theorem~\ref{gmprop3} is equivalent to the 
requirement that $G$ inverts just cocartesian arrows over terminal projections
which can be seen as follows. Suppose $\varphi : X \to Y$ is cocartesian 
over $u : I \to J$. Let $\psi : Y \to \coprod_J Y$ be a cocartesian arrows 
over $!_J : J \to 1$. Then $\psi \circ \varphi$ is cocartesian over 
$!_I : I \to 1$. As $G(\psi \circ \varphi) = G(\psi) \circ G(\varphi)$
and by assumption $G(\psi \circ \varphi)$ and $G(\psi)$ are isomorphisms 
it follows immediately that $G(\varphi)$ is an isomorphism, too. Moreover,
one easily sees that $G$ inverts cocartesian arrows over terminal projections 
if and only if $G$ inverts cocartesian arrows above terminal projections whose
source is terminal in its fiber. Of course, this condition is necessary. For
the reverse direction suppose that $\varphi : X \to Y$ is cocartesian over
$!_I : I \to 1$. Then since $P$ is a geometric fibration we have

\begin{diagram}
X \SEpbk & \rTo^{\varphi}_\cocart & Y \\
\dTo^{\alpha} & & \dTo_{\beta} \\
1_I & \rTo_{\varphi_I}^\cocart & \Delta(I)
\end{diagram}
where $\varphi_I$ is cocartesian over $I \to 1$ and $\alpha$ and $\beta$ are
the unique vertical arrows making the diagram commute. As $G$ is a right 
adjoint it preserves pullbacks and, therefore, we have
\begin{diagram}
G(X) \SEpbk & \rTo^{G(\varphi)} & G(Y) \\
\dTo^{G(\alpha)} & & \dTo_{G(\beta)} \\
G(1_I) & \rTo_{G(\varphi_I)} & G(\Delta(I))
\end{diagram}
from which it follows that $G(\varphi)$ is an isomorphism as $G(\varphi_I)$
is an isomorphism by assumption. Thus, a geometric fibration $P$ is connected
if and only if $G$ inverts all cocartesian arrows over terminal projections
which start from a fiberwise terminal object, i.e.\ if for all 
$\sigma : 1_J \to \Delta(I)$ there exists a unique $u : J \to I$ with
$\sigma = \varphi_I \circ 1_u$ as gets immediate from the following diagram
\begin{diagram}
1_{G1_I} & \rEqual^{\;\varepsilon_{1_I}}  & 1_I \\
\dTo_\cong^{1_{G\varphi_I}} & & \dTo_{\varphi_I} \\
1_{G \Delta(I)} & \rTo_{\;\;\varepsilon_{\Delta(I)}} & \Delta(I)\\
\end{diagram}
with $\varphi_I : 1_I \to \Delta(I)$ cocartesian over $I \to 1$.
Analogously, faithfulness of $F$ is equivalent to the requirement 
that $u = v$ whenever $\varphi_I \circ 1_u = \varphi_I \circ 1_v$ 
providing an alternative characterisation of surjectivity for geometric 
morphisms between toposes (as $F \dashv U$ is surjective iff $F$ is faithful).

Obviously, condition (1) of Theorem~\ref{gmprop3} is equivalent to the 
requirement that $\eta : \Id_\B \to UF$ is a natural isomorphism.
For the particular case of a geometric morphism $\Delta \dashv \Gamma : 
\E \to \Set$ where $\E$ is a topos one easily sees that 
$\eta_I : I \to \Gamma\Delta I$ (sending $i \in I$ to the injection 
$\inj_i : 1 \to \coprod_{i \in I} 1$) is a bijection for all sets $I$  iff
the terminal object of $\E$ is \emph{indecomposable} in the sense that for all 
subterminals $U$ and $V$ with $U{+}V \cong 1_\E$ either $U$ or $V$ is
isomorphic to $0_\E$. 

\subsection{Hyperconnected Geometric Morphisms}

For various characterizations of hyperconnected geometric morphisms 
between toposes see A.4.6 of \cite{Ele}. We concentrate here on those
which can be reformulated as palatable properties of the associated geometric
fibrations.

\begin{Thm}\label{hypconn1}
Let $F \dashv U : \C \to \B$ be a geometric morphism and $P$ be the induced
geometric fibration $P_F$. Then the following conditions are equivalent.
\begin{enumerate}
\item[\rm (1)]
The geometric morphism $F \dashv U$ is \emph{hyperconnected}, i.e.\
connected and all components of the counit of the adjunction are monic. 
\item[\rm (2)] 
The fibered functor $\Gamma$ preserves cocartesian arrows and all counits
$\widetilde{\varepsilon}_X : \Delta\Gamma X \to X$ are vertical monos.
\end{enumerate}
\end{Thm}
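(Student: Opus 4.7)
The plan is to decompose each of (1), (2), (3) into a ``connectedness'' clause plus a ``monomorphism'' clause, and exploit Theorem~\ref{gmprop3}, which already gives the equivalence of ``$F \dashv U$ connected'', ``$G$ inverts cocartesian arrows'', and ``$\Gamma$ preserves cocartesian arrows''. It then suffices, working under any (hence all) of these equivalent connectedness conditions, to show that the three monicness clauses in (1)--(3) are equivalent.

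For (2) $\Leftrightarrow$ (3) I would invoke the vertical/cocartesian factorisation available in the bifibration $P = \gl(F)$, which is unique up to vertical isomorphism. The proof of Theorem~\ref{gmthm2} provides the canonical such factorisation $\varepsilon_X = \widetilde{\varepsilon}_X \circ \varphi$, where $\varphi : 1_{GX} \to \Delta\Gamma X$ is cocartesian over $P(\varepsilon_X)$ and $\widetilde{\varepsilon}_X$ is vertical. Hence any $\alpha$ as in (2) is vertically isomorphic to $\widetilde{\varepsilon}_X$, so ``$\alpha$ monic'' is the same statement as ``$\widetilde{\varepsilon}_X$ is a vertical mono''.

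For (1) $\Leftrightarrow$ (3) I would use the explicit description of $\widetilde{\varepsilon}_a$ from Theorem~\ref{gmthm3}: for $a : A \to FI$ we have $\widetilde{\varepsilon}_a = \varepsilon_A \circ Fq$, where $q$ comes from the pullback of $Ua$ along $\eta_I$. Under connectedness, $\eta_I$ is an isomorphism, so this pullback can be chosen trivially (take $K = UA$, $q = \id_{UA}$, $p = \eta_I^{-1} \circ Ua$), giving $\widetilde{\varepsilon}_a = \varepsilon_A$ as a $\C$-morphism. If (3) holds, then applying this to $!_A : A \to F1$ in $\Gl(F)_1$ (using $F1 \cong 1$) forces $\varepsilon_A$ to be monic in $\C$ for each $A$. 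Conversely, if every $\varepsilon_A$ is monic in $\C$, then for any $a : A \to FI$, a pair of morphisms $f,g : b \to a$ in $\Gl(F)_I$ with $\widetilde{\varepsilon}_a \circ f = \widetilde{\varepsilon}_a \circ g$ amounts to $\varepsilon_A \circ f = \varepsilon_A \circ g$ in $\C$, whence $f = g$, so $\widetilde{\varepsilon}_a$ is a vertical mono.

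The arguments above are essentially bookkeeping; the only subtlety to check is that monicness of $\widetilde{\varepsilon}_a$ in the fibre $\Gl(F)_I$ coincides with monicness of the underlying $\C$-morphism $\varepsilon_A \circ Fq$, which is immediate because morphisms in $\Gl(F)_I$ are $\C$-morphisms making the triangle over $FI$ commute and postcomposition with $a$ is monic anyway. Thus the hard conceptual step---that connectedness trivialises the auxiliary pullback in Theorem~\ref{gmthm3}---is really the only obstacle, and it falls straight out of $\eta$ being a natural isomorphism.
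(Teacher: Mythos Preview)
The paper does not give an explicit proof of Theorem~\ref{hypconn1}; it is stated and immediately followed by commentary. Your argument is correct and is exactly the proof one would expect the reader to supply from the surrounding material: you rightly split each condition into a connectedness clause handled by Theorem~\ref{gmprop3} and a monicity clause, use the cocartesian/vertical factorisation $\varepsilon_X = \widetilde{\varepsilon}_X \circ \varphi$ from the proof of Theorem~\ref{gmthm2} for (2)$\Leftrightarrow$(3), and use the explicit description of $\widetilde{\varepsilon}_a$ from Theorem~\ref{gmthm3} together with the fact that connectedness makes $\eta_I$ invertible (trivialising the auxiliary pullback) for (1)$\Leftrightarrow$(3). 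Your remark that monicity in the fibre $\Gl(F)_I = \C/FI$ coincides with monicity of the underlying $\C$-morphism is the standard slice-category fact and closes the argument cleanly.
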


For geometric morphisms $\Delta \dashv \Gamma : \E \to \Set$ the counit
$\varepsilon_X : \Delta\Gamma X \to X$ at $X \in \E$ is monic
iff distinct global elements of $X$ are disjoint. 
The above Theorem~\ref{hypconn1} generalizes this condition to arbitrary bases.

But there is a different characterization of hyperconnected geometric
morphisms between elementary toposes whose fibrational analogue might 
appear as more intuitive.

\begin{Thm}\label{hypconn2}
Let $F \dashv U : \C \to \B$ be a geometric morphism and $P$ be the induced
geometric fibration $P_F$. Then the following conditions are equivalent.
\begin{enumerate}
\item[\rm (1)] 
The geometric morphism $F \dashv U$ is hyperconnected, i.e.\ 
for all $I \in \B$ the functor $F_{/I} : \B/I \to \C/FI$ restricts 
to an equivalence between $\Sub_\B(I)$ and $\Sub_\C(FI)$.
\item[\rm (2)] 
The fibered functor $\Delta : P_\B \to P_F$ restricts to a fibered 
equivalence between $\Sub_\B$ and $F^*\Sub_\C$ considered as full subfibrations 
of $P_\B$ and $P_F$, respectively.
\end{enumerate}
\end{Thm}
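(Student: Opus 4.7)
The plan is to exploit Theorem~\ref{gmprop3}, which already establishes that the first clauses of (1), (2), (3) --- namely ``$F$ full and faithful'', ``$G$ inverts cocartesian arrows'', and ``$\Gamma$ preserves cocartesian arrows'' --- are mutually equivalent. Hence I may assume connectedness throughout and need only verify the equivalence of the three supplementary subobject conditions. In particular connectedness gives the fibrewise full and faithfulness of $\Delta$ (and of its special case $F = \Delta_1 \circ \Sigma$), so the unit $\widetilde{\eta}$ of $\Delta \dashv \Gamma$ is a natural isomorphism, a fact I will use twice below.

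For (1)$\Leftrightarrow$(3), observe that the fibre of $P_\B$ over $I$ is $\B/I$ and the fibre of $\gl(F)$ over $I$ is $\C/FI$, and that by the construction of $\Delta_P$ (Definition~\ref{Deltafibdef}) the fibrewise action of $\Delta$ over $I$ coincides with the slice functor $F_{/I} : \B/I \to \C/FI$ sending $u : J \to I$ to $Fu$. A vertical subobject of $1_I = \id_{FI}$ in $\C/FI$ is literally a mono $A \mono FI$ in $\C$, so the fibre of $F^*\Sub_\C$ over $I$ is $\Sub_\C(FI)$; and similarly the fibre of $\Sub_\B$ over $I$ is $\Sub_\B(I)$. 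Thus (3) expresses in a fibred way exactly that $F_{/I}$ restricts to an equivalence $\Sub_\B(I) \simeq \Sub_\C(FI)$ naturally in $I$, which is the content of (1). Naturality in $I$ is automatic: change of base in $\B$ and $\C$ preserves monos since $F$ (and hence pullbacks along $Fu$) preserves finite limits, so both $\Delta$ and $\Gamma$ restrict to cartesian functors between the subobject subfibrations.

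For (2)$\Leftrightarrow$(3), since $\widetilde{\eta}$ is a natural isomorphism, $\Delta_I$ is fully faithful, and its restriction to the posetal subcategory $\Sub_\B(I) \hookrightarrow \B/I$ remains fully faithful. For a fully faithful functor between posetal (hence skeletal up to iso) categories, being an equivalence reduces to essential surjectivity, i.e.\ to the requirement that every vertical subobject $m$ of $1_I$ is isomorphic to $\Delta_I(n)$ for some $n \in \Sub_\B(I)$. Now $G$ is a right adjoint, so it preserves monos, and a vertical subobject $m \mono 1_I$ is sent by $G$ to a subobject $G(m) \mono G(1_I) = I$ in $\B$; the triangle identities together with $\widetilde{\eta}$ being iso force that any such essentially surjective presentation must have $n \cong G(m)$. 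Hence essential surjectivity of $\Delta_I|_{\Sub_\B(I)}$ is equivalent to the condition $m \cong \Delta_I(G(m))$ of (2), and this in turn gives the fibred equivalence $\Sub_\B \simeq F^*\Sub_\C$ of (3).

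The main subtlety --- and the only step that is more than bookkeeping --- is the identification of $G$ restricted to vertical subobjects of $1_I$ with the functor induced on $\Sub_\C(FI)$ by $\eta_I^* \circ U$, and the verification that this restriction is indeed a quasi-inverse to $\Delta_I|_{\Sub_\B(I)}$; once connectedness has been invoked to make $\widetilde{\eta}$ invertible, both directions of the adjunction restrict to subobject posets and the equivalence follows. Everything else is naturality in $I$, which is inherited from the fact that $\Delta$ and $\Gamma$ are cartesian and that all constructions involved commute with pullback.
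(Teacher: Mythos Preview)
Your argument is sound. The paper itself states Theorem~\ref{hypconn2} without proof (it is followed only by an informal remark on the meaning of condition~(3)), so there is no ``paper's own proof'' to compare against; your proof fills that gap correctly.

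A few remarks on presentation. The reduction via Theorem~\ref{gmprop3} is exactly right: once connectedness is assumed, only the subobject clauses remain. For (1)$\Leftrightarrow$(3), the key identification $\Delta_I \cong F_{/I}$ is correct and your appeal to the fibrewise criterion for fibred equivalences (a cartesian functor is a fibred equivalence iff every fibre is one) makes the passage between the pointwise statement~(1) and the fibred statement~(3) immediate; you might cite that criterion explicitly since it appears only in a footnote in the paper. For (2)$\Leftrightarrow$(3), your use of the unit $\widetilde{\eta}$ being invertible to reduce the question to essential surjectivity, and then the observation that $\Gamma_I$ restricted to subobjects recovers $G$ (so the only candidate preimage of $m$ is $G(m)$), is clean and correct. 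The identification $G(1_I) \cong I$ that you use tacitly follows from the explicit description of $G$ in the proof of Theorem~\ref{gmthm1}.

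Two minor points: the notation ``$F = \Delta_1 \circ \Sigma$'' in your first paragraph is unclear (you presumably mean the identification of $F$ with $\Delta_1$ modulo $\B \cong \B/1$); and the final paragraph largely repeats what you have already established, so it could be trimmed. Neither affects correctness.
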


Thus, a geometric morphism is hyperconnected iff the corresponding  
fibered adjunction $\Delta \dashv \Gamma$ between $P_\B$ and $P_F$ 
restricts to a fibered equivalence between their subterminal parts.

Of course, the conditions (2) of theorems \ref{hypconn1} and \ref{hypconn2}, 
respectively, are in general not equivalent for geometric fibrations 
where $\B$ is not a topos and $P_F$ is not a fibration of toposes
(i.e.\ $\C$ is not a topos).

\subsection{Local Geometric Morphisms}

\begin{Thm}\label{gmprop4}
Let $F \dashv U : \C \to \B$ be a geometric morphism and $P$ be the induced
geometric fibration $P_F$. Then the following conditions are equivalent.
\begin{enumerate}
\item[\rm (1)]
The geometric morphism $F \dashv U$ is \emph{local}, 
i.e.\ $F$ is full and faithful and $U$ has a right adjoint.
\item[\rm (2)] 
The fibered functor $\Gamma$ has a fibered right adjoint $\nabla$.
\end{enumerate}
\end{Thm}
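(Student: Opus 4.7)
The plan is to exploit Theorem~\ref{gmthm3}'s formula $\Gamma_I \cong \eta_I^* \circ U_{/I}$ (writing $\eta$ for the unit of $F \dashv U$) together with the mate correspondence for fibred adjunctions.

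For the forward direction (1)$\Rightarrow$(2), I will construct $\nabla$ explicitly. Given $F$ full and faithful and $U \dashv R$, I introduce a natural transformation $\zeta : F \Rightarrow R$ whose component at $I$ is the $(U \dashv R)$-transpose of $\eta_I^{-1} : UFI \to I$, well-defined since full faithfulness of $F$ makes $\eta$ invertible; concretely, $\zeta_I = R(\eta_I^{-1}) \circ \eta^{UR}_{FI}$. For $b : B \to I$ I then define $\nabla_I(b)$ as the pullback of $Rb : RB \to RI$ along $\zeta_I$ in $\C$. To verify $\Gamma_I \dashv \nabla_I$ fibrewise, I use that $\eta_I$ is an iso to identify $\eta_I^*UA$ with $UA$ (now equipped with the map $\eta_I^{-1} \circ Ua : UA \to I$), thus reducing morphisms $\Gamma_I(a) \to b$ in $\B/I$ to morphisms $UA \to B$ over $\eta_I^{-1} \circ Ua$; transposing these across $U \dashv R$ and invoking a triangle identity produces exactly the pullback data defining morphisms $a \to \nabla_I(b)$ in $\C/FI$. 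Fibredness of $\nabla$ then follows from naturality of $\zeta$ and pullback pasting: for $u : J \to I$ in $\B$, the object $u^*\nabla_I(b)$ is the pullback of $Rb$ along the equal composites $\zeta_I \circ Fu = Ru \circ \zeta_J$, which equals $\nabla_J(u^*b)$.

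For the converse (2)$\Rightarrow$(1), I proceed in two steps. First, restricting to the fibre over $1_\B$ and using that $F$ preserves the terminal (so $\eta_1$ is an iso and $\Gamma_1 \cong U$ under $\C/F1 \simeq \C$, $\B/1 \simeq \B$), the component $\nabla_1$ furnishes a right adjoint $R$ to $U$. Second, for full and faithfulness of $F$ I invoke Theorem~\ref{gmprop3}: it suffices to show that $\Gamma$ preserves cocartesian arrows. This I obtain from the fibred mate correspondence: both $\gl(F)$ and $P_\B$ are bifibrations with internal sums, providing left adjoints $\coprod_u \dashv u^*$ and $\Sigma_u \dashv u^*$ to reindexing. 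Cartesianness of $\nabla$ (i.e.\ $u^*\nabla_J \cong \nabla_I u^*$) transposes under the combined adjunctions $\Gamma_I \dashv \nabla_I$ and $\coprod_u \dashv u^*$ into the isomorphism $\Sigma_u \Gamma_I \cong \Gamma_J \coprod_u$, which is precisely preservation of internal sums by $\Gamma$, hence preservation of cocartesian arrows.

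The main obstacle lies in the careful handling of the mate correspondence. For (1)$\Rightarrow$(2) the fibrewise right adjoint is easy to guess, but pinning down genuine fibredness (rather than mere fibrewise adjointness) requires verifying naturality of $\zeta$, which hinges on the compatibility between $\eta^{FU}$ and $\eta^{UR}$ via the triangle identities. For (2)$\Rightarrow$(1), the mate-correspondence step that converts cartesianness of $\nabla$ into cocartesian-preservation by $\Gamma$ is the conceptually decisive ingredient; although standard in fibred category theory, it should be traced explicitly through the definitions.
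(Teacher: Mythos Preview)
Your proof is correct and follows essentially the same route as the paper's. For $(1)\Rightarrow(2)$ you construct the identical $\nabla$ via the transpose $\zeta$ of $\eta^{-1}$ and pullback along it; for $(2)\Rightarrow(1)$ you and the paper both extract $U\dashv R$ from the fibre over $1$ and then invoke Theorem~\ref{gmprop3}, the only difference being that the paper says in one line that a fibred left adjoint $\Gamma$ automatically preserves cocartesian arrows, whereas you unpack this as the mate correspondence turning cartesianness of $\nabla$ into the Beck--Chevalley isomorphism $\Sigma_u\Gamma_I\cong\Gamma_J\coprod_u$---which is exactly the content of that general fact.
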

\begin{proof}
First we show that (2) implies (1). If $\Gamma$ has a fibered right adjoint 
$\nabla$ then $\Gamma$ preserves cocartesian arrows as it is a fibered left
adjoint. Thus, by the previous Theorem~\ref{gmprop3} it follows that $F$
is full and faithful. As $\Gamma_1 \dashv \nabla_1$ and $U \cong \Gamma_1$
it follows that $U$ has a right adjoint.

Now we show that (1) implies (2). If $F$ is full and faithful then the
unit $\eta : \Id_\B \to UF$ is an isomorphism. Therefore, the fibered functor
$\Gamma$ acts on objects and morphisms simply by applying the functor $U$
and then postcomposing with the inverse of $\eta$, 
i.e.\ $\Gamma(a) = \eta_I^{-1} \circ U(a)$ 
for $a : A \to FI$ in $\C{\downarrow}F$. Then $\Gamma$ has a fibered right 
adjoint $\nabla$ with $\nabla(v) = \phi_J^*R(v)$ for $v : K \to J$ 
in $\B{\downarrow}\B$ where $\phi_J : FJ \to RJ$ is the transpose (w.r.t.\
$U \dashv R$) of $\eta^{-1}_J : UFJ \to J$ as follows from the natural
1-1-correspondence between
\begin{diagram}
UA & & \rTo^w & & K & & A & & \rTo^{\check{w}} & & RK \\
\dTo^{Ua} & & & & \dTo_v & \qquad\mbox{and}\qquad & \dTo^{a} & & & & 
\dTo_{Rv} \\
UFI & \rTo_{\eta^{-1}_I} & I & \rTo_u & J & & FI & \rTo_{Fu} & FJ & 
\rTo_{\phi_J}& RJ \\
\end{diagram}
exploiting the fact that the transpose of
$u \circ \eta^{-1}_I = \eta^{-1}_J \circ UFu$ is $\phi_J \circ Fu$.
\end{proof}

\subsection{Locally Connected or Molecular Geometric Morphisms}

are geometric morphisms whose inverse image part has a fibered left adjoint
which requirement can be reformulated more elementarily as in the following
\begin{Thm}\label{gmprop5}
Let $F \dashv U : \C \to \B$ be a geometric morphism and $P$ be the induced
geometric fibration $P_F$. Then the following conditions are equivalent.
\begin{enumerate}
\item[\rm (1)]
The geometric morphism $F \dashv U$ is \emph{locally connected}, 
i.e.\ $F$ has a left adjoint $L$ such that
\begin{diagram}
B \SEpbk & \rTo^{f} & A & & &  & LB \SEpbk & \rTo^{Lf} & LA \\
\dTo^{b} & & \dTo_{a} &&\mbox{implies}&& \dTo^{\widehat{b}} &&\dTo_{\widehat{a}}\\
FJ& \rTo_{Fu} & FI &    & & & J & \rTo_{u} & I \\
\end{diagram}
where $\widehat{a}$ and $\widehat{b}$ are the upper transposes of $a$ and $b$,
respectively.
\item[\rm (2)] 
The fibered functor $\Delta$ has a fibered left adjoint $\Pi$.
\end{enumerate}
\end{Thm}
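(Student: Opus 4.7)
The plan is to show that any fibred left adjoint $\Pi$ to $\Delta$ must be given fibrewise by transposition along $L \dashv F$, namely $\Pi_I(a) = \hat{a} : LA \to I$ for $a : A \to FI$, where $L := \Pi_1$; under this identification the condition in (1) reads \emph{literally} as the preservation of cartesian arrows by $\Pi$. Both directions of the equivalence are then largely mechanical.

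For (2)$\Rightarrow$(1), first observe that since $F$ preserves finite limits one has $F1 \cong 1$, so $\Delta_1$ is (up to the canonical isomorphisms $\B \cong \B/1$ and $\C \cong \C/F1$) just $F$; thus $L := \Pi_1$ is left adjoint to $F$. Next I would verify the identification $\Pi_I(a) \cong \hat{a}$: the fibred adjunction yields $\B/I(\Pi_I(a),v) \cong \C/FI(a,Fv)$ for $v : K \to I$, while the $L \dashv F$ adjunction gives the same hom-set as $\B/I(\hat{a},v)$, since a morphism $h : LA \to K$ with $vh = \hat{a}$ transposes to a morphism $A \to FK$ over $FI$ with codomain $Fv$. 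Granting this, a cartesian arrow in $\gl(F)$ is precisely a pullback square as displayed in (1), and its image under $\Pi$ is a cartesian arrow of $P_\B$, i.e.\ a pullback in $\B$, which is the content of (1).

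For (1)$\Rightarrow$(2), define $\Pi : \Gl(F) \to \B^{\two}$ by $\Pi(a) := \hat{a} = \varepsilon_I \circ La$ on objects and $\Pi(u,f) := (u,Lf)$ on morphisms, where $\varepsilon$ is the counit of $L \dashv F$. That this is a functor over $\B$ follows from
\[ \hat{b} \circ Lf \;=\; \varepsilon_J \circ L(bf) \;=\; \varepsilon_J \circ L(Fu \circ a) \;=\; \varepsilon_J \circ LFu \circ La \;=\; u \circ \varepsilon_I \circ La \;=\; u \circ \hat{a}, \]
using $bf = Fu \circ a$ and naturality of $\varepsilon$. The fibrewise adjunctions $\Pi_I \dashv \Delta_I$ follow directly from $L \dashv F$ by the hom-set calculation sketched above, and they are manifestly natural in $I$. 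Finally, the hypothesis (1) is by construction the statement that $\Pi$ sends a cartesian arrow of $\gl(F)$ (a pullback in $\C$ of the given shape) to a cartesian arrow of $P_\B$ (a pullback in $\B$). Hence $\Pi$ is cartesian, and since the fibrewise units and counits are vertical by construction, this assembles into a fibred adjunction $\Pi \dashv \Delta$.

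The main technical subtlety lies in the (2)$\Rightarrow$(1) direction: one has to argue that the abstractly given fibred left adjoint $\Pi$ really is (up to natural isomorphism) the concrete functor $a \mapsto \hat{a}$ built from $L = \Pi_1$. This is where fibredness of $\Pi$ (equivalently, its compatibility with reindexing along arbitrary $u : I \to J$ in $\B$) plays a role beyond the mere existence of fibrewise left adjoints; once this identification is in place, the translation of condition (1) into ``$\Pi$ preserves cartesian arrows'' is immediate, and the opposite direction is essentially bookkeeping with the explicit formula for $\Pi$.
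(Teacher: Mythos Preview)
Your proof is correct and follows essentially the same strategy as the paper: construct the explicit functor $\Pi_L$ sending $a$ to $\hat a$ (and $(u,f)$ to $(u,Lf)$), observe that it is an ordinary left adjoint to $\Delta$ over $\B$, and note that cartesianness of $\Pi_L$ is \emph{literally} condition~(1). The only cosmetic difference is in the $(2)\Rightarrow(1)$ direction: you identify the abstractly given $\Pi$ with $\Pi_L$ via a fibrewise hom-set calculation, whereas the paper invokes uniqueness of left adjoints to $\Delta$ in the $2$-category $\Cat{\downarrow}\B$ (with vertical natural transformations as $2$-cells) to conclude $\Pi\cong\Pi_L$ there, so that cartesianness of $\Pi$ transfers to $\Pi_L$ along a vertical isomorphism.
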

\begin{proof}
If $L \dashv F$ then $\Delta$ has an ordinary left adjoint $\Pi_L$ sending
\begin{diagram}
B         & \rTo^{f} & A & &&  & LB  & \rTo^{Lf} & LA \\
\dTo^{b} & & \dTo_{a} &&\mbox{to}&& \dTo^{\widehat{b}} &&\dTo_{\widehat{a}}\\
FJ & \rTo_{Fu} & FI &  & & &  J & \rTo_{u} & I \\
\end{diagram}
and satisfying $P_{\B} \circ \Pi_L = P_F$. Obviously, this functor
$\Pi_L$ is cartesian iff $L$ satisfies the requirement of condition (1).
Thus, condition (1) entails condition (2).

On the other hand if $\Delta$ has a fibered left adjoint $\Pi$ then 
$F \cong \Delta_1$ has an ordinary left adjoint $L \cong \Pi_1$ and as 
$\Pi \cong \Pi_L$ in the 2-category $\Cat{\downarrow}\B$ with vertical natural 
transformations as 2-cells (because both functors are left adjoints to $\Delta$
in this 2-category) it follows that $\Pi_L$ is also cartesian and, therefore, 
the functor $L$ satisfies the requirement of condition (1). 
Thus, condition (2) entails condition (1).
\end{proof}

\medskip
Fibrations satisfying the equivalent conditions of the previous theorem
were originally called ``molecular'' since when $\B$ is $\Set$ one easily 
sees that $L$ associates with $A$ its set $LA$ of connected components the 
family of which is represented by $\eta_A : A \to FLA$ and one may think 
of the connected components of an object as the ``molecules'' it is made of. 

Moreover, it follows from the fibered version of the Special Adjoint Functor 
Theorem\footnote{which applies as $P_\Se$ has a small generating family and, 
therefore, also a small cogenerating family (as shown by R.~Par\'e and 
D.~Schumacher)} that a geometric morphism $F \dashv U : \E \to \Se$ between 
toposes is locally connected if and only if $F$ preserves the locally 
cartesian closed structure as $\Delta : P_\Se \to P_F$ preserves
internal limits iff $F$ preserves (finite limits and) dependent products.
Obviously, the latter condition is equivalent to the requirement that
$F_{/I} : \Se/I \to \E/FI$ preserves exponentials for all $I \in \Se$.

Thus, summarizing we observe that for a geometric morphism 
$F \dashv U : \E\to\Se$ the following are equivalent
\begin{enumerate}
\item[(1)] $F \dashv U$ is locally connected
\item[(2)] $F$ preserves dependent products 
\item[(3)] $F_{/I} : \Se/I \to \E/FI$ preserves exponentials for all $I\in\Se$
\end{enumerate}
as formulated in Proposition~C.3.3.1 of \cite{Ele}

\subsubsection{Connected Locally Connected Geometric Morphims} 

Next we characterize those locally connected geometric morphisms which are 
moreover connected.
\begin{Lem}\label{clclm}
A locally connected geometric morphism $F \dashv U : \C \to \B$ is connected
iff the left adjoint $L$ of $F$ preserves terminal objects.
\end{Lem}
\begin{proof}
The forward direction is immediate since from $LF \cong \Id_\B$ and 
preservation of terminal objects by $F$ it follows that $L 1_\C \cong
L F 1_\B \cong 1_\B$.

For the backwards direction suppose that $L$ preserves terminal objects.
Consider the square
\begin{diagram}[small]
FI  & \rTo^{!_{FI}} & 1_\C \\
\dEqual & & \dTo \\
FI & \rTo_{F !_I} & F 1_\B
\end{diagram}
which is a pullback since the downward arrows in it are isomorphisms.
Thus, since $F \dashv U$ is assumed as locally connected the square
\begin{diagram}[small]
LFI  & \rTo^{L !_{FI}} & L 1_\C \\
\dTo^{\varepsilon_I} & & \dTo \\
I & \rTo_{!_I} & 1_\B
\end{diagram}
is a pullback, too, from which it follows that $\varepsilon_I$ is an
isomorphism since $L 1_\C \to 1_\B$ is an isomorphism due to the assumption 
that $L$ preserves terminal objects.
\end{proof}

\medskip
Next we show that for connected geometric morphisms being locally connected 
can be expressed in terms of existence and the requirement of simple 
preservation properties of a further left adjoint.
\begin{Thm}\label{clcpresprop}
A connected geometric morphism $F \dashv U : \C\to\B$ is locally connected
iff $F$ has a left adjoint $L$ which sends pullbacks of cospans with one
leg in the image of $F$ to pullbacks in $\B$.
\end{Thm}
\begin{proof}
Suppose $F \dashv U : \C\to\B$ is connected and $L \dashv F$. Then 
for $u : J \to I$ in $\B$ and pullbacks
\begin{diagram}
B \SEpbk & \rTo^f & A \\
\dTo^b & & \dTo_a \\
FJ & \rTo_{Fu} & FI
\end{diagram}
in $\C$ we have
\begin{diagram}
LB \SEpbk& \rTo^{Lf} & LA & & LB \SEpbk& \rTo^{Lf} & LA \\
\dTo^{Lb} & & \dTo_{La} & \qquad\quad\mbox{iff}\qquad\quad & 
\dTo^{\widehat{b}} & & \dTo_{\widehat{a}} \\
LFJ & \rTo_{LFu} & LFI & & J & \rTo_u & I\\
\end{diagram} 
since $\widehat{a} = \varepsilon_I \circ La$ and 
$\widehat{b} = \varepsilon_J \circ Lb$ and both $\varepsilon_I$ and
$\varepsilon_J$ are isomorphisms.
\end{proof}

\subsection{Atomic Geometric Morphisms}

A geometric morphism $F \dashv U : \E \to \Se$ between toposes
is called \emph{atomic} iff $F : \Se \to \E$ is logical. Since logical functors
preserve dependent products atomic geometric morphisms between toposes are in 
particuar also locally connected. 
Atomic geometric morphisms can be characterised as those locally connected 
geometric morphisms $F \dashv U : \E\to\Se$ where all monomorphisms $m$ 
in $\E$ are $\Se$-definable, i.e.\ satisfy
\begin{diagram}
X \SEpbk & \rTo^{\eta_X} & F L X \\
\dEmbed^{m} & & \dTo_{F L m} \\
Y  & \rTo_{\eta_Y} & F L Y \\
\end{diagram}
where $\eta$ is the unit of $L \dashv F$. This can be seen as follows. Recall
that for a locally connected geometric morphism $F \dashv U : \E\to\Se$ 
the monomorphism $F(\top_\Se)$ classifies $\Se$-definable monomorphisms.
Now if $F$ is logical then $F(\top_\Se)$ is a subobject classifier in $\E$
and, therefore, all monomorphisms in $\E$ are $\Se$-definable. On the other
hand, if all monomorphisms in $\E$ are $\Se$-definable then $F(\top_\Se)$
is a subobject classifier (as it classifies all $\Se$-definable monomorphisms)
and thus $F$ is logical.

When the base topos $\Se$ is $\Set$ the geometric morphism from $\E$ to $\Set$
is atomic iff it is molecular and, moreover, the subobjects of $A$ correspond
to subsets of $LA$, i.e.\ $\Sub_\E(A)$ is isomorphic to $\Pow(LA)$ which is
an atomic lattice for which reason the connected components of $A$ may be
considered as the ``atoms'' from which $A$ is built of. Atomic presheaf toposes 
over $\Set$ are up to equivalence of the form $\Set^{\Gbb^\op}$ for some small 
groupoid $\Gbb$  whereas the Sierpi\'nski topos $\Set^{\two^\op}$ is locally 
connected but not atomic over $\Set$ since its terminal object is a molecule 
but not an atom since it contains a proper subobject which is not initial.

\newpage

\newpage
\appendix

\section{M.~Jibladze's Theorem on Fibered Toposes}\label{jibthm}

Let $\B$ be a category with finite limits. A \emph{topos fibered over $\B$}
is a fibration $P : \X \to \B$ all whose fibers are toposes and all whose
reindexing functors are logical. In \cite{Jib} M.~Jibladze has shown that 
if a fibered topos has internal sums then these sums are necessarily universal 
(i.e.\ pullback stable) and disjoint. 
Thus, by Moens's Theorem it follows that $P \simeq P_\Delta$ 
where $\Delta : \B \to \E = \X_1 : I \mapsto \coprod_I 1_I$.

As a preparation we need the following results about logical functors 
$F : \E \to \F$ between toposes. If $L \dashv F$ then (by A.2.4.8 of \cite{Ele}) 
functor $L$ preserves monos and (by A.2.3.8 of \cite{Ele}) 
the following are quivalent
\begin{enumerate}
\item[(1)] $L_{/1} : \F_{/1} \to \E_{/L1}$ is an equivalence\footnote{the right
adjoint of $L_{/1}$ is given by $\eta_1^* \circ F_{/ L1}$}
\item[(2)] $L$ is faithful
\item[(3)] $L$ preserves equalizers
\item[(4)] $L$ preserves pullbacks.
\end{enumerate}
One easily shows\footnote{Suppose $\Sigma_I : \E_{/I} \to \E$ is full 
and faithful. Then the unit $\eta$ of $\Sigma_I \dashv I^*$ is an isomorphism.
For $a : A \to I$ we have $\eta_a = \langle a , \id_A \rangle : a \to I^*\Sigma_Ia$
as depicted in 
\begin{diagram}[small]
A  &            & & & \\
&\rdTo~{\eta_a} \rdTo(2,4)_a \rdEqual(4,2) &&&\\
  &            &  I{\times}A \SEpbk & \rTo^{\pi_2} & A \\
  &            &  \dTo_{\pi_1}     &      &  \dTo \\
  &            &  I        & \rTo &   1 \\
\end{diagram}
Since $\eta_a$ is an isomorphism the projection $\pi_2 : I \times A \to A$ 
is an isomorphism, too. Thus, for $a^\prime : A \to I$ we have 
$\pi_2 \circ \langle a , \id_A \rangle = \id_A = 
 \pi_2 \circ \langle a^\prime , \id_A \rangle$
from which it follows that $a = a^\prime$ since $\pi_2$ is an isomorphism. 
Thus $I$ is subterminal. But then the counit $\varepsilon_A$ at $A$ is given by
\begin{diagram}[small]
I^*A \SEpbk & \rEmbed^{\varepsilon_A} & A \\
\dTo & & \dTo \\
I & \rEmbed & 1
\end{diagram}
and thus monic.}
that if $L$ is full and faithful then $L1$ is subterminal 
from which it follows that all components of the counit $\varepsilon$ are monos. 

\medskip
We are now ready to prove Jibladze's Theorem on Fibered Toposes.

\begin{proof}
First observe that for a mono $m : J \mono I$ we have $m^*\coprod_m \cong 
\Id_{P(J)}$ as follows from the Beck--Chevalley condition for internal sums 
at the pullback square
\begin{diagram}[small]
J \SEpbk & \rEqual & J \\
\dEqual & & \dEmbed_m \\
J & \rEmbed_m & I
\end{diagram}
Since $m^*$ is logical and $\coprod_m$ is full and faithful all components
of the counit of $\coprod_m \dashv m^*$ are monic.

Next we show that for all $u : J \to I$ in $\B$ and $X \in P(J)$ the map
$\eta_X : X \to u^*\coprod_u X$ is monic (where $\eta$ is the counit of
$\coprod_u \dashv u^*$). Recall that $\eta_X$ is the unique vertical map
such that
\begin{diagram}
X & \rTo_{\cocart}^{\varphi_u(X)} & \coprod_u X  \\
\dTo^{\eta_X} & \ruTo^{\cart}_{\theta_u(X)} & \\
u^*\coprod_u X
\end{diagram}
Let $k_0,k_1 : K \to J$ be a kernel pair of $u$ in $\B$ and $d_u : J \to K$
with $k_0 d_u = \id_J = k_1 d_u$. Consider the diagram
\begin{diagram}
d_u^*k_1^*X & \rTo_\cart^\psi & k_1^*X \SEpbk & \rTo_\cart^\theta & X \\ 
\dEqual & & \dTo_\varphi & & \dTo_{\varphi_u(X)} \\
X & \rTo_{\eta_X} & u^*\coprod_u X & \rTo_{\theta_u(X)}^\cart & \coprod_u X
\end{diagram}
with $\theta \circ \psi = \id_X$. Notice that $\varphi$ is cocartesian by the 
Beck--Chevalley condition for internal sums.
Next consider the diagram
\begin{diagram}
d_u^*k_1^*X & & \\
\dTo^{\widetilde{\varphi_1}} & \rdTo^\psi & \\
\coprod_{d_u}d_u^*k_1^*X & \rEmbed_{\varepsilon_{k_1^*X}} & k_1^*X \\
\dTo^{\widetilde{\varphi_2}} & & \dTo_\varphi \\
\coprod_{k_0}\coprod_{d_u}d_u^*k_1^*X = X &  \rEmbed_{\eta_X} &  u^*\coprod_u X \\
\end{diagram}
where $\varphi$ and $\widetilde{\varphi_2}$ are cocartesian over $k_0$ and
$\widetilde{\varphi_1}$ is cocartesian over $d_u$. Since $d_u$ is monic
the map $\varepsilon_{k_1^*X}$ is monic. Since $\coprod_{k_0}$ is left 
adjoint to the logical functor $k_0^*$ it preserves monos from which 
it follows that $\eta_X$ is monic.

Now since all components of the counit $\eta$ of $\coprod_u \dashv u^*$ are monic
it follows that $\coprod_u$ is faithful. Since $u^*$ is logical it follows
that ${\coprod_u}_{/ 1_J}$ is an equivalence. Recall that its right adjoint
is given by $\eta_{1_J}^* \circ {u^*}_{/ \coprod_u 1_J}$, i.e.\ pullback 
along the cocartesian arrow $\varphi_u : 1_J \to \coprod_u 1_J$. 

That the counit of the adjunction ${\coprod_u}_{/ 1_J} \dashv \varphi_u^*$ 
is an isomorphism means that for vertical $\alpha$ in the pullback
\begin{diagram}
\varphi_u^* X \SEpbk & \rTo^{\alpha^*\varphi_u} & X \\
\dTo^{\varphi_u^*\alpha} & & \dTo_\alpha \\
1_J & \rTo^\cocart_{\varphi_u} & \coprod_u 1_J
\end{diagram}
the top arrow $\alpha^*\varphi_u$ is cocartesian. This is sufficient 
for showing that cocartesian arrows are stable under pullbacks along 
vertical arrows, i.e.\ that internal sums are universal (since by the 
Beck--Chevalley condition cocartesian arrows are stable under pullbacks 
along cartesian arrows anyway).

That the unit of the adjunction ${\coprod_u}_{/ 1_J} \dashv \varphi_u^*$ 
is an isomorphism means that
\begin{diagram}
X & \rTo_\cocart & \coprod_u X \\
\dTo^{!_X} & & \dTo_{\coprod_u !_X} \\
1_J & \rTo^\cocart_{\varphi_u} & \coprod_u 1_J
\end{diagram}
is a pullback. From this it follows that all diagrams of the form
\begin{diagram}
X & \rTo_\cocart & \coprod_u X \\
\dTo^{\alpha} & & \dTo_{\coprod_u \alpha} \\
Y & \rTo^\cocart & \coprod_u Y
\end{diagram}
are pullbacks. But (from the proof of Moens's Theorem) this is known 
to imply disjointness of internal sums.\footnote{One can see this more easily
as follows. Since ${\coprod_u}_{/ 1_J}$ is an equivalence it follows that
$\coprod_u$ reflects isomorphisms which is known (from the proof of Moens's
Theorem) to entail that internal sums are disjoint provided they are 
universal.} 
\end{proof}

\bigskip
Though claimed otherwise in \cite{Ele} Jibladze's Theorem was not 
proved in Moens's Th\'ese \cite{Moe} from 1982. Johnstone claims that
Moens proved in some other way that for a fibered topos internal sums are
universal and disjoint. But this is not the case because he considered
fibered variants of Giraud's Theorem where internal sums are \emph{assumed} 
as universal and disjoint. The only known way of showing that for a fibered 
topos internal sums are universal and disjoint is via Jibladze's Theorem.

However, in Jibladze's original formulation he did not prove universality
and disjointness for internal sums in a fibered topos. For him it was
sufficient to show that all $\coprod_u$ are faithful because from this
it follows that the adjunctions $\coprod_u \dashv u^*$ are (equivalent to
ones) of the form $\Sigma_A \dashv A^*$ for some $A$ in $\X_1$ and this
is sufficient for showing that $P \simeq P_\Delta$.

\newpage
\section{Descent and Stacks}\label{DecSta}

Let $\mathfrak{C}$ be a 2-category, $x$ a 0-cell in $\mathfrak{C}$ and
$f : z \to y$ a 1-cell in $\mathfrak{C}$ then $f$ is called a
\emph{descent} map w.r.t.\ $x$ iff the functor
$ \mathfrak{C}(f,x) : \mathfrak{C}(y,x) \to \mathfrak{C}(z,x) $
is an equivalence of (ordinary) categories.
In particular this definition applies to 2-categories $\Fib(\B)$
for arbitrary ordinary categories $\B$.
\begin{Def}\label{DescentMap}
  For $P \in \Fib(\B)$ a \emph{descent map w.r.t.\ $P$} is a cartesian
  functor $F : Q^\prime \to Q$ over $\B$ such that
  \[ \Fib(\B)(F,P) :  \Fib(\B)(Q,P) \to \Fib(\B)(Q^\prime,P) \]
  is an equivalence of (ordinary) categories. 
\end{Def}
If $u : J \to I$ is a morphism in $\B$ we write $K_u$ for the
posetal groupoid fibered over $\B$ whose morphisms over $K$ are
pairs $(v_1,v_2)$ of morphisms from $K$ to $J$ with $uv_1 = uv_2$
for which reindexing along $w : L \to K$ is given by $(v_1w , v_2w)$.
We write $Q_u$ for the cartesian functor from $K_u$ to $\underline{I}$
sending $(v_1,v_2)$ to $uv_1 = uv_2$.
This cartesian functor $Q_u : K_u \to \underline{I}$
factors through the discrete subfibration
$i_u : S_u \hookrightarrow \underline{I}$
consisting of all maps of the form $uv$ in $\B$ via a (unique) cartesian
functor $E_u : K_u \to S_u$. Obviously $E_u : K_u \to S_u$ is an
equivalence in the 2-category $\Fib(\B)$ for which reason
$i_u$ is a descent map w.r.t.\ $P$, i.e.\
$\Fib(\B)(i_u,P) : \Fib(\B)(\underline{I},P) \to \Fib(\B)(S_u,P)$ is
an equivalence of ordinary categories, iff
$\Fib(\B)(Q_u,P) : \Fib(\B)(\underline{I},P) \to \Fib(\B)(K_u,P)$
is an equivalence of ordinary categories.

Traditionally, one writes $\mathsf{Des}_u(P)$ for $\Fib(\B)(K_u,P)$ and
calls it the category of ``descent data for $P$ w.r.t.\ $u$'' and says
that ``$u$ is is a descent map w.r.t.\ $P$'' iff $\mathsf{Des}_u(P)$ is 
equivalent to $P(I) \simeq  \Fib(\B)(\underline{I},P)$ 
via $\Fib(\B)(Q_u,P)$.\footnote{The celebrated B\'enabou-Roubaud Theorem
from 1970 characterizes descent maps for fibrations $P$ with internal sums
over a base category $\B$ with pullbacks as those maps $u : J \to I$ in $\B$
for which $u^* : P(I) \to P(J)$ is monadic.

Its proof is based on a lemma saying that $\mathsf{Des}_u(P)$ is equivalent 
to the category of algebras for the monad induced by the adjunction 
$\coprod_u \dashv u^*$.} 
Thus $u$ is a descent map w.r.t.\ $P$ in this traditional sense iff 
$i_u$ is a descent map w.r.t.\ $P$ in the sense of Def.~\ref{DescentMap}.

Using the notion of descent map one easily defines what is a
$\mathfrak{J}$-stack for a Grothendieck topology $\mathfrak{J}$ on $\B$.
\begin{Def}\label{J-Stack}
 A \emph{$\mathfrak{J}$-stack} is a fibration $P \in \Fib(\B)$ such that
 for every $S \in \mathfrak{J}(I)$ the inclusion
 $i_S : S \hookrightarrow \underline{I}$ is a descent map w.r.t.\ $P$. 
\end{Def}
Obviously, a discrete fibration over $\B$ is a $\mathfrak{J}$-stack
iff the corresponding presheaf over $\B$ is a $\mathfrak{J}$-sheaf.

\newpage
\section{Precohesive Geometric Morphisms}\label{precohesive}

In his work on ``axiomatic cohesion'' Lawvere calls a topos $\E$ over $\Set$
\emph{precohesive} iff $\E$ is 2-valued, i.e.\ $\Gamma$ preserves subobject 
classifiers, $\Delta : \Set\to\E$ has a left adjoint $\Pi$ and 
$\Gamma : \E\to\Set$ has a right adjoint $\nabla$. 

Let $\Se$ be an arbitrary base topos. By A.4.6 of \cite{Ele}
a geometric morphism $F \dashv U : \E\to\Se$ is hyperconnected iff $U$ 
preserves subobject classifiers. Thus, it appears as natural to call
a geometric morphism $F \dashv U : \E\to\Se$ \emph{precohesive} iff it is 
hyperconnected, locally connected and local. 

Thus, by Theorem~\ref{clcpresprop} a geometric morphism $F \dashv U : \E\to\Se$ 
is precohesive iff $F$ has a left adjoint $L$ and $U$ has a right adjoint $R$ 
such that $U$ preserves subobject classifiers and $L$ sends pullbacks of 
cospans where one of the legs is in the image of $F$ to pullbacks. 

As shown in \cite{JohPLC} for precohesive geometric morphisms $F \dashv U : 
\E\to\Se$ the left adjoint $L$ to $F$ sends pullbacks of cospans whose 
common codomain is in the image of $F$ to pullbacks, i.e.\ the fibered 
left adjoint $\Pi$ to $\Delta$ preserves binary products in each fiber. 
Thus, a geometric morphism $F \dashv U : \E\to\Se$ is precohesive iff it 
is hyperconnected, local and $F$ has a left adjoint $L$ sending pullbacks 
of cospans with common codomain in the image of $F$ to pullbacks, i.e.\ 
$L_{/FI} : \E/FI \to \Se/LFI \simeq \Se/I$ preserves binary products for 
all $I$ in $\Se$. 

One knows (see e.g.\ Proposition~C.3.3.1 of \cite{Ele}) that for a geometric 
morphism $F \dashv U : \E\to\Se$ the following are equivalent
\begin{enumerate}
\item[(1)] $F \dashv U$ is locally connected
\item[(2)] $F$ preserves dependent products 
\item[(3)] $F_{/I} : \Se/I \to \E/FI$ preserves exponentials for all $I\in\Se$.
\end{enumerate}
Accordingly, a hyperconnected and local geometric morphism 
$F \dashv U : \E\to\Se$ is precohesive iff $F$ preserves dependent products 
iff all slices of $F$ preserve exponentials.

Already in their 1980 paper \emph{Molecular Toposes} introducing locally connected
geometric morphisms Barr and Par\'e proved that for a geometric morphism 
$F \dashv U : \E\to\Se$ its inverse image part $F$ preserves ordinary exponentials
iff $F$ has a left adjoint enriched over $\Se$. As shown in \cite{JohPLC} such a 
further $\Se$-enriched left adjoint preserves also finite products whenever the 
geometric morphism is also local and hyperconnected. Using this result Hemelaer 
and Rogers in their 2020 APCS paper have come up with an example of a local and 
hyperconnected geometric morphism whose inverse image part does not preserve ordinary
exponentials although it has a left adjoint.

Lawvere and Menni in their 2015 TAC paper use the terminology 
\emph{stably precohesive} for what we have called \emph{precohesive}. 
They call a geometric morphism \emph{precohesive} iff it is hyperconnected, 
local and its inverse image part preserves exponentials, i.e.\ has a finite 
product preserving left adjoint.\footnote{The reason why they are interested in 
this presumably weaker notion is that it corresponds to a string of adjoints 
$L \dashv F  \dashv U  \dashv  R : \Se\to\E$
such that $F$ (and thus also $R$) are full and faithful, $L$ preserves finite 
products and the so called ``Nullstellensatz'' holds claiming that for every 
$X \in \E$ the unique map $\theta_X : UX \to LX$ with 
\begin{diagram}
FUX & \rTo^{\;\;\varepsilon_X} & X \\
& \rdTo_{F\theta_X} & \dTo_{\eta_X} \\
 & & FLX
\end{diagram}
is epic. Intuitively, the ``points-to-pieces transform'' $\theta_X$ sends every point 
to the piece in which it lies. Thus, the ``Nullstellensatz'' claims that 
``every piece of $X$ contains a point''.}
Accordingly, in their terminology a geometric morphism is called
stably precohesive iff all its slices are precohesive in their sense. 
It is an open question raised by Lawvere and Menni in \emph{loc.cit.}\
whether all precohesive geometric morphisms are stably precohesive.

We expect the answer to their question to be negative although we have not been 
able to come up with a counterexample so far. One reason is that generally in 
toposes a predicate on an object need not be universally valid even if it holds 
for all its global elements. Moreover, in 2020 R.~Garner and T.~Streicher have 
come up with an example of a (bounded) local geometric morphism which is not 
locally connected though its inverse image part has a finite product preserving
left adjoint.
Thus, this also provides an example of a geometric morphism whose inverse image part 
has a left adjoint which is enriched but not fibered over the base topos, 
i.e.\ whose inverse image part preserves ordinary but not dependent function spaces.
Alas, the geometric morphism in Garner's counterexample is not hyperconnected.
But we do not see how the additional assumption of hyperconnectedness allows one 
to derive preservation of dependent function spaces from preservation of ordinary 
function spaces. 

However, as recently shown by Menni for boolean toposes $\Se$ a geometric morphism 
$F \dashv U : \E\to\Se$ is locally connected whenever $F$ is full and faithful and 
has a left adjoint preserving finite products. 

\end{document}